\documentclass[12pt]{amsart}
\usepackage[english]{babel}
 \usepackage[utf8x]{inputenc} 
\usepackage{amsmath}
\usepackage{cmll}
\usepackage{amssymb}
\usepackage{amsthm}
\usepackage{xcolor}
\usepackage{bbm}
\usepackage{mathrsfs}
\usepackage[all]{xy}
\usepackage{indentfirst}
\usepackage{fancyhdr}
\usepackage{newlfont}
\usepackage{setspace}
\usepackage{verbatim}
\usepackage{hyperref} 
\usepackage{bbm}

\usepackage{geometry}
\geometry{a4paper,top=2cm,bottom=2cm, left=1.5cm,right=1.5cm}



\renewcommand{\H}{\mathbb{H}}
\newcommand{\B}{\mathbb{B}}

\newcommand{\G}{\mathbb{G}}

\newcommand{\N}{\mathbb{N}}

\newcommand{\R}{\mathbb{R}}


\newcommand{\cB}{\mathcal{B}}

\newcommand{\cV}{\mathcal{V}}







\renewcommand{\exp}{\mbox{\rm exp}\;\!}

\newcommand{\sgn}{\mbox{\rm sgn}\,}

\newcommand{\Lie}{\mathrm{Lie}}







\newcommand{\beqas}{\begin{eqnarray*}}
\newcommand{\eeqas}{\end{eqnarray*}}
\newcommand{\beqa}{\begin{eqnarray}}
\newcommand{\eeqa}{\end{eqnarray}}
\newcommand{\beq}{\begin{equation}}
\newcommand{\eeq}{\end{equation}}
\newcommand{\bce}{\begin{center}}
\newcommand{\ece}{\end{center}}

\newcommand{\pa}[1]{\left( #1 \right)}               
\newcommand{\ban}[1]{\left\langle  #1 \right\rangle}  






\newtheorem{The}{Theorem}[section]
\newtheorem{Lem}[The]{Lemma}
\newtheorem{Def}[The]{Definition}
\newtheorem{Rem}[The]{Remark}
\newtheorem{Pro}[The]{Proposition}
\newtheorem{Cor}[The]{Corollary}
\newtheorem{Exa}[The]{Example}
\newtheorem{Con}{Conjecure}

\newcommand{\bt}{\begin{The}}
\newcommand{\et}{\end{The}}
\newcommand{\bl}{\begin{Lem}}
\newcommand{\el}{\end{Lem}}
\newcommand{\bd}{\begin{Def}\rm}
\newcommand{\ed}{\end{Def}}
\newcommand{\br}{\begin{Rem}\rm}
\newcommand{\er}{\end{Rem}}
\newcommand{\bpr}{\begin{Pro}}
\newcommand{\epr}{\end{Pro}}
\newcommand{\bc}{\begin{Cor}}
\newcommand{\ec}{\end{Cor}}
\newcommand{\bj}{\begin{Con}}
\newcommand{\ej}{\end{Con}}
\newcommand{\bex}{\begin{Exa}}
\newcommand{\eex}{\end{Exa}}

\newtheorem{teo}{Theorem}[section]
\newtheorem{prop}[teo]{Proposition}
\newtheorem{cor}[teo]{Corollary}
\newtheorem{lem}[teo]{Lemma}

\theoremstyle{definition}

\newtheorem{deff}[teo]{Definition}

\newtheorem{Remark}[teo]{Remark}

\cfoot{} 

\rhead[\fancyplain{}{\bfseries\leftmark}]{\fancyplain{}{\bfseries\thepage}} 
\lhead[\fancyplain{}{\bfseries\thepage}]{\fancyplain{}{\bfseries\rightmark}} 
\linespread{1.15}   

\allowdisplaybreaks

\begin{document}
\begin{abstract}
		In this paper we construct the fractional powers of the sub-Laplacian in Carnot groups through an analytic continuation approach. In addition, we characterize the powers of the fractional sub-Laplacian in the Heisenberg group, and as a byproduct we compute the $k$-th order momenta with respect to the heat kernel. 
	\end{abstract}

\pagestyle{plain}


\title{The fractional powers of the sub-Laplacian in Carnot groups through an analytic continuation}

\author{Francesca Corni}
\address{Francesca Corni: Dipartimento di Matematica\\ Universit\`a di Bologna\\ Piazza di Porta S.Donato 5\\ 40126, Bologna-Italy}
\email{francesca.corni3@unibo.it }
\author{Fausto Ferrari}
\address{Fausto Ferrari: Dipartimento di Matematica\\ Universit\`a di Bologna\\ Piazza di Porta S.Donato 5\\ 40126, Bologna-Italy}
\email{fausto.ferrari@unibo.it }
\thanks{F.C.  and F.F. are partially supported by INdAM-GNAMPA 2023 project CUP E53C22001930001: {\it Equazioni completamente non lineari locali e non locali} and INdAM-GNAMPA 2024 project CUP E53C23001670001: {\it Free boundary
problems in noncommutative structures and degenerate operators}. F.F. is partially funded by  PRIN 2022 7HX33Z - CUP J53D23003610006, {\it Pattern formation in nonlinear phenomena}}
\thanks{AMS-2020, 35R03, 35R11}
\date{\today}

\maketitle

\tableofcontents
\section{Introduction}

The fractional Laplacian may be considered as the simplest example of a non-local fractional integral operator. A great interest about this object arises, since it allows to model various phenomena related to different fields. It is well known that fractional operators are connected to potential theory, probability, harmonic and fractional analysis, pseudo-differential operators and partial differential equations, see for instance \cite{Fol73}, \cite{Samko}, \cite{DPValdinoci} and \cite{Garofalo}.

On the other side, the setting of stratified groups represents a generalization of the Euclidean one, endowed with a rich structure, both from the algebraic and geometrical point of view. 

Actually, the goal of generalizing fractional operators to stratified groups gives rise to appealing questions and open problems.
The aim of this paper is to investigate the connection between the fractional sub-Laplacian and suitable generalized Riesz potentials in the context of Carnot groups. 

In particular, following an idea recalled in the book  by Landkof, \cite{Landkof}, based on the work of Marcel Riesz, \cite{MarcelRieszA}, \cite{MarcelRiesz},  we propose a natural extension of the definition of the real powers of the sub-Laplacian on a stratified group based on a suitable analytic continuation approach. We would like to represent an alternative face to the problem already dealt with by Folland \cite{Fol75}. 

Indeed, we think that this approach is very interesting for its simplicity and it is worth to be focused.

We start by considering a stratified group $\G$, i.e. a nilpotent simply connected Lie group whose Lie algebra admits a stratification. It is a standard fact that $\G$ can be represented as a vector space endowed with both a structure of stratified Lie algebra, with grading \begin{equation}\label{grading}
\G=V_1 \oplus \ldots V_{\iota},
\end{equation} and Lie group.

 We denote by $q$ and $Q=\sum_{i=1}^{\iota} i \dim(V_i)$ the topological and the homogeneous dimension of $\G$, respectively, and, following \cite{Fol75}, we suppose that $Q \geq 3$. We assume that a homogeneous distance $d$ on $\G$ is fixed, i.e. a distance $d$ such that $d(zx,zy)=d(x,y)$ and $d(\delta_t(x), \delta_t(y))=td(x,y)$ for every $x,y,z \in \G$ and $t>0$, where $\delta_t$ is a semigroup anisotropic dilation  associated with the grading \eqref{grading} of $\G$. 
 
We denote by $m_1$ the dimension of the first layer $V_1$ of the grading. Moreover, we assume that a scalar product on $\G$ is fixed and we consider an orthonormal basis $(e_1, \ldots, e_{m_1})$ of $V_1$. We set for every $i=1, \ldots, m_1$ the vector field $X_i \in \mathrm{Lie}(\G)$ such that $X_i(0)=e_i$. The sub-Laplacian associated with $(e_1, \ldots, e_{m_1})$ is the operator $$\mathcal{L}=-\sum_{i=1}^{m_1}X_i^2.$$ 
It is known that $\mathcal{L}$ is hypoelliptic,\cite{Hormander67},  left-invariant and $2$-homogeneous with respect to the anisotropic dilations $\{\delta_t \}_{t>0}$.

Fractional powers $\mathcal{L}^{-\frac{\alpha}{2}}$ of $\mathcal{L}$, for $\alpha \in (-2,0)$, have been introduced through the theory of semigroups, involving the fundamental solution $h$ of the heat operator $\mathcal{H}=\partial_t+\mathcal{L}$, that is the heat kernel on $\G$ of $\mathcal{H}$ (see Section \ref{sect:fraclap} for definitions). 

The operator $\mathcal{L}^{-\frac{\alpha}{2}}$, $\alpha \in (-2,0)$ takes the name of fractional sub-Laplacian and it has been studied by many authors. 

In \cite{FF15} the authors established a pointwise integral representation of $\mathcal{L}^{-\frac{\alpha}{2}}\phi(x)$ through a family of suitable generalized Riesz $(\alpha-Q)$-homogeneous kernels $P_{\alpha} \in C^{\infty}(\G \setminus \{0\})$.
They showed that for every $\alpha  \in (-2,0)$ if $u \in \mathcal{S}(\G)$, then $\mathcal{L}^{-\frac{\alpha}{2}}u \in L^2(\G)$ and 
\begin{equation}\label{onerep}\mathcal{L}^{-\frac{\alpha}{2}}u(x)= P.V. \int_{\G} (u(y)-u(x)) P_{\alpha}(x^{-1}y) dy.
\end{equation}
We start by considering the operator
\begin{equation}
\phi\star P_{\alpha}(x)= \int_{\G} \phi(xy) P_{\alpha}(y)dy,
\end{equation}
for $\phi \in \mathcal{S}(\G)$ and $x\in \G$, where the symbol $\star$ in \eqref{map} denotes the  convolution in the group, and we notice that it is well defined for every $\alpha \in (0,Q)$.

The main result of our paper relies in the construction of an analytic continuation of the map 
\begin{equation}\label{map}
(0, Q) \ni \alpha \to  \phi\star P_{\alpha}(x)= \int_{\G} \phi(xy) P_{\alpha}(y)dy,
\end{equation}
to the interval $(-2,Q)$ for any fixed $x \in \G$ and $\phi \in \mathcal{S}(\G)$. This allows to extend in a natural way the definition of the power $\mathcal{L}^{-\frac{\alpha}{2}}$ on the Schwartz class $\mathcal{S}(\G)$, for $\alpha \in (-2,Q)$. 

In our setting, we need to rely on various tools of geometric measure theory. Among these, in particular, we exploit the coarea formula proved in \cite{Mag6, Mag31}
and the Eikonal equation for the Carnot-Carath\'eodory distance proved in \cite{MSC2001}. 

On the other hand, we take into account various properties of the heat kernel $h$ associated with $\mathcal{H}$, developed in \cite{Fol75} and \cite{FMPPS18}. Actually, the main step in  our constuction is the estimate \cite[Theorem 20.3.3]{BLU07Stratified}, involving homogeneous Taylor polynomials on a stratified group. In fact, while in the Euclidean setting the main point of the continuation is the Pizzetti formula, in an arbitrary stratified group the Pizzetti formula is not available yet. More precisely, even if some results are available in the literature \cite{B2002}, \cite{B09}, they don't seem directly exploitable in our approach.

In the second part of the paper, we restrict ourselves to the simplest non-trivial case, given by the Heisenberg group $\H^n$. We show by induction the construction of the analytic continuation of the map in \eqref{map} up to 
$$ (-\infty, Q) \ni \alpha \to \phi \star P_{\alpha}(x),$$
for any fixed $x \in \G$ and $\phi \in \mathcal{S}(\G)$. In a natural way, this leads to an extension of the definition of $\mathcal{L}^{-\frac{\alpha}{2}}$ on the Schwartz class $\mathcal{S}(\H^n)$, for $\alpha \in (-\infty,Q)$. 

From our point of view, the Heisenberg group represents a privileged setting. In fact, in $\H^n$,  both the heat kernel $h$, the kernels $P_{\alpha}$ 
and the profile of the Carnot-Carath\'eodory ball $\partial \B_c(0,1)$
preserve various symmetries related to the structure of the group. 
In an arbitrary stratified group it is not clear how to proceed. 

These invariances allow to rely on the use of Taylor polynomials to extend the continuation of the map in \eqref{map} to every strip $(-2m-2, -2m]$, with $m \in \N$, through an inductive procedure.

Eventually, we provide simplified representations of the extended map $\alpha \to \psi(x,\alpha)$  on the intervals $(-4,Q)$, $(-6,Q)$ and $(-8,Q)$. By combining these representations with \cite[Proposition 1.5]{FMPPS18}, we obtain several geometric consequences. In particular, we compute the value of the integrals
\begin{equation}\label{geocons}
\begin{split}
\int_{\H^n} x_i^2 h(1, x) dx &\quad \int_{\H^n} x_i^4 h(1, x) dx  \quad \int_{\H^n} x_{2n+1}^2 h(1, x) dx\\
 \int_{\H^n} x_i^6 h(1, x) & dx \quad  \int_{\H^n} x_i^2 x_{2n+1}^2 h(1, x) dx,
\end{split}
\end{equation}
for $i \in \{1, \ldots, 2n\}$, that of course have an important probabilistic meaning.

Now, we briefly describe the content of the various sections. In Section \ref{sect:one} we introduce stratified groups and we collect many related definitions and results available in literature. Moreover, we deal with the sub-Laplacian $\mathcal{L}$ and fractional sub-Laplacian $\mathcal{L}^s$, with $s \in (0,1)$ and we introduce the family of the generalized Riesz potentials $P_{\alpha}$.

In Section \ref{sec-ext} we study the map
$$ (0,Q) \ni \alpha \to \psi(x,\alpha)=\phi \star P_{\alpha}(x),$$
and we build up its analytic continuation to the strip $(-2,0]$. Hence, we are in position to extend the definition of the fractional sub-Laplacian $\mathcal{L}^{s}$ to $s \in (-\frac{Q}{2},1)$.

In Section \ref{sect:three} we sketch the setting of the Heisenberg group $\H^n$ for fixing notation and we show some technical auxiliary results.

In Section \ref{sect-indu} we build up the analytic continuation of the function $\alpha \to \psi(x, \alpha)$ on the interval $(-\infty,Q)$. Then, we exploit the uniqueness of the analytic continuation to extend the notion of fractional sub-Laplacian $\mathcal{L}^s$ to every $s \in (-\frac{Q}{2}, \infty)$.

Finally, in Sections \ref{striscia2}, \ref{striscia3} and \ref{striscia4} we obtain simplified representations of $\alpha \to \psi(x,\alpha)$ on the intervals $(-4,Q)$, $(-6,Q)$ and $(-8,Q)$, respectively. We apply these representations to establish the value of the integrals in \eqref{geocons}.

\section{Preliminaries}
\label{sect:one}

\subsection{Stratified groups, Pansu differentiabily, coarea formula}
\label{sect:prel}
In this section we collect some definitions and more or less known results about stratified groups, that we state for fixing the notation.
\begin{deff}
A \textit{stratified group} or \textit{Carnot group} $\G$ of step $\iota$ is a connected, simply connected and nilpotent Lie group whose Lie algebra is stratified of step $\iota$, i.e. there exists a sequence of subspaces $\cV_j$ with $j\in\N$, such that $\cV_j= \{ 0 \}$ if $j>\iota$, $[\cV_1, \cV_i] = \cV_{i+1}$ for every $i,j\ge1$, $\cV_\iota \neq \{ 0 \}$ and
$ \mathrm{Lie}(\G)=\cV_1 \oplus \dots \oplus \cV_\iota$, 
where for $i,j \geq 1$ we have set
$$[\cV_i,\cV_j]=\text{span} \{ [X,Y]  :  X \in \cV_i, \ Y \in \cV_j\}.$$ 
\end{deff}

The exponential map $\mathrm{exp}: \mathrm{Lie}(\G) \to \G$ is a global diffeomorphism which allows to identify in a standard way $\G$ with $\mathrm{Lie}(\G)$ (see for instance \cite[Section 2.2]{BLU07Stratified}).
As a consequence, we can model a stratified group as a vector space $\G$ endowed with both a Lie bracket $[ \cdot, \cdot]$ with respect to which $\G$ is a stratified Lie algebra 
\begin{equation}\label{eq:cg}
\G=V_1 \oplus V_2 \ldots \oplus V_{\iota},
\end{equation} and a Lie group structure, whose product is given by the Baker-Campbell-Hausdorff operation associated with $[ \cdot, \cdot]$ (see \cite[Theorem 2.2.1, Theorem 2.2.13, Corollary 2.2.15, Theorem 2.2.24]{BLU07Stratified}). 

For every $x \in \G$ we denote the corresponding left translation by $$l_x:\G \to \G, \ l_x(y)=xy,$$ for every $y \in \G$. We denote by $H\G$ the distribution generated by $V_1$, it is called the \textit{horizontal distribution}.
A basis $(e_1,\ldots,e_q)$ of $\G$ is called {\em graded} if
the ordered family $ (e_{h_{i-1}+1},\ldots,e_{h_i}) $ is a basis of $V_i$ for every $i=1,\ldots,\iota$.
We call $(Z_1, \cdots Z_q)$ a \textit{Jacobian basis} of $\mathrm{Lie}(\G)$ if $(Z_1(0), \ldots Z_q(0))$ is a graded basis of $\G$.
A family of dilations is associated with the stratification of $\G$ in a natural way: for every $t>0$ we set $$\delta_t \left(\sum_{i=1}^\iota v_i \right)=\sum_{i=1}^{\iota}t^i v_i \qquad \mathrm{with} \ v_i \in V_i.$$
\begin{deff}(Degree)
Let $v \in \G$, we call $n \in \N$ the \textit{degree} of $v$, and we write $\mathrm{deg}(v)=n$, if 
$$ \delta_t(v)=t^n v,$$
hence if $v \in V_n$. Analogously, for a vector field $X \in \mathrm{Lie}(\G)$, we say that $\mathrm{deg}(X)=n$ if $X \in \mathrm{Lie}(V_n)$.
\end{deff}
We assume that a scalar product $ \langle \cdot, \cdot \rangle$ is fixed on $\G$
and we denote by $|\cdot|$ its associated norm. Moreover, we assume that $\langle \cdot, \cdot \rangle$ makes the $V_i$'s orthogonal. Taking into account the structure of vector space of $\G$, we may identify it with $T_0\G$. Thus, the scalar product $\langle \cdot, \cdot \rangle$ extends through the left translations to a left invariant Riemannian metric that we denote by $g$. 

A distance $d$ on $\G$ is called \textit{homogeneous} if $d(zx,zy)=d(x,y)$ and $d(\delta_t(x), \delta_t(y))=t d(x,y)$ for every $x,y,z \in \G$ and $t>0$.
Let us recall two concrete examples of homogeneous distances (see \cite[Definition 5.2.2]{BLU07Stratified} and \cite[Theorem 5.1]{FSSC5}).
First, we consider $x,y \in \G$ and we set $\Gamma_{x,y}$ the set of \textit{horizontal curves}, i.e. absolutely continuous curves $\gamma:[0,T] \to \G$ with $\dot{\gamma(t)} \in H\G_{\gamma(t)}$ for a.e. $t \in [0,T]$ for some $T>0$, such that $\gamma(0)=x, \ \gamma(T)=y$.
The \textit{Carnot-Carath\'eodory distance} or CC-distance between $x$ and $y$ is
$$ d_c(x,y)= \inf \left\{ \int^T_{0} \sqrt{g(\gamma(t))(\dot{\gamma}(t), \dot{\gamma}(t))} dt : \gamma \in \Gamma_{x,y} \right\}.$$ 
For every $x,y \in \G$, the Chow's theorem ensures that the set $\Gamma_{x,y}$ is non-empty hence $d_c$ is well defined. We denote by $\| x \|_c=d(x,0)$, for every $x \in \G$, the homogeneous norm associated with $d_c$. 
The second homogeneous distance we introduce is the distance $d_{\infty}$, defined for all $x,y \in \G$ as
 $$d_{\infty}(x,y)= \| y^{-1}  x \|_{\infty},$$ where the homogeneous norm $\| \cdot \|_{\infty}$ is 
$$ \| x \|_{\infty}:= \max\{ \varepsilon_j | x^j|^{1/j}, \ j=1, \dots, \kappa  \},$$for $x=\sum_{i=1}^\kappa x^j \in \G$, with $x^j \in V_j$, with $\varepsilon_1=1$, $\varepsilon_{j} \in (0,1]$ suitable positive constants depending on the group structure .
In this paper, we assume that a homogeneous distance $d$ is fixed on $\G$ and we denote by $\| x \|=d(x,0)$, for every $x \in \G$, the corresponding homogeneous norm. Moreover, we introduce the following closed balls 
\begin{align*}
\B(x,r)&= \{ x \in \G: \|x\| \leq r \},\\
\B_E(x,r)&=\{ x \in \G: |x| \leq r \},
\end{align*}
 for every $x \in \G$ and $r>0$. 
The closed ball centered at $x$ of radius $r$ corresponding to $d_c$ will be denoted as $\B_c(x,r)$. 
We recall that all homogeneous norms on $\G$ are equivalent (see for instance \cite[Theorem 5.1.4]{BLU07Stratified}), hence if $d_1$, $d_2$ are two homogeneous distances on $\G$, then there exist a positive constant $C>0$ such that
$$\frac{1}{C}d_1(x,y) \leq d_2(x,y) \leq C d_1(x,y),$$
 for every $x,y \in \G$.
We denote by $q$ the topological (or linear) dimension of $\G$ and we call \textit{homogeneous dimension} of $\G$ the integer $$Q=\sum_{i=1}^\iota i \dim(V_i),$$ which is also the Hausdorff dimension of $\G$ with respect to a homogeneous distance $d$. We assume that $Q\geq 3$.
Let us recall also the notion of Hausdorff measure in this context. Let $\mathcal{F} \subset \mathcal{P}(\G) $ be a non-empty family of closed subsets of a Carnot group $\G$, endowed with a homogeneous distance $d$. Let $\zeta: \mathcal{F} \to \R^+$ be a function such that $0 \leq \zeta(S)<\infty$ for any $S \in \mathcal{F}$. If $\delta>0$, and $A \subset \G$, we define
\begin{equation}
\phi_{\delta, \zeta}(A)= \inf \left\{ \sum_{j=0}^{\infty} \zeta(B_j) \ : \ A \subset \bigcup_{j=0}^\infty B_j , \ \mathrm{diam}(B_j) \leq \delta, \ B_j \in \mathcal{F} \right\}.
\end{equation}
If $\mathcal{F}$ coincides with the family of closed balls with respect to the distance $d$ and $\zeta(\B(x,r))=r^{\alpha}$ we call 
$$ \mathcal{S}^{\alpha}(A):= \sup_{\delta>0} \phi_{\delta, \zeta}(A)$$
the \textit{$\alpha$-spherical Hausdorff measure} of $A$.
If $\mathcal{F}$ coincides with the family of all
closed sets, and for $ k \in \{1, \dots, q \}$ we set 
$c_{k}=\omega_k$, the $k$-dimensional Euclidean measure of the unitary Euclidean ball in $\R^k$, and $\zeta(B)=c_{k} \left(\frac{\mathrm{diam}(B)}{2} \right)^{k},$ we call 
$$ \mathcal{H}^{k}_E (A):= \sup_{\delta>0} \phi_{\delta, \zeta}(A)$$
the \textit{$k$-Euclidean Hausdorff measure}.
In particular, we stress that $\mathcal{H}_E^k$ is the Hausdorff measure with respect to the distance generated by the Riemannian metric $g$ fixed on $\G$. Nevertheless, if we consider $\G$ in coordinates with respect to an orthonormal basis, then $\mathcal{H}_E^k$ coincides with the Euclidean Hausdorff measure and this justify our notation.
The $q$-dimensional Lebesgue measure on $\G$ is the Haar measure of $\G$. It is left invariant and $Q$-homogeneous.
On the other hand, for every $m>0$, for every measurable set $ A\subset \mathbb{G}$ and for all $x \in \G$ and $t>0$ we have
\begin{equation*}
 \mathcal{S}^m(l_x( A))=\mathcal{S}^m(A)  \quad \mathrm{and} \quad
\mathcal{S}^m(\delta_t(A))=t^m \mathcal{S}^m( A).
\end{equation*}
We refer the reader for example to \cite{SerraCassano2016} for more details about an introduction to stratified groups.
\subsubsection{Pansu differentiability}
A map $L:\G \to \R$ is a \textit{h-homomorphism} if it is a group homomorphism such that $L(\delta_t(x))=t(L(x))$, for every $x \in \G$ and $t>0$. It is not difficult to verify that $L$ is linear and $L|_{V_j}\equiv 0$ for every $j=2, \ldots, \iota$. We denote by $\mathcal{L}_h(\G, \R)$ the family of h-homomorphisms between $\G$ and $\R$.
Given $L, T \in \mathcal{L}_h(\G, \R)$, we define the distance $$d_{\mathcal{L}_h(\G, \R)}(L,T):= \sup_{\|x\| \leq 1} |T(x)^{-1}L(x) |.$$ This class of maps is the starting point to state a notion of differentiability fitting the setting of stratified groups.
Let $\Omega$ be an open set in $\G$, let $f : \Omega \to \R$ be a function and let $x \in \Omega$.
The map $f$ is said  \textit{Pansu differentiable} at $x$ if there exists a h-homomorphism $L:\G \to \R$ that satisfies 
\begin{equation}
\label{pansudiff}
|  f(y) - f(x)-  L(x^{-1} y) |= o ( \|x^{-1} y \|) \quad  \text{as} \quad \|x^{-1} y \|  \to 0.
\end{equation}
When condition \eqref{pansudiff} is verified, we call $L$ the \textit{Pansu differential} of $f$ at $x$ and we denote it by $D_hf(x)$. 
The unique vector $\nabla_{H}f(x)\in \G$ such that
\begin{equation}\label{def-grad}
D_hf(x)(z)=\ban{\nabla_{H}f(x),z}
\end{equation}for every $z\in \G$ is called the \textit{horizontal gradient} of $f$ at $x$.
Assuming that $(v_1, \dots, v_q)$ is a graded orthonormal basis of $\G$ and setting $X_j \in \mathrm{Lie}(\G)$ the left invariant vector field such that $X_j(0)=v_j$ for every $j=1, \dots, q$ \begin{equation}\label{forma-grad}
\nabla_Hf(x)=\sum_{i=1}^{m_1} X_if(x) v_i,
\end{equation}
where $m_1=\dim(V_1)$ (see \cite[Proposition 3.10]{SerraCassano2016} and the references therein).
We say that $f:\Omega\to\R$
is \textit{continuously Pansu differentiable} on $\Omega$ if $f$ is Pansu differentiable at every point of $\Omega$ and the function 
$D_hf: \Omega \to \mathcal{L}_h(\G, \R)$ is continuous with respect to the topology induced by $d_{\mathcal{L}_h(\G, \R)}$.
We denote by $C^1_h(\Omega)$ the family of continuously Pansu differentiable mappings from $\Omega$ to $\R$ and we observe that $C^1(\Omega) \subsetneq C^1_h(\Omega)$.
\subsubsection{Coarea formula in stratified groups}
In this subsection we recall a suitable coarea formula in Carnot groups. It is interesting to notice that the starting point of the proof of Theorem \ref{teoco} is a previous coarea formula for bounded variation maps (\cite{FSSC1, GN96, MSC2001}).
\begin{deff}
Let $v \in V_1 \setminus \{ 0 \}$ be a vector and consider its orthogonal hyperplane $\mathcal{L}(v) \subset \G$. We define 
$$ \theta^g_{Q-1}(v):= \max_{z \in \B(0,1)}\mathcal{H}_E^{q-1}(\mathcal{L}(v) \cap \B(z,1))$$
the \textit{metric factor} with respect to the direction $v$.
\end{deff}
We explicitly remark that $\theta^g_{Q-1}(v)$ depends both on the homogeneous distance $d$ and on the Riemannian metric $g$.

The following theorem is proved in \cite[Theorem 3.5]{Mag6} (see also \cite{Mag31}).
\begin{teo}\label{teoco}
Let $A \subset \G$ be a measurable set and consider a Lipschitz function $u: A \to \R$. For any measurable function $h:A \to [0, \infty)$ the equality
$$ \int_A h(x) | \nabla_Hu(x)| dx = \int_\R \int_{u^{-1}(r) \cap A} h(x) \theta^g_{Q-1}( \nabla_H u(x)) d \mathcal{S}^{Q-1}(x) dr$$
holds. Notice that $\theta^g_{Q-1}( \nabla_H u(x))$ and $\mathcal{S}^{Q-1} $
are both considered with respect to $d$.
\end{teo}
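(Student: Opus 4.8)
\noindent\textit{Proof strategy.}\; The plan is to read both sides as set functions and reduce to an identity between two Borel measures. For a Borel set $B\subseteq A$ put
\begin{equation*}
\mu(B):=\int_B h\,|\nabla_H u|\,dx,\qquad \nu(B):=\int_\R\Big(\int_{u^{-1}(r)\cap B}h(x)\,\theta^g_{Q-1}\big(\nabla_H u(x)\big)\,d\mathcal S^{Q-1}(x)\Big)\,dr .
\end{equation*}
First I would check, by a monotone--class argument together with Fubini--Tonelli, that the inner integral is a measurable function of $r$ and that $\nu$ is a well-defined Borel measure. Since every nonnegative measurable $h$ is an increasing limit of simple functions, monotone convergence then reduces everything to the case $h=\mathbf 1_B$, i.e.\ to $\mu(B)=\nu(B)$ for Borel $B$. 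As the statement is local and $u$ is Lipschitz, I would also assume $A$ bounded and $u$ extended to a globally Lipschitz function on $\G$, recovering the general case by exhaustion.

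Next I would invoke the announced starting point, the coarea formula for $BV_H$ functions from \cite{FSSC1, GN96, MSC2001}: a function Lipschitz with respect to a homogeneous distance (equivalently, with respect to $d_c$) belongs to $BV_{H,\mathrm{loc}}(\G)$, its horizontal total variation measure equals $|\nabla_H u|\,dx$ with $\nabla_H u$ the almost everywhere horizontal gradient, and
\begin{equation*}
\int_B|\nabla_H u|\,dx=\int_\R|\partial_H E_r|(B)\,dr,\qquad E_r:=\{x\in\G:\ u(x)>r\},
\end{equation*}
where $|\partial_H E_r|$ is the horizontal perimeter measure of $E_r$ (finite in $B$ for a.e.\ $r$). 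This reduces the problem to the pointwise-in-$r$ identity $|\partial_H E_r|\res B=\theta^g_{Q-1}(\nabla_H u)\,\mathcal S^{Q-1}\res\big(u^{-1}(r)\cap B\big)$ for a.e.\ $r$.

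The geometric heart is then a blow-up description of the perimeter measure of these super-level sets. By the Pansu--Rademacher theorem $u$ is Pansu differentiable almost everywhere; combined with the coarea inequality
\begin{equation*}
\int_\R\mathcal S^{Q-1}\big(u^{-1}(r)\cap F\big)\,dr\ \le\ C\int_F|\nabla_H u|\,dx\qquad (F\subseteq\G\ \text{Borel}),
\end{equation*}
which I would establish directly by a Vitali covering argument using the Pansu-differential estimate for $u$, one gets that for a.e.\ $r$, $\mathcal S^{Q-1}$-almost every point of $u^{-1}(r)$ is a point of Pansu differentiability with $\nabla_H u\neq 0$. At such a point $x$ the rescaled sets $\delta_{1/t}\big(x^{-1}E_r\big)$ converge locally, as $t\to 0^+$, to the vertical half-space $\{z\in\G:\ \langle\nabla_H u(x),z\rangle>0\}$, whose boundary is the hyperplane $\mathcal L(\nabla_H u(x))$; computing the density of $|\partial_H E_r|$ with respect to $\mathcal S^{Q-1}$ on such a half-space reproduces exactly the metric factor $\theta^g_{Q-1}(\nabla_H u(x))$, using that $\theta^g_{Q-1}$ is $0$-homogeneous and even in its argument, so neither the length nor the orientation of $\nabla_H u(x)$ matters. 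The critical set $\{\nabla_H u=0\}$ is harmless: it does not charge $\mu$, and by the coarea inequality $\mathcal S^{Q-1}(u^{-1}(r)\cap\{\nabla_H u=0\})=0$ for a.e.\ $r$, so it does not charge $\nu$ either. Integrating the resulting pointwise identity in $r$ and recalling the $BV_H$-coarea formula finishes the argument.

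The hard part is the blow-up step: bridging the dimension gap between almost everywhere (Lebesgue) Pansu differentiability and the $(Q-1)$-dimensional level sets, proving that the rescaled super-level sets converge to vertical half-spaces at $\mathcal S^{Q-1}$-a.e.\ point of almost every level set, and evaluating the perimeter density of such a half-space as the metric factor --- this is the technical core of \cite{Mag6}. Entangled with it is the need to discard, for a.e.\ $r$, the characteristic points of $u^{-1}(r)$ where the blow-up fails to be a half-space, which is precisely where the coarea inequality and the fine structure of level sets of Lipschitz functions come in. By comparison, the measure-theoretic reductions, the appeal to the $BV_H$-coarea formula, the Vitali-covering proof of the coarea inequality, and the final Fubini integration in $r$ are routine.
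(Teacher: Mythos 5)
The paper does not actually prove this statement: Theorem \ref{teoco} is imported verbatim from \cite[Theorem 3.5]{Mag6} (see also \cite{Mag31}), and the only comment the authors make on its proof is the one you picked up on, namely that its starting point is the earlier coarea formula for $BV$ functions in \cite{FSSC1, GN96, MSC2001}. So there is no in-paper argument to compare yours against; the relevant comparison is with Magnani's proof.

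Measured against that, your outline correctly reconstructs the architecture: reduction by monotone convergence to $h=\mathbf 1_B$, the $BV_H$ coarea formula $\int_B|\nabla_H u|\,dx=\int_\R|\partial_H E_r|(B)\,dr$ for the super-level sets $E_r$, the coarea inequality to dispose of the critical set $\{\nabla_H u=0\}$ and of the non-differentiability points on almost every level set, and finally the identification, for a.e.\ $r$ and $\mathcal S^{Q-1}$-a.e.\ point of $u^{-1}(r)$, of the density of $|\partial_H E_r|$ with respect to $\mathcal S^{Q-1}$ as $\theta^g_{Q-1}(\nabla_H u)$. But as a proof it has a genuine gap exactly where you say it does: the claim that at a point of Pansu differentiability with $\nabla_H u\neq 0$ the rescaled super-level sets blow up to a vertical half-space \emph{in a sense strong enough to pass the perimeter measures to the limit and compute the Federer density} is not a routine consequence of differentiability. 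Pansu differentiability gives $L^1_{\mathrm{loc}}$ convergence of the rescaled indicators, hence only lower semicontinuity of the perimeters; the matching upper bound, the asymptotic formula for the perimeter of the limiting half-space, and the fact that the resulting density is precisely the maximum over centers appearing in the definition of $\theta^g_{Q-1}$, are the substance of \cite{Mag6} (in fact Magnani routes part of this through a Whitney-type decomposition into pieces where $u$ agrees with a $C^1_h$ function, using the coarea inequality to control the exceptional sets). Since you explicitly defer that step to the reference, your text is an accurate roadmap of the cited proof rather than a self-contained argument --- which puts it on the same footing as the paper itself, where the theorem is simply quoted.
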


On the other hand, by \cite[Proposition 1.9]{Mag6} and \cite[Theorem 5.2]{Mag31} we know that the distance $d_{\infty}$ preserves enough symmetries to guarantee the constancy of the metric factor. This is the content of the following proposition.
\begin{prop}
\label{precedente}
If $d=d_{\infty}$, then there is a constant $\alpha_{Q-1}>0$ such that $$\theta^g_{Q-1}(v)=\alpha_{Q-1}, \qquad \mathrm{ for \ every } \ v \in V_1 \setminus \{0\}.$$
\end{prop}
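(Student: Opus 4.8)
The plan is to compute the metric factor $\theta^g_{Q-1}(v)$ explicitly when $d=d_\infty$ and to check that the outcome does not involve $v$. Fix $v\in V_1\setminus\{0\}$. Since $v$ lies in the first layer and the subspaces $V_j$ are mutually orthogonal, its orthogonal hyperplane is $\mathcal{L}(v)=(v^\perp\cap V_1)\oplus V_2\oplus\cdots\oplus V_\iota$, a $(q-1)$-dimensional linear subspace of $\G$ on which $\mathcal{H}_E^{q-1}$ agrees with the product Lebesgue measure coming from the orthogonal splitting. By left-invariance of $d_\infty$ and inversion-symmetry of $\B(0,1)$ we have $\B(z,1)=z\cdot\B(0,1)$, so, directly from the definition of $\|\cdot\|_\infty$, a point $x$ belongs to $\B(z,1)$ if and only if $|(z^{-1}x)^{(j)}|\le\varepsilon_j^{-j}$ for every $j=1,\dots,\iota$, where $x^{(j)}$ denotes the component of $x$ in $V_j$.

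Next I would invoke the graded (``triangular'') form of the Baker--Campbell--Hausdorff product: in exponential coordinates the $V_1$-component of a product is additive, so $(z^{-1}x)^{(1)}=x^{(1)}-z^{(1)}$, while for $j\ge2$ the $V_j$-component of $z^{-1}x$ equals $x^{(j)}$ plus a polynomial expression depending only on $z$ and on the lower-layer components $x^{(1)},\dots,x^{(j-1)}$ (iterated brackets strictly increase the degree). Writing a generic $y\in\mathcal{L}(v)$ as $(y^{(1)},\dots,y^{(\iota)})$ with $y^{(1)}\in v^\perp\cap V_1$ and $y^{(j)}\in V_j$, the conditions defining $\mathcal{L}(v)\cap\B(z,1)$ then decouple in a triangular fashion: the $j=1$ condition forces $y^{(1)}$ into $B_{V_1}(z^{(1)},\varepsilon_1^{-1})\cap(v^\perp\cap V_1)$, which is a ball of $v^\perp\cap V_1$ of radius $\sqrt{\varepsilon_1^{-2}-\langle z^{(1)},v/|v|\rangle^2}$; and for each $j\ge2$, once $y^{(1)},\dots,y^{(j-1)}$ have been chosen, the $j$-th condition forces $y^{(j)}$ into a Euclidean ball of $V_j$ whose center depends on those choices but whose radius is always exactly $\varepsilon_j^{-j}$. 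Integrating out the layers in the order $y^{(\iota)},y^{(\iota-1)},\dots,y^{(1)}$ by Fubini yields
\[
\mathcal{H}_E^{q-1}\big(\mathcal{L}(v)\cap\B(z,1)\big)=\omega_{m_1-1}\Big(\varepsilon_1^{-2}-\langle z^{(1)},v/|v|\rangle^2\Big)^{\frac{m_1-1}{2}}\prod_{j=2}^{\iota}\omega_{\dim V_j}\,\varepsilon_j^{-j\dim V_j}.
\]

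It remains to maximize over $z\in\B(0,1)$. Only the first factor depends on $z$, and it is largest exactly when $z^{(1)}\perp v$ — for instance at $z=0$, which belongs to $\B(0,1)$ — so that
\[
\theta^g_{Q-1}(v)=\omega_{m_1-1}\,\varepsilon_1^{-(m_1-1)}\prod_{j=2}^{\iota}\omega_{\dim V_j}\,\varepsilon_j^{-j\dim V_j}=:\alpha_{Q-1},
\]
a strictly positive constant with no dependence on $v$, which is exactly the assertion.

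I expect the delicate points to be: (i) proving the triangular structure of $z^{-1}x$ layer by layer, which is where the Baker--Campbell--Hausdorff formula and the stratification are genuinely used; and (ii) verifying that in the Fubini decomposition each higher-layer slice is a \emph{full} ball of the stated radius for every admissible choice of the lower-layer variables — i.e.\ that no coupling ever shrinks these slices — so that the integrals factor as a clean product. The maximization step itself is immediate, and one only has to recall in passing that $\mathcal{H}_E^{q-1}$ restricted to $\mathcal{L}(v)$ is finite on the bounded set $\B(z,1)$, so every integral above is well posed. As an alternative to the explicit computation one could instead argue by symmetry, using that $\B(0,1)$ is invariant under the orthogonal maps of $V_1$ extended by the identity on the higher layers; but the direct evaluation seems the cleanest route since it also pins down the value of $\alpha_{Q-1}$.
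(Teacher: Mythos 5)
Your computation is correct, and it is worth noting that the paper itself does not prove this proposition at all: it simply quotes \cite[Proposition 1.9]{Mag6} and \cite[Theorem 5.2]{Mag31}. Your argument is therefore a self-contained substitute for that citation, and the two essential points you isolate are exactly the right ones. The triangular structure of $z^{-1}x$ is sound: by the Baker--Campbell--Hausdorff formula every bracket term in $z^{-1}\cdot x$ contains at least one factor coming from $z^{-1}=-z$, so the $V_j$-component of $z^{-1}x$ is $x^{(j)}$ plus a polynomial in $z$ and in $x^{(1)},\dots,x^{(j-1)}$ only; consequently, for each choice of the lower-layer variables the $j$-th constraint $|(z^{-1}y)^{(j)}|\le\varepsilon_j^{-j}$ carves out a \emph{translated but never truncated} ball of $V_j$ of radius $\varepsilon_j^{-j}$, and Fubini over the orthogonal splitting $\mathcal{L}(v)=(v^\perp\cap V_1)\oplus V_2\oplus\dots\oplus V_\iota$ gives the stated product. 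The only $v$- and $z$-dependence survives in the first-layer slice $\B_{V_1}(z^{(1)},1)\cap v^\perp$, whose $(m_1-1)$-measure $\omega_{m_1-1}\bigl(1-\langle z^{(1)},v/|v|\rangle^2\bigr)_+^{(m_1-1)/2}$ (recall $\varepsilon_1=1$; the slice is empty when the quantity in parentheses is negative, which you should mention for completeness) is maximized at $z=0$, yielding a value independent of $v$. Two small points of hygiene: your Fubini step implicitly uses that $\mathcal{H}_E^{q-1}$ restricted to the linear hyperplane $\mathcal{L}(v)$ is Lebesgue measure of that subspace — this is exactly the paper's stated convention identifying $\mathcal{H}_E^k$ with the Euclidean Hausdorff measure in orthonormal coordinates, so you are consistent with it, but it deserves a sentence; and the remark about inversion-symmetry of $\B(0,1)$ is not needed, since $\B(z,1)=z\cdot\B(0,1)$ follows from left-invariance alone. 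Compared with the paper's approach, yours buys an explicit value $\alpha_{Q-1}=\omega_{m_1-1}\prod_{j=2}^{\iota}\omega_{\dim V_j}\varepsilon_j^{-j\dim V_j}$, at the cost of the layer-by-layer bookkeeping; the symmetry argument you sketch as an alternative would prove constancy more quickly but would not identify the constant.
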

Thus, in the setting of Proposition \ref{precedente}, we may set
$$\mathcal{S}^{Q-1}_{\infty}:= \alpha_{Q-1} \mathcal{S}^{Q-1}$$
to yield a simplified coarea formula, which follows from the combination of \cite[Corollary 3.6]{Mag6} and Proposition \ref{precedente}.
\begin{teo}
\label{coarea}
Let $d=d_{\infty}$, let $A \subset \G$ be a measurable set and let $u: A \to \R$
be a Lipschitz function. For any measurable function $h:A \to [0, \infty)$, the following equality holds
$$ \int_A h(x) | \nabla_Hu(x)| dx = \int_\R \int_{u^{-1}(r) \cap A} h(x) d \mathcal{S}_\infty^{Q-1} dr.$$
\end{teo}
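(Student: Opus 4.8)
The plan is to obtain Theorem~\ref{coarea} as an immediate specialization of the general coarea formula of Theorem~\ref{teoco}, exploiting that when $d=d_\infty$ the metric factor degenerates to a universal positive constant by Proposition~\ref{precedente}. Concretely, the first step is to write Theorem~\ref{teoco} for the given $A$, $u$ and $h$:
$$ \int_A h(x)\,|\nabla_H u(x)|\,dx \;=\; \int_\R \int_{u^{-1}(r)\cap A} h(x)\,\theta^g_{Q-1}\big(\nabla_H u(x)\big)\,d\mathcal{S}^{Q-1}(x)\,dr, $$
which makes sense because a Lipschitz $u$ is Pansu differentiable almost everywhere on $A$, so that $\nabla_H u(x)\in V_1$ is defined for a.e.\ $x$ and, for a.e.\ $r$, $\mathcal{S}^{Q-1}$-a.e.\ on the level set $u^{-1}(r)\cap A$.

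The second step is to record that the critical set $N=\{x\in A:\nabla_H u(x)=0\}$ plays no role on either side: on the left it is killed by the factor $|\nabla_H u|$, while choosing $h=\chi_N$ in the identity above makes the left-hand side vanish, hence $\mathcal{S}^{Q-1}(u^{-1}(r)\cap N)=0$ for a.e.\ $r$. Consequently, in the right-hand side we may assume $\nabla_H u(x)\in V_1\setminus\{0\}$ throughout, and Proposition~\ref{precedente} gives $\theta^g_{Q-1}(\nabla_H u(x))=\alpha_{Q-1}$ with $\alpha_{Q-1}>0$ independent of $x$. Factoring this constant out of the inner integral and using the definition $\mathcal{S}^{Q-1}_\infty:=\alpha_{Q-1}\,\mathcal{S}^{Q-1}$ yields precisely the asserted equality; this reduction is exactly what is combined in \cite[Corollary~3.6]{Mag6} with Proposition~\ref{precedente}.

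Since the whole argument amounts to substituting a constant for the metric factor, there is no real obstacle. The single point that needs a moment's care is the treatment of the critical set $N$: one must verify that for almost every level $r$ the part of $u^{-1}(r)$ where $\nabla_H u$ vanishes is $\mathcal{S}^{Q-1}$-negligible, so that replacing $\theta^g_{Q-1}(\nabla_H u(x))$ by $\alpha_{Q-1}$ alters nothing. This follows either directly from the sharper formulation in \cite[Corollary~3.6]{Mag6}, where the reduction is already incorporated, or by testing Theorem~\ref{teoco} against $h=\chi_E$ for arbitrary measurable $E\subseteq A$ and noting that the identity must then hold for every such $E$.
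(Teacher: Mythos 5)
Your proposal is correct and follows exactly the route the paper takes: Theorem \ref{teoco} (equivalently \cite[Corollary 3.6]{Mag6}) specialized to $d=d_\infty$, with Proposition \ref{precedente} turning the metric factor into the constant $\alpha_{Q-1}$ that is then absorbed into $\mathcal{S}^{Q-1}_\infty$. Your additional remark on the critical set $\{\nabla_H u=0\}$ is a sensible technical precaution that the paper leaves implicit, but it does not change the argument.
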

Now, in order to obtain a further simplified expression of the coarea formula, we recall the Eikonal equation associates with the CC-distance $d_c$, which is a consequence of \cite[Theorem 3.1]{MSC2001}.
\begin{teo}
\label{Eik}
For a.e. $ x \in \G$ the equality
$ | \nabla_H d_c(x, 0)| =1$ holds.
\end{teo}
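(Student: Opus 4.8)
The statement in question is Theorem \ref{Eik}, the Eikonal equation $|\nabla_H d_c(x,0)| = 1$ for a.e.\ $x \in \G$. The plan is to derive this from the cited result \cite[Theorem 3.1]{MSC2001}, but let me sketch how one would actually establish it from first principles, since the geometric content deserves to be made transparent.

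\textbf{Upper bound.} First I would show $|\nabla_H d_c(\cdot,0)| \le 1$ wherever the horizontal gradient exists. The key input is that $d_c(\cdot,0)$ is Lipschitz with respect to $d_c$ itself (indeed $1$-Lipschitz by the triangle inequality), hence Lipschitz with respect to any homogeneous distance and, locally, with respect to the Euclidean distance; by a Pansu--Rademacher-type theorem it is Pansu differentiable a.e. Now fix a point $x$ of differentiability and a horizontal direction $v \in V_1$ with $|v| = 1$. Let $\gamma(t) = x \cdot \delta_{?}(\exp(tv))$ — more precisely the horizontal curve $\gamma(t)$ with $\gamma(0) = x$, $\dot\gamma(t) = (l_{\gamma(t)})_* v$, which is a unit-speed horizontal curve. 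Then $d_c(\gamma(t),0) \le d_c(x,0) + d_c(\gamma(t), x) \le d_c(x,0) + |t|$, so the directional derivative of $d_c(\cdot,0)$ along $\gamma$ at $t=0$ is at most $1$ in absolute value. Since this holds for every unit horizontal $v$, and the Pansu differential acts linearly on $V_1$ via $\langle \nabla_H d_c(x,0), \cdot\rangle$, taking $v = \nabla_H d_c(x,0)/|\nabla_H d_c(x,0)|$ gives $|\nabla_H d_c(x,0)| \le 1$.

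\textbf{Lower bound.} This is the subtler direction and I expect it to be the main obstacle. The idea is that a length-minimizing horizontal curve (geodesic) $\sigma$ from $0$ to $x$, parametrized by arclength, satisfies $d_c(\sigma(s),0) = s$ exactly, so along $\sigma$ the function $d_c(\cdot,0)$ increases at unit rate; hence if $\sigma$ is differentiable and $d_c(\cdot,0)$ is Pansu differentiable at $\sigma(s)$, the chain rule forces $\langle \nabla_H d_c(\sigma(s),0), \dot\sigma(s)\rangle = 1$ with $|\dot\sigma(s)|=1$, whence $|\nabla_H d_c(\sigma(s),0)| \ge 1$. The difficulty is that this only controls behaviour along geodesics, which a priori form a measure-zero set, so one cannot immediately conclude a.e.\ equality on all of $\G$. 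The standard remedy, which is essentially the content of \cite{MSC2001}, is to combine the a.e.\ upper bound with the observation that $d_c(\cdot,0)$ is a viscosity solution of $|\nabla_H u| = 1$: at any point where $d_c(\cdot,0)$ is touched from below by a smooth function one runs the geodesic argument, and from above by the triangle-inequality argument, and then invokes the fact that a Lipschitz viscosity solution of the eikonal equation satisfies it a.e.\ in the pointwise (Pansu) sense. Alternatively one can argue by foliating a neighbourhood by geodesics emanating from $0$ (away from the cut locus, which is $d_c$-negligible and Lebesgue-negligible) and using that the map $(s,\text{direction}) \mapsto \sigma(s)$ is, generically, a submersion, so that a.e.\ point lies on a geodesic.

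\textbf{Conclusion.} Combining the two inequalities gives $|\nabla_H d_c(x,0)| = 1$ for a.e.\ $x$, which is exactly the statement. For the purposes of this paper, since \cite[Theorem 3.1]{MSC2001} is available, I would simply cite it and note that the Pansu gradient of the CC-distance from the origin is defined Lebesgue-a.e.\ (by Pansu--Rademacher applied to a function that is Lipschitz in the CC-metric) and has horizontal norm identically $1$ off a null set; the only thing to check carefully is that the notion of $\nabla_H$ used in \cite{MSC2001} matches \eqref{def-grad}--\eqref{forma-grad}, which is immediate from the definitions. The main obstacle, were one to prove it from scratch, is genuinely the lower bound and the passage from ``eikonal equation holds along geodesics / in viscosity sense'' to ``holds a.e.\ pointwise.''
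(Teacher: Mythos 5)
The paper offers no proof of Theorem \ref{Eik} beyond citing \cite[Theorem 3.1]{MSC2001}, which is exactly what you propose to do, so your approach matches the paper's. As an aside, the lower bound in your supplementary sketch is less delicate than you fear: every point $x \neq 0$ is the \emph{endpoint} of a length-minimizing horizontal curve $\sigma$ from the origin, and applying Pansu differentiability at that endpoint (using $d_c(\sigma(s),0)=s$, $d_c(\sigma(s),x)=d_c(x,0)-s$, and $|\pi_{V_1}(z)|\le \|z\|_c$) yields $|\nabla_H d_c(x,0)|\ge 1$ directly at every point of Pansu differentiability, with no need for viscosity solutions or a foliation-by-geodesics argument.
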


By combining Theorems \ref{coarea} and Theorem \ref{Eik}, setting $u(x):\G \to \R, \ u(x)=d_c(x,0)$ as the Lipschitz function in Theorem \ref{coarea}, we get the following formula, which retraces the Euclidean coarea formula.
\begin{cor}(Coarea formula)
\label{cor-coarea}
If $d=d_{\infty}$ and $A \subset \G$ is a measurable set,
then for any measurable function $h:A \to [0, \infty)$ we have
$$ \int_A h(x) dx = \int_\R \int_{\partial \B_c(0,r) \cap A} h(x) d \mathcal{S}_\infty^{Q-1} dr.$$
\end{cor}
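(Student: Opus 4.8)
The plan is to apply Theorem~\ref{coarea} to the distinguished Lipschitz function $u\colon\G\to\R$, $u(x)=d_c(x,0)$, and then to simplify both sides using the Eikonal equation of Theorem~\ref{Eik} together with an elementary identification of the level sets of $u$.

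First I would verify that $u$ is an admissible choice. Since $d_c$ is a homogeneous distance it satisfies the triangle inequality, whence $|u(x)-u(y)|=|d_c(x,0)-d_c(y,0)|\le d_c(x,y)$, i.e. $u$ is $1$-Lipschitz with respect to $d_c$; because all homogeneous norms on $\G$ are equivalent, $u$ is then Lipschitz with respect to the fixed distance $d=d_\infty$. In particular $\nabla_H u$ is defined a.e. on $\G$, so Theorem~\ref{coarea}, applied with this $u$, the given measurable set $A$ and the given measurable $h$, yields
$$\int_A h(x)\,|\nabla_H u(x)|\,dx=\int_\R\int_{u^{-1}(r)\cap A}h(x)\,d\mathcal{S}_\infty^{Q-1}\,dr.$$

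Next I would invoke Theorem~\ref{Eik}: since $|\nabla_H u(x)|=|\nabla_H d_c(x,0)|=1$ for a.e. $x\in\G$, the integrand on the left coincides a.e. with $h(x)$, so the left-hand side equals $\int_A h(x)\,dx$ (this is where it matters that the Eikonal identity holds only a.e., which is harmless since the integral is a Lebesgue integral). On the right-hand side I would identify the level sets: $u^{-1}(r)=\emptyset$ for $r<0$; $u^{-1}(0)=\{0\}$, which is $\mathcal{S}^{Q-1}$-negligible because $Q-1\ge 2>0$; and for $r>0$ one has $u^{-1}(r)=\{x\in\G:\,d_c(x,0)=r\}=\partial\B_c(0,r)$. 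The inclusion into $\partial\B_c(0,r)$ is immediate from continuity of $d_c$, and the reverse inclusion follows from the fact that $(\G,d_c)$ is a length space (Chow's theorem), so every boundary point of the closed ball is attained at distance exactly $r$ and no interior point is. Substituting these identifications into the displayed equality gives
$$\int_A h(x)\,dx=\int_\R\int_{\partial\B_c(0,r)\cap A}h(x)\,d\mathcal{S}_\infty^{Q-1}\,dr,$$
which is the assertion.

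I do not expect a serious obstacle here: the whole argument is a substitution, and the real analytic content is already contained in Theorems~\ref{coarea} and~\ref{Eik}. The only points deserving a line of care are the a.e.\ (rather than everywhere) validity of the Eikonal equation and the level-set identification $u^{-1}(r)=\partial\B_c(0,r)$ for $r>0$, both of which are routine.
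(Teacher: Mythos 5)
Your proposal is correct and follows exactly the route the paper takes: it obtains the corollary by applying Theorem~\ref{coarea} to the Lipschitz function $u(x)=d_c(x,0)$ and then using the Eikonal equation of Theorem~\ref{Eik} to replace $|\nabla_H u|$ by $1$ a.e., identifying the level sets $u^{-1}(r)$ with $\partial\B_c(0,r)$. The extra care you take with the Lipschitz property, the negligible level set at $r=0$, and the level-set identification is a welcome elaboration of details the paper leaves implicit.
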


\subsection{Taylor polynomial on stratified groups} 
In this subsection we introduce homogeneous Taylor polinomials on $\G$. We mainly follow \cite[Chapter 20]{BLU07Stratified}.
We choose $m \in \N$ and for every $m$-multi-index $\gamma =(\gamma_1, \ldots, \gamma_m)\in (\N \cup \{ 0 \})^m$ we set
$$|\gamma|=\gamma_1+\gamma_2+\dots \gamma_m \qquad \gamma!=\gamma_1! \gamma_2! \dots \gamma_m! \qquad \mathrm{and} \qquad x^{\gamma}=x_1^{\gamma_1}x_2^{\gamma_2}\dots x_m^{\gamma_m} \quad \mathrm{for \ every \ } x \in \R^m.$$
Moreover, we introduce
$$(Z_1, \cdots, Z_m)^{\gamma}=Z_1^{\gamma_1} \cdots Z_m^{\gamma_m}, \quad \mathrm{ for \ every \ } Z_1, \ldots Z_m \in \Lie(\G).$$
We say that an $m$-multi-index $\gamma =(\gamma_1, \ldots, \gamma_m)\in (\N \cup \{ 0 \})^m$ is \textit{even} if $\gamma_j$ is even (0 being even) for every $j \in \{1, \ldots, m \}$.\\
If $f \in C^{\infty}(\G,\R)$ and $(Z_1, \ldots, Z_q)$ is a Jacobian basis of $\mathrm{Lie}(\G)$, then for every $n \in \N \cup \{0 \}$ there exists one and only one polynomial $p$ homogeneous of degree at most $n$ such that
\begin{equation}\label{taipol}
(Z_1 \cdots Z_q)^{\beta}(p)(0)= (Z_1 \cdots Z_q)^{\beta}(f)(0),
\end{equation}
for every multi-index $\beta \in ( \N \cup \{0\})^q$ such that $|\beta|\leq n$.
The unique polynomial satisfying \eqref{taipol} is denoted by $\mathcal{T}_n(f,0)(x)$ and it is called the \textit{$Z$-Mac Laurin polynomial homogeneous of degree $n$} related to $f$.
Then, for every $x,x_0 \in \G$ we introduce the so-called \textit{$Z$-Taylor polynomial of $\G$-degree $n$} related to $f$ centered at $x_0$ as
\begin{equation}\label{transla}
\mathcal{T}_n(f,x_0)(x):=\mathcal{T}_n(f \circ l_{x_0},0)(x_0^{-1}x).
\end{equation}
Let us recall an important estimate involving $Z$-Taylor polynomials, obtained by \cite[Theorem 20.3.3]{BLU07Stratified}, setting $d=d_c$ (see also \cite[Theorem 1.37]{FS82}).
\begin{teo}(Stratified Taylor formula)
\label{Taylor}
Let $\G$ be a stratified group. There is a constant $b=b(\G)$ such that for every $n \in \N \cup \{ 0 \}$, there exists a constant $c=c(\G,n)>0$ such that
\begin{equation} \label{Taylorn}
|f(xy)-\mathcal{T}_n(f,x)(xy)| \leq c \|y\|_c^{n+1} \sup_{\{\|z\|_c \leq b^{n+1} \|y\|_c\}} \sup_{\{|\beta|=n+1\}} |(Z_1 \cdots Z_{m_1})^{\beta} f(xz)|,
\end{equation} 
for all $f \in C^{n+1}(\G,\R)$, for every $x,y \in \G$. Here $\beta$ is a $m_1$-multi-index and $m_1=\dim(V_1)$.
\end{teo}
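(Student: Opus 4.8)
The plan is to argue by induction on $n$, the case $n=0$ serving as a stratified mean value inequality and the inductive step reducing the order-$n$ remainder of $f$ to the order-$(n-1)$ remainders of the horizontal derivatives $X_1f,\dots,X_{m_1}f$. By left invariance of the fields $Z_j$ (hence of every monomial $(Z_1\cdots Z_{m_1})^\beta$) and the definition \eqref{transla}, it suffices to treat $x=0$: with $g:=f\circ l_x$ one has $g(y)-\mathcal T_n(g,0)(y)=f(xy)-\mathcal T_n(f,x)(xy)$ and $(Z_1\cdots Z_{m_1})^\beta g(z)=(Z_1\cdots Z_{m_1})^\beta f(xz)$, so \eqref{Taylorn} reduces to
\[
|g(y)-\mathcal T_n(g,0)(y)|\le c\,\|y\|_c^{\,n+1}\sup_{\|z\|_c\le b^{n+1}\|y\|_c}\ \sup_{|\beta|=n+1}\bigl|(Z_1\cdots Z_{m_1})^\beta g(z)\bigr|,\qquad g\in C^{n+1}(\G,\R).
\]

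For $n=0$ we have $\mathcal T_0(g,0)\equiv g(0)$, and the inequality is the stratified mean value inequality. Joining $0$ to $y$ by a horizontal curve $\gamma\colon[0,1]\to\G$, $\dot\gamma(t)=\sum_{i=1}^{m_1}a_i(t)X_i(\gamma(t))$, with $\bigl(\sum_ia_i(t)^2\bigr)^{1/2}\le C\|y\|_c$ and $\|\gamma(t)\|_c\le C\|y\|_c$ for every $t$ and a structural constant $C=C(\G)$ --- e.g. a minimizing $d_c$-geodesic (for which $C=1$), or, more elementarily and in the spirit of \cite{BLU07Stratified}, a uniformly bounded concatenation of integral curves of the $X_i$ supplied by the connectivity (ball--box) estimates, for which one only retains $\|\gamma(t)\|_c\le C\|y\|_c$ with some $C>1$ --- and integrating $\tfrac{d}{dt}g(\gamma(t))=\sum_ia_i(t)X_ig(\gamma(t))$ (see \eqref{forma-grad}) gives
\[
|g(y)-g(0)|\le C\sqrt{m_1}\,\|y\|_c\sup_{\|z\|_c\le C\|y\|_c}\max_{i}|X_ig(z)|,
\]
so it suffices to fix, once and for all, $b:=C(\G)$, independent of $n$; it is the second, more elementary choice of $\gamma$ (with $C>1$) that ultimately makes the geometrically growing factor $b^{n+1}$, rather than a constant, appear in \eqref{Taylorn}.

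For the inductive step, assume the estimate at order $n-1$ for every $C^{n}$ function, let $g\in C^{n+1}(\G,\R)$, and set $R:=g-\mathcal T_n(g,0)$. By \eqref{taipol} with $\beta=0$ one has $R(0)=0$, so the case $n=0$ applied to $R$ controls $|R(y)|$ by $C\sqrt{m_1}\,\|y\|_c\sup_{\|z\|_c\le C\|y\|_c}\max_i|X_iR(z)|$. The algebraic key is the identity $X_i\mathcal T_n(g,0)=\mathcal T_{n-1}(X_ig,0)$, $i=1,\dots,m_1$, so that $X_iR=X_ig-\mathcal T_{n-1}(X_ig,0)$: the left-hand side is a polynomial, $\delta$-homogeneous of $\G$-degree at most $n-1$, and, moving $X_i=Z_i$ into sorted position by the Poincar\'e--Birkhoff--Witt theorem, one checks $(Z_1\cdots Z_q)^\beta\bigl(X_i\mathcal T_n(g,0)\bigr)(0)=(Z_1\cdots Z_q)^\beta(X_ig)(0)$ for every $q$-multi-index $\beta$ with $|\beta|\le n-1$, whence the identity by the uniqueness in \eqref{taipol}. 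Applying the inductive hypothesis to $X_ig\in C^{n}$ at an arbitrary $z$ with $\|z\|_c\le C\|y\|_c$, and using $\|z\|_c\le C\|y\|_c$ together with the choice $b=C$, one bounds $|X_iR(z)|$ by $c_{n-1}C^{n}\|y\|_c^{\,n}\sup_{\|w\|_c\le b^{n+1}\|y\|_c}\sup_{|\beta'|=n}\bigl|(Z_1\cdots Z_{m_1})^{\beta'}X_ig(w)\bigr|$. Since $(Z_1\cdots Z_{m_1})^{\beta'}X_i$ is a $\G$-homogeneous differential operator of degree $n+1$, it lies --- by Poincar\'e--Birkhoff--Witt together with the bracket relations of the stratification, expressing every $Z_j\in V_k$ through iterated brackets of $Z_1,\dots,Z_{m_1}$ --- in the span of the sorted monomials $(Z_1\cdots Z_{m_1})^\beta$ with $|\beta|\le n+1$, with coefficients depending only on $\G$ and $n$. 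Collecting all constants into a single $c_n=c(\G,n)$ yields the estimate with $\sup_{|\beta|\le n+1}$ in place of $\sup_{|\beta|=n+1}$; a final dilation argument removes the lower-order terms: applying that estimate to $g\circ\delta_\lambda$, using $(Z_1\cdots Z_{m_1})^\beta(g\circ\delta_\lambda)=\lambda^{|\beta|}\bigl((Z_1\cdots Z_{m_1})^\beta g\bigr)\circ\delta_\lambda$ and $\mathcal T_n(g\circ\delta_\lambda,0)=\mathcal T_n(g,0)\circ\delta_\lambda$, and letting $\lambda\to+\infty$ (the monomials with $|\beta|<n+1$ contributing $O(\lambda^{-1})$ since $g\in C^{n+1}$), one recovers $\sup_{|\beta|=n+1}$ and closes the induction.

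The heart of the matter, and the expected main obstacle, is the purely algebraic bookkeeping in the inductive step: the identity $X_i\mathcal T_n(\cdot,0)=\mathcal T_{n-1}(X_i\,\cdot\,,0)$, and, above all, the fact that every $\G$-homogeneous differential operator of degree $k$ belongs to the span of the sorted horizontal monomials $(Z_1\cdots Z_{m_1})^\beta$, $|\beta|\le k$, with structural constants --- this is where the stratification enters essentially, and it is precisely what couples with the choice of the connecting curve to control the radius $b^{n+1}$ of the region where the top-order derivatives are evaluated while keeping $b$ independent of $n$. The remaining ingredients --- the existence of a controlled horizontal connecting curve and the one-variable fundamental theorem of calculus --- are routine; the whole scheme follows the pattern of \cite[Theorem 20.3.3]{BLU07Stratified} (see also \cite[Theorem 1.37]{FS82}).
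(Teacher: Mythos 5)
The paper gives no proof of this statement: it is quoted from \cite[Theorem 20.3.3]{BLU07Stratified} and \cite[Theorem 1.37]{FS82}, so there is no internal argument to compare with. Your proposal reconstructs the standard proof, and its skeleton --- reduction to $x=0$ by left invariance, the stratified mean value inequality as the case $n=0$ via a horizontal connecting curve contained in $\B_c(0,b\|y\|_c)$, and the induction driven by the identity $X_i\mathcal T_n(g,0)=\mathcal T_{n-1}(X_ig,0)$, which is what generates the radius $b^{n+1}$ --- is exactly the argument of \cite{BLU07Stratified} and is sound.

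The one step that fails is the final algebraic reduction: it is \emph{not} true that a horizontal word such as $(Z_1\cdots Z_{m_1})^{\beta'}X_i$ of length $n+1$ lies in the span of the \emph{sorted} horizontal monomials $(Z_1\cdots Z_{m_1})^{\beta}=Z_1^{\beta_1}\cdots Z_{m_1}^{\beta_{m_1}}$ with $|\beta|\le n+1$. On $\H^1$ one has $YX=XY-T$, and $T\notin\mathrm{span}\{\mathrm{Id},X,Y,X^2,XY,Y^2\}$: for $f(x)=x_3-\tfrac12x_1x_2$ all six of these operators annihilate $f$ identically, while $Tf\equiv1$ and $YXf\equiv-1$. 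Sorting a horizontal word creates higher-layer fields that cannot be re-expanded into sorted horizontal monomials, so Poincar\'e--Birkhoff--Witt does not rescue the claim. You should not take all the blame, though: the same $f$ with $n=1$ and $x=0$ (so that $\mathcal T_1(f,0)=0$ and the left-hand side of \eqref{Taylorn} is $|x_3-\tfrac12x_1x_2|$) shows that the inequality with the supremum restricted to sorted horizontal monomials --- which is what the notation $(Z_1\cdots Z_{m_1})^{\beta}$, $|\beta|=n+1$, literally says under the paper's own convention --- is itself false. The correct statement, and the one actually proved in \cite[Theorem 20.3.3]{BLU07Stratified}, takes the supremum over \emph{all} horizontal words $Z_{i_1}\cdots Z_{i_{n+1}}$ with $i_j\in\{1,\dots,m_1\}$. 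With that reading your induction closes at once and both the sorting step and the concluding dilation argument should be deleted: the inductive hypothesis applied to $X_ig$ produces precisely operators of the form (word of length $n$) followed by $X_i$, a subfamily of the words of length $n+1$. None of this affects the rest of the paper, which only uses \eqref{Taylorn} through $O(\|y\|_c^{n+1})$ bounds for smooth or Schwartz functions.
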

An explicit form of the $Z$-Taylor polynomial homogeneous of $\G$-degree $n$ is known. We refer the reader to \cite[Corollary 20.3.15]{BLU07Stratified} (see also \cite[Theorem 2]{B2008} formula (43)).
\begin{teo}\label{explicit}
Let $n \in \N \cup \{0\}$, let $f \in C^{n+1}(\G,\R)$ and choose $x_0 \in \G$. Let $(Z_1, \dots, Z_q)$ be a Jacobian basis of $\mathrm{Lie}(\G)$, then
$$ \mathcal{T}_n(f,x_0)(x)=f(x_0)+ \sum_{h=1}^n \sum_{\substack{k=1, \ldots, n\\ i_1, \ldots,i_k \leq q, \\ \mathrm{deg}(Z_{i_1})+ \ldots \mathrm{deg}(Z_{i_k})=h}} \frac{(x_0^{-1}x)_{i_1} \ldots (x_0^{-1}x)_{i_k}}{k!} Z_{i_1} \ldots Z_{i_k} f(x_0),$$
for every $x \in \G$.
\end{teo}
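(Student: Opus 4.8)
The plan is to recover this cited identity (\cite[Corollary 20.3.15]{BLU07Stratified}) directly from the defining property \eqref{taipol}--\eqref{transla} of the $Z$-Taylor polynomial together with the stratified Taylor estimate of Theorem~\ref{Taylor}. Write $Q_{f,x_0}(x)$ for the right-hand side of the asserted identity. Since each $Z_j$ is left invariant one has $Z_{i_1}\cdots Z_{i_k}(f\circ l_{x_0})=(Z_{i_1}\cdots Z_{i_k}f)\circ l_{x_0}$, whence $Q_{f\circ l_{x_0},\,0}(x_0^{-1}x)=Q_{f,x_0}(x)$, while $\mathcal T_n(f,x_0)(x)=\mathcal T_n(f\circ l_{x_0},0)(x_0^{-1}x)$ by \eqref{transla}; so it suffices to prove $\mathcal T_n(g,0)=Q_{g,0}$ for every $g\in C^{n+1}(\G,\R)$. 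Next one checks that $Q_{g,0}$ is a polynomial of $\G$-degree at most $n$: writing $d_j:=\deg(Z_j)$ and $v_j:=Z_j(0)$, exponential coordinates are adapted to the dilations, $\delta_t\exp(\sum_j y_jv_j)=\exp(\sum_j t^{d_j}y_jv_j)$, so each coordinate function $x_j$ is $\delta$-homogeneous of degree $d_j$. Hence every monomial $x_{i_1}\cdots x_{i_k}$ appearing in $Q_{g,0}$, constrained by $d_{i_1}+\cdots+d_{i_k}=h\le n$, is $\delta$-homogeneous of degree $h\le n$ (that constraint also forces $k\le n$, so the sum is finite), while the coefficients $\tfrac1{k!}Z_{i_1}\cdots Z_{i_k}g(0)$ are constants.

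The heart of the matter is the estimate $g(x)-Q_{g,0}(x)=O(\|x\|^{n+1})$ as $\|x\|\to 0$. To prove it I would fix $x$ with exponential coordinates $(x_1,\dots,x_q)$, set $W:=\sum_j x_jv_j\in\Lie(\G)$ and $\widetilde W:=\sum_j x_jZ_j$, and consider the curve $\sigma(t):=\exp(tW)$, which is the integral curve of the left-invariant vector field $\widetilde W$ through the origin; thus $\sigma(0)=0$, $\sigma(1)=x$, and $\frac{d^k}{dt^k}(g\circ\sigma)(t)=(\widetilde W^{\,k}g)(\sigma(t))$ for $0\le k\le n+1$. One-dimensional Taylor's theorem with Lagrange remainder then yields some $\tau\in(0,1)$ with
$$g(x)=\sum_{k=0}^{n}\frac1{k!}\,(\widetilde W^{\,k}g)(0)+\frac1{(n+1)!}\,(\widetilde W^{\,n+1}g)(\sigma(\tau)).$$
Expanding $\widetilde W^{\,k}=\big(\sum_j x_jZ_j\big)^k=\sum_{i_1,\dots,i_k}x_{i_1}\cdots x_{i_k}Z_{i_1}\cdots Z_{i_k}$, the terms of $\sum_{k=0}^n$ with $d_{i_1}+\cdots+d_{i_k}\le n$ assemble exactly into $Q_{g,0}(x)$; the remaining terms of $\sum_{k=0}^n$ are monomials $x_{i_1}\cdots x_{i_k}$ of $\delta$-degree $>n$, hence $O(\|x\|^{n+1})$; and in the remainder each product satisfies $|x_{i_1}\cdots x_{i_{n+1}}|\le C\|x\|^{d_{i_1}+\cdots+d_{i_{n+1}}}\le C\|x\|^{n+1}$ for $\|x\|\le 1$ (every $d_{i_\ell}\ge1$), while $(Z_{i_1}\cdots Z_{i_{n+1}}g)(\sigma(\tau))$ stays bounded for small $\|x\|$ by continuity of the order-$(n+1)$ derivatives of $g\in C^{n+1}(\G,\R)$ near $0$. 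Summing the finitely many contributions gives $g(x)-Q_{g,0}(x)=O(\|x\|^{n+1})$.

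To conclude, I would invoke Theorem~\ref{Taylor} at the origin: since $g\in C^{n+1}(\G,\R)$ the supremum in \eqref{Taylorn} is bounded for $\|x\|_c$ small, so $g(x)-\mathcal T_n(g,0)(x)=O(\|x\|_c^{n+1})=O(\|x\|^{n+1})$ (all homogeneous norms being equivalent). Consequently $\mathcal T_n(g,0)-Q_{g,0}$ is a polynomial of $\G$-degree $\le n$ that is $O(\|x\|^{n+1})$ near $0$; decomposing it into its $\delta$-homogeneous layers $p_0,\dots,p_n$, evaluating along $x\mapsto\delta_tx$, using $\|\delta_tx\|=t\|x\|$ and letting $t\to 0^+$ forces $p_0\equiv\cdots\equiv p_n\equiv0$. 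Hence $\mathcal T_n(g,0)=Q_{g,0}$, and the reduction from the first paragraph finishes the proof.

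I expect the one-parameter reduction to be the delicate point: identifying $\sigma(t)=\exp(tW)$ as the integral curve through $0$ of the \emph{left}-invariant field $\widetilde W=\sum_j x_jZ_j$, so that iterated $t$-derivatives of $g\circ\sigma$ become iterated applications of $\widetilde W$, and then the homogeneity bookkeeping that separates $Q_{g,0}$ from the terms that are $O(\|x\|^{n+1})$ --- the essential structural input being that, in exponential coordinates, the $j$-th coordinate is $\delta$-homogeneous of degree $\deg(Z_j)$, which converts the algebraic condition $d_{i_1}+\cdots+d_{i_k}>n$ into the analytic decay $O(\|x\|^{n+1})$.
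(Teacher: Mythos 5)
Your argument is correct, and it is worth pointing out that the paper itself offers no proof of Theorem \ref{explicit}: the identity is simply quoted from \cite[Corollary 20.3.15]{BLU07Stratified} (see also \cite[Theorem 2]{B2008}), so any derivation is "a different route"; yours is essentially the classical Lie--series argument and all of its steps check out. The reduction to $x_0=0$ by left invariance, the expansion of $g\circ\sigma$ along the integral curve $\sigma(t)=\exp(tW)$ of $\widetilde{W}=\sum_j x_jZ_j$ with Lagrange remainder (for which $g\in C^{n+1}(\G,\R)$ is exactly the regularity needed so that $\widetilde{W}^{\,n+1}g$ exists and is continuous near $0$), the bound $|x_{i_1}\cdots x_{i_k}|\le C\,\|x\|^{\mathrm{deg}(Z_{i_1})+\cdots+\mathrm{deg}(Z_{i_k})}$ coming from the $\delta$-homogeneity of the graded coordinates, and the vanishing of any polynomial of $\G$-degree at most $n$ that is $O(\|x\|^{n+1})$ (by splitting into homogeneous layers and dilating) are all sound. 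The one point you should make explicit is that you never verify the defining conditions \eqref{taipol} for your candidate $Q_{g,0}$; instead you silently replace the definition of $\mathcal{T}_n(g,0)$ by the characterization ``the unique polynomial of $\G$-degree at most $n$ with $g-P=O(\|\cdot\|_c^{n+1})$ at $0$'', whose existence half is exactly Theorem \ref{Taylor} and whose uniqueness half is your dilation argument. Within this paper's logical structure that substitution is legitimate and non-circular, since Theorem \ref{Taylor} is quoted independently of Theorem \ref{explicit}, but it deserves to be isolated as a lemma. What your route buys is a short, self-contained derivation using the stratified Taylor inequality as a black box; what the textbook route (direct verification of the derivative-matching conditions \eqref{taipol}) buys is independence from Theorem \ref{Taylor}.
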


\subsection{Fractional sub-Laplacian on stratified groups}
\label{sect:fraclap}
In this subsection, we introduce the sub-Laplacian on $\G$ and its powers. We choose a graded orthonormal basis $(v_1 , \dots, v_{m_1})$ of $V_1$ and we let $X_i \in \mathrm{Lie}(\G)$ be the left invariant vector field such that $X_i(0)=v_i$ for every $i=1, \dots, m_1$. 
We call \textit{sub-Laplacian} of $\G$ the second-order differential operator
\begin{equation}\label{sublaplacian}
\mathcal{L}:= - \sum_{i=1}^{m_1} X_i^2.
\end{equation}
It is a positive operator associated with the choice of the basis $(v_1, \ldots, v_{m_1})$.

\begin{Remark}
Since $\G$ is a stratified group, by Hormander's theorem the operator $\mathcal{L}$ is hypoelliptic. By \cite[Theorem 2.1]{Fol75} there exists a unique fundamental solution for $\mathcal{ L}$.
\end{Remark}
Let $\widehat{\G}$ be the homogeneous group $$\widehat{\G}=\R \times \G,$$ endowed with the product
$$(t,x) \cdot (t'.y)=(t+t', xy)$$ and the following stratification
$$ \widehat{\G}= V_1 \oplus \widehat{V_2}  \oplus \dots \oplus V_{\iota}, \quad \mathrm{ with  }\quad\mathrm{Lie}(\widehat{V_2})=\mathrm{span} \left\{ \frac{\partial}{\partial t}, \mathrm{Lie}(V_2) \right\}.$$  The group $\widehat{G}$ may be equipped with the dilations
\begin{equation}\label{dil}
\widehat{\delta}_s(t,x)=(s^2t, \delta_s(x)),
\end{equation}
for every $s>0$. We consider on $\widehat{\G}$ the heat operator $$\mathcal{H}= \mathcal{L} + \frac{\partial}{\partial t},$$ where
$\mathcal{L}$ is the sub-Laplacian in \eqref{sublaplacian}.
The operator $\mathcal{H}$ is invariant with respect to the left translations of $\widehat{\G}$ and homogeneous of degree 2 with respect to the dilations in \eqref{dil}, i.e. $\mathcal{H}(u \circ \widehat{\delta}_s)=s^2 \mathcal{H}u$ for $s>0$. Moreover, $\mathcal{H}$ is hypoelliptic and it has a unique fundamental solution $h$ called the \textit{heat kernel} on $\G$ (see \cite{Fol75}). We collect some properties of $h$ addressing the reader to \cite[Proposition 1.68]{FS82}, \cite[Theorem 1.1]{FMPPS18} and \cite[Proposition 1.74]{FS82} and to the references therein.
\begin{teo}
\label{T1}
Let $h:\R \times \G \to \R$ be the heat kernel associated with the sub-Laplacian $\mathcal{L}$ on $\G$, then the following conditions hold
\begin{itemize}
\item[(i)] $h \in C^{\infty}(\widehat{\G} \setminus \{(0,0)\})$.
\item[(ii)] $ \mathcal{H} h=0$ on $(0, \infty) \times \G $.
\item[(iii)] $\int_{\G} h(t,x) dx=1$, for every $t>0$; $h(t,x)=h(t,x^{-1})$, $h(t,x)\geq 0$, for every $x \in \G$ and $t \in \R$ and $h(t,x)=0$, when $t\leq 0$.
\item[(iv)] $h(t, \cdot ) \star h(\tau, \cdot)= h(t+\tau, \cdot)$, for every $t, \tau >0$.
\item[(v)] $h(\widehat{\delta}_s(t,x))=s^{-Q} h(t,x)$, for every $s, t>0$ and $x \in \G$.
\item[(vi)] There exists a constant $c>0$ such that 
$$ c^{-1} t^{-Q/2} \mathrm{exp}\Big(-\frac{ \| x \|^2}{c^{-1}t}\Big) \leq h(t,x) \leq c t^{-Q/2} \mathrm{exp}\Big(-\frac{\| x \|^2}{ct}\Big),$$
for every $x \in \G$ and $t>0$.
\item[(vii)] 
$h(t, \cdot) \in \mathcal{S}(\G)$, for every $t>0$.
\end{itemize}
\end{teo}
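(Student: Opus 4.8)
The plan is to verify the seven properties by assembling them from the cited references rather than proving anything from scratch, since each item is an instance of a known statement once we recall how the heat kernel $h$ is constructed. First I would fix the construction: $h$ is the fundamental solution of the hypoelliptic operator $\mathcal{H}=\mathcal{L}+\partial_t$ on $\widehat{\G}$, whose existence and uniqueness follow from the homogeneity of degree $2$ of $\mathcal{H}$ with respect to $\widehat\delta_s$ together with Folland's theory in \cite{Fol75}; equivalently $h(t,\cdot)$ is the convolution kernel of the heat semigroup $e^{-t\mathcal{L}}$. With this identification in hand, (i) is the hypoellipticity of $\mathcal{H}$ applied to the equation $\mathcal{H}h=0$ away from the origin (plus the fact that $h$, being a tempered distribution solving a hypoelliptic equation, is smooth there), and (ii) is simply the defining PDE restricted to positive times. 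Property (v) is the scaling relation forced by $\widehat\delta_s$-homogeneity of degree $2$ of $\mathcal H$ and of degree $Q+2$ of the measure $dt\,dx$ on $\widehat{\G}$: applying $\widehat\delta_s$ to the identity $\mathcal H h=\delta_{(0,0)}$ and matching homogeneities yields $h(\widehat\delta_s(t,x))=s^{-Q}h(t,x)$.

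Next I would treat the semigroup-theoretic facts. Property (iv), $h(t,\cdot)\star h(\tau,\cdot)=h(t+\tau,\cdot)$, is the semigroup law $e^{-t\mathcal{L}}e^{-\tau\mathcal{L}}=e^{-(t+\tau)\mathcal{L}}$ read off on convolution kernels, using that $e^{-t\mathcal{L}}$ acts by right convolution with $h(t,\cdot)$ and left-invariance of $\mathcal{L}$. Property (iii) collects: conservativeness $\int_\G h(t,x)\,dx=1$ (the constant function $1$ is $\mathcal{L}$-harmonic and bounded, so $e^{-t\mathcal{L}}1=1$; equivalently this follows from \cite[Proposition 1.68]{FS82}); the symmetry $h(t,x)=h(t,x^{-1})$, which reflects that $\mathcal{L}=-\sum X_i^2$ is formally self-adjoint with respect to Haar measure, so its kernel satisfies $h(t,x^{-1}y)=h(t,y^{-1}x)$, i.e. $h(t,z)=h(t,z^{-1})$; positivity $h\ge 0$, which is the maximum principle / positivity-preserving property of the heat semigroup; and vanishing for $t\le 0$, which is the support property of the fundamental solution of the forward heat operator (causality). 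These are all standard and I would cite \cite[Proposition 1.68]{FS82} and \cite[Proposition 1.74]{FS82} for the precise statements.

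Then I would address the two quantitatively substantive items. Property (vi), the two-sided Gaussian bound
\[
c^{-1}t^{-Q/2}\exp\!\Big(-\frac{\|x\|^2}{c^{-1}t}\Big)\leq h(t,x)\leq c\,t^{-Q/2}\exp\!\Big(-\frac{\|x\|^2}{ct}\Big),
\]
is exactly the content of \cite[Theorem 1.1]{FMPPS18} (originating in the work of Varadhan, Jerison--Sánchez-Calle, and others); I would invoke it directly, noting only that by equivalence of homogeneous norms the particular choice of $d$ affects $c$ but not the form of the estimate. Finally, (vii) — that $h(t,\cdot)\in\mathcal{S}(\G)$ for each $t>0$ — I would deduce by combining (i), (v) and (vi): smoothness away from $0$ comes from (i), while (vi) shows $h(t,\cdot)$ decays faster than any polynomial; to control the derivatives one differentiates the scaling identity (v), which expresses $X^\beta h(t,\cdot)$ for a horizontal multi-index $\beta$ as a rescaling of $X^\beta h(1,\cdot)$, and then applies Gaussian-type bounds for derivatives of the heat kernel (again from \cite{FMPPS18} or, in the form stated, \cite[Proposition 1.74]{FS82}) to get rapid decay of all derivatives. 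The main obstacle is not in any of the routine deductions but in the Gaussian bound (vi): it is the one input that genuinely requires the analytic machinery of \cite{FMPPS18}, and everything else is bookkeeping around the semigroup structure and the homogeneity under $\widehat\delta_s$.
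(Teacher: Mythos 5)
Your proposal is correct and matches what the paper does: the theorem is stated without proof as a compilation of known facts, with exactly the references you invoke (\cite[Propositions 1.68 and 1.74]{FS82} and \cite[Theorem 1.1]{FMPPS18}), and your routing of each item to the semigroup structure, hypoellipticity, $\widehat\delta_s$-homogeneity, and the Gaussian bounds is the standard and intended justification. No further comparison is needed.
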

Now, we introduce the \textit{heat semigroup}:
\begin{equation} 
e^{-t \mathcal{L}}f(x):=(f \star h(t, \cdot))(x)=\int_\G h(t,y^{-1}x)f(y)dy, 
\end{equation}
for every $f \in L^1(\G)$, where $\star$ denotes the group convolution.
We define the fractional sub-Laplacian $\mathcal{L}^s$, $0<s<1$ as follows (see \cite{Yosida}). For every map $u \in W^{2s,2}(\G)$ and for every $x \in \G$
\begin{equation}\label{frac-lap}
\mathcal{L}^s u(x):= \frac{1}{\Gamma(-s)} \int_0^{\infty}t^{-s-1} (e^{-t\mathcal{L}} u(x)-u(x)) dt.
\end{equation}
See \cite[Section 1.2]{FMPPS18} and the references therein for the definition and an introduction to fractional Sobolev space on a stratified group. The operator $\mathcal{L}^s$ is non-local. \\
We recall the following result about the behaviour of $\mathcal{L}^s$, when $s$ is close to one, see also \cite[Proposition 1.5]{FMPPS18}. We remind the proof of the theorem to point out some technical aspects of the result. By $\mathcal{S}(\G)$ we denote the Schwartz class on $\G$ (see \cite[Section 1.D]{FS82}).
\begin{prop}\label{prop-limauno}
For every $\phi \in \mathcal{S}(\G)$ and for every $x \in \G$
$$\lim_{s \to 1^-} \mathcal{L}^{s}\phi(x)=\mathcal{L}\phi(x).$$
\end{prop}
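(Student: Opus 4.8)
The plan is to start from the semigroup definition \eqref{frac-lap} and justify passing the limit $s\to1^-$ inside the integral. First I would split the integral $\int_0^\infty t^{-s-1}(e^{-t\mathcal L}\phi(x)-\phi(x))\,dt$ into $\int_0^1$ and $\int_1^\infty$, since the two pieces present opposite difficulties: near $t=0$ the factor $t^{-s-1}$ is nearly non-integrable and one must exploit the cancellation $e^{-t\mathcal L}\phi(x)-\phi(x)=O(t)$, while near $t=\infty$ the factor $t^{-s-1}$ is harmless but one needs decay of $e^{-t\mathcal L}\phi(x)-\phi(x)$, or at least boundedness, to apply dominated convergence as $s\to1$. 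For the tail, since $\phi\in\mathcal S(\G)$ is bounded and $\int_\G h(t,y)\,dy=1$ (Theorem \ref{T1}(iii)), we have $|e^{-t\mathcal L}\phi(x)-\phi(x)|\le 2\|\phi\|_\infty$, and $t^{-s-1}\le t^{-3/2}$ uniformly for $s$ near $1$ (say $s\ge1/2$), which is integrable on $[1,\infty)$; dominated convergence then gives convergence of the tail integral to $\int_1^\infty t^{-2}(e^{-t\mathcal L}\phi(x)-\phi(x))\,dt$.

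The delicate part is the behaviour near $t=0$, where I would use a second-order Taylor-type expansion of $e^{-t\mathcal L}\phi(x)$ in $t$. Writing $g(t)=e^{-t\mathcal L}\phi(x)$, one has $g(0)=\phi(x)$, $g'(0)=-\mathcal L\phi(x)$, and $g''(t)=\mathcal L^2 e^{-t\mathcal L}\phi(x)$, which is bounded uniformly on $[0,1]$ because $\mathcal L^2\phi\in\mathcal S(\G)$ (the sub-Laplacian maps $\mathcal S(\G)$ to itself) and $e^{-t\mathcal L}$ is a contraction on $L^\infty$. Hence $g(t)-g(0)=-t\,\mathcal L\phi(x)+R(t)$ with $|R(t)|\le C t^2$ on $[0,1]$. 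Then
\begin{equation}
\frac{1}{\Gamma(-s)}\int_0^1 t^{-s-1}(g(t)-g(0))\,dt=\frac{-\mathcal L\phi(x)}{\Gamma(-s)}\int_0^1 t^{-s}\,dt+\frac{1}{\Gamma(-s)}\int_0^1 t^{-s-1}R(t)\,dt.
\end{equation}
The first term equals $\frac{-\mathcal L\phi(x)}{\Gamma(-s)(1-s)}=\frac{-\mathcal L\phi(x)}{-\Gamma(1-s)}=\frac{\mathcal L\phi(x)}{\Gamma(1-s)}\to\mathcal L\phi(x)$ as $s\to1^-$, using $\Gamma(-s)=\Gamma(1-s)/(-s)$ and $\Gamma(1-s)\to1$; the remainder is bounded by $\frac{C}{|\Gamma(-s)|}\int_0^1 t^{1-s}\,dt=\frac{C}{|\Gamma(-s)|(2-s)}\to0$ since $|\Gamma(-s)|\to\infty$ as $s\to1^-$. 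Finally, the prefactor $1/\Gamma(-s)\to0$ also kills the tail contribution, so only the first term survives.

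The main obstacle I anticipate is making the $t\to0$ expansion rigorous uniformly in $s$: one must be sure that $g\in C^2([0,1])$ with $\mathcal L$-derivatives controlled pointwise in $x$, which requires knowing that $e^{-t\mathcal L}$ commutes with $\mathcal L$ on $\mathcal S(\G)$ and that $\partial_t e^{-t\mathcal L}\phi=-\mathcal L e^{-t\mathcal L}\phi$ pointwise — standard semigroup facts, but worth citing via the heat kernel properties in Theorem \ref{T1} (in particular (ii), (iv), (vii)) and the mapping properties of $\mathcal L$ on $\mathcal S(\G)$. Everything else is bookkeeping with the Gamma function identities $\Gamma(-s)=-\Gamma(1-s)/s$ and $\Gamma(1-s)\sim 1/(1-s)$ near $s=1$, together with two applications of dominated convergence.
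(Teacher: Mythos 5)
Your argument is correct in structure but follows a genuinely different route from the paper's. The paper first performs the change of variables $\xi=t^{1-s}$, which concentrates the limit in the behaviour of $c(s)=1/((1-s)\Gamma(-s))$, and then applies dominated convergence using the uniform convergence of the difference quotient $\|(e^{-h\mathcal L}\phi-\phi)/h+\mathcal L\phi\|_{L^\infty(\G)}\to0$ from Folland together with the fact that the heat semigroup preserves $L^\infty$ bounds. You instead split at $t=1$ in the original variable and upgrade the first-order information on the difference quotient to a genuine second-order Taylor expansion of $g(t)=e^{-t\mathcal L}\phi(x)$, using $g''(t)=e^{-t\mathcal L}\mathcal L^2\phi(x)$ and the contraction property to get $|R(t)|\le Ct^2$; the blow-up of $|\Gamma(-s)|$ then kills both the remainder and the tail. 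Your route needs slightly more input ($\mathcal L^2\phi\in\mathcal S(\G)$ and $\partial_t e^{-t\mathcal L}\phi=-e^{-t\mathcal L}\mathcal L\phi$, which are available and are in fact exactly what the paper uses in its own estimate \eqref{1.20}), but in exchange it makes completely transparent where the limit $\mathcal L\phi(x)$ comes from. For full rigour you should write the remainder in integral form, $g(t)-g(0)+t\mathcal L\phi(x)=\int_0^t\int_0^\tau e^{-\sigma\mathcal L}\mathcal L^2\phi(x)\,d\sigma\,d\tau$, which sidesteps any worry about $C^2$ regularity at $t=0$.

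One bookkeeping error must be fixed: the identity $\Gamma(-s)(1-s)=-\Gamma(1-s)$ and the claim $\Gamma(1-s)\to1$ are both false (indeed $\Gamma(1-s)\sim1/(1-s)\to\infty$, as you yourself note in your final sentence). The correct computation is $\Gamma(-s)=\Gamma(1-s)/(-s)$, hence $\Gamma(-s)(1-s)=-(1-s)\Gamma(1-s)/s=-\Gamma(2-s)/s$, so the main term equals $s\,\mathcal L\phi(x)/\Gamma(2-s)\to\mathcal L\phi(x)$ since $\Gamma(2-s)\to\Gamma(1)=1$. Your two slips happen to cancel and the stated limit is right, but as written the intermediate identities are wrong.
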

\begin{proof}
Using the change of variables $\xi=t^{1-s}$ on \eqref{frac-lap}, we get
\begin{equation}\mathcal{L}^s\phi(x)=\frac{1}{(1-s)\Gamma(-s)} \int_0^{\infty} (e^{-\xi^{\frac{1}{1-s}\mathcal{L}}}\phi(x)-\phi(x))\xi^{-\frac{1}{1-s}} d \xi.
\end{equation}
Clearly, if $0\leq \xi <1$ then $\lim_{s \to 1^-}\xi^{\frac{1}{1-s}}=0$. Since $\phi \in \mathcal{S}(\G)$, by \cite[Theorem 3.1(ii)]{Fol75} (notice that $\mathcal{D(\G)}$ is $L^{\infty}$-dense in $\mathcal{S}(\G)$) we have
\begin{equation}\label{1.17}
\left\| \frac{e^{-h\mathcal{L}}\phi-\phi}{h}+\mathcal{L}\phi \right\|_{L^{\infty}(\G)} \to 0 \ \mathrm{as} \ h \to 0.
\end{equation}
Set $c(s)=\frac{1}{(1-s)\Gamma(-s)}=\frac{-s}{(1-s)\Gamma(1-s)}$. We claim that for every $x \in \G$
\begin{equation}\label{1.18}
\lim_{s \to 1^-} c(s) \int_0^1 (e^{-\xi^{\frac{1}{1-s}\mathcal{L}}}\phi(x)-\phi(x))\xi^{-\frac{1}{1-s}} d \xi =\frac{1}{2}\mathcal{L}(x).
\end{equation}
Indeed, denoting by $f_{s,x}(\xi):=((e^{-\xi^{\frac{1}{1-s}\mathcal{L}}}\phi(x)-\phi(x))\xi^{-\frac{1}{1-s}}$ and using \eqref{1.17} we get
\begin{equation}\label{1.19}
f_{s,x}(\xi) \to -\mathcal{L}(x)
\end{equation}
for every $x \in \G$. Moreover,
\begin{equation}\label{1.20}
\begin{split}
|f_{s,x}(\xi)|&=\left| ((e^{-\xi^{\frac{1}{1-s}\mathcal{L}}}\phi(x)-\phi(x))\xi^{-\frac{1}{1-s}} \right|=\left| \frac{1}{\xi^{\frac{1}{1-s}}} \int_0^{\xi^{\frac{1}{1-s}}} \frac{d}{d\tau} e^{-\tau \mathcal{L}}\phi(x) d \tau \right|\\
&= \left| \frac{1}{\xi^{\frac{1}{1-s}}} \int_0^{\xi^{\frac{1}{1-s}}}  e^{-\tau \mathcal{L}} \mathcal{L}\phi(x) d \tau \right| \leq \| \mathcal{L}\phi\|_{L^{\infty}(\G)},
\end{split}
\end{equation}
where in the last inequality we used that $\phi \in \mathcal{S}(\G)$ and the fact, proved in \cite[Proposition 3.2.1]{BH91}, that the heat semigroup preserves one-sided bounds, namely
\begin{equation}\label{1.6}
f \leq C \mathrm{ \ a.e. \ in \ }\G \Rightarrow e^{-t\mathcal{L}}f \leq C \mathrm{ \ a.e. \ in \ } \G \qquad \forall t \geq 0.
\end{equation}
Thus, our claim \eqref{1.18} follows using the definition of $c(s)$, hence from the fact that $\lim_{s\to 1^-}c(s)=-1$, and from \eqref{1.19} and \eqref{1.20}. Furthermore, using \eqref{1.6}, we obtain
\begin{align*}
c(s) \int_{1}^{\infty} |e^{-\xi^\frac{1}{1-s}\mathcal{L}}\phi(x)-\phi(x)| \xi^{-\frac{1}{1-s}} d \xi &\leq 2c(s) \|\phi\|_{L^{\infty}(\G)} \int_1^{\infty}\xi^{-\frac{1}{1-s}}d \xi\\
&= - 2  \|\phi\|_{L^{\infty}(\G)}\frac{-s}{(1-s)\Gamma(1-s)} \frac{1-s}{-s}\\
&= - 2 \frac{1}{\Gamma(1-s)}  \|\phi\|_{L^{\infty}(\G)} \to 0
\end{align*}
as $s \to 1^-$. Thus, letting $s \to 1^-$ we get the thesis.
\end{proof}
\subsection{Generalized Riesz potentials}
\label{section-kernel}
In this section we introduce a family of generalized Riesz potentials.
For every $ \alpha< Q$, $\alpha \neq 0, -2, -4, \cdots$ we set $P_{\alpha}:\G \to \R$ as 
$$ P_{\alpha}(x)=\frac{1}{\Gamma \left( \frac{\alpha}{2} \right)}\int_0^{\infty} t^{\frac{\alpha}{2}-1} h(t,x) dt.$$
We observe that $P_{\alpha}$ coincides with the map $R_{\alpha}$ defined in \cite{Fol75, FF15, FMPPS18} for $0<\alpha<Q$.
We collect below some properties of the $P_{\alpha}$ (see \cite[Proposition 3.17]{Fol75}).
\begin{enumerate}
\item \label{item1} By Theorem \ref{T1}(vi), for every $ x \neq 0$ $$h(t,x)=o(t^N) \ \mathrm{as} \ t \to 0^+$$ for every $N \in \N$. For every $x \in \G$, $$h(t,x) \leq c t^{-\frac{Q}{2}}$$ for some constant $c\geq 0$ and for every $t>0$. \\
Thus, $P_{\alpha}(x)$ converges absolutely if $\alpha<Q$ for every $x \neq 0$.
\item\label{item2} For every $\alpha<Q$, $P_{\alpha} \in C^{\infty}(\G \setminus \{0\})$, since $h(t,x) \in C^{\infty}(\widehat{\G }\setminus \{(0,0)\})$. 
\item \label{item-hom} For every $0<\alpha<Q$, $P_{\alpha}$ is non-negative. Moreover, for every $\alpha<Q$ it is homogeneous of degree $\alpha-Q$ with respect to the intrinsic dilations $\delta_s$, $s>0$. Let us show the homogeneity, for $s>0$ we have
\begin{align*}
P_{\alpha}(\delta_s(x))&= \frac{1}{\Gamma(\frac{\alpha}{2})} \int_0^{\infty} t^{\frac{\alpha}{2}-1} h(t,\delta_s(x)) dt\\
&= \frac{1}{\Gamma(\frac{\alpha}{2})} \int_0^{\infty} t^{\frac{\alpha}{2}-1} h\left(\frac{1}{s^2}s^2t,\delta_s(x)\right) dt\\
&= \frac{1}{\Gamma(\frac{\alpha}{2})} \int_0^{\infty} t^{\frac{\alpha}{2}-1} s^{-Q}h\left(\frac{1}{s^2}t,x \right) dt,
\end{align*}
where we exploited Theorem \ref{T1}(v). Let us now perform the change of variables $t=s^2\tau$ so that we get
\begin{align*}
P_{\alpha}(\delta_s(x))&= \frac{1}{\Gamma(\frac{\alpha}{2})} \int_0^{\infty} (s^2 \tau)^{\frac{\alpha}{2}-1} s^{-Q}h(\tau,x) s^2 d\tau\\
&=s^{\alpha-2-Q+2}\frac{1}{\Gamma(\frac{\alpha}{2})} \int_0^{\infty}  \tau^{\frac{\alpha}{2}-1} h(\tau,x) d\tau\\
&=s^{\alpha-Q}P_{\alpha}(x).
\end{align*}
\end{enumerate}
We introduce for every $ \alpha<Q$ and for every $x \in \G$, $x \neq 0$, the function
\begin{equation}
 \| x \|_{\alpha}:=\left( \int_{0}^{\infty} t^{\frac{\alpha}{2}-1} h(t,x) dt \right)^{\frac{1}{\alpha-Q}}.
 \end{equation}
Notice that for every $\alpha<Q$, $\alpha \neq 0,-2,-4, \ldots$ and $x \in \G$, $x \neq 0$, the equality $$P_{\alpha}(x)= \frac{1}{\Gamma(\frac{\alpha}{2})} \|x\|_{\alpha}^{\alpha-Q}$$
holds.
The map $\| \cdot \|_{\alpha}$ is a homogeneous norm on $\G$ smooth outside the identity element (see \cite[Section 1.1]{FMPPS18} and Item \ref{item-hom} in Section \ref{section-kernel}). Since on a Carnot group every homogeneous norms are equivalent, for every $\alpha<Q$ there exists a constant $c_{\alpha}>0$ such that
\begin{equation}\label{relcc-alfa}
\frac{1}{c_{\alpha}} \| x \|_c \leq \|x \|_{\alpha} \leq c_{\alpha} \|x\|_c,
\end{equation}
for every $x \in \G$.
\begin{Remark}\label{rem4}
Let us verify that if $0<\alpha<Q$, then $P_{\alpha} \in L^1_{loc}(\G)$. In fact, by combining the coarea formula of Corollary \ref{cor-coarea}, the homogeneity of the measure $\mathcal{S}_{\infty}^{Q-1}$ and the homogeneity of the norm $\| \cdot \|_{\alpha}$, for every $R>0$ we get 
\begin{align*}
\frac{1}{\Gamma(\frac{\alpha}{2}) }\int_{\B_c(0,R)} \|x \|_{\alpha}^{\alpha-Q} dx &= \frac{1}{\Gamma(\frac{\alpha}{2}) }\int_0^R \int_{\partial \B_c(0,r)} \|x \|_{\alpha}^{\alpha-Q} d\mathcal{S}^{Q-1}_{\infty}(x) dt\\
&= \frac{1}{\Gamma(\frac{\alpha}{2}) }\int_0^R \int_{\partial \B_c(0,1)} r^{\alpha-Q} \|z \|_{\alpha}^{\alpha-Q} r^{Q-1} d\mathcal{S}^{Q-1}_{\infty}(z) dt \\
&\leq \frac{1}{\Gamma(\frac{\alpha}{2}) } \int_{0}^{R} r^{\alpha-1} dr \  \mathcal{S}_{\infty}^{Q-1}( \partial \B_c(0,1)) \max_{\partial \B_c(0,1)} \|z\|_{\alpha}^{\alpha-Q},  
\end{align*}
which is finite since $\alpha>0$. 
\end{Remark}
Taking into account the previous Items \ref{item2}, \ref{item-hom} and Remark \ref{rem4}, it holds that if $0<\alpha<Q$, then $P_{\alpha}$ is a kernel of type $\alpha$, according to the definition in \cite{Fol75}. We list below various results known in the literature that relate the maps $P_{\alpha}$ with $\mathcal{L}$ and $\mathcal{L}^s$, for $0<s<1$. We give the details of their own proofs only in those cases that are useful for the sequel of the paper.
\begin{itemize}
\item[(i)] The map $P_2$ is the fundamental solution of $\mathcal{L}$. Indeed for $x \in \G$ $$\mathcal{L} P_2(x)= \int_0^{\infty}  \mathcal{L}h(t,x) dt= -\int_0^{\infty} \frac{\partial}{\partial t}h(t,x) dt =- \lim_{t \to \infty} h(t,x)+h(0,x)=-\lim_{t \to \infty} h(t,x)=0, $$
where we exploited Theorem \ref{T1}(iii),(vi) and the known estimates about the derivatives of the heat kernel (see for example the proof of \cite[Proposition 1.75]{FS82})
\item[(ii)] By \cite[Proposition 4.1]{GT1}, for every $s \in (0,1)$ the kernel $P_{2s}$ is the fundamental solution of $\mathcal{L}^s$.
\item[(iii)]  By \cite[Theorem 3.15(iii), Proposition 3.18]{Fol75} and \cite[Lemma 8.5]{Gar18} we can deduce that for every $s \in (0,1)$ and $u \in C^{\infty}_0(\G)$ the equality $$ \mathcal{L}^su= \mathcal{L} u \star P_{2-2s}$$
holds.
\item[(iv)]\label{quattro} For every $s \in (0,1)$ the following relation holds outside the origin
$$ \mathcal{L} P_{2-2s}=P_{-2s}.$$
In fact, let $x \in \G$, $x \neq 0$
\begin{align*}
\mathcal{L}P_{2-2s}(x)&= \mathcal{L} \left( \frac{1}{\Gamma \left( 1-s \right)} \int_0^{\infty} t^{\frac{2-2s}{2}-1} h(t,x) dt \right)\\
& =  \frac{1}{-s \Gamma(-s)} \int_0^{\infty} t^{-s} \mathcal{L} h(t,x) dt \\
& = - \left( \frac{1}{-s \Gamma(-s)} \int_0^{\infty} t^{-s} \frac{\partial}{\partial t} h(t,x) dt \right)\\
&=  \frac{1}{-s \Gamma(-s)} \int_0^{\infty}(-s) t^{-s-1}  h(t,x) dt \\
&=   \frac{1}{ \Gamma(-s)} \int_0^{\infty} t^{-s-1}  h(t,x) dt =P_{-2s}(x),
\end{align*}
where we have considered as well the known estimates about the derivatives of the heat kernel (see \cite[Proposition 1.75]{FS82}) and the integration by parts.
\item[(v)] Reading \cite[Theorem 3.11]{FF15} in light of item (iv), for every $s \in (0,1)$ if $u \in \mathcal{S}(\G)$, then $\mathcal{L}^su \in L^2(\G)$ and 
\begin{equation}\label{3.11} \mathcal{L}^su(x)= P.V. \int_{\G} (u(y)-u(x)) P_{-2s}(x^{-1}y) dy.
\end{equation}
We have taken into account the equivalence proved in \cite[Lemma 8.5]{Gar18}.

\item[(vi)]For every $ 0< \alpha, \beta<Q$ such that $\alpha+\beta<Q$ and for every $x \in \G$, $x \neq 0$ the following relation holds
\begin{equation}
\label{convolution}
P_{\alpha+ \beta}(x)= P_{\alpha} \star P_{\beta}(x).
\end{equation}
Let us show \eqref{convolution}:
\begin{align*}
P_{\alpha}\star P_{\beta}(x)&= \frac{1}{\Gamma(\frac{\alpha}{2})\Gamma(\frac{\beta}{2})} \int_{\G} \int_0^{\infty} t^{\frac{\alpha}{2}-1}h(t, y)dt \int_0^{\infty} \tau^{\frac{\beta}{2}-1} h(\tau, y^{-1}x) d \tau  dy\\
&= \frac{1}{\Gamma(\frac{\alpha}{2})\Gamma(\frac{\beta}{2})} \int_{\G} \int_{(0, \infty)\times(0, \infty)} t^{\frac{\alpha}{2}-1}\tau^{\frac{\beta}{2}-1} h(t,y) h(\tau, y^{-1}x) dt d \tau  dy\\
&=\frac{1}{\Gamma(\frac{\alpha}{2})\Gamma(\frac{\beta}{2})}  \int_{(0, \infty)\times(0, \infty)} t^{\frac{\alpha}{2}-1}\tau^{\frac{\beta}{2}-1} \int_{\G} h(t, y) h(\tau, y^{-1}x) dy dt d \tau  \\
&= \frac{1}{\Gamma(\frac{\alpha}{2})\Gamma(\frac{\beta}{2})}  \int_{(0, \infty)\times(0, \infty)} t^{\frac{\alpha}{2}-1}\tau^{\frac{\beta}{2}-1} h(t + \tau, x) dt d \tau ,
\end{align*}
where we used Fubini's theorem and Theorem \ref{T1}(iv). Then, we consider the change of variables $$ \begin{cases} t=s(1-u)\\
\tau=us, \end{cases}$$ getting
\begin{align*}
P_{\alpha} \star P_{\beta}(x)&=\frac{1}{\Gamma(\frac{\alpha}{2})\Gamma(\frac{\beta}{2})} \int_{(0, \infty) \times (0,1)} (s(1-u))^{\frac{\alpha}{2}-1}(us)^{\frac{\beta}{2}-1} h(s, x) s  \ dsdu\\
&= \frac{1}{\Gamma(\frac{\alpha}{2})\Gamma(\frac{\beta}{2})} \int_0^{\infty} s^{\frac{\alpha}{2}-1}s^{\frac{\beta}{2}-1} s h(s, x)  ds \int_0^1(1-u)^{\frac{\alpha}{2}-1} u^{\frac{\beta}{2}-1} du\\
&= \frac{1}{\Gamma(\frac{\alpha}{2})\Gamma(\frac{\beta}{2})} \int_0^{\infty} s^{\frac{\alpha+ \beta}{2}-1} h(s, x)  ds \ B\left( \frac{\alpha}{2}, \frac{\beta}{2}\right),
\end{align*}
where $B(\cdot, \cdot )$ denotes the Beta Euler function. Exploiting the relation between the Beta and the Gamma Euler functions, we have
\begin{align*}
P_{\alpha} \star P_{\beta}(x)&= \frac{1}{\Gamma(\frac{\alpha}{2})\Gamma(\frac{\beta}{2})} \int_0^{\infty} s^{\frac{\alpha+ \beta}{2}-1} h(s, x)  ds \frac{\Gamma(\frac{\alpha}{2}) \Gamma(\frac{\beta}{2})}{\Gamma(\frac{\alpha+\beta}{2})}\\
&= \frac{1}{\Gamma(\frac{\alpha+\beta}{2})}\int_0^{\infty} s^{\frac{\alpha+ \beta}{2}-1} h(s, x)  ds= P_{\alpha+\beta}(x),
\end{align*}
and \eqref{convolution} is proved.
\end{itemize}

\subsubsection{Regularity}
Here we deal with the regularity of the map $\alpha \to P_{\alpha}(x)$, for $ x \in \G \setminus \{0\}$.
\begin{prop}\label{analiti2}
For every $x \in \G, \ x \neq 0$, the map
\begin{align*}
(-\infty, Q) \ni \alpha \to \| x \|_{\alpha}^{\alpha-Q}&= \int_{0}^{\infty} t^{\frac{\alpha}{2}-1} h(t,x) dt 
\end{align*} is analytic.
\end{prop}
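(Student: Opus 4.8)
The plan is to prove the stronger fact that the integral extends to a \emph{holomorphic} function of a complex variable on the half-plane $\{z\in\C:\Ree z<Q\}$, and then to restrict it to the real axis. Fix $x\in\G\setminus\{0\}$ and, for $z\in\C$ with $\Ree z<Q$, set
$$F(z):=\int_0^\infty t^{z/2-1}h(t,x)\,dt.$$
By the definition of $\|x\|_\alpha$ one has $F(\alpha)=\|x\|_\alpha^{\alpha-Q}$ for every real $\alpha<Q$, so it is enough to show that $F$ is holomorphic on $\{\Ree z<Q\}$.

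First I would record absolute convergence together with a locally uniform integrable majorant. Let $K\subset\{\Ree z<Q\}$ be compact and pick $a,b\in\R$ with $a\le\Ree z\le b<Q$ for all $z\in K$. Since $|t^{z/2-1}|=t^{\Ree z/2-1}$, splitting the integral at $t=1$ gives, for every $z\in K$,
$$|t^{z/2-1}h(t,x)|\le g(t):=\begin{cases}t^{a/2-1}h(t,x), & 0<t\le1,\\ t^{b/2-1}h(t,x), & t\ge1.\end{cases}$$
By Theorem \ref{T1}(vi) (equivalently Item \ref{item1}) we have $h(t,x)\le c\,t^{-Q/2}e^{-\|x\|^2/(ct)}$; this forces $h(t,x)$ to vanish faster than any power of $t$ as $t\to0^+$, so $g$ is integrable on $(0,1]$, while on $[1,\infty)$ it gives $g(t)\le c\,t^{b/2-1-Q/2}$, which is integrable there precisely because $b<Q$. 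This last point is exactly where the endpoint $Q$ enters. Hence $g\in L^1(0,\infty)$ and $F$ is well defined on $\{\Ree z<Q\}$.

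With this domination, analyticity follows from a routine Morera argument. For each fixed $t>0$ the map $z\mapsto t^{z/2-1}h(t,x)=e^{(z/2-1)\log t}h(t,x)$ is entire; dominated convergence using $g$ shows $F$ is continuous on $\{\Ree z<Q\}$; and for any closed triangle $T$ contained in this half-plane, Fubini's theorem (legitimate since $\partial T$ has finite length and $\Ree z$ ranges over a compact interval along $T$, so $g$ dominates uniformly) yields
$$\oint_{\partial T}F(z)\,dz=\int_0^\infty h(t,x)\Big(\oint_{\partial T}t^{z/2-1}\,dz\Big)\,dt=0,$$
the inner integral vanishing by Cauchy's theorem. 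By Morera's theorem $F$ is holomorphic on $\{\Ree z<Q\}$, so its restriction to $(-\infty,Q)$ is real-analytic, which is the claim. (Alternatively one differentiates under the integral sign: the derivative introduces a factor $\tfrac12\log t$, which is absorbed by replacing $g$ with $g$ times a fixed power of $|\log t|$, still integrable.)

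The only real obstacle is the convergence bookkeeping, namely combining the two regimes of the Gaussian estimate for $h$ — superpolynomial decay as $t\to0^+$, which annihilates the singularity of $t^{z/2-1}$ for every $z$, and the bound $h(t,x)\le c\,t^{-Q/2}$ as $t\to\infty$, which is what confines $z$ to $\Ree z<Q$ — into a single $z$-uniform integrable majorant; once that is in place, the holomorphy is immediate.
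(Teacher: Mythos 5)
Your argument is correct, but it takes a genuinely different route from the paper. You prove holomorphy of $z\mapsto\int_0^\infty t^{z/2-1}h(t,x)\,dt$ on the half-plane $\{\mathrm{Re}\,z<Q\}$ via a locally uniform integrable majorant together with Morera's theorem, and then read off real-analyticity on $(-\infty,Q)$ by restriction; the two regimes you isolate (superpolynomial decay of $h(t,x)$ as $t\to0^+$ for $x\neq0$, and the bound $h(t,x)\leq c\,t^{-Q/2}$ at infinity, which is where the threshold $Q$ enters) are exactly the right ones, and the Fubini step over $\partial T$ is justified by your majorant. The paper instead stays entirely on the real line: it differentiates under the integral sign $k$ times, picking up a factor $(\tfrac12\ln t)^k$, and then establishes by explicit integration by parts the recursions \eqref{ric1} and \eqref{ric2} for the auxiliary integrals $a_k$ and $b_k$, arriving at the quantitative bound \eqref{stima-ana} of the form $C\,M^{k+1}k!$ on every compact $[a,b]\subset(-\infty,Q)$, which is the classical sufficient condition for real-analyticity. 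Your proof is shorter and conceptually cleaner; what the paper's computation buys is the explicit derivative estimate \eqref{stima-ana} itself, which is not a throwaway: it is reused verbatim to prove the analyticity of $\sigma(\alpha)$ in Item \ref{itemcalfa} of Section \ref{sectioncalfa}, and the same $a_k$, $b_k$ recursions drive the derivative bounds in Proposition \ref{prop-schw} and its successors. If one adopted your complex-analytic proof, those later arguments would either need the bounds re-derived or would have to be recast as holomorphy statements for the corresponding parameter integrals (which is also feasible, but is a larger rewrite than this single proposition).
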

\begin{proof}
We fix $x \in \G$, $x \neq 0$, $t>0$ and $k \in \N$ and we compute the $k$-th derivative of the map $\alpha \to t^{\frac{\alpha}{2}-1} h(t,x)$ that is
$$ \frac{\partial^k}{\partial \alpha^k} \Big( t^{\frac{\alpha}{2}-1} h(t,x) \Big)= t^{\frac{\alpha}{2}-1} \left( \frac{1}{2} \right)^k \ln^k(t) h(t,x).$$
We notice that from the estimates in Item \ref{item1} of Section \ref{section-kernel} we have $$\int_0^{\infty}t^{\frac{\alpha}{2}-1} \left( \frac{1}{2} \right)^k \ln^k(t) h(t,x)dt < \infty,$$ for every $\alpha<Q$, then we can deduce that for every $\alpha<Q$ we have
$$ \frac{\partial^k}{\partial \alpha^k} \int_0^{\infty} t^{\frac{\alpha}{2}-1} h(t,x)dt=  \int_0^{\infty} t^{\frac{\alpha}{2}-1} \left( \frac{1}{2} \right)^k \ln^k(t) h(t,x)dt.$$
Now, we claim that for every closed interval $[a,b] \subset (-\infty, Q)$ there are two positive constants $C$ and $M$ such that
\begin{equation}
\label{claim}
\left| \frac{\partial^k}{\partial \alpha^k} \int_0^{\infty} t^{\frac{\alpha}{2}-1} h(t,x)dt  \right| \leq C M^{k+1} k!,
\end{equation}
for every $\alpha \in [a,b]$. 
This would conclude the proof. Hence, we need to prove \eqref{claim}. First, we consider
\begin{equation}
\label{stima1}
\left| \frac{\partial^k}{\partial \alpha^k} \int_0^{\infty} t^{\frac{\alpha}{2}-1} h(t,x)dt  \right| \leq \left| \int_0^{1} t^{\frac{\alpha}{2}-1} \left( \frac{1}{2} \right)^k \ln^k(t) h(t,x)dt \right| + \left| \int_1^{\infty} t^{\frac{\alpha}{2}-1} \left( \frac{1}{2} \right)^k \ln^k(t) h(t,x)dt \right|.
\end{equation}
By combining the homogeneity of $h$ and Theorem \ref{T1}(vii), which ensures that $h(1, \cdot ) \in \mathcal{S}(\G)$, we can estimate the second term of the right-hand side of \eqref{stima1} getting
\begin{align*}
\left|\int_1^{\infty} t^{\frac{\alpha}{2}-1} \left( \frac{1}{2} \right)^k \ln^k(t) h(t,x)dt \right| &=\int_1^{\infty} t^{\frac{\alpha}{2}-1} \left( \frac{1}{2} \right)^k \ln^k(t) h(t,x)dt\\ &= \int_1^{\infty} t^{\frac{\alpha}{2}-1-\frac{Q}{2}} \left( \frac{1}{2} \right)^k \ln^k(t) h(1,\delta_{\frac{1}{\sqrt{t}}}(x))dt\\
& \leq \|h(1, \cdot) \|_{L^{\infty}(\G)}\int_1^{\infty} t^{\frac{\alpha}{2}-1-\frac{Q}{2}} \left( \frac{1}{2} \right)^k \ln^k(t)dt.
\end{align*}
Now, for $k \in \N$ we set 
\begin{align}
\label{ak}
a_k:&= \int_1^{\infty} t^{\frac{\alpha-Q}{2}-1} \left( \frac{1}{2} \right)^k \ln^k(t) dt,
\end{align}
and we notice that
\begin{align*}
a_k&=\lim_{a \to \infty} \left[  \frac{2}{\alpha-Q} t^{\frac{\alpha-Q}{2}} \left( \frac{1}{2} \right)^k \ln^k(t) \right]_{t=1}^{a}+ \int_{1}^{\infty} \frac{2}{Q-\alpha} t^{\frac{\alpha}{2}-1-\frac{Q}{2}}  \left( \frac{1}{2} \right)^k k \ln^{k-1}(t)dt\\
&= \frac{1}{Q-\alpha} k a_{k-1},  \notag  
\end{align*}
where we have exploited the integration by parts.
Then, recursively we get that 
\begin{equation}
\label{ric1}
a_k=k \frac{1}{Q-\alpha} a_{k-1}= k(k-1)\frac{1}{(Q-\alpha)^2} a_{k-2} = \ldots = k! \frac{1}{(Q-\alpha)^{k}}a_0=k! \frac{2}{(Q-\alpha)^{k+1}}.
\end{equation}
Let us now focus on the first contribute of the right-hand side of \eqref{stima1}.
We exploit the fact that $h(t,x)=o(t^N)$ as $t \to 0$ for every $x \neq 0$ and for every $N \in \N$ (hence for every $N \in \N$, there is $c>0$ such that $h(t,x)\leq ct^N$ as $t<1$) and we can state that
$$\left| \int_0^{1} t^{\frac{\alpha}{2}-1} \left( \frac{1}{2} \right)^k \ln^k(t) h(t,x)dt \right| \leq c \left| \int_0^{1} t^{\frac{\alpha}{2}-1+N} \left( \frac{1}{2} \right)^k \ln^k(t) dt  \right| \leq c \int_0^{1} t^{\frac{\alpha}{2}-1+N} \left( \frac{1}{2} \right)^k |\ln(t)|^k dt. $$
Let us now set for $k \in \N$ and $N \in \N$ the term
\begin{align}
\label{bk}
b_k:&= \int_0^{1} t^{\frac{\alpha}{2}-1+N} \left( \frac{1}{2} \right)^k |\ln(t)|^k dt.
\end{align}
We have that
\begin{align*}
b_k&=\lim_{a \to 0} \left[ t^{\frac{\alpha}{2}+N} \frac{2}{\alpha+2N} \left( \frac{1}{2} \right)^k |\ln^k (t)| \right]_{t=a}^1 - \int_0^1 \frac{1}{\alpha+2N}  t^{\frac{\alpha}{2}-1+N} \left( \frac{1}{2} \right)^{k-1} k |\ln (t)|^{k-1} \sgn(\ln(t))dt. \notag
\end{align*}
If $N>-\frac{\alpha}{2}$, we can go on getting 
\begin{equation}
\label{ric2}
\begin{split}
b_k &=  \int_0^1 \frac{1}{\alpha+2N}  t^{\frac{\alpha}{2}-1+N} \left( \frac{1}{2} \right)^{k-1} k |\ln (t)|^{k-1} dt\\
&=\frac{1}{\alpha+2N} k b_{k-1}= \frac{1}{(\alpha+2N)^2}k(k-1)b_{k-2}= \ldots = \frac{1}{(\alpha+2N)^k} k! b_0=\frac{2}{(\alpha+2N)^{k+1}} k!. \\
\end{split}
\end{equation}
By combining \eqref{stima1}, \eqref{ric1} and \eqref{ric2}, we get that
\begin{equation}
\label{stima-ana}
\begin{split}
\left| \frac{\partial^k}{\partial \alpha^k} \int_0^{\infty} t^{\frac{\alpha}{2}-1} h(t,x)dt  \right| \leq  k! \| h(1, \cdot)\|_{L^{\infty}(\G)}\frac{2}{(Q-\alpha)^{k+1}} +  \frac{2c}{(\alpha+2N)^{k+1}} k!\\
=2(1+c)(1+\|h(1, \cdot)\|_{L^{\infty}(\G)}) k! \max \left\{ \frac{1}{Q-\alpha}, \frac{1}{\alpha+2N} \right\}^{k+1}.
\end{split}
\end{equation}
Since $N$ can be chosen arbitrarily large (once fixed $\alpha$), our claim \eqref{claim} is proved and this completes the proof.
\end{proof}
As a consequence of Proposition \ref{analiti2}, the map $\alpha \to P_{\alpha}(x)$ is analytic for $\alpha<Q$, for every $x \in \G, \ x \neq 0$, since it is a product of analytic functions.\\
Now we are in position to introduce the following map
\begin{equation}\label{sigmaalfa}
(-\infty,Q) \ni \alpha \to \sigma(\alpha):= \int_{\partial \B_c(0,1)} \| z \|_{\alpha}^{\alpha-Q} d \mathcal{S}^{Q-1}_{\infty}(z).
\end{equation}

\subsection{Main properties of the map $\sigma(\alpha)$}
\label{sectioncalfa}
In this subsection we collect and prove some properties of the map $\sigma$ defined in \eqref{sigmaalfa}.
\begin{enumerate}
\item The map $\sigma(\alpha)$ is well defined for every $\alpha<Q$. In fact, if we exploit the estimates in Item \ref{item1} of Section \ref{section-kernel}, we have
\begin{align*}
\sigma(\alpha)&= \int_{\partial \B_c(0,1)} \int_0^{\infty} t^{\frac{\alpha}{2}-1} h(t,x) dt d \mathcal{S}^{Q-1}_{\infty}(x)\\
&=  \int_{\partial \B_c(0,1)} \int_0^1  t^{\frac{\alpha}{2}-1} h(t,x) dt d \mathcal{S}^{Q-1}_{\infty}(x) +   \int_{\partial \B_c(0,1)} \int_1^{\infty} t^{\frac{\alpha}{2}-1} h(t,x) dt d \mathcal{S}^{Q-1}_{\infty}(x)  \\
& \leq c_1  \int_{\partial \B_c(0,1)} \int_0^1 t^{\frac{\alpha}{2}-1} t^Nd \mathcal{S}^{Q-1}_{\infty}(x) dt + c_2 \int_{\partial \B_c(0,1)} \int_1^{\infty} t^{\frac{\alpha}{2}-1-\frac{Q}{2}}  dt d \mathcal{S}^{Q-1}_{\infty}(x) \\
&= \mathcal{S}^{Q-1}_{\infty}(\partial \B_c(0,1)) \left( c_1\int_0^1   t^{\frac{\alpha}{2}-1+N} dt + c_2 \int_1^{\infty} t^{\frac{\alpha}{2}-1-\frac{Q}{2}} dt\right),
\end{align*}
for some positive constants $c_1$ and $c_2$ for every arbitrarily fixed $N \in \N$. Therefore, $\sigma(\alpha)$ converges if $\alpha <Q$. 
\item \label{itemcalfa} We show that $\alpha \to \sigma(\alpha)$ is analytic if $\alpha<Q$. For every $k \in \N$ we have
\begin{equation}
\label{der-c-alfa}
\begin{split}
\left| \frac{\partial}{\partial \alpha^k} \sigma(\alpha) \right|&=\left| \frac{\partial^k}{\partial \alpha^k} \int_{\partial \B_c(0,1)} \int_0^{\infty} t^{\frac{\alpha}{2}-1} h(t,x) dt d\mathcal{S}^{Q-1}_{\infty}(x)\right| \\
&=\left| \int_{\partial \B_c(0,1)} \int_0^{\infty}\frac{\partial^k}{\partial \alpha^k} \left( t^{\frac{\alpha}{2}-1} h(t,x) \right) dt d\mathcal{S}^{Q-1}_{\infty}(x)\right|\\
&=\left| \int_{\partial \B_c(0,1)} \int_0^{\infty} t^{\frac{\alpha}{2}-1} \left( \frac{1}{2}\right)^k \ln^k(t) h(t,x)  dt d\mathcal{S}^{Q-1}_{\infty}(x)\right|,
\end{split}
\end{equation}
since for every $\alpha<Q$ and $k=0, 1, \dots$ we have that
\begin{align}\label{servez}
&\int_{\partial \B_c(0,1)} \int_0^{\infty} t^{\frac{\alpha}{2}-1} \left( \frac{1}{2}\right)^k \ln^k(t) h(t,x)  dt d\mathcal{S}^{Q-1}_{\infty}(x) \leq \notag \\ 
& \mathcal{S}^{Q-1}_{\infty}(\partial \B_c(0,1)) \int_0^{\infty} t^{\frac{\alpha}{2}-1} \left( \frac{1}{2}\right)^k \ln^k(t) h(t,x_M)  dt < \infty,
\end{align}
where $x_M \in \partial \B_c(0,1)$ is such that $h(t,x_M)=\max_{\partial \B_c(0,1)} \left( \int_0^{\infty} t^{\frac{\alpha}{2}-1} \left( \frac{1}{2}\right)^k \ln^k(t) h(t,x)  dt\right)$. By combining \eqref{der-c-alfa}, \eqref{servez} and \eqref{stima-ana}, we get that
\begin{align*}
\left| \frac{\partial^k}{\partial \alpha^k} \sigma(\alpha) \right| &\leq \mathcal{S}^{Q-1}_{\infty}(\partial \B_c(0,1)) \left| \frac{\partial^k}{\partial \alpha^k} \int_0^{\infty} t^{\frac{\alpha}{2}-1} h(t,x_M)dt  \right| \\
&= \mathcal{S}^{Q-1}_{\infty}(\partial \B_c(0,1))  2(1+c)(1+\|h(1, \cdot)\|_{L^{\infty}(\G)}) k! \max \left\{ \frac{1}{Q-\alpha}, \frac{1}{\alpha+2N} \right\}^{k+1},
\end{align*}
so that $\sigma(\alpha)$ is analytic with respect to $\alpha$ for $\alpha<Q$.
\item \label{fa2!} Let us prove that 
$$\lim_{\alpha \to 0} \sigma(\alpha)=2,$$
hence that
$$ \lim_{\alpha \to 0} \int_{\partial \B_c(0,1)} \| z \|_{\alpha}^{\alpha-Q} d \mathcal{S}^{Q-1}_{\infty}(z) =2.$$
Let us start by considering Theorem \ref{T1}(iii). By exploiting the coarea formula and the homogeneity of $\mathcal{S}^{Q-1}_{\infty}$, for every $t>0$ we get
\begin{equation}
\label{fa1}
\begin{split}
1=\int_\G h(t,x) dx&= \int_0^{\infty} \int_{\partial \B_c(0,r)} h(t,x) d \mathcal{S}^{Q-1}_{\infty}(x) dt\\
&=\int_0^{\infty}r^{Q-1} \int_{\partial \B_c(0,1)} h(t, \delta_r(z)) d \mathcal{S}^{Q-1}_{\infty}(z) dr.
\end{split}
\end{equation}
Now, let us consider 
\begin{align*}
\sigma(\alpha) &= \int_{\partial \B_c(0,1) } \| z \|_{\alpha}^{\alpha-Q} d \mathcal{S}^{Q-1}_{\infty}(z)\\
&= \int_{\partial \B_c(0,1)} \int_0^{\infty} t^{\frac{\alpha}{2}-1}h(t,z) dt d \mathcal{S}^{Q-1}_{\infty}(z)\\
&= \int_0^{\infty}t^{\frac{\alpha}{2}-1} \int_{\partial \B_c(0,1)} h(t,z) d \mathcal{S}^{Q-1}_{\infty}(z) dt\\
&= \int_0^{\infty}t^{\frac{\alpha}{2}-1} \int_{\partial \B_c(0,1)} (\sqrt{t})^{-Q} h(1,\delta_{\frac{1}{\sqrt{t}}}(z)) d \mathcal{S}^{Q-1}_{\infty}(z) dt\\
&= \int_0^{\infty}t^{\frac{\alpha}{2}-1- \frac{Q}{2}} \int_{\partial \B_c(0,1)} h(1,\delta_{\frac{1}{\sqrt{t}}}(z)) d \mathcal{S}^{Q-1}_{\infty}(z) dt,
\end{align*}
where we used the homogeneity of $h$. Let us change variable setting $r= \frac{1}{\sqrt{t}}$, hence $t=r^{-2}$ ($dt=-2r^{-3}$) and we get
\begin{align*}
\sigma(\alpha) &= -2 \int_{\infty}^{0}r^{-\alpha+2+Q} \int_{\partial \B_c(0,1)} h(1,\delta_{r}(z)) d \mathcal{S}^{Q-1}_{\infty}(z) r^{-3} dr\\
&= 2 \int_{0}^{\infty}r^{-\alpha-1+Q} \int_{\partial \B_c(0,1)} h(1,\delta_{r}(z)) d \mathcal{S}^{Q-1}_{\infty}(z) dr,
\end{align*}
which converges to 
\begin{align*}
 2 \int_{0}^{\infty}r^{Q-1} \int_{\partial \B_c(0,1)} h(1,\delta_{r}(z)) d \mathcal{S}^{Q-1}_{\infty}(z) dr,
\end{align*}
as $\alpha \to 0$. Hence the proof of Item \ref{fa2!} is now complete taking into account \eqref{fa1}.

\end{enumerate}

\section{Analytic continuation in $\G$}
\label{sec-ext}
Let $\phi \in \mathcal{S}(\G)$ and $x \in \G$.
For every $r>0$ and $0<\alpha<Q$ we denote the closed ball $$\B_{\alpha}(x,r):= \{ y \in \G : \| x^{-1} y \|_{\alpha} \leq r \}.$$  
For every $ \alpha>0 $ we introduce the map
\begin{equation}\label{psi}\psi(x,\alpha)= \int_{\G} \phi(xy)P_{\alpha}(y) dy= \frac{1}{\Gamma(\frac{\alpha}{2})} \int_\G \phi(xy) \|y \|_{\alpha}^{\alpha-Q} dy.
\end{equation}

\begin{prop}\label{prop31}
For every $x \in \G$ the map $(0, Q) \ni \alpha \to \psi(x, \alpha)$ is well defined.
\end{prop}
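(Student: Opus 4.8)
The plan is to verify that, for each fixed $x\in\G$ and each $\alpha\in(0,Q)$, the integral in \eqref{psi} converges absolutely; since $\Gamma(\alpha/2)$ is a finite nonzero constant on $(0,Q)$, it is enough to bound $\int_{\G}|\phi(xy)|\,\|y\|_{\alpha}^{\alpha-Q}\,dy$. I would split $\G=\B_c(0,1)\cup\big(\G\setminus\B_c(0,1)\big)$ and control the singularity of $P_{\alpha}$ at the origin and its behaviour at infinity separately.

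\emph{Near the origin.} On $\B_c(0,1)$ the function $\phi\in\mathcal{S}(\G)$ is bounded, so $\int_{\B_c(0,1)}|\phi(xy)|\,\|y\|_{\alpha}^{\alpha-Q}\,dy\le \|\phi\|_{L^\infty(\G)}\int_{\B_c(0,1)}\|y\|_{\alpha}^{\alpha-Q}\,dy$, and the finiteness of the last integral is precisely the content of Remark \ref{rem4}: applying the coarea formula of Corollary \ref{cor-coarea}, the $Q$-homogeneity of $\mathcal{S}^{Q-1}_{\infty}$ and the homogeneity of $\|\cdot\|_{\alpha}$, it is dominated by a constant times $\int_0^1 r^{\alpha-1}\,dr$, which is finite exactly because $\alpha>0$. (Alternatively one can bypass the coarea formula using only the equivalence of homogeneous norms, the smoothness and $(\alpha-Q)$-homogeneity of $P_\alpha$ away from the origin, and the $Q$-homogeneity of Lebesgue measure.)

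\emph{Away from the origin.} For $\|y\|_c\ge 1$ the equivalence of homogeneous norms \eqref{relcc-alfa} gives $\|y\|_{\alpha}\ge c_{\alpha}^{-1}\|y\|_c\ge c_{\alpha}^{-1}$, whence $\|y\|_{\alpha}^{\alpha-Q}\le c_{\alpha}^{\,Q-\alpha}$ because $\alpha-Q<0$; therefore, using the left-invariance of the Haar measure, $\int_{\G\setminus\B_c(0,1)}|\phi(xy)|\,\|y\|_{\alpha}^{\alpha-Q}\,dy\le c_{\alpha}^{\,Q-\alpha}\int_{\G}|\phi(xy)|\,dy=c_{\alpha}^{\,Q-\alpha}\,\|\phi\|_{L^1(\G)}<\infty$, since $\mathcal{S}(\G)\subset L^1(\G)$. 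Summing the two bounds yields absolute convergence, so $\psi(x,\alpha)$ is well defined on $(0,Q)$.

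The argument is essentially routine once Remark \ref{rem4} is available; there is no genuine obstacle, only the observation that both constraints $\alpha>0$ and $\alpha<Q$ are used — the former to make the origin singularity locally integrable, the latter to keep $P_{\alpha}$ bounded at infinity so that its pairing with the rapidly decaying (and in particular $L^1$) function $\phi$ is finite.
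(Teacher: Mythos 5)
Your argument is correct and follows essentially the same route as the paper: split into a neighbourhood of the singularity, where $\phi$ is bounded and the local integrability of $\|\cdot\|_{\alpha}^{\alpha-Q}$ (Remark \ref{rem4}, requiring $\alpha>0$) does the work, and the complement, where the kernel is bounded (requiring $\alpha<Q$) and the integrability of the Schwartz function $\phi$ finishes the estimate. The only cosmetic differences are that the paper centres its splitting ball at $x$ in the translated variable and encodes $\phi\in L^1(\G)$ via the weight $(1+\|z\|_c^k)^{-1}$ with $k>Q$, whereas you split at the origin and invoke $\|\phi\|_{L^1}$ directly.
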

\begin{proof}
By exploiting the estimate in \eqref{relcc-alfa}, the fact that $\phi$ is in the Schwartz class $ \mathcal{S}(\G)$ on $\G$ and the coarea formula of Corollary \ref{cor-coarea}, we have
\begin{equation}
\label{converge}
\begin{split}
 \int_{\G} \phi(xy) P_{\alpha}(y) dy & \leq  \int_{\G} |\phi(xy)| |P_{\alpha}(y) |dy = \frac{1}{  \Gamma(\frac{\alpha}{2})} \int_{\G} |\phi(z)| \| x^{-1} z \|_{\alpha}^{\alpha-Q} dz \\
&= \frac{1}{ \Gamma(\frac{\alpha}{2})} \int_{\B_{\alpha}(x,1)} |\phi(z)| \| x^{-1} z \|_{\alpha}^{\alpha-Q} dz  +  \frac{1}{ \Gamma(\frac{\alpha}{2})} \int_{\G \setminus \B_{\alpha}(x,1)} |\phi(z)| \| x^{-1} z \|_{\alpha}^{\alpha-Q} dz\\
& \leq  \frac{1}{ \Gamma(\frac{\alpha}{2})} \|\phi \|_{L^{\infty}(\G)} \int_{\B_c(x,c_{\alpha})} \| x^{-1} z \|_{\alpha}^{\alpha-Q} dz  +  \frac{1}{ \Gamma(\frac{\alpha}{2})} \int_{\G \setminus \B_{\alpha}(x,1)}|\phi(z)| \frac{(1+\| z \|_c^k)}{(1+\| z \|_c^k)}  dz,
\end{split}
\end{equation}
thus, taking into account the homogeneity of the norm $\| \cdot \|_{\alpha}$ and of the measure $\mathcal{S}^{Q-1}_{\infty}$, and the definition of $\sigma(\alpha)$ we can continue from \eqref{converge} getting
\begin{align*}
 \int_{\G} \phi(xy) P_{\alpha}(y) dy &\leq  \frac{1}{\Gamma(\frac{\alpha}{2})} \sigma ( \alpha) \|\phi\|_{L^{\infty}(\G)} \int_0^{c_{\alpha}} r^{\alpha-1} dr   +  \frac{1}{\Gamma(\frac{\alpha}{2})} c_k \int_{\G } \frac{1}{(1+\| z \|_c^k)} dz < \infty \\
&\leq  \frac{1}{\Gamma(\frac{\alpha}{2})} \frac{c_{\alpha}}{\alpha} \sigma ( \alpha) \|\phi  \|_{L^{\infty}(\G)}+  \frac{1}{\Gamma(\frac{\alpha}{2})} c_k \int_{\G } \frac{1}{(1+\| z \|_c^k)} dz,
\end{align*}
where, choosing $k>Q$, the last integral converges. 
\end{proof}

The map $\alpha \to \psi(x,\alpha)$ is well defined also if we weaken the hypothesis on $\phi$. 
\begin{prop}
\label{prop-psi}
If $\phi \in C^{\infty}(\G)$ and $|\phi(z)| \leq K\|z\|_c^{-L}$ as $\|z\|_c \geq S$ for some $L, K, S>0$ then the map $$(0,\min \{L,Q \}) \ni \alpha \to \psi(x, \alpha)$$ is well defined.
\end{prop}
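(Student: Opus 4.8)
The plan is to prove that the integral in \eqref{psi} defining $\psi(x,\alpha)$ converges absolutely for every fixed $x\in\G$ and every $\alpha\in(0,\min\{L,Q\})$; the factor $\Gamma(\alpha/2)^{-1}$ is harmless since $\Gamma(\alpha/2)\in(0,\infty)$ for $\alpha>0$. After the measure-preserving change of variables $z=xy$ it suffices to bound
$$\int_{\G}|\phi(z)|\,\|x^{-1}z\|_{\alpha}^{\alpha-Q}\,dz .$$
I would split $\G$ into three pieces: the singular region $A:=\B_\alpha(x,1)$, a far region $C:=\{z\in\G:\|z\|_c\ge R_0\}$ with $R_0:=\max\{S,2\|x\|_c\}$, and the bounded leftover $B:=\G\setminus(A\cup C)$, and estimate each contribution separately.

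\textbf{The regions $A$ and $B$.} On $A$ the estimate of Proposition \ref{prop31} carries over verbatim: $\phi$ is continuous, hence bounded on the compact set $\B_\alpha(x,1)\subset\B_c(x,c_\alpha)$ (using \eqref{relcc-alfa}), and by left invariance of Lebesgue measure, the inclusion $\B_\alpha(0,1)\subset\B_c(0,c_\alpha)$, the coarea formula of Corollary \ref{cor-coarea}, and the homogeneity of $\|\cdot\|_\alpha$ and of $\mathcal{S}^{Q-1}_{\infty}$, the integral $\int_A\|x^{-1}z\|_\alpha^{\alpha-Q}\,dz$ is dominated by a constant multiple of $\int_0^{c_\alpha}r^{\alpha-1}\,dr$, which is finite precisely because $\alpha>0$. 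On $B\subset\B_c(0,R_0)$ one has $\|x^{-1}z\|_\alpha>1$, hence $\|x^{-1}z\|_\alpha^{\alpha-Q}\le1$ since $\alpha-Q<0$, while $|\phi|$ is bounded on the compact ball $\B_c(0,R_0)$; thus $\int_B|\phi(z)|\,\|x^{-1}z\|_\alpha^{\alpha-Q}\,dz\le\big(\sup_{\B_c(0,R_0)}|\phi|\big)\,|\B_c(0,R_0)|<\infty$.

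\textbf{The region $C$: the main obstacle.} This is where the argument genuinely departs from that of Proposition \ref{prop31}: when $L\le Q$, bounding the kernel by $1$ does not suffice, because $\phi$ is no longer known to decay faster than any power, so one must combine the decay of $\phi$ with that of the kernel. The triangle inequality for $\|\cdot\|_c$ gives $\|z\|_c\le\|x\|_c+\|x^{-1}z\|_c$, so for $\|z\|_c\ge R_0=\max\{S,2\|x\|_c\}$ one gets $\|x^{-1}z\|_c\ge\tfrac12\|z\|_c$, whence by \eqref{relcc-alfa} there is a constant $c>0$ with $\|x^{-1}z\|_\alpha\ge c\,\|z\|_c$, and therefore $\|x^{-1}z\|_\alpha^{\alpha-Q}\le c^{\alpha-Q}\|z\|_c^{\alpha-Q}$ (again since $\alpha-Q<0$). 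Together with $|\phi(z)|\le K\|z\|_c^{-L}$ this yields the pointwise bound $|\phi(z)|\,\|x^{-1}z\|_\alpha^{\alpha-Q}\le Kc^{\alpha-Q}\|z\|_c^{\alpha-Q-L}$ on $C$, and a final application of Corollary \ref{cor-coarea} with the homogeneity of $\mathcal{S}^{Q-1}_{\infty}$ bounds $\int_C|\phi(z)|\,\|x^{-1}z\|_\alpha^{\alpha-Q}\,dz$ by a constant times $\int_{R_0}^{\infty}r^{\alpha-Q-L}\,r^{Q-1}\,dr=\int_{R_0}^{\infty}r^{\alpha-L-1}\,dr$, which converges exactly because $\alpha<L$. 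Adding the three contributions proves the claim; the delicate point is entirely in region $C$, and it is there that the hypothesis $\alpha<L$ (rather than merely $\alpha<Q$) enters in an essential way.
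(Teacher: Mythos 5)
Your argument is correct and follows essentially the same route as the paper: split into a region near the singularity, where continuity of $\phi$ and the coarea formula give $\int_0^{c_\alpha} r^{\alpha-1}\,dr<\infty$ for $\alpha>0$, and a far region, where the comparison $\|x^{-1}z\|_\alpha\gtrsim\|z\|_c$ combined with $|\phi(z)|\le K\|z\|_c^{-L}$ yields $\int_{R_0}^\infty r^{\alpha-L-1}\,dr<\infty$ for $\alpha<L$. The only (immaterial) differences are your three-piece decomposition versus the paper's two pieces, and your use of the triangle inequality for $d_c$ in place of the paper's dilation argument to obtain the lower bound $\|x^{-1}z\|_\alpha\ge c\|z\|_c$ on the far region.
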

\begin{proof}
Keeping in mind that $x$ is fixed, we consider $R(x):= \max \{1+ \|x\|_c, S \}$ and we set $\widetilde{R}(x)>0$ such that $\B_c(0,R(x)) \subset \B_c(x,\widetilde{R}(x))$. For $\alpha>0$, by exploiting the hypotheses on $\phi$ and the coarea formula in Corollary \ref{cor-coarea}, we get \begin{equation}
\label{continue4}
\begin{split}
 \int_{\G} \phi(xy) P_{\alpha}(y) dy &\leq  \int_{\G}| \phi(xy) P_{\alpha}(y)| dy= \frac{1}{\Gamma(\frac{\alpha}{2})} \int_{\G}| \phi(z)| \| x^{-1} z \|_{\alpha}^{\alpha-Q} dz \\
= \frac{1}{\Gamma(\frac{\alpha}{2})} &\int_{\B_{c}(0,R(x))}| \phi(z) |\| x^{-1} z \|_{\alpha}^{\alpha-Q} dz  +  \frac{1}{\Gamma(\frac{\alpha}{2})} \int_{\G \setminus \B_c(0,R(x))} |\phi(z)| \| x^{-1} z \|_{\alpha}^{\alpha-Q} dz\\
 \leq  \frac{1}{\Gamma(\frac{\alpha}{2})} &  \|\phi\|_{L^{\infty}(\G)} \int_{\B_c(0,R(x))} \| x^{-1} z \|_{\alpha}^{\alpha-Q} dz  +  K\frac{1}{\Gamma(\frac{\alpha}{2})} \int_{\G \setminus \B_c(0,R(x))}\frac{1}{\|z\|_c^{L}}  \| x^{-1} z \|_{\alpha}^{\alpha-Q}  dz.
\end{split}
\end{equation}
Thus, taking into account the homogeneity of the norm $\| \cdot \|_{\alpha}$ and of the measure $\mathcal{S}^{Q-1}_{\infty}$, and the definition of $\sigma(\alpha)$, we can continue from \eqref{converge} getting
\begin{align}\label{serve2}
 \int_{\G} \phi(xy) P_{\alpha}(y) dy &\leq  \frac{1}{\Gamma(\frac{\alpha}{2})} \sigma(\alpha)  \|\phi\|_{L^{\infty}(\G)} \int_0^{\widetilde{R}(x)} r^{\alpha-1} dr   + K \frac{1}{ \Gamma(\frac{\alpha}{2})}  \int_{\G \setminus \B_c(0,R(x))}\frac{1}{\|z\|_c^{L+Q-\alpha}}  \frac{\|z\|_c^{Q-\alpha}}{\| x^{-1} z \|_{\alpha}^{Q-\alpha}}   dz  \notag \\
&\leq  \frac{1}{\Gamma(\frac{\alpha}{2})} \sigma(\alpha)  \|\phi\|_{L^{\infty}(\G)} \int_0^{\widetilde{R}(x)} r^{\alpha-1} dr   +  K\frac{1}{\Gamma(\frac{\alpha}{2})} (c_{\alpha}(\|x\|_c+1))^{Q-\alpha}\int_{\G \setminus \B_c(0,R(x))}\frac{1}{\|z\|_c^{L+Q-\alpha}}  dz, 
\end{align}
since for every $z \notin \B_c(0,R(x))$, combining the definition of $R(x)$, the estimate \eqref{relcc-alfa} and some properties of homogeneous norms we have
\begin{align}\label{serve3}
& \frac{\| x^{-1}z \|_{\alpha}}{\| z \|_c} \geq \frac{1}{c_{\alpha}} \frac{\| x^{-1}z \|_c}{\| z \|_c} = \frac{1}{c_{\alpha}}  \| \delta_{\frac{1}{\| z \|_c}}(x)^{-1}\delta_{\frac{1}{\| z \|_c}}(z) \|_c \notag \\
& \geq \frac{1}{c_{\alpha}} | \| \delta_{\frac{1}{\| z \|_c}}(x)\|_c - \| \delta_{\frac{1}{\| z \|_c}}(z)  \|_c |\\
& \geq \frac{1}{c_{\alpha}}\bigg(1-\frac{\|x\|_c}{\|x\|_c+1} \bigg) =\frac{1}{c_{\alpha}}\frac{1}{\|x\|_c+1}. \notag
\end{align}
Therefore, we can continue from \eqref{serve2} getting 
\begin{align*}
 \int_{\G} \phi(xy) P_{\alpha}(y) dy  \leq \  & \frac{1}{\Gamma(\frac{\alpha}{2})}  \sigma(\alpha)  \|\phi\|_{L^{\infty}(\G)} \int_0^{\widetilde{R}(x)} r^{\alpha-1} dr\\
&   + K \frac{1}{\Gamma(\frac{\alpha}{2})} (c_{\alpha}(\|x\|_c+1))^{Q-\alpha}\mathcal{S}^{Q-1}_{\infty}(\partial \B_c(0,1)) \int_{R(x)}^{\infty} r^{\alpha-L-1} dr. 
\end{align*}
The last term of previous equation converges for every $\alpha>0$ such that $L+1-\alpha>1$, hence when $\alpha<L$. This concludes the proof.
\end{proof}

From now on, in the current section we show that the map $\alpha \to \psi(x, \alpha)$ is analytic for $0 < \alpha <Q$ and we construct an analytic continuation of $\alpha \to \psi(x, \alpha)$ on $-2<\alpha\leq 0 $. This construction allows to introduce a consistent suitable definition of negative powers of the sub-Laplacian $\mathcal{L}$. We mainly follow the structure of \cite[Chapter 1]{Landkof}, where an analogous construction is carried out in Euclidean spaces.
\subsection{The map $\alpha \to \psi(x, \alpha)$ is analytic on $(0,Q)$}
In this subsection, we show that the map $\alpha \to \psi(x, \alpha)$ is analytic on $(0,Q)$. First, we consider the following remark.
\begin{Remark}
\label{uniform-ogrande}
For every $x \neq 0$, we know that $h(t,x)=o(t^N)$ for every $N \in \N$. This property is uniform for $x \in \G \setminus \B(0,r)$, for every $r>0$. In fact, by Theorem \ref{T1}(vi), there exists a constant $c>0$ such that for every $x \in \G$ and $t>0$
$$ c^{-1} t^{-Q/2} \exp\left(-\frac{ \| x \|^2}{c^{-1}t}\right) \leq h(t,x) \leq c t^{-Q/2} \exp\left(-\frac{\| x \|^2}{ct}\right),$$
hence for every $y \in \G \setminus \B_c(0,r)$ and $t>0$ the estimate 
$$h(t,y) \leq c t^{-Q/2} e^{-\frac{r}{ct}}$$
holds, so that for every $N \in \N$
$$t^{-N} h(t,y) \leq c t^{-Q/2-N} e^{-\frac{r}{ct}} \to 0$$
as $t \to 0$ independently of $y$. We are ready to treat the regularity of $\psi(x,\alpha)$.
\end{Remark}
\begin{prop}
\label{prop-schw}
For every $\phi \in \mathcal{S}(\G)$ and $x \in \G$ the function
$$ \alpha \to  \psi(x, \alpha)=\int_{\G} \phi(xy) P_{\alpha}(y) dy$$ is analytic on $(0,Q).$
\end{prop}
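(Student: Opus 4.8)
The plan is to reduce the $q$‑dimensional integral defining $\psi(x,\cdot)$ to a one–dimensional Mellin‑type integral and then run the kind of derivative estimate already used in the proof of Proposition~\ref{analiti2}. Since $\alpha\mapsto 1/\Gamma(\alpha/2)$ is real‑analytic on all of $\R$, it is enough to show that
\begin{equation*}
I(x,\alpha):=\int_\G\phi(xy)\,\|y\|_\alpha^{\alpha-Q}\,dy=\int_\G\phi(xy)\int_0^\infty t^{\frac{\alpha}{2}-1}h(t,y)\,dt\,dy
\end{equation*}
is real‑analytic on $(0,Q)$. The estimate of Proposition~\ref{prop31} says exactly that for $0<\alpha<Q$ this double integral converges absolutely, so Fubini's theorem gives $I(x,\alpha)=\int_0^\infty t^{\frac{\alpha}{2}-1}G(t)\,dt$ with $G(t):=\int_\G\phi(xy)h(t,y)\,dy$; by Theorem~\ref{T1}(i) the function $G$ is smooth on $(0,\infty)$, and by Theorem~\ref{T1}(iii) one has $|G(t)|\le\|\phi\|_{L^\infty(\G)}$ for all $t>0$.

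The only genuinely new ingredient is a decay bound for $G$ at infinity. Using the homogeneity $h(t,y)=t^{-Q/2}h(1,\delta_{1/\sqrt t}(y))$ of Theorem~\ref{T1}(v) and the substitution $y=\delta_{\sqrt t}(w)$, I would rewrite $G(t)=\int_\G\phi\big(x\,\delta_{\sqrt t}(w)\big)\,h(1,w)\,dw$, and split the $w$‑domain into $\{\|w\|_c\le 2\|x\|_c/\sqrt t\}$ and its complement. On the first set $|\phi|\le\|\phi\|_{L^\infty}$ while, by $Q$‑homogeneity of the Haar measure, the set has measure $\le C\,t^{-Q/2}$; on the complement the reverse triangle inequality for the homogeneous norm gives $\|x\,\delta_{\sqrt t}(w)\|_c\ge\|\delta_{\sqrt t}(w)\|_c-\|x\|_c\ge\frac12\sqrt t\,\|w\|_c$, so the Schwartz decay $|\phi(\zeta)|\le C_M\|\zeta\|_c^{-M}$ ($\zeta\neq0$) yields $|\phi(x\,\delta_{\sqrt t}(w))|\le C_M\big(\frac12\sqrt t\,\|w\|_c\big)^{-M}$. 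Since $\|w\|_c^{-M}h(1,w)\in L^1(\G)$ for every $M<Q$ (integrable singularity at the origin together with the Schwartz decay of $h(1,\cdot)$ from Theorem~\ref{T1}(vii)), taking $M<Q$ close to $Q$ gives: for every $s<Q/2$ there is $C_s>0$ with $|G(t)|\le C_s\,t^{-s}$ for all $t\ge 1$.

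It then remains to deduce analyticity from the two bounds $|G(t)|\le\|\phi\|_{L^\infty}$ and $|G(t)|\le C_s t^{-s}$ on $[1,\infty)$. Fix a compact interval $[\alpha_0,\alpha_1]\subset(0,Q)$ and choose $s$ with $\alpha_1/2<s<Q/2$, which is possible exactly because $\alpha_1<Q$. For each $k\in\N$ the map $t\mapsto\big(\tfrac12\big)^k t^{\frac{\alpha}{2}-1}|\ln t|^k|G(t)|$ is dominated, uniformly for $\alpha\in[\alpha_0,\alpha_1]$, by $\|\phi\|_{L^\infty}\,t^{\frac{\alpha_0}{2}-1}|\ln t|^k$ on $(0,1)$ and by $C_s\,t^{\frac{\alpha_1}{2}-1-s}|\ln t|^k$ on $(1,\infty)$, both integrable; hence one may differentiate $I(x,\cdot)$ under the integral sign arbitrarily often and
\begin{equation*}
\Big|\frac{\partial^k}{\partial\alpha^k}I(x,\alpha)\Big|\le\Big(\tfrac12\Big)^k\Big(\|\phi\|_{L^\infty(\G)}\!\int_0^1 t^{\frac{\alpha}{2}-1}|\ln t|^k\,dt+C_s\!\int_1^\infty t^{\frac{\alpha}{2}-1-s}|\ln t|^k\,dt\Big).
\end{equation*}
The two integrals are evaluated by the same integration‑by‑parts recursions as in \eqref{ric1}--\eqref{ric2}, giving $k!\,(\alpha/2)^{-k-1}$ and $k!\,(s-\alpha/2)^{-k-1}$ respectively, so $|\partial_\alpha^k I(x,\alpha)|\le C\,M^{k+1}k!$ on $[\alpha_0,\alpha_1]$ for suitable $C,M>0$. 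This is the classical sufficient condition for $I(x,\cdot)$ to be real‑analytic on $(0,Q)$, and multiplying by the real‑analytic function $1/\Gamma(\alpha/2)$ finishes the proof. I expect the decay estimate for $G$ at infinity to be the delicate step, since it must produce an exponent $s$ strictly larger than $\alpha/2$ uniformly on compact subsets of $(0,Q)$ — this is precisely where the standing hypothesis $\alpha<Q$ is used; the behaviour near $t=0$, i.e. near the singularity of $\|y\|_\alpha^{\alpha-Q}$, is harmless because $\alpha>0$, and everything else is the bookkeeping with the $\Gamma$‑factor and the factorial bounds already present in Proposition~\ref{analiti2}.
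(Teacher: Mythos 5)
Your proof is correct, and it is organized differently from the paper's. The paper keeps the double integral and performs a four-way splitting ($\B_c(0,1)$ versus its complement in space, $(0,1)$ versus $(1,\infty)$ in time), estimating each piece separately; in particular it needs the uniform smallness $h(t,y)=o(t^N)$ for $y$ outside the unit ball (Remark \ref{uniform-ogrande}) to control the region $(\G\setminus\B_c(0,1))\times(0,1)$. You instead apply Tonelli--Fubini first (legitimate, since Proposition \ref{prop31} gives absolute convergence of the double integral for $0<\alpha<Q$) and reduce everything to the one-dimensional Mellin integral $\int_0^\infty t^{\alpha/2-1}G(t)\,dt$ with $G(t)=\int_\G\phi(xy)h(t,y)\,dy$. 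This makes the small-$t$ analysis trivial — the stochastic normalization $\int_\G h(t,\cdot)=1$ gives $|G|\le\|\phi\|_{L^\infty}$ with no spatial splitting at all — and concentrates the work in a single new lemma, the decay $|G(t)|\le C_s t^{-s}$ for every $s<Q/2$ as $t\to\infty$, which you prove correctly via the homogeneity $h(t,y)=t^{-Q/2}h(1,\delta_{1/\sqrt t}(y))$, the reverse triangle inequality, the Schwartz decay of $\phi$, and the integrability of $\|w\|_c^{-M}h(1,w)$ for $M<Q$ (the same ingredients the paper uses in \eqref{quartopezzo}, just repackaged; note you implicitly also use $\|h(1,\cdot)\|_{L^\infty}<\infty$ to convert the measure bound on $\{\|w\|_c\le 2\|x\|_c/\sqrt t\}$ into an integral bound, which is fine). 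The role of the two endpoints becomes transparent in your version: $\alpha>0$ is needed for integrability of $t^{\alpha/2-1}$ near $0$ against the bounded $G$, and $\alpha<Q$ is exactly what allows a decay exponent $s$ with $\alpha/2<s<Q/2$ at infinity. The final factorial bounds $|\partial_\alpha^k I|\le CM^{k+1}k!$ on compact subintervals coincide with the recursions \eqref{ric1}--\eqref{ric2}, so the classical criterion for real-analyticity applies as in the paper.
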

\begin{proof}
Since the map $\alpha \to \frac{1}{\Gamma(\frac{\alpha}{2})}$ is analytic on $\R$ and analytic functions well behave with respect to the product, it is enough to prove that
\begin{equation*}
\alpha \to \int_{\G} \phi(xy) \| y \|_{\alpha}^{\alpha-Q}dy= \int_{\G} \phi(xy) \int_{0}^{\infty} t^{\frac{\alpha}{2}-1} h(t,y)dt dy
\end{equation*}
is analytic on $(0,Q).$ Let us consider that for every $k \in \N$ for every $t>0$ and $x \in \G$
$$ \frac{\partial^k}{\partial \alpha^k} t^{\frac{\alpha}{2}-1} h(t,x)= t^{\frac{\alpha}{2}-1} \left( \frac{1}{2} \right)^k \ln^k(t) h(t,x).$$
We obtain the needed estimate on the $k$-th derivative of the map $\psi(x, \alpha)$, showing at the same time that it is summable on $ (0, \infty) \times \G $ 
\begin{align}
\label{zeropezzo}
\left| \int_{\G} \phi(xy) \int_{0}^{\infty} t^{\frac{\alpha}{2}-1} \left( \frac{1}{2} \right)^k \ln^k(t) h(t,y) dt dy \right| & \leq  \int_{\G} |\phi(xy)| \int_{0}^{\infty} t^{\frac{\alpha}{2}-1} \left( \frac{1}{2} \right)^k |\ln(t)|^k h(t,y) dt dy \notag \\
=  \int_{\G \setminus \B_c(0,1)} |\phi(xy)| & \int_{0}^{\infty} t^{\frac{\alpha}{2}-1} \left( \frac{1}{2} \right)^k |\ln(t)|^k h(t,y) dt dy \\
+   \int_{\B_c(0,1)} |\phi(xy)| &  \int_{0}^{\infty} t^{\frac{\alpha}{2}-1} \left( \frac{1}{2} \right)^k |\ln(t)|^k h(t,y) dt dy . \notag
\end{align}
Let us split the second term of \eqref{zeropezzo} as follows
\begin{align}
\label{zeropezzo1}
 &\int_{\B_c(0,1)} |\phi(xy)| \int_{0}^{\infty} t^{\frac{\alpha}{2}-1} \left( \frac{1}{2} \right)^k |\ln(t)|^k h(t,y) dt dy \notag \\
=&  \int_{\B_c(0,1)} |\phi(xy)| \int_{0}^{1} t^{\frac{\alpha}{2}-1} \left( \frac{1}{2} \right)^k |\ln(t)|^k h(t,y) dt dy \\
&+  \int_{\B_c(0,1)} |\phi(xy)| \int_{1}^{\infty} t^{\frac{\alpha}{2}-1} \left( \frac{1}{2} \right)^k \ln^k(t) h(t,y)dt dy. \notag
\end{align}
Let us then consider the first term of the sum in \eqref{zeropezzo1}, getting the estimate
\begin{align}
\label{primopezzo}
 & \int_{\B_c(0,1)} |\phi(xy)| \int_{0}^{1} t^{\frac{\alpha}{2}-1} \left( \frac{1}{2} \right)^k |\ln(t)|^k h(t,y) dy  \notag \\
  \leq & \sup_{\B_c(0,1)} | \phi(x \cdot) | \int_0^1  t^{\frac{\alpha}{2}-1} \left( \frac{1}{2} \right)^k |\ln(t)|^k \int_{\B(0,1)} h(t,y) dy dt  \notag \\
    \leq & \sup_{\B_c(0,1)} | \phi(x \cdot) | \int_0^1  t^{\frac{\alpha}{2}-1} \left( \frac{1}{2} \right)^k |\ln(t)|^k \int_{\G} h(t,y) dy dt\\
\leq & \sup_{\B_c(0,1)}| \phi(x \cdot) | \int_0^1  t^{\frac{\alpha}{2}-1} \left( \frac{1}{2} \right)^k |\ln(t)|^k dt.  \notag
\end{align}
Now, consider that $\int_0^1  t^{\frac{\alpha}{2}-1} \left( \frac{1}{2} \right)^k |\ln(t)|^k dt$ equals the $b_k$ defined in \eqref{bk} for $N=0>- \frac{\alpha}{2}$, hence we can continue \eqref{primopezzo} exploiting the estimate in \eqref{ric2} and we get that
\begin{equation}
\label{first}
\begin{split}
\int_{\B_c(0,1)} |\phi(xy)| \int_{0}^{1} t^{\frac{\alpha}{2}-1} \left( \frac{1}{2} \right)^k |\ln(t)|^k h(t,y) dy \leq \sup_{\B_c(0,1)} | \phi(x \cdot) |  \ \frac{2}{\alpha^{k+1}} k!.
\end{split}
\end{equation}
Let us bound the second term of the sum in \eqref{zeropezzo1}
\begin{align}
\label{secondopezzo}
&\int_{\B_c(0,1)} |\phi(xy)| \int_{1}^{\infty} t^{\frac{\alpha}{2}-1} \left( \frac{1}{2} \right)^k \ln^k(t) h(t,y)dt dy \notag \\
& = \int_{\B_c(0,1)} |\phi(xy)| \int_{1}^{\infty} t^{\frac{\alpha}{2}-1-\frac{Q}{2}} \left( \frac{1}{2} \right)^k \ln^k(t) h(1,\delta_{\frac{1}{\sqrt{t}}}(y))dt dy  \\
& \leq \sup_{\B_c(0,1)}| \phi(x \cdot) | \sup_{\B_c(0,1)} h(1, \cdot) \mu(\B_c(0,1))  \int_{1}^{\infty} t^{\frac{\alpha}{2}-1-\frac{Q}{2}} \left( \frac{1}{2} \right)^k \ln^k(t) dt. \notag
\end{align}
Now, consider that $\int_1^\infty  t^{\frac{\alpha}{2}-1-\frac{Q}{2}} \left( \frac{1}{2} \right)^k \ln^k(t) dt$ equals the $a_k$ introduced in \eqref{ak}, hence we can continue from \eqref{secondopezzo} exploiting the estimate in \eqref{ric1} and we get that
\begin{equation}
\label{second}
\begin{split}
&\int_{\B_c(0,1)} |\phi(xy)| \int_{1}^{\infty} t^{\frac{\alpha}{2}-1} \left( \frac{1}{2} \right)^k \ln^k(t) h(t,y)dt dy \\
&\leq \sup_{\B_c(0,1)}| \phi(x \cdot) | \sup_{\B_c(0,1)} h(1, \cdot) \mu(\B_c(0,1)) \frac{2}{(Q-\alpha)^{k+1}}   k!.
\end{split}
\end{equation}
Let us split the first term of the sum in \eqref{zeropezzo} as follows
\begin{align}
\label{zeropezzo2}
&\int_{\G \setminus \B_c(0,1)} |\phi(xy)| \int_{0}^{\infty} t^{\frac{\alpha}{2}-1} \left( \frac{1}{2} \right)^k |\ln(t)|^k h(t,y) dt dy  \notag \\
=&\int_{\G \setminus \B_c(0,1)} |\phi(xy)| \int_{0}^{1} t^{\frac{\alpha}{2}-1} \left( \frac{1}{2} \right)^k |\ln(t)|^k h(t,y) dt dy \\
&+ \int_{\G \setminus \B_c(0,1)} |\phi(xy)| \int_{1}^{\infty} t^{\frac{\alpha}{2}-1} \left( \frac{1}{2} \right)^k \ln^k(t) h(t,y) dt dy.  \notag
\end{align}
Keeping in mind Remark \ref{uniform-ogrande}, we can estimate the first term of the sum in \eqref{zeropezzo2} as follows for $N \in \N$
\begin{align}
\label{terzopezzo}
&\int_{\G \setminus \B_c(0,1)} |\phi(xy)| \int_{0}^{1} t^{\frac{\alpha}{2}-1} \left( \frac{1}{2} \right)^k |\ln(t)|^k h(t,y) dt dy  \notag \\
\leq \ & c  \int_{0}^{1} t^{\frac{\alpha}{2}-1+N} \left( \frac{1}{2} \right)^k |\ln(t)|^k \int_{\G \setminus \B_c(0,1)} | \phi(xy)| dy dt  \notag \\
\leq \ & c \int_{0}^{1} t^{\frac{\alpha}{2}-1+N} \left( \frac{1}{2} \right)^k |\ln(t)|^k \int_{\G } |\phi(z)| \frac{(1+\|z\|_c^\ell)}{(1+\|z\|_c^\ell)} dz dt \\
\leq \ & c c_{\ell} \int_{\G} \frac{1}{1+\|z\|_c^\ell} dz  \int_{0}^{1} t^{\frac{\alpha}{2}-1+N} \left( \frac{1}{2} \right)^k |\ln(t)|^k dt   \notag \\
\leq \ & c \widetilde{c}_{\ell} \int_{0}^{1} t^{\frac{\alpha}{2}-1+N} \left( \frac{1}{2} \right)^k |\ln(t)|^k dt,  \notag 
\end{align}
where $c$, $c_{\ell}$ and $\widetilde{c}_{\ell}$ are suitable positive constants and $\ell$ is understood such that $\ell>Q$.
Analogously to \eqref{primopezzo}, when $N> -\frac{\alpha}{2}$ we can bound \eqref{terzopezzo} as follows 
\begin{equation}
\label{third}
\begin{split}
&\int_{\G \setminus \B_c(0,1)} |\phi(xy)| \int_{0}^{1} t^{\frac{\alpha}{2}-1+N} \left( \frac{1}{2} \right)^k |\ln(t)|^k h(t,y) dt dy \leq \tilde{c}_\ell  \frac{2}{(\alpha+2N)^{k+1}} k!.
\end{split}
\end{equation}
Let us finally consider the second term of the sum in \eqref{zeropezzo2}
\begin{align}
\label{quartopezzo}
&\int_{\G \setminus \B_c(0,1)} |\phi(xy)| \int_{1}^{\infty} t^{\frac{\alpha}{2}-1} \left( \frac{1}{2} \right)^k \ln(t)^k h(t,y) dt dy  \notag \\
&= \int_{\G \setminus \B_c(0,1)} |\phi(xy)| \int_{1}^{\infty} t^{\frac{\alpha}{2}-1-\frac{Q}{2}} \left( \frac{1}{2} \right)^k \ln(t)^k h(1,\delta_{\frac{1}{\sqrt{t}}}(y)) dt dy  \notag \\
&=   \int_{1}^{\infty} t^{\frac{\alpha}{2}-1-\frac{Q}{2}} \left( \frac{1}{2} \right)^k \ln(t)^k \int_{\G \setminus \B_c(0,1)} |\phi(xy)|h(1,\delta_{\frac{1}{\sqrt{t}}}(y)) dy dt  \notag \\
&\leq  \|h(1,\cdot)\|_{L^{\infty}(\G)} \int_{1}^{\infty} t^{\frac{\alpha}{2}-1-\frac{Q}{2}} \left( \frac{1}{2} \right)^k \ln^k(t) dt \int_{\G } |\phi(h)| dh  \\
& =  \|h(1,\cdot)\|_{L^{\infty}(\G)} \int_{1}^{\infty} t^{\frac{\alpha}{2}-1-\frac{Q}{2}} \left( \frac{1}{2} \right)^k \ln^k(t) dt \int_{\G } |\phi(h)| \frac{(1+ \|h \|_c^\ell)}{(1+ \|h \|_c^\ell)}dh  \notag \\
& \leq  c_\ell  \|h(1,\cdot)\|_{L^{\infty}(\G)} \int_{1}^{\infty} t^{\frac{\alpha}{2}-1-\frac{Q}{2}} \left( \frac{1}{2} \right)^k \ln^k(t) \int_{\G } \frac{1}{(1+ \|h \|_c^\ell)}dh  dt   \notag \\
& \leq  \widetilde{c}_\ell  \|h(1,\cdot)\|_{L^{\infty}(\G)} \int_{1}^{\infty} t^{\frac{\alpha}{2}-1-\frac{Q}{2}} \left( \frac{1}{2} \right)^k \ln^k(t)  dt,  \notag 
\end{align} 
where $c_{\ell}$ and $\widetilde{c}_\ell$ are suitable positive constants depending on $\ell$, and $\ell$ is understood to be strictly larger than $Q$. Now, analogously to the reasoning used in \eqref{secondopezzo}, exploiting the estimate in \eqref{ric1}, we get
\begin{equation}
\label{fourth}
\int_{\G \setminus \B_c(0,1)} |\phi(xy)| \int_{1}^{\infty} t^{\frac{\alpha}{2}-1} \left( \frac{1}{2} \right)^k |\ln(t)|^k h(t,y) dt dy  \leq c_\ell  \|h(1,\cdot)\|_{L^{\infty}(\G)} \frac{2}{(Q-\alpha)^{k+1}}   k!.
\end{equation}

Now, by combining \eqref{zeropezzo}, \eqref{zeropezzo1}, \eqref{zeropezzo2}, \eqref{first}, \eqref{second}, \eqref{third} and \eqref{fourth}, we get that for every $k \in \N$
\begin{align}
\label{conclusione}
&\left| \int_{\G} \phi(xy) \int_{0}^{\infty} t^{\frac{\alpha}{2}-1} \left( \frac{1}{2} \right)^k \ln^k(t) h(t,y) dt dy \right|  \notag \\
 \leq & \sup_{\G} | \phi(x \cdot) |  \ \frac{2}{\alpha^{k+1}} k! + \sup_{\G}| \phi(x \cdot) | \sup_{\B_c(0,1)} h(1, \cdot) \mu(\B_c(0,1)) \frac{2}{(Q-\alpha)^{k+1}}   k!  \notag \\
& + \widetilde{c}_\ell  \frac{2}{(\alpha+2N)^{k+1}} k!+ \widetilde{c}_\ell   \|h(1,\cdot)\|_{L^{\infty}(\G)} \frac{2}{(Q-\alpha)^{k+1}}   k!\\
 \leq & C(\phi, \ell)   \max \left\{\frac{1}{\alpha+2N}, \frac{1}{Q-\alpha}, \frac{1}{\alpha} \right\}^{k+1} k!,  \notag 
\end{align}
where $c(\phi,\ell)$ is a suitable positive constant.
Thus, $\psi(x, \alpha)$ is analytic on $0 < \alpha < Q$ with respect to $\alpha$.
\end{proof}

\begin{Remark}
\label{partefuori}
Following the scheme of the previous proof, exploiting the estimates in \eqref{third} and \eqref{fourth}, it holds that the map $$\alpha \to \int_{\G \setminus \B_c(0,1)} \phi(xy) \| y \|_{\alpha}^{\alpha-Q} dt$$ is analytic for $\alpha<Q$ (where it surely converges by direct computations similar to the ones in Proposition \ref{prop31}).
\end{Remark}

\subsection{Further properties of the map $\alpha \to \psi(x,\alpha)$}
\label{section-psi}
Here we generalize Proposition \ref{prop-schw}. We assume that $\phi \in C^{\infty}(\G)$ and $|\phi(z)| \leq K \|z\|_c^{-L}$ when $\|z\|_c \geq S$ for some $L, S,K>0$.
\begin{prop} \label{prop:anzola}
For any $\phi \in C^{\infty}(\G)$ such that $|\phi(z)| \leq K \|z\|_c^{-L}$ when $\|z\|_c \geq S$ for some $L, S, K>0$, for every $x \in \G$ the map
$$ \alpha \to \int_{\G} \phi(xy) P_{\alpha}(y) dy$$ is analytic on $(0,\min\{L,Q\})$.
\end{prop}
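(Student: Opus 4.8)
The plan is to run the argument of Proposition~\ref{prop-schw} and fuse it with the estimates behind Proposition~\ref{prop-psi}, so that the restriction $\alpha<L$ appears exactly where large‑time integrability of the heat kernel against the slow decay of $\phi$ breaks down. Since $\alpha\mapsto 1/\Gamma(\alpha/2)$ is entire, it suffices to prove analyticity on $(0,\min\{L,Q\})$ of
\[
\alpha\longmapsto\int_{\G}\phi(xy)\,\|y\|_{\alpha}^{\alpha-Q}\,dy=\int_{\G}\phi(xy)\int_{0}^{\infty}t^{\frac{\alpha}{2}-1}h(t,y)\,dt\,dy ,
\]
and for this — by the criterion already used in Propositions~\ref{analiti2} and~\ref{prop-schw} — it is enough to show that for every compact $[a,b]\subset(0,\min\{L,Q\})$ there exist $C,M>0$ with
\[
\Bigl|\int_{\G}\phi(xy)\int_{0}^{\infty}t^{\frac{\alpha}{2}-1}(1/2)^{k}\ln^{k}(t)\,h(t,y)\,dt\,dy\Bigr|\le C\,M^{k+1}k!,\qquad \alpha\in[a,b],\ k\in\N,
\]
which simultaneously legitimizes differentiation under the integral sign.

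First I would fix $x$ and split $\G$ into $\B_c(0,\rho(x))$ and its complement, with $\rho(x):=\max\{1,\,2\|x\|_c,\,S+\|x\|_c\}$. On the compact piece $\phi(x\cdot)$ is bounded (continuity), and one argues exactly as in Proposition~\ref{prop-schw}: split further into $\B_c(0,1)$ and $\B_c(0,\rho(x))\setminus\B_c(0,1)$, and into $t\in(0,1)$ and $t\in(1,\infty)$, using $\int_{\G}h(t,\cdot)=1$ near the origin for small $t$, the uniform estimate $h(t,y)=o(t^{N})$ of Remark~\ref{uniform-ogrande} away from the origin for small $t$, and $h(t,y)=t^{-Q/2}h(1,\delta_{1/\sqrt{t}}(y))\le t^{-Q/2}\|h(1,\cdot)\|_{L^{\infty}(\G)}$ for large $t$; the recursions~\eqref{ric1} and~\eqref{ric2} then give a bound $C\max\{\alpha^{-1},(Q-\alpha)^{-1},(\alpha+2N)^{-1}\}^{k+1}k!$, valid for all $\alpha\in(0,Q)$ and all $N>-a/2$. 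Hence the compact piece imposes only $\alpha<Q$.

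The new work is the complement $\G\setminus\B_c(0,\rho(x))$. There $\|y\|_c>\|x\|_c$, so by the triangle inequality for $d_c$ one has $\|xy\|_c\ge\|y\|_c-\|x\|_c\ge\max\{S,\|y\|_c/2\}$, hence $|\phi(xy)|\le 2^{L}K\|y\|_c^{-L}$; after Fubini the contribution is bounded by $2^{L}K\int_{0}^{\infty}t^{\alpha/2-1}(1/2)^{k}|\ln t|^{k}\,g(t)\,dt$ with $g(t):=\int_{\{\|y\|_c>\rho(x)\}}\|y\|_c^{-L}h(t,y)\,dy$. For $t\in(0,1)$ I would use $\|y\|_c^{-L}\le\rho(x)^{-L}$, the bound $h(t,y)\le c\,t^{N}$ of Remark~\ref{uniform-ogrande}, and $\int_{\G}h(t,\cdot)=1$, getting $g(t)\le c\,\rho(x)^{-L}t^{N}$, so that with $N>-a/2$ this part is controlled by~\eqref{ric2}. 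The delicate estimate is for $t>1$: the crude bound $g(t)\le t^{-Q/2}\|h(1,\cdot)\|_{L^{\infty}(\G)}\int_{\{\|y\|_c>\rho(x)\}}\|y\|_c^{-L}\,dy$ fails when $L\le Q$ (the spatial integral diverges), so I would split the $y$-integral at $\|y\|_c=\sqrt{t}$: on $\{\rho(x)<\|y\|_c\le\sqrt{t}\}$ bound $h$ by $t^{-Q/2}\|h(1,\cdot)\|_{L^{\infty}(\G)}$ and integrate $\|y\|_c^{-L}$ over a ball of radius $\sqrt{t}$ by the coarea formula (Corollary~\ref{cor-coarea}), while on $\{\|y\|_c>\sqrt{t}\}$ use $\|y\|_c^{-L}<t^{-L/2}$ and $\int_{\G}h(t,\cdot)=1$. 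This yields
\[
g(t)\le
\begin{cases}
C\,t^{-L/2}, & L<Q,\\[2pt]
C\,t^{-Q/2}(1+\ln t), & L\ge Q,
\end{cases}\qquad t>1,
\]
the extra $\ln t$ occurring only in the borderline case $L=Q$. Inserting this, the $(1,\infty)$–part becomes an integral of the type $\int_{1}^{\infty}t^{(\alpha-\mu)/2-1}|\ln t|^{k}\,dt$ with $\mu=\min\{L,Q\}$, which converges precisely for $\alpha<\min\{L,Q\}$, and the integration‑by‑parts recursion of~\eqref{ric1} (with $Q$ replaced by $\mu$) bounds it by $C(\mu-\alpha)^{-(k+1)}k!$ — with a harmless $(k+1)!$ in the borderline case, still absorbed into a bound of the form $C'M'^{\,k+1}k!$.

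Adding the two regions gives $|\partial_{\alpha}^{k}\psi(x,\alpha)|\le C(\phi,x)\,M^{k+1}k!$ uniformly on $[a,b]$, whence $\alpha\mapsto\psi(x,\alpha)$ is analytic on $(0,\min\{L,Q\})$, as claimed. Everything else is a routine repackaging of Propositions~\ref{prop-schw} and~\ref{prop-psi} with the additional $\ln^{k}t$ factors; the one genuine obstacle is the large‑time control of $g(t)$, where one must exploit the concentration of $h(t,\cdot)$ on a ball of radius $\sqrt{t}$ to obtain the sharp decay $t^{-\min\{L,Q\}/2}$, and it is exactly this sharp rate that pins down the interval of analyticity.
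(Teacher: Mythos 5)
Your proposal is correct, and on the decisive estimate it takes a genuinely different route from the paper. Both proofs share the same skeleton (reduce to bounds $CM^{k+1}k!$ on the $k$-th $\alpha$-derivatives, split space into a fixed compact containing the singularity versus its complement, split time at $t=1$, and recycle the recursions \eqref{ric1}--\eqref{ric2}), and the compact piece is handled identically. The difference is in how the large-$t$ tail on the unbounded region is controlled. The paper rescales $z=\delta_{1/\sqrt{t}}(y)$, inserts a factor $\|z\|_c^{\ell}/\|z\|_c^{\ell}$ using the full Schwartz decay of $h(1,\cdot)$, undoes the scaling via $h=x\delta_{\sqrt t}(z)$, and uses a quotient bound of the type \eqref{serve3} to reduce to $\int_{\G\setminus\B_c(0,R(x))}\|h\|_c^{-(L+\ell)}dh$ with $\ell>Q-L$, arriving at the exponent $t^{\alpha/2-1-L/2}$. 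You instead keep the variable $y$, isolate $g(t)=\int_{\{\|y\|_c>\rho(x)\}}\|y\|_c^{-L}h(t,y)\,dy$, and split at $\|y\|_c=\sqrt t$, using only $\|h(1,\cdot)\|_{L^\infty}$ and total mass $1$; this yields the sharp decay $g(t)\lesssim t^{-\min\{L,Q\}/2}$ (with a log at $L=Q$) and makes the threshold $\alpha<\min\{L,Q\}$ appear transparently, without the auxiliary exponent $\ell$. Your argument is arguably more robust: it does not need the polynomial decay of $h(1,\cdot)$ beyond boundedness and integrability, and it handles the borderline case $L=Q$ explicitly, which the paper's chain of inequalities glosses over.

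One step to tighten: for $t\in(0,1)$ you bound $g(t)\le c\,\rho(x)^{-L}t^{N}$ by invoking simultaneously the pointwise bound $h(t,y)\le c\,t^{N}$ of Remark \ref{uniform-ogrande} and the normalization $\int_{\G}h(t,\cdot)=1$. As written this does not follow, because the region $\{\|y\|_c>\rho(x)\}$ has infinite measure, so you cannot integrate the pointwise bound, and the mass bound alone gives only $g(t)\le\rho(x)^{-L}$ with no power of $t$. The statement you actually need is the integrated version, namely that $\int_{\{\|y\|_c>\rho\}}h(t,y)\,dy=O(t^{N})$ as $t\to0^+$ for every $N$; this is true and follows from Theorem \ref{T1}(vi) by writing $\exp(-\|y\|^2/(ct))\le \exp(-\rho^2/(2ct))\exp(-\|y\|^2/(2ct))$ on that region and integrating the second factor over all of $\G$ (which gives a constant by homogeneity), so the prefactor $\exp(-\rho^2/(2ct))$ supplies arbitrarily high powers of $t$. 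With that one-line repair the small-$t$ contribution is controlled by \eqref{ric2} exactly as you claim, and the rest of the argument goes through.
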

\begin{proof}
It is enough to prove that
\begin{equation*}
\alpha \to \int_{\G} \phi(xy) \| y \|_{\alpha}^{\alpha-Q}dy= \int_{\G} \phi(xy) \int_{0}^{\infty} t^{\frac{\alpha}{2}-1} h(t,y)dt dy
\end{equation*}
is analytic on $(0,\min\{L,Q\})$.
Let $x \in \G$ and set $R(x)= \max \{ 1+\|x\|_c, S \}$. For $k \in \N$ we consider
\begin{align}
\label{zeropezzo23}
&\left| \int_{\G} \phi(xy) \int_{0}^{\infty} t^{\frac{\alpha}{2}-1} \left( \frac{1}{2} \right)^k \ln^k(t) h(t,y) dt dy \right|  \notag \\
  \leq & \int_{\G} |\phi(xy)| \int_{0}^{\infty} t^{\frac{\alpha}{2}-1} \left( \frac{1}{2} \right)^k |\ln(t)|^k h(t,y) dt dy  \notag \\
= & \int_{\G \setminus \B_c(x^{-1},R(x))} |\phi(xy)| \int_{0}^{\infty} t^{\frac{\alpha}{2}-1} \left( \frac{1}{2} \right)^k |\ln(t)|^k h(t,y) dt dy \\
&+   \int_{\B_c(x^{-1},R(x))} |\phi(xy)| \int_{0}^{\infty} t^{\frac{\alpha}{2}-1} \left( \frac{1}{2} \right)^k |\ln(t)|^k h(t,y) dt dy.  \notag
\end{align}
By computations analogous to the ones  used in the previous Proposition \ref{prop-schw}, one verifies that
\begin{align}
\label{zeropezzo3}
& \int_{\B_c(x^{-1},R(x))} |\phi(xy)| \int_{0}^{\infty} t^{\frac{\alpha}{2}-1} \left( \frac{1}{2} \right)^k |\ln(t)|^k h(t,y) dt dy  \notag \\
= & \int_{\B_c(x^{-1},R(x))} |\phi(xy)| \int_{0}^{1} t^{\frac{\alpha}{2}-1} \left( \frac{1}{2} \right)^k |\ln(t)|^k h(t,y) dt dy  \notag \\
&+  \int_{\B_c(x^{-1},R(x))} |\phi(xy)| \int_{1}^{\infty} t^{\frac{\alpha}{2}-1} \left( \frac{1}{2} \right)^k |\ln(t)|^k h(t,y)dt dy\\
 \leq & \sup_{\B_c(x^{-1},R(x))} | \phi(x \cdot) | \sup_{\B_c(x^{-1},R(x))}|h(1,\cdot)|  2 \max \left\{\frac{1}{\alpha}, \frac{1}{Q-\alpha} \right\}^{k+1}k!.  \notag
\end{align}
Let us split the first term of the sum in \eqref{zeropezzo23} 
\begin{align}
\label{zeropezzo21}
&\int_{\G \setminus \B_c(x^{-1},R(x))} |\phi(xy)| \int_{0}^{\infty} t^{\frac{\alpha}{2}-1} \left( \frac{1}{2} \right)^k |\ln(t)|^k h(t,y) dt dy  \notag \\
=&\int_{\G \setminus \B_c(x^{-1},R(x))} |\phi(xy)| \int_{0}^{1} t^{\frac{\alpha}{2}-1} \left( \frac{1}{2} \right)^k |\ln(t)|^k h(t,y) dt dy \\
&+ \int_{\G \setminus \B_c(x^{-1},R(x))} |\phi(xy)| \int_{1}^{\infty} t^{\frac{\alpha}{2}-1} \left( \frac{1}{2} \right)^k |\ln(t)|^k h(t,y) dt dy.  \notag
\end{align}
Let us consider the first term of the sum in \eqref{zeropezzo21} as follows
\begin{align}
\label{terzopezzo1}
&\int_{\G \setminus \B_c(x^{-1},R(x))} |\phi(xy)| \int_0^1 t^{\frac{\alpha}{2}-1} \left( \frac{1}{2} \right)^k |\ln(t)|^k h(t,y) dt dy  \notag \\
&= \int_{\G \setminus \B_c(x^{-1},R(x))} |\phi(xy)| \int_0^1 t^{\frac{\alpha}{2}-1-\frac{Q}{2}} \left( \frac{1}{2} \right)^k |\ln(t)|^k h(1,\delta_{\frac{1}{\sqrt{t}}}(y)) dt dy \\
&=   \int_0^1 t^{\frac{\alpha}{2}-1-\frac{Q}{2}} \left( \frac{1}{2} \right)^k |\ln(t)|^k \int_{\G \setminus \B_c(x^{-1},R(x))} |\phi(xy)|h(1,\delta_{\frac{1}{\sqrt{t}}}(y)) dy dt . \notag
\end{align}
Let us change the variables as $z= \delta_{\frac{1}{\sqrt{t}}}(y)$, so that, since the Lebesgue measure on $\G$ is $Q$-homogeneous, we get the estimate
\begin{align*}
 &\int_0^1 t^{\frac{\alpha}{2}-1-\frac{Q}{2}} \left( \frac{1}{2} \right)^k |\ln(t)|^k \int_{\G \setminus \B_c(x^{-1},R(x))} |\phi(xy)|h(1,\delta_{\frac{1}{\sqrt{t}}}(y)) dy dt \\
  &=  \int_0^1 t^{\frac{\alpha}{2}-1-\frac{Q}{2}} \left( \frac{1}{2} \right)^k |\ln(t)|^k \int_{\G \setminus \delta_{\frac{1}{\sqrt{t}}}(\B_c(x^{-1},R(x)))} |\phi(x\delta_{\sqrt{t}}(z))|h(1,z) dz  \ t^{\frac{Q}{2}} dt \\
    &=  \int_0^1 t^{\frac{\alpha}{2}-1} \left( \frac{1}{2} \right)^k |\ln(t)|^k \int_{\G \setminus \delta_{\frac{1}{\sqrt{t}}}(\B_c(x^{-1},R(x)))} |\phi(x\delta_{\sqrt{t}}(z))|h(1,z) dz   dt \\
    & \leq K \int_0^1 t^{\frac{\alpha}{2}-1} \left( \frac{1}{2} \right)^k |\ln(t)|^k \int_{\G \setminus \delta_{\frac{1}{\sqrt{t}}}(\B_c(x^{-1},R(x)))} \frac{1}{\|x\delta_{\sqrt{t}}(z)\|_c^L} h(1,z) \frac{\|z\|_c^\ell}{\|z\|_c^{\ell}}dz  dt \\
    &  \leq K c_\ell \int_0^1 t^{\frac{\alpha}{2}-1} \left( \frac{1}{2} \right)^k |\ln(t)|^k \int_{\G \setminus \delta_{\frac{1}{\sqrt{t}}}(\B_c(x^{-1},R(x)))} \frac{1}{\|x\delta_{\sqrt{t}}(z)\|_c^L}  \frac{1}{\|z\|_c^{\ell}}dz  dt, 
\end{align*}
where $c_{\ell}$ is a suitable positive constant obtained exploiting the fact that the map $h(1, \cdot) \in \mathcal{S}(\G)$, for an arbitrary $\ell \in \N$. Let us change again variables, setting $h= x\delta_{\sqrt{t}}(z)$ and considering that the Lebesgue measure is left-invariant and $Q$-homogeneous, so that we get the estimate
\begin{align}\label{serve4}
 & K c_\ell \int_0^1 t^{\frac{\alpha}{2}-1} \left( \frac{1}{2} \right)^k |\ln(t)|^k \int_{\G \setminus \delta_{\frac{1}{\sqrt{t}}}(\B_c(x^{-1},R(x)))} \frac{1}{\|x\delta_{\sqrt{t}}(z)\|_c^L}  \frac{1}{\|z\|_c^{\ell}}dz  dt \notag \\
    &  \leq K c_\ell \int_0^1 t^{\frac{\alpha}{2}-1-\frac{Q}{2}} \left( \frac{1}{2} \right)^k |\ln(t)|^k \int_{\G \setminus \B_c(0,R(x))} \frac{1}{\|h \|_c^L}  \frac{1}{\|\delta_{\frac{1}{\sqrt{t}}}(x^{-1}h)\|_c^{\ell}}dh  dt \notag \\
    &  \leq K c_\ell \int_0^1 t^{\frac{\alpha}{2}-1-\frac{Q}{2}} \left( \frac{1}{2} \right)^k |\ln(t)|^k \int_{\G \setminus \B_c(0,R(x))} \frac{1}{\|h \|_c^L}  \frac{t^{\frac{\ell}{2}}}{\|x^{-1}h\|_c^{\ell}}dh  dt \\
        &  \leq K c_\ell\int_0^1 t^{\frac{\alpha}{2}-1-\frac{Q}{2}+\frac{\ell}{2}} \left( \frac{1}{2} \right)^k |\ln(t)|^k \int_{\G \setminus \B_c(0,R(x))} \frac{1}{\|h \|_c^L}  \frac{\|h\|^\ell}{\|x^{-1}h\|_c^{\ell}} \frac{1}{\|h\|_c^{\ell} }dh  dt \notag \\
                &  \leq K c_\ell \int_0^1t^{\frac{\alpha}{2}-1-\frac{Q}{2}+\frac{\ell}{2}} \left( \frac{1}{2} \right)^k |\ln(t)|^k (1+\|x\|_c)^{\ell}\int_{\G \setminus \B_c(0,R(x))} \frac{1}{\|h \|_c^{L+\ell}}  dh  dt. \notag 
\end{align} 
We have used an argument analogous to the one used in \eqref{serve3}.
The integral in the last line of \eqref{serve4} converges for every $\ell>Q-L$. If we choose some $\ell>Q-L$ large enough that also $\ell-Q > -\alpha$ holds, then we can choose also $\ell>Q$ and we can introduce $N=[\frac{\ell-Q}{2}]+1$ and we can repeat the computations leading to \eqref{ric2} getting
\begin{equation}
\label{third1}
\int_{\G \setminus \B_c(x^{-1},1)} |\phi(xy)|\int_0^1 t^{\frac{\alpha}{2}-1} \left( \frac{1}{2} \right)^k |\ln(t)|^k h(t,y) dt dy  \leq K \widetilde{c}_\ell \frac{2}{(\alpha+(\ell-Q))^{k+1}} k! .
\end{equation}
Let us then consider the second term of the sum in \eqref{zeropezzo21}
\begin{align}
\label{quartopezzo1}
&\int_{\G \setminus \B_c(x^{-1},R(x))} |\phi(xy)| \int_{1}^{\infty} t^{\frac{\alpha}{2}-1} \left( \frac{1}{2} \right)^k \ln^k(t) h(t,y) dt dy  \notag \\
&= \int_{\G \setminus \B_c(x^{-1},R(x))} |\phi(xy)| \int_{1}^{\infty} t^{\frac{\alpha}{2}-1-\frac{Q}{2}} \left( \frac{1}{2} \right)^k \ln^k(t) h(1,\delta_{\frac{1}{\sqrt{t}}}(y)) dt dy \\
&=   \int_{1}^{\infty} t^{\frac{\alpha}{2}-1-\frac{Q}{2}} \left( \frac{1}{2} \right)^k \ln^k(t) \int_{\G \setminus \B_c(x^{-1},R(x))} |\phi(xy)|h(1,\delta_{\frac{1}{\sqrt{t}}}(y)) dy dt .  \notag
\end{align}
Let us change the variables as $z= \delta_{\frac{1}{\sqrt{t}}}(y)$, so that, since the Lebesgue measure is $Q$-homogeneous we get
\begin{align*}
 & \int_{1}^{\infty} t^{\frac{\alpha}{2}-1-\frac{Q}{2}} \left( \frac{1}{2} \right)^k \ln^k(t) \int_{\G \setminus \B_c(x^{-1},R(x))} |\phi(xy)|h(1,\delta_{\frac{1}{\sqrt{t}}}(y)) dy dt \\
  &= \int_{1}^{\infty} t^{\frac{\alpha}{2}-1-\frac{Q}{2}} \left( \frac{1}{2} \right)^k \ln^k(t) \int_{\G \setminus \delta_{\frac{1}{\sqrt{t}}}(\B_c(x^{-1},R(x)))} |\phi(x\delta_{\sqrt{t}}(z))|h(1,z) dz  \ t^{\frac{Q}{2}} dt \\
    &= \int_{1}^{\infty} t^{\frac{\alpha}{2}-1} \left( \frac{1}{2} \right)^k \ln^k(t) \int_{\G \setminus \delta_{\frac{1}{\sqrt{t}}}(\B_c(x^{-1},R(x)))} |\phi(x\delta_{\sqrt{t}}(z))|h(1,z) dz   dt \\
    &  \leq K \int_{1}^{\infty} t^{\frac{\alpha}{2}-1} \left( \frac{1}{2} \right)^k \ln^k(t) \int_{\G \setminus \delta_{\frac{1}{\sqrt{t}}}(\B_c(x^{-1},R(x)))} \frac{1}{\|x\delta_{\sqrt{t}}(z)\|_c^L} h(1,z) \frac{\|z\|_c^\ell}{\|z\|^{\ell}}dz  dt \\
    &  \leq K c_\ell \int_{1}^{\infty} t^{\frac{\alpha}{2}-1} \left( \frac{1}{2} \right)^k \ln^k(t) \int_{\G \setminus \delta_{\frac{1}{\sqrt{t}}}(\B_c(x^{-1},R(x)))} \frac{1}{\|x\delta_{\sqrt{t}}(z)\|_c^L}  \frac{1}{\|z\|^{\ell}}dz  dt,
\end{align*}
where $c_{\ell}$ denotes a suitable positive constant depending on $\ell$, obtained considering that $h(1, \cdot) \in \mathcal{S}(\G)$.
Let us change again variables, setting $h= x\delta_{\sqrt{t}}(z)$ and recalling that the Lebesgue measure on $\G$ is left-invariant and $Q$-homogeneous, so that we get
\begin{align*}
 & K  c_\ell \int_{1}^{\infty} t^{\frac{\alpha}{2}-1} \left( \frac{1}{2} \right)^k |\ln(t)|^k \int_{\G \setminus \delta_{\frac{1}{\sqrt{t}}}(\B_c(x^{-1},R(x)))} \frac{1}{\|x\delta_{\sqrt{t}}(z)\|_c^L}  \frac{1}{\|z\|^{\ell}}dz  dt \\
    &  \leq K c_\ell \int_{1}^{\infty} t^{\frac{\alpha}{2}-1-\frac{Q}{2}} \left( \frac{1}{2} \right)^k \ln^k(t) \int_{\G \setminus \B_c(0,R(x))} \frac{1}{\|h \|_c^L}  \frac{1}{\|\delta_{\frac{1}{\sqrt{t}}}(x^{-1}h)\|^{\ell}}dz  dt \\
    &  \leq K c_\ell \int_{1}^{\infty} t^{\frac{\alpha}{2}-1-\frac{Q}{2}} \left( \frac{1}{2} \right)^k \ln^k(t) \int_{\G \setminus \B_c(0,R(x))} \frac{1}{\|h \|_c^L}  \frac{t^{\frac{\ell}{2}}}{\|x^{-1}h\|^{\ell}}dz  dt \\
        &  \leq K c_\ell \int_{1}^{\infty} t^{\frac{\alpha}{2}-1-\frac{Q}{2}+\frac{\ell}{2}} \left( \frac{1}{2} \right)^k \ln^k(t) dt\int_{\G \setminus \B_c(0,R(x))} \frac{1}{\|h \|_c^L}  \frac{\|h\|^\ell}{\|x^{-1}h\|^{\ell}} \frac{1}{\|h\|^{\ell} }dz   \\
                &  \leq K c_\ell \int_{1}^{\infty} t^{\frac{\alpha}{2}-1-\frac{Q}{2}+\frac{\ell}{2}} \left( \frac{1}{2} \right)^k \ln^k(t) (1+\|x\|_c)^{\ell} dt \int_{\G \setminus \B_c(0,R(x))} \frac{1}{\|h \|_c^{L+\ell}}  dz\\
                 &  \leq K c_\ell \int_{1}^{\infty} t^{\frac{\alpha}{2}-1-\frac{L}{2}} \left( \frac{1}{2} \right)^k \ln^k(t) (1+\|x\|_c)^{\ell} dt \int_{\G \setminus \B_c(0,R(x))} \frac{1}{\|h \|_c^{L+\ell}}  dz.
\end{align*}
The integral in the last line is justified if we consider $\ell>Q-L$, hence when $L>Q-\ell$. Now, similarly to the procedure applied to deal with \eqref{secondopezzo}, exploiting an estimate analogous to \eqref{ric1}, we can conclude that
\begin{equation}
\label{fourth1}
\begin{split}
\int_{\G \setminus \B_c(x^{-1},R)} |\phi(xy)| \int_{1}^{\infty} t^{\frac{\alpha}{2}-1} \left( \frac{1}{2} \right)^k \ln^k(t) h(t,y) dt dy  &\leq K c_\ell \frac{2}{(L-\alpha)^{k+1}}   k!.
\end{split}
\end{equation}
Now, by combining \eqref{zeropezzo23}, \eqref{zeropezzo3},  \eqref{third1} and \eqref{fourth1}, we get that for every $k \in \N$, and for every $\ell>\max \{Q-L, Q-\alpha, 2Q \}$
\begin{equation}
\label{conclusione1}
\begin{split}
&\left| \int_{\G} \phi(xy) \int_{0}^{\infty} t^{\frac{\alpha}{2}-1} \left( \frac{1}{2} \right)^k \ln^k(t) h(t,y) dt dy \right| \\
& \leq C \max \left\{ \frac{1}{Q-\alpha},\frac{1}{L-\alpha} ,\frac{1}{\alpha}, \frac{1}{\alpha+(\ell-Q)} \right\}^{k+1} k!,
\end{split}
\end{equation}
for some suitable constant $C>0$. Therefore, $\psi(x, \alpha)$ is analytic on $0 < \alpha < \min\{Q,L\}$.
\end{proof}

\begin{Remark}
\label{partefuori1}
Following the previous proof, in the same hypotheses of Proposition \ref{prop:anzola}, we obtain that the map $$\alpha \to \int_{\G \setminus \B_c(x^{-1},R(x))} \phi(xy) \| y \|_{\alpha}^{\alpha-Q} dt$$ is analytic on $(-\infty, \min\{Q,L\})$. It is enough to consider \eqref{zeropezzo21} and the estimates in \eqref{third1} and \eqref{fourth1} that are valid for every $k \in \N$, and for every $\ell>\max\{ Q-L, Q-\alpha, 2Q\}$. The analyticity is guaranteed on $(-\infty, \min\{Q,L\})$ also for the map $$\alpha \to \int_{\G \setminus \B_c(0,1)} \phi(xy) \| y \|_{\alpha}^{\alpha-Q} dt,$$ since, in order to estimate the $k$-th derivative of $$ \alpha \to \int_{\B_c(x^{-1}, R) \setminus \B_c(0,1)} \phi(xy) \| y \|_{\alpha}^{\alpha-Q} dt,$$ one can simplify the arguments carried out in the proof of Proposition \ref{prop-schw}.
\end{Remark}
\subsection{The map $\alpha \to \psi(x, \alpha)$ is analytically continued on (-2,0]}
\label{primastriscia}

In this subsection, we build up the analytic continuation of $\alpha \to \psi(x,\alpha)$ on the strip $(-2,0]$. 
\begin{teo}\label{teo:samoggia}
For every $\phi \in \mathcal{S}(\G)$ and $x \in \G$, the map 
$$(0,Q) \to \psi(x,\alpha)$$ 
can be analytically continued to the interval $(-2,Q)$. Moreover, the representation
\begin{align}\label{peralfamag-2}
\psi(x,\alpha)= \int_{\G \setminus \B_c(0,1)} \phi(xy)P_{\alpha}(y) dy + \int_{\B_c(0,1)} (\phi(xy)-\phi(x))P_{\alpha}(y) dy + \phi(x) \frac{1}{\Gamma(\frac{\alpha}{2})} \frac{1}{\alpha} \sigma(\alpha)
\end{align}
holds for $\alpha \in (-2,Q)$.
\end{teo}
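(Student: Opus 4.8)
The plan is to verify the identity \eqref{peralfamag-2} by hand for $\alpha\in(0,Q)$, then show that each of its three right-hand summands, viewed as a function of $\alpha$, is analytic on the larger interval $(-2,Q)$; uniqueness of analytic continuation then forces $\psi(x,\cdot)$ to agree with that right-hand side on $(-2,Q)$. For $\alpha\in(0,Q)$ the integral $\psi(x,\alpha)=\int_\G\phi(xy)P_\alpha(y)\,dy$ converges absolutely (Proposition \ref{prop31}); splitting $\G=(\G\setminus\B_c(0,1))\cup\B_c(0,1)$ and writing $\phi(xy)=(\phi(xy)-\phi(x))+\phi(x)$ on the inner ball produces three absolutely convergent pieces for $\alpha\in(0,Q)$ (the piece with $\phi(xy)-\phi(x)$ in fact converges for $\alpha>-1$, since $|\phi(xy)-\phi(x)|\le C(\phi,x)\|y\|_c$ near the origin, and the piece with the constant $\phi(x)$ converges for $\alpha>0$). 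It remains to compute $\int_{\B_c(0,1)}P_\alpha(y)\,dy$: by the coarea formula of Corollary \ref{cor-coarea}, the $(\alpha-Q)$-homogeneity of $P_\alpha$ (Item \ref{item-hom}) and the $(Q-1)$-homogeneity of $\mathcal S^{Q-1}_\infty$, parametrizing $\partial\B_c(0,r)$ by $z\mapsto\delta_r(z)$ with $z\in\partial\B_c(0,1)$ gives $\int_{\B_c(0,1)}P_\alpha(y)\,dy=\frac{\sigma(\alpha)}{\Gamma(\alpha/2)}\int_0^1 r^{\alpha-1}\,dr=\frac{1}{\Gamma(\alpha/2)}\frac{\sigma(\alpha)}{\alpha}$, which establishes \eqref{peralfamag-2} on $(0,Q)$.

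\textbf{The first-layer linear term integrates to zero.} This is the step that replaces the (unavailable) Pizzetti formula. In exponential coordinates one has $y^{-1}=-y$, so Theorem \ref{T1}(iii) ($h(t,y^{-1})=h(t,y)$) yields $h(t,-y)=h(t,y)$, hence $P_\alpha(-y)=P_\alpha(y)$; also $\|-y\|_c=\|y^{-1}\|_c=\|y\|_c$, so $\B_c(0,1)$ is symmetric under $y\mapsto-y$, a bijection that preserves Lebesgue measure. Therefore $\int_{\B_c(0,1)}y_iP_\alpha(y)\,dy=0$ for every $i=1,\dots,m_1$ and $\alpha\in(0,Q)$. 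Since by Theorem \ref{explicit} the first-order $Z$-Taylor polynomial is $\mathcal T_1(\phi,x)(xy)=\phi(x)+\sum_{i=1}^{m_1}y_iX_i\phi(x)$, we obtain, for $\alpha\in(0,Q)$,
\[
\int_{\B_c(0,1)}\bigl(\phi(xy)-\phi(x)\bigr)P_\alpha(y)\,dy=\int_{\B_c(0,1)}\bigl(\phi(xy)-\mathcal T_1(\phi,x)(xy)\bigr)P_\alpha(y)\,dy ,
\]
and the right-hand side — which we adopt as the meaning of the middle term of \eqref{peralfamag-2} for $\alpha\le-1$, equivalently as a symmetric principal value — is absolutely convergent for $\alpha>-2$ thanks to the stratified Taylor estimate $|\phi(xy)-\mathcal T_1(\phi,x)(xy)|\le C(\phi,x)\|y\|_c^{2}$ valid for $\|y\|_c\le 1$ (Theorem \ref{Taylor} with $n=1$).

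\textbf{Analyticity of the three terms on $(-2,Q)$.} The first term $\frac1{\Gamma(\alpha/2)}\int_{\G\setminus\B_c(0,1)}\phi(xy)\|y\|_\alpha^{\alpha-Q}\,dy$ is analytic on $(-\infty,Q)$ by Remark \ref{partefuori} together with the fact that $1/\Gamma$ is entire. The third term equals $\phi(x)\frac{\sigma(\alpha)}{\alpha\,\Gamma(\alpha/2)}=\phi(x)\frac{\sigma(\alpha)}{2\,\Gamma(\alpha/2+1)}$, which is analytic on $(-\infty,Q)$ since $\sigma$ is (Item \ref{itemcalfa}) and $1/\Gamma(\cdot/2+1)$ is entire — in particular the apparent pole at $\alpha=0$ is cancelled by the zero of $1/\Gamma(\alpha/2)$. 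The bulk of the work is the middle term $\frac1{\Gamma(\alpha/2)}\int_{\B_c(0,1)}(\phi(xy)-\mathcal T_1(\phi,x)(xy))\|y\|_\alpha^{\alpha-Q}\,dy$. Writing $\|y\|_\alpha^{\alpha-Q}=\int_0^\infty t^{\alpha/2-1}h(t,y)\,dt$ and differentiating $k$ times in $\alpha$ (which brings down a factor $(1/2)^k\ln^k t$), one bounds, via Fubini and the Taylor estimate,
\[
\Bigl|\partial_\alpha^k\!\!\int_{\B_c(0,1)}\!\!\bigl(\phi(xy)-\mathcal T_1(\phi,x)(xy)\bigr)\|y\|_\alpha^{\alpha-Q}\,dy\Bigr|\le C(\phi,x)\int_0^\infty t^{\alpha/2-1}\Bigl(\tfrac12\Bigr)^k|\ln t|^k\!\int_{\B_c(0,1)}\!\!\|y\|_c^{2}\,h(t,y)\,dy\,dt .
\]
The moment $\int_{\B_c(0,1)}\|y\|_c^{2}h(t,y)\,dy$ is controlled using the $Q$-homogeneity of $h$ and of Lebesgue measure: it is $\le t\int_\G\|z\|_c^2 h(1,z)\,dz$ for $t\le1$ (finite since $h(1,\cdot)\in\mathcal S(\G)$ by Theorem \ref{T1}(vii)) and $\le\|h(1,\cdot)\|_{L^\infty(\G)}|\B_c(0,1)|\,t^{-Q/2}$ for $t\ge1$. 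Inserting these, the $t$-integral over $(0,1)$ reduces to $b_k$ of \eqref{bk} with $N=1$ — finite precisely when $\alpha>-2$ — and the $t$-integral over $(1,\infty)$ reduces to $a_k$ of \eqref{ak}; by \eqref{ric1}–\eqref{ric2} the whole expression is $\le C\,k!\,\max\{(\alpha+2)^{-1},(Q-\alpha)^{-1}\}^{k+1}$ uniformly on compact subintervals of $(-2,Q)$. This is exactly a Proposition \ref{analiti2}-type bound, so the integral is analytic on $(-2,Q)$, and multiplying by the entire factor $1/\Gamma(\alpha/2)$ keeps it analytic there.

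\textbf{Conclusion and the main obstacle.} Combining the three points, the right-hand side of \eqref{peralfamag-2} is analytic on $(-2,Q)$ and coincides with $\psi(x,\cdot)$ on $(0,Q)$, hence it is the unique analytic continuation, and \eqref{peralfamag-2} holds throughout $(-2,Q)$. The genuinely substantive part — the one replacing the Euclidean Pizzetti argument — is the interplay of the last two steps: recognizing that the first-layer linear term of $\phi$ integrates to zero against $P_\alpha$ over $\B_c(0,1)$ by the inversion symmetry of $P_\alpha$ and of the Carnot–Carathéodory ball, so that $\phi(x)$ may be upgraded to $\mathcal T_1(\phi,x)$, and then carrying out the derivative bookkeeping so that the extra factor $\|y\|_c^2$ furnished by the stratified Taylor formula (Theorem \ref{Taylor}) combines with the heat-kernel moment estimates to yield convergence and factorial derivative bounds exactly down to $\alpha>-2$.
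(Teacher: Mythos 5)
Your proof is correct and follows essentially the same route as the paper: the same three-term decomposition, the same coarea computation of the constant term, the same inversion-symmetry argument killing the first-layer linear term so that the stratified Taylor formula supplies the $\|y\|_c^2$ gain, and the same $a_k$/$b_k$ factorial derivative bounds for analyticity of the middle term on $(-2,Q)$. Your remark that the middle term of \eqref{peralfamag-2} is only absolutely convergent for $\alpha>-1$ and must be read through the symmetrized form $\int_{\B_c(0,1)}(\phi(xy)-\mathcal T_1(\phi,x)(xy))P_\alpha(y)\,dy$ for $\alpha\le-1$ is a slightly more careful bookkeeping of a point the paper's Lemma \ref{rem-conv} glosses over, but the substance is identical.
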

Before proving Theorem \ref{teo:samoggia}, we prove two technical results concerning the convergence and the regularity of the second term of the sum in \eqref{peralfamag-2}, respectively.
\begin{lem}\label{rem-conv}
For every $\phi \in \mathcal{S}(\G)$ and for every $x \in \G$ the integral
$$\int_{\B_c(0,1)}(\phi(xy)-\phi(x))P_{\alpha}(y) dy $$ converges for every $-2<\alpha<Q$. 
\end{lem}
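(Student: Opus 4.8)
The plan is to replace $\phi(xy)$ by the first-order stratified Taylor polynomial $\mathcal{T}_1(\phi,x)(xy)$ of $\phi$ at $x$ and to remove the surviving odd (linear) part by a symmetrization using $P_\alpha(y)=P_\alpha(y^{-1})$. The crude bound $|\phi(xy)-\phi(x)|\le C(x,\phi)\|y\|_c$ — immediate from Theorem \ref{Taylor} with $n=0$, since $\mathcal{T}_0(\phi,x)(xy)=\phi(x)$ and the horizontal derivatives of $\phi\in\mathcal{S}(\G)$ are bounded — together with the equivalence of homogeneous norms \eqref{relcc-alfa} and the coarea formula already gives absolute convergence for $\alpha>-1$; pushing this down to $\alpha>-2$ is the real content, and it is precisely where the linear term must be cancelled. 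That cancellation, combined with the inversion symmetry of $P_\alpha$ (which comes from $h(t,x)=h(t,x^{-1})$), is the main obstacle; the rest is a routine homogeneity-and-coarea estimate modelled on Remark \ref{rem4}.

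First I would record, from Theorem \ref{explicit} with $n=1$ and $x_0=x$, that among the monomials of $\G$-degree at most $1$ only those with a single first-layer factor survive, so
$$\mathcal{T}_1(\phi,x)(xy)=\phi(x)+\sum_{i=1}^{m_1}y_i\,X_i\phi(x),$$
where $y_1,\dots,y_{m_1}$ are the first-layer coordinates of $y$ in the linear coordinates on $\G$. Since the group inversion reads $y^{-1}=-y$ in those coordinates (a consequence of the Baker--Campbell--Hausdorff form of the product), the same formula applied at $xy^{-1}$ gives $\mathcal{T}_1(\phi,x)(xy^{-1})=\phi(x)-\sum_{i=1}^{m_1}y_i\,X_i\phi(x)$, hence
$$\mathcal{T}_1(\phi,x)(xy)+\mathcal{T}_1(\phi,x)(xy^{-1})=2\phi(x).$$
Moreover, since the second-order horizontal derivatives of $\phi\in\mathcal{S}(\G)\subset C^2(\G,\R)$ are bounded, the stratified Taylor formula (Theorem \ref{Taylor}, $n=1$, $d=d_c$) yields a constant $C_\phi>0$, depending only on $\phi$ and on $\G$, with $|\phi(xy)-\mathcal{T}_1(\phi,x)(xy)|\le C_\phi\|y\|_c^2$ for $\|y\|_c\le1$, and the same bound with $xy$ replaced by $xy^{-1}$ (because $\|y^{-1}\|_c=\|y\|_c$); adding these two and using the previous display, $|\phi(xy)+\phi(xy^{-1})-2\phi(x)|\le 2C_\phi\|y\|_c^2$ on $\B_c(0,1)$.

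Now fix $\alpha\in(-2,Q)$ and $\varepsilon\in(0,1)$. The punctured ball $\B_c(0,1)\setminus\B_c(0,\varepsilon)$ is invariant under the inversion $y\mapsto y^{-1}$ (since $\|y^{-1}\|_c=\|y\|_c$), the Lebesgue measure on $\G$ is invariant under it (inversion being $y\mapsto -y$ in linear coordinates), and $P_\alpha(y^{-1})=P_\alpha(y)$ by Theorem \ref{T1}(iii); therefore the change of variables $y\mapsto y^{-1}$ gives
$$\int_{\B_c(0,1)\setminus\B_c(0,\varepsilon)}(\phi(xy)-\phi(x))P_\alpha(y)\,dy=\frac12\int_{\B_c(0,1)\setminus\B_c(0,\varepsilon)}(\phi(xy)+\phi(xy^{-1})-2\phi(x))P_\alpha(y)\,dy.$$
On the right the integrand is dominated on all of $\B_c(0,1)$ by $2C_\phi\|y\|_c^2|P_\alpha(y)|$; using $|P_\alpha(y)|=\frac{1}{|\Gamma(\alpha/2)|}\|y\|_\alpha^{\alpha-Q}\le\frac{c_\alpha^{Q-\alpha}}{|\Gamma(\alpha/2)|}\|y\|_c^{\alpha-Q}$ from \eqref{relcc-alfa} (recall $\alpha-Q<0$), then the coarea formula of Corollary \ref{cor-coarea} with the $Q$-homogeneity of $\mathcal{S}^{Q-1}_\infty$ exactly as in Remark \ref{rem4}, one obtains
$$\int_{\B_c(0,1)}\big|\phi(xy)+\phi(xy^{-1})-2\phi(x)\big|\,|P_\alpha(y)|\,dy\le\frac{2C_\phi\,c_\alpha^{Q-\alpha}}{|\Gamma(\alpha/2)|}\,\mathcal{S}^{Q-1}_\infty(\partial\B_c(0,1))\int_0^1 r^{1+\alpha}\,dr,$$
which is finite precisely because $\alpha>-2$. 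Hence the symmetrized integral converges absolutely over $\B_c(0,1)$, so the limit as $\varepsilon\to0^+$ of the left-hand side of the displayed identity exists; this shows that $\int_{\B_c(0,1)}(\phi(xy)-\phi(x))P_\alpha(y)\,dy$ converges (as a principal value, and absolutely when $\alpha>-1$) for every $\alpha\in(-2,Q)$, as claimed.
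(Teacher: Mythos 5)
Your proof is correct and follows essentially the same route as the paper: you cancel the first-order term using the inversion symmetry $P_\alpha(y)=P_\alpha(y^{-1})$, $y^{-1}=-y$ and the invariance of $\B_c(0,1)$, and then control the remainder by the stratified Taylor estimate of order $2$; the paper implements the same cancellation by observing that $\int_{\B_c(0,1)}D_h\phi(x)(y)P_\alpha(y)\,dy=0$ and subtracting $D_h\phi(x)(y)=\mathcal{T}_1(\phi,x)(xy)-\phi(x)$ inside the integral, which is equivalent to your symmetrization. Your remark that absolute convergence only holds for $\alpha>-1$ and that for $-2<\alpha\le-1$ the integral converges as a symmetric limit is in fact a more careful account of the mode of convergence than the paper's own final display.
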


\begin{proof} By the coarea formula of Corollary \ref{cor-coarea} and the homogeneity of $P_{\alpha}$ and of $\mathcal{S}^{Q-1}_{\infty}$ we get
\begin{align*}
\int_{\B_c(0,1)}(\phi(xy)-\phi(x))P_{\alpha}(y) dy &= \int_0^1 \int_{\partial \B_c(0,r)} (\phi(xy)-\phi(x))P_{\alpha}(y) d \mathcal{S}^{Q-1}_{\infty}(y) dr\\
&= \int_0^1 r^{\alpha-1} \int_{\partial \B_c(0,1)} (\phi(x \delta_r(z))-\phi(x))P_{\alpha}(z) d \mathcal{S}^{Q-1}_{\infty}(z) dr\\
&= \int_0^1 r^{\alpha-1} \int_{\partial \B_c(0,1)} (\phi(x \delta_r(z))-\phi(x)-D_h \phi(x)(\delta_r(z)))P_{\alpha}(z) d \mathcal{S}^{Q-1}_{\infty}(z) dr.
\end{align*}
In order to obtain the last equality, we have exploited the equality 
\begin{equation}\label{rel-palla-diffe}
\int_{\B_c(0,1)} D_h\phi(x)(y) P_{\alpha}(y) dy=0,
\end{equation}
that is justified by the combination of few results. In fact, first we know that $\phi \in \mathcal{S}(\G) \subset C^{\infty}(\G) \subset C^1(\G) \subset C^1_h(\G)$, then by the definition of h-homomorphism we know that for every $x, y \in \G$ it holds that
$$D_h\phi(x)(y)=\langle \nabla_H\phi(x) ,y\rangle = -\langle \nabla_H\phi(x) ,-y\rangle =- \langle \nabla_H\phi(x) ,y^{-1}\rangle=-D_h\phi(x)(y^{-1}). $$
Thus \eqref{rel-palla-diffe} follows by the following remarks
\begin{itemize}
\item by the homogeneity of $d_c$, if $y \in \B_c(0,1)$, then $y^{-1} \in \B_c(0,1)$. 
\item by the property of the heat kernel $h$ stated in Theorem \ref{T1}(iii), it can be verified that $$\|y\|_{\alpha}^{\alpha-Q}=\|y^{-1}\|_{\alpha}^{\alpha-Q}$$ for every $y \in \G$, $y \neq 0$.
Hence, also $P_{\alpha}(y)=P_{\alpha}(y^{-1})$ holds.
\end{itemize}
Now, by Theorem \ref{Taylor} (in light of \eqref{def-grad} and \eqref{forma-grad}), we know that $$|\phi(x \delta_r(z))-\phi(x)-D_h \phi(x)(\delta_r(z)))| =O(r^2) \quad \mathrm{ \ as  \ } r \to 0,$$ as a consequence we know that
\begin{align*}
\int_{\B_c(0,1)}|\phi(xy)-\phi(x)||P_{\alpha}(y)| dy & \leq  c \int_0^1 r^{\alpha+1} dr \int_{\partial \B_c(0,1)} |P_{\alpha}(z)| d \mathcal{S}^{Q-1}_{\infty}(z) \\
 & = c \ \frac{1}{| \Gamma(\frac{\alpha}{2})|} \frac{1}{\alpha+2}\sigma(\alpha),
\end{align*}
for some suitable constant $c>0$. The integral converges for every $\alpha$ such that $\alpha >-2$, since 
$$\frac{1}{| \Gamma(\frac{\alpha}{2})|} \frac{1}{\alpha+2}\sigma(\alpha)=\frac{1}{| \Gamma(\frac{\alpha}{2})|\alpha} \frac{\alpha}{\alpha+2}\sigma(\alpha)$$ and
$\lim_{\alpha \to 0} \Gamma(\frac{\alpha}{2}) \alpha=2$.
\end{proof}

\begin{lem} \label{claim-partedentro}
For every $\phi \in \mathcal{S}(\G)$ and for every $x \in \G$ the map
\begin{equation}
\label{partedentro}
\alpha \to \int_{\B_c(0,1)} (\phi(xy)-\phi(x))\|y \|_{\alpha}^{\alpha-Q}dy
\end{equation}
is analytic on $(-2,Q)$.
\end{lem}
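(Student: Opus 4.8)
The plan is to imitate the proof of Proposition~\ref{prop-schw}, but working on the ball $\B_c(0,1)$ and exploiting the first-order cancellation of $\phi(xy)-\phi(x)$. First I would pass to the heat-kernel representation $\|y\|_\alpha^{\alpha-Q}=\int_0^\infty t^{\alpha/2-1}h(t,y)\,dt$ and, by Tonelli's theorem (justified a posteriori by the bounds below), rewrite \eqref{partedentro} as $\int_0^\infty t^{\alpha/2-1}g(t)\,dt$ with $g(t):=\int_{\B_c(0,1)}(\phi(xy)-\phi(x))h(t,y)\,dy$. The crucial observation is that, exactly as in the proof of Lemma~\ref{rem-conv} (inversion-symmetry of $\B_c(0,1)$, the identity $D_h\phi(x)(y)=-D_h\phi(x)(y^{-1})$, and $h(t,y)=h(t,y^{-1})$ from Theorem~\ref{T1}(iii)), the horizontal-differential term integrates to zero, so for every $t>0$ one may equally write $g(t)=\int_{\B_c(0,1)}\big(\phi(xy)-\phi(x)-D_h\phi(x)(y)\big)h(t,y)\,dy$. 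As in the previous propositions, analyticity on $(-2,Q)$ will follow once I show that on every compact $[a,b]\subset(-2,Q)$ the $k$-th $\alpha$-derivative, namely $\int_0^\infty t^{\alpha/2-1}(1/2)^k\ln^k(t)\,g(t)\,dt$, is bounded by $C\,M^{k+1}k!$.

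The key pointwise estimate is on $g$. Since $\phi(x)+D_h\phi(x)(y)=\mathcal{T}_1(\phi,x)(xy)$ (Theorem~\ref{explicit} together with \eqref{def-grad} and \eqref{forma-grad}), the stratified Taylor formula of Theorem~\ref{Taylor} with $n=1$ gives $|\phi(xy)-\phi(x)-D_h\phi(x)(y)|\le c\,\|y\|_c^2\,M_\phi(x)$ for all $y\in\B_c(0,1)$, where $M_\phi(x):=\sup_{\|z\|_c\le b^2}\sup_{|\beta|=2}|(Z_1\cdots Z_{m_1})^\beta\phi(xz)|<\infty$. Hence $|g(t)|\le c\,M_\phi(x)\int_{\B_c(0,1)}\|y\|_c^2\,h(t,y)\,dy$, and I would bound the last integral in two regimes: for $t\ge 1$, using $\|h(t,\cdot)\|_{L^\infty(\G)}\le c\,t^{-Q/2}$, it is $\le c\,t^{-Q/2}$; for $t\le 1$, using the Gaussian upper bound of Theorem~\ref{T1}(vi), the equivalence of homogeneous norms, and the change of variables $y=\delta_{\sqrt t}(w)$ (which contributes $t^{Q/2}$), it is $\le c\,t$. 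Therefore $|g(t)|\le c\,M_\phi(x)\,t$ for $t\le 1$ and $|g(t)|\le c\,M_\phi(x)\,t^{-Q/2}$ for $t\ge 1$; in particular $\int_0^\infty t^{\alpha/2-1}|g(t)|\,dt<\infty$ for $-2<\alpha<Q$, which also legitimizes the Tonelli step.

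Finally I would differentiate in $\alpha$ under the integral sign (dominated convergence, with the domination uniform in $\alpha\in[a,b]$ provided by the same bounds), split $\int_0^\infty=\int_0^1+\int_1^\infty$, and recognize the two resulting integrals: $\int_0^1 t^{\alpha/2-1}\cdot t\cdot(1/2)^k|\ln t|^k\,dt$ is the quantity $b_k$ of \eqref{bk} with $N=1$ (admissible since $N=1>-\alpha/2$ whenever $\alpha>-2$), so by \eqref{ric2} it equals $\tfrac{2}{(\alpha+2)^{k+1}}k!$, while $\int_1^\infty t^{\alpha/2-1-Q/2}(1/2)^k|\ln t|^k\,dt$ is the quantity $a_k$ of \eqref{ak}, equal by \eqref{ric1} to $\tfrac{2}{(Q-\alpha)^{k+1}}k!$. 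Combining, $\big|\partial_\alpha^k\!\int_{\B_c(0,1)}(\phi(xy)-\phi(x))\|y\|_\alpha^{\alpha-Q}\,dy\big|\le C\,M_\phi(x)\,k!\,\max\{(\alpha+2)^{-1},(Q-\alpha)^{-1}\}^{k+1}$, which on any compact subinterval of $(-2,Q)$ has the form $C\,M^{k+1}k!$; this is exactly the bound yielding analyticity. The main obstacle is the one already flagged: near $\alpha=-2$ the factor $\phi(xy)-\phi(x)=O(\|y\|_c)$ alone does not make the integral converge, so one genuinely needs both the vanishing of the $D_h\phi(x)$-contribution by oddness and the resulting $O(\|y\|_c^2)$ bound from Theorem~\ref{Taylor}, together with the small-time second-moment estimate $\int_{\G}\|y\|_c^2\,h(t,y)\,dy\le c\,t$.
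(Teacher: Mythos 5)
Your proposal is correct and follows essentially the same route as the paper's proof: heat-kernel representation, cancellation of the $D_h\phi(x)$ term by the inversion symmetry of $\B_c(0,1)$ and of $h(t,\cdot)$, the stratified Taylor bound $O(\|y\|_c^2)$, the scaling estimate $\int_{\B_c(0,1)}\|y\|_c^2h(t,y)\,dy\le ct$ for $t\le1$, and recognition of the quantities $a_k$ and $b_k$ (with $N=1$) to obtain the factorial bound on the $k$-th derivative. The only cosmetic difference is that you invoke the Taylor cancellation for all $t$, while the paper uses it only on $t\in(0,1)$ and treats $t\ge1$ with a cruder supremum bound; both give the same final estimate.
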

\begin{proof}
Let us consider the $k$-th derivative of \eqref{partedentro} with respect to $\alpha$, for $k \in \N$. In particular, we will show that derivative and integral can be exchanged by estimating the term
\begin{equation}
\left| \int_{\B_c(0,1)} (\phi(xy)-\phi(x))\int_0^{\infty} t^{\frac{\alpha}{2}-1} (\ln(t))^k \left( \frac{1}{2} \right)^k h(t,y)dt dy \right|.
\end{equation}
We split the integral of \eqref{partedentro} as
\begin{align}
&\left| \int_{\B_c(0,1)} (\phi(xy)-\phi(x))\int_0^{\infty} t^{\frac{\alpha}{2}-1} (\ln(t))^k \left( \frac{1}{2} \right)^k h(t,y)dt dy \right| \leq \notag\\
& \left| \int_{\B_c(0,1)} (\phi(xy)-\phi(x))\int_0^{1} t^{\frac{\alpha}{2}-1} (\ln(t))^k \left( \frac{1}{2} \right)^k h(t,y)dt dy \right|\\
&+\left| \int_{\B_c(0,1)} (\phi(xy)-\phi(x))\int_1^{\infty} t^{\frac{\alpha}{2}-1} (\ln(t))^k \left( \frac{1}{2} \right)^k h(t,y)dt dy \right|. \notag
\end{align}
The second term can be dealt with repeating the steps in \eqref{secondopezzo}
and we get that
\begin{equation}\label{zero}
\begin{split}
&\left| \int_{\B_c(0,1)} ( \phi(xy)-\phi(x)) \int_{1}^{\infty} t^{\frac{\alpha}{2}-1} \left( \frac{1}{2} \right)^k \ln^k(t) h(t,y)dt dy  \right|\\
&\leq \sup_{\B_c(0,1)}| \phi(x \cdot)-\phi(x) | \sup_{\B_c(0,1)} h(1, \cdot) \mu(\B_c(0,1)) \frac{2}{(Q-\alpha)^{k+1}}   k!.
\end{split}
\end{equation}
Then we need to deal with 
$$\left| \int_{\B_c(0,1)} (\phi(xy)-\phi(x))\int_0^{1} t^{\frac{\alpha}{2}-1} (\ln(t))^k \left( \frac{1}{2} \right)^k h(t,y)dt dy \right|.$$
By exploiting the equality \eqref{rel-palla-diffe} justified in Lemma \ref{rem-conv}, we get
\begin{align}
\label{un}
&\left| \int_{\B_c(0,1)} (\phi(xy)-\phi(x))\int_0^{1} t^{\frac{\alpha}{2}-1} (\ln(t))^k \left( \frac{1}{2} \right)^k h(t,y)dt dy \right|  \notag \\
=&\left| \int_{\B_c(0,1)} (\phi(xy)-\phi(x)-D_h\phi(x)(y) )\int_0^{1} t^{\frac{\alpha}{2}-1} (\ln(t))^k \left( \frac{1}{2} \right)^k h(t,y)dt dy \right|  \notag \\
 \leq & \int_{\B_c(0,1)} |\phi(xy)-\phi(x)-D_h\phi(x)(y) |\int_0^{1} t^{\frac{\alpha}{2}-1} |\ln(t)|^k \left( \frac{1}{2} \right)^k h(t,y)dt dy \\
 =  & \int_0^{1} t^{\frac{\alpha}{2}-1} |\ln(t)|^k \left( \frac{1}{2} \right)^k  \int_{\B_c(0,1)} |\phi(xy)-\phi(x)-D_h\phi(x)(y) | h(t,y)dy dt. \notag
\end{align}
Now, by Theorem \ref{Taylor} (in light of \eqref{def-grad} and \eqref{forma-grad}), we know that $$|\phi(x y)-\phi(x)-D_h \phi(x)(y))| =O(\|y \|_c^2) \quad \mathrm{ \  as \ } \|y \|_c \to 0.$$ Hence, for some $C\geq 0$ we have
\begin{align*}
& \int_0^{1} t^{\frac{\alpha}{2}-1} |\ln(t)|^k \left( \frac{1}{2} \right)^k  \int_{\B_c(0,1)} |\phi(xy)-\phi(x)-D_h\phi(x)(y)| h(t,y)dy dt \\
\leq & \ C \int_0^{1} t^{\frac{\alpha}{2}-1} |\ln(t)|^k \left( \frac{1}{2} \right)^k  \int_{\B_c(0,1)} \|y \|_c^2 h(t,y)dy dt \\
=& \ C \int_0^{1} t^{\frac{\alpha}{2}-1} |\ln(t)|^k \left( \frac{1}{2} \right)^k t^{-\frac{Q}{2}} \int_{\B_c(0,1)} \|y \|_c^2 h(1,\delta_{\frac{1}{\sqrt{t}}}(y))dy dt,
\end{align*}
due to the homogeneity of $h$. Now, by a change of variables $z= \delta_{\frac{1}{\sqrt{t}}}(y)$ and by the homogeneity of the Lebesgue measure on $\G$ we get
\begin{align}
\label{dos}
& C\int_0^{1} t^{\frac{\alpha}{2}-1} |\ln(t)|^k \left( \frac{1}{2} \right)^k t^{-\frac{Q}{2}} \int_{\B_c(0,1)} \|y \|_c^2 h(1,\delta_{\frac{1}{\sqrt{t}}}(y))dy dt  \notag \\
=& \ C\int_0^{1} t^{\frac{\alpha}{2}-1} |\ln(t)|^k \left( \frac{1}{2} \right)^k  \int_{\B_c(0,\frac{1}{\sqrt{t}})} \|\delta_{\sqrt{t}}(z) \|_c^2 h(1,z)dz dt  \notag \\
=& \ C\int_0^{1} t^{\frac{\alpha}{2}-1} |\ln(t)|^k \left( \frac{1}{2} \right)^k  \int_{\B_c(0,\frac{1}{\sqrt{t}})} t\| z\|_c^2 h(1,z)dz dt \\
\leq &  \ C\int_0^{1} t^{\frac{\alpha}{2}} |\ln(t)|^k \left( \frac{1}{2} \right)^k  dt\int_{\G} \|z\|_c^2h(1,z)dz.   \notag
\end{align}
Notice that $\int_{\G} \|z\|_c^2 h(1,z) dz < \infty$ since $h(1,z) \in \mathcal{S}(\G)$. Now, observe that, according to \eqref{bk}, $$ b_k= \int_0^{1} t^{\frac{\alpha}{2}} |\ln(t)|^k \left( \frac{1}{2} \right)^k   dt,$$ when $N=1$, recalling that $N=1 >- \frac{\alpha}{2}$ if $\alpha>-2$. Hence, we can repeat the argument involving the estimate in \eqref{ric2} obtaining that for every $k \in \N$
\begin{equation}
\label{tres}
\begin{split}
 \int_0^{1} t^{\frac{\alpha}{2}} |\ln(t)|^k \left( \frac{1}{2} \right)^k   dt
\leq C \frac{2}{(\alpha+2)^{k+1}} k!.
\end{split}
\end{equation} 
By combining \eqref{zero}, \eqref{un}, \eqref{dos} and \eqref{tres} the proof is concluded.
\end{proof}

Now, we are in position to prove Theorem \ref{teo:samoggia}.

\begin{proof}[Proof of Theorem \ref{teo:samoggia}]
Let $x \in \G$ and $\phi \in \mathcal{S}(\G)$. We rearrange the map $(0,Q) \ni \alpha \to \psi(x, \alpha)$ as 
\begin{align}
\label{cavametti}
\psi(x,\alpha)&= \int_{\G} \phi(xy)P_{\alpha}(y) dy  \notag \\
&= \int_{\G \setminus \B_c(0,1)} \phi(xy)P_{\alpha}(y) dy +  \int_{\B_c(0,1)} \phi(xy)P_{\alpha}(y) dy  \notag \\
&= \int_{\G \setminus \B_c(0,1)} \phi(xy)P_{\alpha}(y) dy \\
&+ \int_{\B_c(0,1)} (\phi(xy)-\phi(x))P_{\alpha}(y) dy + \int_{\B_c(0,1)} \phi(x)P_{\alpha}(y) dy.  \notag
\end{align}
Since $\alpha \in (0,Q)$, we can compute the third integral of the sum in \eqref{cavametti} exploiting the coarea formula of Corollary \ref{cor-coarea} and homogeneity properties, getting
\begin{align}\label{eq:dentropalla}
 \int_{\B_c(0,1)} \phi(x)P_{\alpha}(y) dy &= \phi(x) \frac{1}{\Gamma(\frac{\alpha}{2})} \int_{\B_c(0,1)} \| y \|_{\alpha}^{\alpha-Q} dy \notag\\ &= \phi(x) \frac{1}{\Gamma(\frac{\alpha}{2})} \int_{0}^{1} \int_{\partial \B_c(0,r)} \| y \|_{\alpha}^{\alpha-Q} d\mathcal{S}^{Q-1}_{\infty}(y)  dr \notag\\
&= \phi(x) \frac{1}{\Gamma(\frac{\alpha}{2})} \int_{0}^{1} \int_{\partial \B_c(0,r)} \| \delta_r(z) \|_{\alpha}^{\alpha-Q} d\mathcal{S}_{\infty}^{Q-1}(\delta_r(z))  dr \notag\\
&= \phi(x) \frac{1}{\Gamma(\frac{\alpha}{2})} \int_{0}^{1} \int_{\partial \B_c(0,1)} r^{\alpha-Q} \| z \|_{\alpha}^{\alpha-Q} r^{Q-1} d\mathcal{S}_{\infty}^{Q-1}(z)  dr \notag\\
&= \phi(x) \frac{1}{\Gamma(\frac{\alpha}{2})}  \int_{0}^{1} r^{\alpha-1} \int_{\partial \B_c(0,1)} \| z \|_{\alpha}^{\alpha-Q}  d\mathcal{S}_{\infty}^{Q-1}(z)  dr\\
&= \phi(x) \frac{1}{\Gamma(\frac{\alpha}{2})} \sigma(\alpha) \int_{0}^{1} r^{\alpha-1} dr \sigma(\alpha)\notag \\
&= \phi(x) \frac{1}{\Gamma(\frac{\alpha}{2})} \sigma(\alpha) \lim_{a \to 0}\bigg[ \frac{r^{\alpha}}{\alpha} \bigg]_{r=a}^1 \sigma(\alpha) \notag \\
&= \phi(x) \frac{1}{\Gamma(\frac{\alpha}{2})} \frac{1}{\alpha} \sigma(\alpha). \notag
\end{align}
Hence, for every $\alpha \in (0,Q)$ the equality
\begin{align}\label{patrono}
\psi(x,\alpha)= \int_{\G \setminus \B_c(0,1)} \phi(xy)P_{\alpha}(y) dy + \int_{\B_c(0,1)} (\phi(xy)-\phi(x))P_{\alpha}(y) dy + \phi(x) \frac{1}{\Gamma(\frac{\alpha}{2})} \frac{1}{\alpha} \sigma(\alpha)
\end{align}
holds. \\
By Remark \ref{partefuori}, the first term of the sum in \eqref{patrono} is analytic with respect to $\alpha$ on $(-\infty,Q)$. 
On the other hand, the third term of the sum in \eqref{patrono} is analytic with respect to $\alpha$ on $(-\infty,Q)$, since by Item \ref{itemcalfa} in Section \ref{sectioncalfa}, $\alpha \to \sigma(\alpha)$ is analytic on $(-\infty,Q)$ and $\lim_{\alpha \to 0} \Gamma(\frac{\alpha}{2}) \alpha=2$.\\
We point out that by Lemma \ref{rem-conv}, the expression in \eqref{patrono} is well-defined for every $\alpha \in (-2,Q)$. Eventually, by Lemma \ref{claim-partedentro}, the second term of the sum in \eqref{patrono} is analytic with respect to $\alpha$ on $(-2,Q)$ and this concludes the proof.
\end{proof}

\begin{Remark}
Let us show that $\psi(x,0)=\phi(x)$. 
Since
\begin{enumerate}
\item $\lim_{\alpha \to 0} \frac{1}{\alpha \Gamma(\frac{\alpha}{2})}=\frac{1}{2},$
\item by Item \ref{fa2!} of Section \ref{sectioncalfa}, $\lim_{\alpha \to 0} \sigma(\alpha)=2,$
\end{enumerate}
then
$$\lim_{\alpha \to 0} \frac{1}{\alpha \Gamma(\frac{\alpha}{2})}\sigma(\alpha)=1$$
holds. Since $\lim_{\alpha \to 0} \frac{1}{\Gamma(\frac{\alpha}{2})}=0,$ then $\psi(x,0)=\lim_{\alpha \to 0} \psi(x, \alpha)$ is \begin{small}
\begin{align*}
 \lim_{\alpha \to 0} \left( \frac{1}{\Gamma(\frac{\alpha}{2})}\int_{\G \setminus \B_c(0,1)} \phi(xy)\|y \|_{\alpha}^{\alpha-Q} dy + \frac{1}{\Gamma(\frac{\alpha}{2})}  \int_{\B_c(0,1)} (\phi(xy)-\phi(x))\|y \|_{\alpha}^{\alpha-Q}dy +\phi(x) \frac{1}{\Gamma(\frac{\alpha}{2})} \frac{1}{\alpha} \sigma(\alpha) \right)=\phi(x).
\end{align*}
\end{small}

\end{Remark}
In the following proposition, we deduce an useful representation of $\psi(x,\alpha)$ valid for $\alpha \in (-2,0)$.
\begin{prop}\label{rem-repre}
For every $x \in \G$, for every $\phi \in \mathcal{S}(\G)$ the following representation holds
\begin{equation}\label{representation-2}
 \psi(x, \alpha)= \int_{\G}(\phi(xy)-\phi(x))P_{\alpha}(y) dy,
\end{equation}
for every $\alpha \in (-2,0)$.
\end{prop}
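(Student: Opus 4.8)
The plan is to derive the representation \eqref{representation-2} directly from the representation \eqref{peralfamag-2} established in Theorem \ref{teo:samoggia}, which is valid for $\alpha \in (-2,Q)$, by rewriting the ``boundary term'' $\phi(x)\frac{1}{\Gamma(\frac{\alpha}{2})}\frac{1}{\alpha}\sigma(\alpha)$ as an integral over $\G \setminus \B_c(0,1)$; this becomes possible precisely when $\alpha<0$, which explains the restriction of the statement to $(-2,0)$.

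First I would compute $\int_{\G \setminus \B_c(0,1)} P_{\alpha}(y)\, dy$ for $\alpha \in (-2,0)$. By the coarea formula of Corollary \ref{cor-coarea}, the homogeneity of $\mathcal{S}^{Q-1}_{\infty}$ and of the norm $\|\cdot\|_{\alpha}$ — exactly the chain of equalities used to obtain \eqref{eq:dentropalla} — one gets
$$\int_{\G \setminus \B_c(0,1)} P_{\alpha}(y)\,dy = \frac{1}{\Gamma(\frac{\alpha}{2})}\,\sigma(\alpha)\int_1^{\infty} r^{\alpha-1}\,dr,$$
and since $\alpha<0$ the last integral converges and equals $-\frac{1}{\alpha}$. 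Hence $\int_{\G\setminus\B_c(0,1)} P_{\alpha}(y)\,dy = -\frac{1}{\alpha\,\Gamma(\frac{\alpha}{2})}\sigma(\alpha)$, so that
$$\phi(x)\,\frac{1}{\Gamma(\frac{\alpha}{2})}\,\frac{1}{\alpha}\,\sigma(\alpha) = -\int_{\G\setminus\B_c(0,1)} \phi(x)\,P_{\alpha}(y)\,dy.$$
Substituting this identity into \eqref{peralfamag-2} and merging the first term of the sum with it yields $\int_{\G\setminus\B_c(0,1)}(\phi(xy)-\phi(x))P_{\alpha}(y)\,dy$; adding the remaining term $\int_{\B_c(0,1)}(\phi(xy)-\phi(x))P_{\alpha}(y)\,dy$ gives $\int_{\G}(\phi(xy)-\phi(x))P_{\alpha}(y)\,dy$, which is \eqref{representation-2}.

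To make the rearrangement legitimate I would also record the required absolute convergence: over $\B_c(0,1)$ the integral $\int_{\B_c(0,1)}(\phi(xy)-\phi(x))P_{\alpha}(y)\,dy$ converges for $\alpha>-2$ by Lemma \ref{rem-conv}, while over $\G\setminus\B_c(0,1)$ both $\int|\phi(xy)|\,|P_{\alpha}(y)|\,dy$ and $\int|\phi(x)|\,|P_{\alpha}(y)|\,dy$ are finite because $\phi$ is bounded and $\int_1^{\infty} r^{\alpha-1}\,dr<\infty$ for $\alpha<0$; thus $\int_{\G}(\phi(xy)-\phi(x))P_{\alpha}(y)\,dy$ is absolutely convergent for $\alpha\in(-2,0)$ and the term-by-term manipulation above is justified. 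There is no genuine obstacle here: the only delicate point is that the identity $\int_1^{\infty} r^{\alpha-1}\,dr = -\frac{1}{\alpha}$ requires $\alpha<0$, which is exactly why \eqref{representation-2} holds on $(-2,0)$ and not on all of $(-2,Q)$.
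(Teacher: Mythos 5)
Your proposal is correct and follows essentially the same route as the paper: the paper's proof likewise computes $\int_{\G \setminus \B_c(0,1)} \phi(x)P_{\alpha}(y)\,dy = -\phi(x)\frac{1}{\Gamma(\frac{\alpha}{2})}\frac{1}{\alpha}\sigma(\alpha)$ via the coarea formula and homogeneity (using $\alpha<0$ for convergence at infinity) and then combines this with Theorem \ref{teo:samoggia}. Your additional remark on absolute convergence of the merged integral is a harmless and sensible extra check.
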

\begin{proof}
Let us compute for $-2<\alpha<0$ the following integral
\begin{align}\label{eq:fuoripalla}
 \int_{\G \setminus \B_c(0,1)} \phi(x)P_{\alpha}(y) dy &= \phi(x) \frac{1}{\Gamma(\frac{\alpha}{2})} \int_{\G \setminus \B_c(0,1)} \| y \|_{\alpha}^{\alpha-Q} dy \notag \\ &= \phi(x) \frac{1}{\Gamma(\frac{\alpha}{2})} \int_{1}^{\infty} \int_{\partial \B_c(0,r)} \| y \|_{\alpha}^{\alpha-Q} d\mathcal{S}^{Q-1}_{\infty}(y)  dr \notag \\
&= \phi(x) \frac{1}{\Gamma(\frac{\alpha}{2})} \int_{1}^{\infty} \int_{\partial \B_c(0,r)} \| \delta_r(z) \|_{\alpha}^{\alpha-Q} d\mathcal{S}_{\infty}^{Q-1}(\delta_r(z))  dr\\
&= \phi(x) \frac{1}{\Gamma(\frac{\alpha}{2})} \lim_{a \to \infty} \bigg[ \frac{r^{\alpha}}{\alpha} \bigg]_{r=1}^a \int_{\partial \B_c(0,1)} \| z \|_{\alpha}^{\alpha-Q}  d\mathcal{S}_{\infty}^{Q-1}(z) \notag \\
&= - \phi(x) \frac{1}{\Gamma(\frac{\alpha}{2})} \frac{1}{\alpha} \sigma(\alpha). \notag
\end{align}
As usual, we have exploited the coarea formula of Corollary \ref{cor-coarea} and the homogeneity properties. The proof follows by combining Theorem \ref{teo:samoggia} and \eqref{eq:fuoripalla}.
\end{proof}

\begin{Remark}
The results of the current subsection hold as well being true if $\phi \in C^{\infty}(\G)$ and $|\phi(z)| \leq K \|z\|_c^{-L}$ as $\|z\|_c>S$ for some $L, S, K>0$. In particular, under these hypotheses one can check that $\psi(x,\alpha)$ can be analytically continued on the interval $(-2,\min\{Q,L\})$.
\end{Remark}
\subsection{Extension of the definition of $\mathcal{L}^s$ to $-\frac{Q}{2}<s<1$}
\label{sec-introddistr}
In this section, we introduce a further notation. For every fixed $x \in \G$ and $\phi \in \mathcal{S}(\G)$ it is useful to denote the map $\psi(x,\alpha)$ also by $\psi_{\phi}(x,\alpha)$. This notation appears in the paper only when we wish to emphasize the dependence of $\psi(x,\alpha)$ on the choice of the map $\phi$.
By combining \cite[Theorem 3.11]{FF15}, which states the pointwise integral representation of $\mathcal{L}^s\phi(x)$ in \eqref{3.11}, and the representation \eqref{representation-2} of $\psi_{\phi}(x,\alpha)$ showed in Proposition \ref{rem-repre}, we notice that for every $ s \in (0,1)$, $x \in \G $, and $\phi \in \mathcal{S}(\G)$ the equality
\begin{equation}
\label{fiuguale}
 \psi_{\phi}(x, -2s)= \mathcal{L}^s\phi(x)
 \end{equation}
holds. Since for every $x \in \G$ and $\phi \in \mathcal{S}(\G)$ we have analytically continued the map $\alpha \to \psi_{\phi}(x,\alpha)$ on $ (-2,Q)$, we can introduce for every $x \in \G$  a family of tempered distributions $\{ \widetilde{P}_{\alpha}^x \}_{\alpha \in (-2,Q)}$ by re-reading the map $\psi_{\phi}(x, \alpha)$ as a the action of $\widetilde{P}^x_{\alpha}$ on $\phi \in \mathcal{S}(\G)$, so that for every $\alpha \in (-2,Q)$ we have
$$\widetilde{P}_{\alpha}^x (\phi ):= \psi_{\phi}(x, \alpha).$$ 
Thus, for every $s \in (0,1)$ and $x \in \G$ we have
$$ \widetilde{P}_{-2s}^x (\phi  )= \psi_{\phi}(x, -2s)=\mathcal{L}^s\phi(x),$$
for every $\phi \in \mathcal{S}(\G)$.
We notice that, when $0<\alpha<Q$, $P_{\alpha}$ is a distribution as well and $\widetilde{P}^x_{\alpha}$ and $P_{\alpha}$ satisfy the following relation
\begin{equation}\label{peralfamag0}
\widetilde{P}_{\alpha}^x(\phi)=P_{\alpha}(\phi \circ l_x)= \int_{\G}\phi(xy)P_{\alpha}(y)dy,
\end{equation}
for every $x \in \G$ and $\phi \in \mathcal{S}(\G)$. It is natural to introduce the following notion.
\begin{deff}\label{def:fralap}
For every $s \in (-\frac{Q}{2},1)$,
$$\mathcal{L}^s \phi(x):=  \psi_{\phi}(x,-2s)=\widetilde{P}_{-2s}^x(\phi),$$
for every $\phi \in \mathcal{S}(\G)$ and for every $x \in \G$.
\end{deff}
In the following two subsections we study the properties of the operator $\mathcal{L}^s$ introduced in Definition \ref{def:fralap}.
\subsection{An estimate of $x \to \psi(x, \alpha)$}
\label{estimates-psi}
First, we observe that $ \G \ni x \to \psi(x,\alpha)$ is a smooth map. With the following result, later we wish to prove the consistence of the extension of the operator $\mathcal{L}^s$ provided by Definition \ref{def:fralap}.

\begin{prop}\label{stima32}
For every $\alpha>0$ and $\phi \in \mathcal{S}(\G)$ $$ \psi(x, \alpha) =O(\|x \|_c^{\alpha-Q})$$ as $\| x \|_c \to \infty$.
\end{prop}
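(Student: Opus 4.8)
The plan is to exploit the decomposition of $\psi(x,\alpha)$ established in Theorem \ref{teo:samoggia}, namely
\begin{align*}
\psi(x,\alpha)&= \int_{\G \setminus \B_c(0,1)} \phi(xy)P_{\alpha}(y) dy + \int_{\B_c(0,1)} (\phi(xy)-\phi(x))P_{\alpha}(y) dy + \phi(x) \frac{1}{\Gamma(\frac{\alpha}{2})} \frac{1}{\alpha} \sigma(\alpha),
\end{align*}
and to bound each of the three terms separately as $\|x\|_c \to \infty$. For the third term, since $\phi \in \mathcal{S}(\G)$ and $\alpha>0$ is fixed, $\phi(x)$ decays faster than any negative power of $\|x\|_c$, hence this term is $O(\|x\|_c^{-N})$ for every $N$, in particular $O(\|x\|_c^{\alpha-Q})$. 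The same is true of the subtracted piece $-\phi(x)\int_{\B_c(0,1)} P_\alpha(y)\,dy$ hidden in the middle term once one writes it back as $\int_{\B_c(0,1)}\phi(xy)P_\alpha(y)\,dy$ minus a Schwartz-decaying constant; so it suffices to bound $\int_{\G}\phi(xy)P_\alpha(y)\,dy$ directly — equivalently, after the change of variables $z=xy$, the convolution-type integral $\frac{1}{\Gamma(\alpha/2)}\int_{\G}|\phi(z)|\,\|x^{-1}z\|_\alpha^{\alpha-Q}\,dz$.

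First I would split this integral over the region $\{\|x^{-1}z\|_c \leq \tfrac12\|x\|_c\}$ and its complement, using \eqref{relcc-alfa} to pass between $\|\cdot\|_\alpha$ and $\|\cdot\|_c$. On the complement, $\|x^{-1}z\|_\alpha^{\alpha-Q} \leq (c_\alpha)^{Q-\alpha}(\tfrac12\|x\|_c)^{\alpha-Q}$ since $\alpha-Q<0$, so this part contributes at most $C\|x\|_c^{\alpha-Q}\int_\G |\phi(z)|\,dz = O(\|x\|_c^{\alpha-Q})$ because $\phi \in L^1(\G)$. On the near region $\{\|x^{-1}z\|_c \leq \tfrac12\|x\|_c\}$, the quasi-triangle inequality for the homogeneous norm forces $\|z\|_c \geq C\|x\|_c$ (for $\|x\|_c$ large), so there $|\phi(z)| \leq K_N \|z\|_c^{-N} \leq K_N' \|x\|_c^{-N}$ for any $N$; pulling this bound out leaves $K_N'\|x\|_c^{-N}\int_{\B_c(x,\,c_\alpha\|x\|_c/2)} \|x^{-1}z\|_\alpha^{\alpha-Q}\,dz$, and by the coarea formula of Corollary \ref{cor-coarea} together with the homogeneity of $\|\cdot\|_\alpha$ and of $\mathcal{S}^{Q-1}_\infty$ this integral equals a constant times $(\|x\|_c/2)^{\alpha}\sigma(\alpha)/\alpha$. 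Choosing $N$ so that $-N+\alpha \leq \alpha - Q$, i.e.\ $N \geq Q$, yields $O(\|x\|_c^{\alpha-Q})$ for this piece as well.

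Combining the three bounds gives $\psi(x,\alpha)=O(\|x\|_c^{\alpha-Q})$ as $\|x\|_c\to\infty$, as desired. I do not anticipate a serious obstacle here: the estimates are of the same flavour as those already carried out in Propositions \ref{prop31} and \ref{prop-psi}. The only point requiring mild care is the geometric step showing $\|z\|_c \gtrsim \|x\|_c$ on the near region, which follows from the quasi-triangle inequality $\|x\|_c \leq b(\|x^{-1}z\|_c + \|z\|_c)$ for a structural constant $b\geq 1$, exactly as in \eqref{serve3}; one then absorbs the constant $b$ into the threshold defining the splitting.
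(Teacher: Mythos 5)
Your proposal is correct and follows essentially the same route as the paper: the paper also reduces to bounding $\frac{1}{\Gamma(\alpha/2)}\int_{\G}|\phi(z)|\,\|x^{-1}z\|_{c}^{\alpha-Q}\,dz$, splits over $\B_c(x,\|x\|_c/2)$ and its complement, uses the reverse triangle inequality to get $\|z\|_c\geq\|x\|_c/2$ on the near region (where the Schwartz decay of $\phi$ is exploited) and the bound $\|x^{-1}z\|_c^{\alpha-Q}\lesssim\|x\|_c^{\alpha-Q}$ together with $\phi\in L^1$ on the far region, concluding via the coarea formula exactly as you indicate. The only cosmetic difference is that you first pass through the decomposition of Theorem \ref{teo:samoggia}, which is unnecessary since for $\alpha>0$ the map $\psi(x,\alpha)$ is already the convolution integral by definition.
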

\begin{proof}
Recalling the estimate in \eqref{relcc-alfa} and the fact that $\phi \in \mathcal{S}(\G)$, we obtain
\begin{align*}
&\int_{\G}\phi(xy) P_{\alpha}(y) dy = \ \frac{1}{\Gamma(\frac{\alpha}{2}) } \int_{\G} \phi(xy) \| y \|_{\alpha}^{\alpha-Q} dy 
  \leq  \  \frac{1}{\Gamma(\frac{\alpha}{2}) } C_{\alpha} \int_{\G} |\phi(xy) | \| y \|_{c}^{\alpha-Q} dy \\
  = & \ \frac{1}{\Gamma(\frac{\alpha}{2}) } C_{\alpha} \int_{\G} |\phi(z)| \| x^{-1} z \|_{c}^{\alpha-Q} dz 
  =  \ \frac{1}{\Gamma(\frac{\alpha}{2}) } C_{\alpha} \int_{\G} |\phi(z)| \frac{(1+\| z \|_c^k)}{(1+\|z\|_c^k)} \| x^{-1} z \|_{c}^{\alpha-Q} dz \\
  \leq & \ \frac{1}{\Gamma(\frac{\alpha}{2}) } C_{\alpha} c_k \int_{\G} \frac{1}{(1+\|z\|_c^k)} \| x^{-1} z \|_{c}^{\alpha-Q} dz 
  =   \ \frac{1}{\Gamma(\frac{\alpha}{2}) } C_{\alpha} c_k \int_{\G} \frac{1}{(1+\|z\|_c^k)}\frac{1}{ \| x^{-1} z \|_{c}^{Q-\alpha}} dz \\
  = & \ \frac{1}{\Gamma(\frac{\alpha}{2}) } C_{\alpha} c_k \int_{\G \setminus \B_c(x, \frac{\|x\|_c}{2})} \frac{1}{(1+\|z\|_c^k)}\frac{1}{ \| x^{-1} z \|_{c}^{Q-\alpha}} dz 
+ \frac{1}{\Gamma(\frac{\alpha}{2}) } C_{\alpha} c_k \int_{\B_c(x, \frac{\|x\|_c}{2})} \frac{1}{(1+\|z\|_c^k)}\frac{1}{ \| x^{-1} z \|_{c}^{Q-\alpha}} dz,
\end{align*}
where $C_{\alpha}$ and $c_k$ are suitable positive constants.
Let us discuss separately the two terms of the last line. By exploiting homogeneity properties and the coarea formula of Corollary \ref{cor-coarea} we get
\begin{align}
\label{same-comp}
 \frac{1}{\Gamma(\frac{\alpha}{2}) } &C_{\alpha} c_k \int_{\G \setminus \B_c(x, \frac{\|x\|_c}{2})} \frac{1}{(1+\|z\|_c^k)}\frac{1}{ \| x^{-1} z \|_{c}^{Q-\alpha}} dy  \leq  \frac{1}{\Gamma(\frac{\alpha}{2}) } C_{\alpha} c_k \frac{{2}^{Q-\alpha}}{\| x \|_c^{Q-\alpha}} \int_{\G \setminus \B_c(x, \frac{\|x\|_c}{2})} \frac{1}{(1+\|z\|_c^k)} dy  \notag \\
& \leq  \frac{1}{\Gamma(\frac{\alpha}{2}) } C_{\alpha} c_k \frac{{2}^{Q-\alpha}}{\| x \|_c^{Q-\alpha}} \int_{\G } \frac{1}{(1+\|z\|_c^k)} dy\\
 & \leq  \frac{1}{\Gamma(\frac{\alpha}{2}) } C_{\alpha} c_k \frac{{2}^{Q-\alpha}}{\| x \|_c^{Q-\alpha}} \mathcal{S}^{Q-1}_{\infty}(\partial \B_c(0,1)) \int_{0}^{\infty} \frac{1}{(1+r^k)} r^{Q-1}dy . \notag
\end{align}
The integral converges for every $k>Q$. Hence, we proved that for every $k>Q$ the estimate
\begin{equation}
\label{uno}
 \frac{1}{\Gamma(\frac{\alpha}{2}) } C_{\alpha} c_k \int_{\G \setminus \B_c(x, \frac{\|x\|_c}{2})} \frac{1}{(1+\|z\|_c^k)}\frac{1}{ \| x^{-1} z \|_{c}^{Q-\alpha}} dy  \leq c(k,\alpha) \| x \|_c^{\alpha-Q}
\end{equation}
holds for some suitable positive constant $c(k,\alpha)$. 

Concerning the second term of the sum, we have
\begin{align*}
&\frac{1}{\Gamma(\frac{\alpha}{2}) }  C_{\alpha} c_k \int_{\B_c(x, \frac{\|x\|_c}{2})} \frac{1}{(1+\|z\|_c^k)}\frac{1}{ \| x^{-1} z \|_{c}^{Q-\alpha}} dz \\
& =\frac{1}{\Gamma(\frac{\alpha}{2}) } C_{\alpha} c_k \int_{\B_c(x, \frac{\|x\|_c}{2})} \frac{1}{(1+\|xx^{-1}z\|_c^k)}\frac{1}{ \| x^{-1} z \|_{c}^{Q-\alpha}} dz \\
& \leq \frac{1}{\Gamma(\frac{\alpha}{2}) } C_{\alpha} c_k \int_{\B_c(x, \frac{\|x\|_c}{2})} \frac{1}{(1+(\|x\|_c-\|x^{-1}z\|_c)^k)}\frac{1}{ \| x^{-1} z \|_{c}^{Q-\alpha}} dz\\
& \leq \frac{1}{\Gamma(\frac{\alpha}{2}) } C_{\alpha} c_k \frac{1}{(1+(\frac{\|x\|_c}{2})^k)} \int_{\B_c(x, \frac{\|x\|_c}{2})} \frac{1}{ \| x^{-1} z \|_{c}^{Q-\alpha}} dz \\
& \leq \frac{1}{\Gamma(\frac{\alpha}{2}) } C_{\alpha} c_k \frac{1}{(1+(\frac{\|x\|_c}{2})^k)} \mathcal{S}^{Q-1}_{\infty}(\partial \B_c(0,1))\int_{0}^{\frac{\|x\|_c}{2}} r^{\alpha-Q} r^{Q-1} dz \\
& = \frac{1}{\Gamma(\frac{\alpha}{2}) } C_{\alpha} c_k \frac{1}{(1+(\frac{\|x\|_c}{2})^k)} \mathcal{S}^{Q-1}_{\infty}(\partial \B_c(0,1)) \frac{(\frac{\|x\|_c}{2})^{\alpha}}{\alpha}\\
& \leq \frac{1}{\Gamma(\frac{\alpha}{2}) } C_{\alpha} c_k 2^k \|x\|_c^{-k} \mathcal{S}^{Q-1}_{\infty}(\partial \B_c(0,1)) \frac{\|x\|_c^{\alpha}}{2^{\alpha}\alpha}\\
& \leq \tilde{c}(k, \alpha) \|x \|_c^{\alpha-k},
\end{align*}
where $\tilde{c}(k,\alpha)$ is a suitable positive constant. Hence, we have proved that for every $k$ we have
\begin{equation}
\label{due}
\frac{1}{\Gamma(\frac{\alpha}{2}) }  C_{\alpha} c_k \int_{\B_c(x, \frac{\|x\|_c}{2})} \frac{1}{(1+\|z\|_c^k)}\frac{1}{ \| x^{-1} z \|_{c}^{Q-\alpha}} dz \leq \tilde{c}(k, \alpha) \|x \|_c^{\alpha-k}.
\end{equation}
By combining \eqref{uno} and \eqref{due}, we get that for every $k>Q$ the estimate
\begin{equation*}
 \frac{1}{\Gamma(\frac{\alpha}{2}) } \int_{\G} \phi(xy) \| y \|_{\alpha}^{\alpha-Q} dy \leq c(k,\alpha) \| x \|_c^{\alpha-Q}+ \tilde{c}(k, \alpha) \|x \|_c^{\alpha-k} \leq C(\alpha, k) \| x \|_c^{\alpha-Q} 
\end{equation*}
holds, where $C(k,\alpha)$ is a suitable positive constant, concluding the proof.
\end{proof}
\begin{prop}
\label{est-2}
For every $-2 < \alpha <0$ and $\phi \in \mathcal{S}(\G)$
\begin{equation}
\label{thesis}
\psi(x, \alpha) =O(\|x \|_c^{\alpha-Q})
\end{equation}
as $\| x \|_c \to \infty$.
\end{prop}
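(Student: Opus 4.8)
The plan is to start from the decomposition \eqref{peralfamag-2} of Theorem \ref{teo:samoggia}, which holds for $\alpha\in(-2,Q)$, and to show that each of its three summands is $O(\|x\|_c^{\alpha-Q})$ as $\|x\|_c\to\infty$; summing these bounds gives \eqref{thesis}. Throughout I will use that $\phi\in\mathcal{S}(\G)$ forces $|\phi(w)|\le C_N\|w\|_c^{-N}$ for $\|w\|_c$ large and every $N\in\N$, and that the same decay holds for every horizontal derivative $(Z_1\cdots Z_{m_1})^\beta\phi$, since the Schwartz class is stable under left-invariant differential operators; together with the reverse triangle inequality $\|z\|_c\ge\big|\|x\|_c-\|x^{-1}z\|_c\big|$, the comparison \eqref{relcc-alfa}, and the coarea formula of Corollary \ref{cor-coarea}. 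The third summand $\phi(x)\,\Gamma(\alpha/2)^{-1}\alpha^{-1}\sigma(\alpha)$ is the easiest: for $\alpha\in(-2,0)$ the factor $\Gamma(\alpha/2)^{-1}\alpha^{-1}\sigma(\alpha)$ is a finite constant, so this term is $O(\|x\|_c^{-N})$ for every $N$, in particular $O(\|x\|_c^{\alpha-Q})$ upon taking $N\ge Q-\alpha$.

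For the first summand $\int_{\G\setminus\B_c(0,1)}\phi(xy)P_\alpha(y)\,dy$ I would perform the left-invariant change of variables $z=xy$ and use $\|x^{-1}z\|_\alpha^{\alpha-Q}\le c_\alpha^{Q-\alpha}\|x^{-1}z\|_c^{\alpha-Q}$ (note $\alpha-Q<0$), reducing to a bound for $\int_{\{\|x^{-1}z\|_c>1\}}|\phi(z)|\,\|x^{-1}z\|_c^{\alpha-Q}\,dz$. I split the domain as in Proposition \ref{stima32}: on $A_2=\{\|x^{-1}z\|_c>\|x\|_c/2\}$ one has $\|x^{-1}z\|_c^{\alpha-Q}\le(\|x\|_c/2)^{\alpha-Q}$, and the residual integral of $|\phi|$ is bounded by $\|\phi\|_{L^1(\G)}$, giving an $O(\|x\|_c^{\alpha-Q})$ contribution; on the annulus $A_1=\{1<\|x^{-1}z\|_c\le\|x\|_c/2\}$ the reverse triangle inequality gives $\|z\|_c\ge\|x\|_c/2$, hence $|\phi(z)|\le C_N\|x\|_c^{-N}$, while by coarea $\int_{A_1}\|x^{-1}z\|_c^{\alpha-Q}\,dz=\mathcal{S}^{Q-1}_\infty(\partial\B_c(0,1))\int_1^{\|x\|_c/2}r^{\alpha-1}\,dr$, which stays bounded in $x$ because $\alpha<0$. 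Choosing $N\ge Q-\alpha$ closes this case; the point, compared with Proposition \ref{stima32}, is that integrating outside $\B_c(0,1)$ removes the singularity of $P_\alpha$ at the origin, which is exactly what allows $\alpha<0$ here.

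The second summand $\int_{\B_c(0,1)}(\phi(xy)-\phi(x))P_\alpha(y)\,dy$ is the delicate one, because $P_\alpha\notin L^1_{\mathrm{loc}}(\G)$ when $\alpha<0$, so $\int_{\B_c(0,1)}|\phi(xy)-\phi(x)|\,|P_\alpha(y)|\,dy$ cannot be estimated directly. The plan is to subtract the Pansu differential using the symmetry identity \eqref{rel-palla-diffe} from Lemma \ref{rem-conv}, rewriting the summand as $\int_{\B_c(0,1)}\big(\phi(xy)-\phi(x)-D_h\phi(x)(y)\big)P_\alpha(y)\,dy$, and then apply the stratified Taylor formula (Theorem \ref{Taylor} with $n=1$, read via \eqref{def-grad} and \eqref{forma-grad}), which bounds the numerator by $c\|y\|_c^2\sup_{\|z\|_c\le b^2\|y\|_c}\sup_{|\beta|=2}|(Z_1\cdots Z_{m_1})^\beta\phi(xz)|$. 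For $y\in\B_c(0,1)$ every point $xz$ occurring here lies within $d_c$-distance $b^2$ of $x$, so $\|xz\|_c\ge\|x\|_c-b^2\ge\|x\|_c/2$ for $\|x\|_c$ large, and the supremum over the second-order horizontal derivatives is $\le C_N\|x\|_c^{-N}$ for every $N$. Hence this summand is at most $C_N\|x\|_c^{-N}\int_{\B_c(0,1)}\|y\|_c^2|P_\alpha(y)|\,dy$, and the remaining integral is finite: by coarea and \eqref{relcc-alfa} it is a fixed constant times $\int_0^1 r^{\alpha+1}\,dr$, convergent since $\alpha>-2$. Taking $N\ge Q-\alpha$ finishes this summand, and hence the proof. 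The main obstacle is thus exactly this reliance on the $D_h\phi$-cancellation and the quadratic Taylor remainder — the mechanism already exploited in Lemmas \ref{rem-conv} and \ref{claim-partedentro} — rather than any new difficulty; the other two summands are routine adaptations of earlier estimates.
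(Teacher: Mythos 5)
Your proof is correct, and it reaches the conclusion by a slightly different route than the paper. The paper starts from the single-integral representation \eqref{representation-2} and splits $\G$ at the \emph{growing} radius $\frac{\|x\|_c}{2b^2}$: the far-field term is handled as in Proposition \ref{stima32}, the constant term $\int_{\G\setminus\B_c(0,\|x\|_c/(2b^2))}\phi(x)P_\alpha(y)\,dy$ is computed exactly by coarea and equals $\phi(x)$ times a multiple of $\|x\|_c^{\alpha}$ (here $\alpha<0$ is what makes this outer integral converge), and the near-field Taylor remainder produces $\int_0^{\|x\|_c/2}r^{\alpha+1}dr\sim\|x\|_c^{\alpha+2}$, which must then be beaten by the rapid decay of the second-order horizontal derivatives on $\B_c(x,\|x\|_c/2)$. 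You instead start from the fixed-radius decomposition \eqref{peralfamag-2} of Theorem \ref{teo:samoggia}, so your third summand is a genuine constant times $\phi(x)$ and your near-field kernel integral $\int_{\B_c(0,1)}\|y\|_c^2|P_\alpha(y)|\,dy$ is a fixed finite constant (finite precisely because $\alpha>-2$), with all the $x$-dependence carried by the Schwartz decay of $\phi$ and of its second-order horizontal derivatives at distance $\ge\|x\|_c-b^2$ from the origin. The essential mechanism is identical in both arguments — the cancellation \eqref{rel-palla-diffe} of the Pansu differential against the symmetric kernel, the stratified Taylor formula of Theorem \ref{Taylor} at order $n=1$, the comparison \eqref{relcc-alfa}, and the coarea formula of Corollary \ref{cor-coarea} — but your bookkeeping is somewhat lighter because nothing in the near-field estimate grows with $\|x\|_c$. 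The only point worth making explicit (the paper is equally terse here) is that for $\alpha\le-1$ the identity \eqref{rel-palla-diffe} is to be read in the principal-value sense: the integral of $D_h\phi(x)(y)P_\alpha(y)$ over each annulus $\B_c(0,1)\setminus\B_c(0,\varepsilon)$ vanishes by oddness, and the combined integrand $(\phi(xy)-\phi(x)-D_h\phi(x)(y))P_\alpha(y)$ is absolutely integrable, which is exactly how the second summand of \eqref{peralfamag-2} is defined for $\alpha<0$ in Lemma \ref{rem-conv}.
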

\begin{proof}
We can use the representation in \eqref{representation-2}, i.e. for $\alpha \in (-2,0)$
$$ \psi(x, \alpha) = \int_{\G} (\phi(xy)-\phi(x)) P_{\alpha}(y) dy.$$
Let $b=b(\G)>0$ the constant which appears in Theorem \ref{Taylor} and consider
\begin{align}
\label{threeterms}
&\int_{\G} (\phi(xy)-\phi(x)) P_{\alpha}(y) dy  \notag \\
=& \int_{\G \setminus \B_c(0, \frac{\| x\|_c}{2b^2})} (\phi(xy)-\phi(x)) P_{\alpha}(y) dy+\int_{\B_c(0, \frac{\| x\|_c}{2b^2})} (\phi(xy)-\phi(x)) P_{\alpha}(y) dy\\
=&\int_{\G \setminus \B_c(0, \frac{\| x\|_c}{2b^2})} \phi(xy) P_{\alpha}(y) dy -\int_{\G \setminus \B_c(0, \frac{\| x\|_c}{2b^2})} \phi(x) P_{\alpha}(y) dy +\int_{\B_c(0, \frac{\| x\|_c}{2b^2})} (\phi(xy)-\phi(x)) P_{\alpha}(y) dy.  \notag
\end{align}
We deal with the three terms of the sum in \eqref{threeterms}, separately. Let us consider the first one, in particular
\begin{align*}
\int_{\G \setminus \B_c(0, \frac{\| x\|_c}{2b^2})} \phi(xy) P_{\alpha}(y) dy &= \frac{1}{\Gamma(\frac{\alpha}{2})} \int_{\G \setminus \B_c(0, \frac{\| x\|_c}{2b^2})} \phi(xy) \| y \|_{\alpha}^{\alpha-Q} dy\\
&= \frac{1}{\Gamma(\frac{\alpha}{2})} \int_{\G \setminus \B_c(x, \frac{\| x\|_c}{2b^2})} \phi(z) \| x^{-1}z \|_{\alpha}^{\alpha-Q} dz \\
&= \frac{1}{\Gamma(\frac{\alpha}{2})} \int_{\G \setminus \B_c(x, \frac{\| x\|_c}{2b^2})} \phi(z) \frac{(1+\|z \|_c^k)}{(1+\|z \|_c^k)} \| x^{-1}z \|_{\alpha}^{\alpha-Q} dz \\
&\leq \frac{1}{|\Gamma(\frac{\alpha}{2})|} c_k C_{\alpha} \int_{\G \setminus \B_c(x, \frac{\| x\|_c}{2b^2})}  \frac{1}{(1+\|z \|_c^k)} \frac{1}{\| x^{-1}z \|_{c}^{Q-\alpha}} dz ,
\end{align*}
holds for some positive constant $C_{\alpha}$.
Now, as in \eqref{same-comp} (since $\alpha<Q$), we get the estimate in \eqref{uno}, i.e. for every $k>Q$ the estimate
\begin{equation}
\label{an}
\begin{split}
\int_{\G \setminus \B_c(0, \frac{\| x\|_c}{2b^2})} \phi(xy) P_{\alpha}(y) dy\leq \frac{1}{|\Gamma(\frac{\alpha}{2})|} c_k c_{\alpha} \int_{\G \setminus \B_c(x, \frac{\| x\|_c}{2b^2})}  \frac{1}{(1+\|z \|_c^k)} \frac{1}{\| x^{-1}z \|_{c}^{Q-\alpha}} dz \leq  c(k,\alpha) \| x \|_c^{\alpha-Q}
\end{split}
\end{equation}
holds.
Let us consider the second term of \eqref{threeterms}. By the coarea formula of Corollary \ref{cor-coarea} and homogeneity properties we obtain
\begin{align*}
-\int_{\G \setminus \B_c(0, \frac{\| x\|_c}{2b^2})} \phi(x) P_{\alpha}(y) dy &=- \phi(x) \int_{\frac{\|x \|_c}{2b^2}}^{\infty} \int_{\partial \B_c(0,r)} P_{\alpha}(y) d\mathcal{S}^{Q-1}_{\infty}(y) dr \\
&=- \phi(x) \int_{\frac{\|x \|_c}{2b^2}}^{\infty} r^{\alpha-Q} r^{Q-1} \int_{\partial \B_c(0,1)} P_{\alpha}(z) d\mathcal{S}^{Q-1}_{\infty}(z) dr \\
&= \phi(x) \frac{1}{\Gamma(\frac{\alpha}{2})} \frac{1}{\alpha} \sigma(\alpha) \frac{\|x\|_c^{\alpha}}{(2b^2)^{\alpha}}.
\end{align*}
We point out that we used that $\alpha \in (-2,0)$. Now, since $\phi(x)=O( \|x\|_c^N )$ as $\|x \|_c \to \infty$ for every negative integer $N $, fixing $N=-Q$ we get that for $\|x\|_c>1$ the estimate
\begin{equation}
\label{do}
\begin{split}
-\int_{\G \setminus \B_c(0, \frac{\| x\|_c}{2b^2})} \phi(x) P_{\alpha}(y) dy \leq \widehat{c}(\alpha)\|x \|_c^{\alpha-Q} \phi(x)
\end{split}
\end{equation}
holds. By a change of variables in the third term of the sum in \eqref{threeterms}, we obtain
\begin{equation}\label{continue2}
\begin{split}
\int_{\B_c(0, \frac{\| x\|_c}{2b^2})} (\phi(xy)-\phi(x)) P_{\alpha}(y) dy = \int_{\B_c(x, \frac{\| x\|_c}{2b^2})} (\phi(z)-\phi(x)) P_{\alpha}(x^{-1}z) dz.
\end{split}
\end{equation}
Recalling the stratified mean value inequality of Theorem \ref{Taylor} and the fact that $\phi$ is smooth, we have the following inequality
\begin{equation}
| \phi(xy)-\phi(x)-D_h\phi(x)(y) |=| \phi(xy)-\mathcal{T}_1(\phi,x)(xy) | \leq c \|y\|_c^2 \sup_{p \in \B_c(x,b^2\|y\|_c)} \mathfrak{d}^2_h\phi(p),
\end{equation}
where $ \mathfrak{d}^2_h\phi$ is $\mathfrak{d}_h^2\phi(z):\G \to \R, z \to \max_{|\alpha|=2} |(Z_1 \ldots Z_{m_1})^{\alpha}\phi(z)|$ for every $z \in \G$. We get
\begin{equation}\label{oss1}
| \phi(z)-\phi(x)-D_h\phi(x)(x^{-1}z) | \leq c \|x^{-1}z\|_c^2 \sup_{p \in \B_c(x,b^2\|x^{-1}z\|_c)} \mathfrak{d}^2_h\phi(p)=c \|x^{-1}z\|_c^2  \mathfrak{d}^2_h\phi(\xi),
\end{equation}
for some $\xi=\xi(z) \in \B_c(x,b^2\|x^{-1}z\|_c ) \subset \B_c(x, \frac{\|x\|_c}{2})$.
Now, considering the Pansu differential $D_h\phi(x)$ of $\phi$ at $x$, we notice that the following equality
\begin{equation}\label{oss2}
\begin{split}
&\int_{\B_c(x, \frac{\| x\|_c}{2b^2})} 
  D_h\phi(x)(x^{-1}z)  P_{\alpha}(x^{-1}z) dz=\int_{\B_c(0, \frac{\| x\|_c}{2b^2})}  D_h\phi(x)(y) P_{\alpha}(y) dy =0
\end{split}
\end{equation}
holds, by the same argument in \eqref{rel-palla-diffe}.
Hence, from \eqref{continue2}, exploiting \eqref{relcc-alfa}, \eqref{oss1} and \eqref{oss2} and the fact that $\phi \in \mathcal{S}(\G)$, we have
\begin{align}
\label{continue3}
&\int_{\B_c(x, \frac{\| x\|_c}{2b^2})} (\phi(z)-\phi(x)) P_{\alpha}(x^{-1}z) dz  \notag\\
= &\int_{\B_c(x, \frac{\| x\|_c}{2b^2})} (\phi(z)-\phi(x)-D_h \phi(x)(x^{-1}z)) P_{\alpha}(x^{-1}z) dz  \notag \\
 \leq &\int_{\B_c(x, \frac{\| x\|_c}{2b^2})} c \|x^{-1}z\|_c^2 \mathfrak{d}^2_h\phi(\xi) |P_{\alpha}(x^{-1}z)| dz  \notag \\
 \leq & \frac{1}{|\Gamma(\frac{\alpha}{2})|} c C_{\alpha} \int_{\B_c(x, \frac{\| x\|_c}{2b^2})}   \mathfrak{d}^2_h\phi(\xi) \|x^{-1}z\|_c^{\alpha-Q+2} dz\\
= & \frac{1}{|\Gamma(\frac{\alpha}{2})|}c C_{\alpha} \int_{\B_c(x, \frac{\| x\|_c}{2b^2})} \mathfrak{d}^2_h\phi(\xi) \frac{(1+ \| \xi \|_c^k)}{(1+ \| \xi \|_c^{k})} \|x^{-1}z\|_c^{\alpha-Q+2} dz  \notag \\
\leq & \frac{1}{|\Gamma(\frac{\alpha}{2})|}c C_{\alpha} c_k \int_{\B_c(x, \frac{\| x\|_c}{2b^2})} \frac{1}{(1+ \| \xi \|_c^{k})} \|x^{-1}z\|_c^{\alpha-Q+2} dz,  \notag
\end{align}
where $c$ and $c_{\alpha}$ are suitable positive constants.
Now, remarking that $$\|  \xi \|_c \geq \|x\|_c-\|x^{-1}\xi\|_c \geq \|x\|_c - b^{2} \|x^{-1}z\|_c \geq \|x\|_c-b^2 \frac{\|x\|_c}{2b^2}= \frac{\|x\|_c}{2}, $$ we can continue from \eqref{continue3} getting
\begin{align*}
&\int_{\B_c(x, \frac{\| x\|_c}{2b^2})} (\phi(z)-\phi(x)) P_{\alpha}(x^{-1}z) dz \\
 \leq & \ \frac{1}{|\Gamma(\frac{\alpha}{2})|}c C_{\alpha} c_k \int_{\B_c(x, \frac{\| x\|_c}{2b^2})} \frac{1}{(1+ \| \xi \|_c^{k})} \|x^{-1}z\|_c^{\alpha-Q+2} dz\\
\leq & \ \frac{1}{|\Gamma(\frac{\alpha}{2})|}c C_{\alpha} c_k \int_{\B_c(x, \frac{\| x\|_c}{2b^2})} \frac{1}{(1+ \big( \frac{\|x\|_c}{2} \big)^{k})} \frac{1}{\|x^{-1}z\|_c^{Q-\alpha-2}} dz\\
 \leq & \ \frac{1}{|\Gamma(\frac{\alpha}{2})|} c C_{\alpha} c_k \frac{1}{(1+ \big( \frac{\|x\|_c}{2} \big)^{k})} \mathcal{S}^{Q-1}_{\infty}(\partial \B_c(0,1)) \int_{0}^{\frac{\|x\|_c}{2}} r^{\alpha+2-Q+Q-1} dr\\
 = & \ \frac{1}{|\Gamma(\frac{\alpha}{2})|}c C_{\alpha} c_k \frac{1}{(1+ \big( \frac{\|x\|_c}{2} \big)^{k})} \mathcal{S}^{Q-1}_{\infty}(\partial \B_c(0,1)) \frac{\|x\|_c^{\alpha+2}}{2^{\alpha+2}(\alpha+2)}\\
 \leq & \ c(\alpha, k) \| x\|_c^{\alpha+2-k},
\end{align*}
where $c(\alpha,k)$ is a suitable positive constant.

Thus, by choosing $k>Q+2$ we get the following estimate
\begin{equation}
\label{trua}
\begin{split}
\int_{\B_c(0, \frac{\| x\|_c}{2})} (\phi(xy)-\phi(x)) P_{\alpha}(y) \leq \widetilde{c}(\alpha) \| x\|_c^{\alpha-Q},
\end{split}
\end{equation}
for a suitable constant $\widetilde{c}(\alpha)$.
Combining \eqref{an}, \eqref{do} and \eqref{trua} we obtain \eqref{thesis}.
\end{proof}
\subsection{Some properties of $\mathcal{L}^s$, when $-\frac{Q}{2}<s<1$}
\label{section-laplesteso}
Coherently with the continuation constructed in the previous subsections, we show that the definition of the fractional sub-Laplacian $\mathcal{L}^s$ with argument $s \in (-\frac{Q}{2},1)$, that we introduced in Definition \ref{def:fralap} is consistent.\\
First, we observe that for $s=0$, it holds that
$$\psi_{\phi}(x,0)= \phi(x)= \mathcal{L}^0\phi(x),$$
for every $\phi \in \mathcal{S}(\G)$ and $x \in \G$.\\
Moreover, $\mathcal{L}^s$ satisfies the semigroup property stated in the following proposition.

\begin{prop} 
For every $s,m \in (-\frac{Q}{2},1)$ such that $-\frac{Q}{2}<m+s<1 $ the equality
$$\mathcal{L}^s \circ \mathcal{L}^m=\mathcal{L}^{s+m}$$
holds.
\end{prop}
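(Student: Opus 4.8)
The plan is to reduce the semigroup identity for the extended operator $\mathcal{L}^s$ to the classical composition law $P_\alpha \star P_\beta = P_{\alpha+\beta}$ established for positive indices in \eqref{convolution}, and then to propagate it to negative indices by the uniqueness of analytic continuation. Concretely, fix $\phi \in \mathcal{S}(\G)$ and $x \in \G$, and set $\alpha = -2s$, $\beta = -2m$, so that $\alpha,\beta \in (-2, Q)$ and $\alpha+\beta \in (-2,Q)$ by the hypothesis $-\frac{Q}{2} < s,m,s+m < 1$. By Definition \ref{def:fralap} the claim is equivalent to $\psi_{\mathcal{L}^m\phi}(x, -2s) = \psi_\phi(x, -2(s+m))$, i.e.\ $\widetilde P^x_{\alpha}\big(\mathcal{L}^m\phi\big) = \widetilde P^x_{\alpha+\beta}(\phi)$. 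The first preliminary step is to check that $\mathcal{L}^m\phi$ is a legitimate test function for $\widetilde P^x_\alpha$: although $\mathcal{L}^m\phi$ need not lie in $\mathcal{S}(\G)$ when $m<0$, by Proposition \ref{est-2} (resp.\ Proposition \ref{stima32} when $m \geq 0$) one has $\mathcal{L}^m\phi(y) = \psi_\phi(y,-2m) = O(\|y\|_c^{-2m-Q})$ as $\|y\|_c \to \infty$, and $\mathcal{L}^m\phi \in C^\infty(\G)$ since $y \mapsto \psi(y,\alpha)$ is smooth; hence $\mathcal{L}^m\phi$ falls under the hypotheses of Proposition \ref{prop:anzola} with $L = 2m+Q$, and the Remark following Theorem \ref{teo:samoggia} guarantees that $\alpha \mapsto \psi_{\mathcal{L}^m\phi}(x,\alpha)$ is analytically continued on $(-2, \min\{Q, 2m+Q\}) \supseteq (-2,Q)$.

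The core of the argument is to verify the identity first on the region where everything is given by honest absolutely convergent integrals, namely for $s, m \in (0,1)$ with $s+m < 1$, so that $\alpha,\beta \in (0,Q)$ with $\alpha+\beta < Q$. There, by item (v) of Section \ref{section-kernel} and \eqref{peralfamag0}, $\widetilde P^x_\alpha(\mathcal{L}^m\phi) = \int_\G (\mathcal{L}^m\phi)(xy)\,P_\alpha(y)\,dy$, and since $\mathcal{L}^m\phi = \phi \star P_\beta$ (item (ii)/(iii) of Section \ref{section-kernel}, or directly the integral representation \eqref{psi}), a Fubini argument — identical in spirit to the computation proving \eqref{convolution} — gives $(\phi \star P_\beta)\star P_\alpha = \phi \star (P_\beta \star P_\alpha) = \phi \star P_{\alpha+\beta}$, whence $\widetilde P^x_\alpha(\mathcal{L}^m\phi) = \psi_\phi(x,\alpha+\beta)$. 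The integrability needed for Fubini is furnished by the decay estimates of Section \ref{section-kernel} item \eqref{item1} together with the Schwartz bound on $\phi$; this is routine.

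Having established the equality on the open triangle $\{(s,m) : 0 < s,m,\ s+m<1\}$, I would now freeze one variable and slide the other. Fix $m \in (0,1)$ small. Both sides of $\psi_{\mathcal{L}^m\phi}(x,-2s) = \psi_\phi(x,-2(s+m))$ are, by the previous paragraph's admissibility check and by Theorem \ref{teo:samoggia}/Proposition \ref{prop:anzola}, real-analytic functions of $s$ on the interval $s \in (-\tfrac{Q}{2}, \min\{1, 1-m\})$; they agree on the nonempty subinterval $(0,1-m)$, hence by the identity principle they agree on the whole interval, in particular for all admissible $s \in (-\tfrac{Q}{2},1)$ with $s+m<1$. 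Next fix such an $s$ and run the same analytic-continuation argument in the variable $m$: both sides are analytic in $m$ on $(-\tfrac{Q}{2}, \min\{1, 1-s\})$ and coincide for $m$ in a subinterval, hence everywhere there. This two-step continuation covers the full admissible region $-\tfrac{Q}{2} < s,m,s+m < 1$, completing the proof.

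\textbf{Main obstacle.} The delicate point is not the analytic continuation itself but ensuring that $\widetilde P^x_\alpha(\mathcal{L}^m\phi)$ is well-defined and depends analytically on the relevant parameter when $\mathcal{L}^m\phi$ is merely a polynomially-decaying smooth function rather than a Schwartz function; this is exactly what forces the use of Proposition \ref{prop:anzola} and Propositions \ref{stima32}--\ref{est-2} rather than Proposition \ref{prop-schw}, and one must also keep track of the decay exponent $L = 2m+Q$ to confirm $\min\{Q,L\} \geq Q$ so that the continuation reaches all of $(-2,Q)$. A secondary technical nuisance is justifying the Fubini interchange in the base case near the endpoints $\alpha \to 0^+$ or $\alpha+\beta \to Q^-$, which is handled by the same splitting into $\B_c(0,1)$ and its complement used throughout Section \ref{sec-ext}.
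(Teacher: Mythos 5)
Your proposal is correct and follows essentially the same route as the paper: both establish the identity on the positive range $\alpha,\beta\in(0,Q)$ with $\alpha+\beta<Q$ via Fubini and the convolution rule $P_\alpha\star P_\beta=P_{\alpha+\beta}$, then use the decay estimate $\psi_\phi(\cdot,\beta)=O(\|\cdot\|_c^{\beta-Q})$ together with Proposition \ref{prop:anzola} to continue analytically one variable at a time, concluding by uniqueness of the analytic continuation. The only slip is the containment $(-2,\min\{Q,2m+Q\})\supseteq(-2,Q)$, which fails for $m<0$; this is harmless because the admissible range $\alpha+\beta<Q$ then shrinks to exactly $(-2,\min\{Q-\beta,Q\})$, which is precisely what the continuation delivers, as the paper observes.
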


\begin{proof}
Let us consider $\alpha \in (0,Q)$ and $ \beta \in (0,Q)$ such that $\alpha+\beta<Q$. We recall that by \eqref{convolution}, the convolution rule $$P_{\alpha+\beta}=P_{\alpha}\star P_{\beta}$$
holds. For every $x \in \G$ and $\phi \in \mathcal{S}(\G)$, exploiting the relation \eqref{peralfamag0}, we obtain that
\begin{align}\label{assumption}
\widetilde{P}_{\alpha+\beta}^x(\phi)&=P_{\alpha+\beta}(\phi  \circ l_x )= \int_{\G} P_{\alpha+\beta}(y)\phi(xy)dy \notag \\
&= \int_{\G} P_{\alpha}\star P_{\beta}(y)\phi(xy)dy = \int_{\G} \int_{\G} P_{\alpha}(z) P_{\beta}(z^{-1}y) dz \phi(xy)dy \notag \\
&= \int_{\G } P_{\alpha}(z)  \int_{\G} P_{\beta}(z^{-1}y) \phi(xy)dy dz = \int_{\G } P_{\alpha}(z)  \int_{\G} P_{\beta}(z') \phi(xzz')dz' dz\\
&= \int_{\G } P_{\alpha}(z) \psi_{\phi}(xz,\beta) dz =\widetilde{P}_{\alpha}^x(\psi_{\phi}(\cdot,\beta)) =\widetilde{P}_{\alpha}^x(\widetilde{P}_{\beta}^{xz}(\phi)) = \psi_{\psi_{\phi}(\cdot,\beta)}(x,\alpha).  \notag 
\end{align}
We wish to show that the equality in \eqref{assumption} holds for every $\alpha, \beta \in (-2,Q)$ such that $-2<\alpha+\beta<Q$. This conclude the proof by the uniqueness of analytic continuation.\\
First, we observe that, as a consequence of our analytic continuation, the map $\widetilde{P}_{\alpha+\beta}^x(\phi)=\psi_{\phi}(x,\alpha+\beta)$ is analytic with respect to $\alpha$ and $\beta$ on the set of $\alpha$ and $\beta$ such that $-2<\alpha+\beta<Q$. \\
Let us show that the map $\psi_{\psi_{\phi}(\cdot,\beta)}(x,\alpha)$ is analytic as well with respect to $\alpha$ and $\beta$ for $\alpha, \beta \in (-2,Q)$, $\alpha+\beta<Q$.\\
In order to do this, we fix $\beta \in(-2,Q)$ and consider $\alpha \to \psi_{\psi_{\phi}(\cdot,\beta)}(x,\alpha)$.
We need to apply the results of Proposition \ref{prop-psi} and Section \ref{section-psi}. In fact we proved that, if one considers a map $\phi \in C^{\infty}(\G)$ such that $|\phi(z)|\leq K \|z\|_c^{-L}$ when $\|z\|_c \geq S$ for some $L,S,K>0$, then one can carry out the analytic continuation of the map $\psi_{\phi}(x, \alpha)$ for any $\alpha<\min\{L,Q\}$.
We apply this result to the map $\alpha \to \psi_{\psi(\cdot,\beta)}(x,\alpha)$. In fact, by the estimates of Section \ref{estimates-psi} we know that for every $\phi \in \mathcal{S}(\G)$ we have $\psi_{\phi}(z,\beta)=O(\|z\|_c^{\beta-Q})$ as $\|z\|_c \to \infty$. \\
Hence, by Proposition \ref{prop-psi}, we conclude that $\alpha \to \psi_{\psi_{\phi}(\cdot,\beta)}(x,\alpha)$ is analytic up to $-2<\alpha<\min\{Q-\beta,Q\}$, i.e. for every $\alpha \in (-2,Q)$ such that $\alpha+\beta<Q$. \\
Clearly, the role of $\alpha$ and $\beta$ can be exchanged since $\widetilde{P}_{\alpha+\beta}^x(\phi)=\widetilde{P}_{\beta+\alpha}^x(\phi)$, and then $\psi_{\psi_{\phi}(\cdot,\beta)}(x,\alpha)=\psi_{\psi_{\phi}(\cdot,\alpha)}(x,\beta)$. Thus, for every $\alpha \in (-2,Q)$, the map $\beta \to \psi_{\psi(\cdot,\beta)}(x,\alpha)$ is analytic for $\beta \in (-2,Q)$ as $\alpha+\beta <Q$. To sum up, we proved that for every $x \in \G$ and $\phi \in \mathcal{S}(\G)$, 
\begin{equation}
\psi_{\phi}(x,\alpha+\beta)=\widetilde{P}_{\alpha+\beta}^x(\phi)=\psi_{\psi_{\phi}(\cdot,\beta)}(x,\alpha)
\end{equation}
hold for every $\alpha, \beta \in (-2,Q)$ such that $-2<\alpha+\beta<Q$.\\
Let us now consider $x \in \G$ and $\phi \in \mathcal{S}(\G)$ and set $\alpha, \beta \in (-2,Q)$ such that $-2<\alpha+\beta<Q$. Set $\alpha=-2s$ and $\beta=-2m$ for suitable $s,m \in (-\frac{Q}{2},1)$. We have
\begin{align*}
\mathcal{L}^{s+m} \phi(x)&=\widetilde{P}^{x}_{\alpha+\beta}(\phi)=\psi_{\psi_{\phi}(\cdot,\beta)}(x,\alpha)=\psi_{\mathcal{L}^m\phi(\cdot)}(x,\alpha)= \mathcal{L}^s (\mathcal{L}^m \phi)(x).
\end{align*}
This concludes the proof.
\end{proof}

\section{The Heisenberg group $\H^n$}
\label{sect:three}
In this section we recall the setting of the Heisenberg group $\H^n$ and we analyse some suitable families of integrals. This analysis will be an auxiliary tool to construct an analytic continuation of the map $\alpha \to \psi(x,\alpha)$ on $(-\infty, Q)$ in the Heisenberg group.  
\subsection{The Heisenberg group: definition and some properties}
We discuss the Heisenberg group according to Section \ref{sect:prel}, referring to the representation of a Carnot group $\G$ in \eqref{eq:cg}. For every $n \in \N$, independently of the choice of $n$, the Heisenberg group can be represented as a vector space
\[
\mathbb{H}^n = H_1 \oplus H_2,
\]
with dim($H_1$)$=2n$ and dim($H_2$)$=1$, endowed with a symplectic form $\omega$ on $H_1$ and a fixed nonvanishing element \begin{equation}
\label{edueennepiuuno}
e_{2n+1} \in H_2.
\end{equation}
We denote by $\pi_{H_1}$ and $\pi_{H_2}$ the canonical projections on $H_1$ and $H_2$, respectively, associated with the direct sum.
The space $\mathbb{H}^n$ has a structure of Lie algebra given by 
\begin{equation}
[ x,y]= \omega(\pi_{H_1}(x),\pi_{H_1}(y)) \ e_{2n+1}.
\end{equation}
Then, by the Baker-Campbell-Hausdorff formula, the Lie group product is
\begin{equation}\label{lieprod}
x y = x+ y + \frac{[x,y]}{2}
\end{equation}
for every $x,y \in \H^n$.
Notice that $q=2n+1$ and $Q=2n+2$.
We fix a symplectic basis $ (e_1, \dots, e_{2n} )$ of $(H_1,\omega)$, namely
\[
\omega(e_i,e_{n+j})=\delta_{ij}, \qquad \omega(e_i,e_j)=\omega(e_{n+i},e_{n+j})=0,
\]
for every $i,j=1,\ldots,n$, where $\delta_{ij}$ is the Kronecker delta.
Thus, considering the vector $e_{2n+1}$ given in \eqref{edueennepiuuno}, we obtain a basis
\begin{equation}
\label{heisbasis}
\mathcal{B}= ( e_1, \dots, e_{2n+1} ).
\end{equation}
We assume that on $\H^n$ is fixed the scalar product $\langle \cdot, \cdot \rangle$ that makes $\mathcal{B}$ orthonormal.
We can read the product \eqref{lieprod} on $\H^n$ in coordinates with respect to $\cB$ (i.e.\ on $\mathbb{R}^{2n+1}$) as follows:
\begin{small}
\begin{equation}
\label{productheis}
(x_1, \dots, x_{2n+1})  (y_1, \dots, y_{2n+1})= \pa{x_1 + y_1, \ldots, x_{2n+1} + y_{2n+1} + \sum_{i=1}^n \frac{x_iy_{i+n}-x_{i+n} y_i}2},
\end{equation}
\end{small}
for every $(x_1, \ldots, x_{2n+1}), (y_1, \ldots, y_{2n+1}) \in \H^n$.
Notice that $H_2= \mathrm{span}(e_{2n+1})$ is the center of $\H^n$.
Moreover, taking in consideration the product \eqref{productheis}, in our coordinates the Jacobian basis of left invariant vector fields is the following
\begin{equation}
\label{eqbase}
\begin{aligned}
 X_j(x) &= \partial_{x_j}-\frac{1}{2} x_{j+n} \partial_{x_{2n+1}} \ \ \ \ \ \ j=1, \dots, n\\
 Y_j(x) &= \partial_{x_{n+j}} + \frac{1}{2}x_j \partial_{x_{2n+1}} \ \ \ \ \ \ j=1, \dots, n\\
 T(x)&= \partial_{x_{2n+1}},
\end{aligned}
\end{equation}
for every $x=(x_1, \dots, x_{2n+1}) \in \H^n$.
We will also use the following notation: for $i=1, \ldots, 2n+1$ we denote
$$\mathrm{Lie}(\H^n) \ni Z_i:=
\begin{cases}
X_i \qquad \mathrm{if \ }i=1 \ldots n\\
Y_{i-n} \qquad \mathrm{if \ }i=n+1, \ldots, 2n\\
T \qquad \mathrm{if \ }i=2n+1.\\
\end{cases}
$$
With a slight abuse of notation, we denote by $\pi_{H_1}$ and $\pi_{H_2}$ also the corresponding projections read in coordinates with respect to the basis $\mathcal{B}$ i.e.
\begin{align*}
&\pi_{H_1}:\H^n \to \R^{2n},
\pi_{H_1}(x_1, \ldots, x_{2n+1})=(x_1, \ldots, x_{2n}),\\
& \pi_{H_2}:\H^n \to \R, \pi_{H_2}(x_1, \ldots, x_{2n+1})=x_{2n+1},
\end{align*}
for every $(x_1, \ldots, x_{2n+1}) \in \H^n$.\\

The structure of the Lie algebra of $\H^n$ reflects on the explicit form of the $Z$-Taylor polynomials. For example, we consider below the explicit form of the $Z$-Taylor Polynomial of $\G$-degree $n=3$ related to a map $\phi \in C^{\infty}(\H^n)$ in the Heisenberg group $\H^n$. Let $x \in \H^n$, then for every $y \in \H^n$, according to Theorem \ref{explicit}, we have
\begin{equation}\label{Taylor3}
\begin{split}
\mathcal{T}_3(\phi,x)(xy)=&\phi(x)+\sum_{i=1}^{2n}Z_i\phi(x)y_i+ T\phi(x)y_{2n+1}+ \frac{1}{2}\sum_{i,j=1}^{2n} Z_iZ_j\phi(x)y_iy_j\\
&+\frac{1}{2}\sum_{i=1}^{2n}Z_i T\phi(x)y_iy_{2n+1}+\frac{1}{6}\sum_{i,j,k=1}^{2n} Z_iZ_jZ_k\phi(x)y_iy_jy_k.
\end{split}
\end{equation}
Moreover, by applying Theorem \ref{Taylor}, we get the existence of constants $c=c(\H^n)>0$ and $b=b(\H^n)>0$ such that 
\begin{equation}
\label{utile}
|\phi(xy)-\mathcal{T}_3(\phi,x)(xy)| \leq c \|y\|_c^{4} \sup_{\{\|z\|_c \leq b^{4} \|y\|_c\}} \sup_{\{|\beta|=4\}} |(Z_1 \cdots Z_{2n})^{\beta} \phi(xz)|,
\end{equation} 
for every $x,y \in \G$. Here $\beta$ denotes a $2n$-multi-index.\\

We recall also some metric properties on $\H^n$. Referring to Theorem \ref{Eik}, if $\G=\H^n$, then the Carnot-Carath\'eodory distance $d_c(x,0) \in C^1(\H^n \setminus H_2)$ (see for example \cite[Lemma 3.11]{AR02}) and $ | \nabla_H d_c(x, 0)| =1$ for every $x \in \H^n \setminus H_2$.\\

In the following two subsections, we introduce some properties of $h$ and $ \B_c(0,1)$ in $\H^n$.
\subsection{Invariance of the heat kernel under horizontal rotations}
\label{parita-h}
In many papers, like \cite{Gaveau75, Hul76, Gaveau77, Randall96},
the authors studied the heat kernel $h$ in the Heisenberg group $\H^n$ and, in general, in stratified groups of step 2. 
In particular, in the Heisenberg group $\H^n$, for every $t>0$ and $x=(x_1, \dots, x_{2n}, x_{2n+1}) \in \H^n$, the heat kernel $h$ depends on $t$, on the last component $x_{2n+1}$ of $x$ and on the norm $|(x_1, \dots, x_{2n})|=|\pi_{H_1}(x)|$.
This allows to deduce that the map $h$ is invariant under the action of horizontal rotations, i.e. the rotations which do not involve the $x_{2n+1}$-axis $H_2$.\\
As a consequence, for every $t>0$, for every $x=(x_1, \dots, x_{2n+1}) \in \H^n$ and for every $j \in \{1, \dots, 2n \}$ the equality
\begin{equation}
\label{parita-heat}
h(t,(x_1, \dots, x_j, \dots, x_{2n}, x_{2n+1}))=h(t,(x_1, \dots, -x_j, \dots, x_{2n}, x_{2n+1}))
\end{equation}
holds.
Clearly, \eqref{parita-heat} yields that for every $\alpha<Q$, for every $t>0$ and for every $x=(x_1, \dots, x_{2n+1}) \in \H^n$ the following equalities
\begin{align}
\label{parita-kernel}
\|(x_1, \dots, x_j, \dots, x_{2n}, x_{2n+1})\|_{\alpha}^{\alpha-Q}&=\|(x_1, \dots, -x_j, \dots, x_{2n}, x_{2n+1})\|_{\alpha}^{\alpha-Q}, \notag \\
P_{\alpha}(x_1, \dots, x_j, \dots, x_{2n}, x_{2n+1})&=P_{\alpha}(x_1, \dots, -x_j, \dots, x_{2n}, x_{2n+1})
\end{align}
hold for every $j \in \{1, \dots, 2n \}$.
We notice that, a priori, this properties are not proved in arbitrary stratified groups, included the stratified groups of step 2. For the explicit form of the heat kernel we address the reader to \cite{Cygan79}. For further details about this topic, we point out \cite[Remark 2.12]{BMP12}.

\subsection{Invariance of $\B_c(0,1)$ under horizontal rotations}
\label{parita-ball}
In the Heisenberg group the Carnot-Carath\'eodory ball $\B_c(0,R)$, of radius $R$, for $R>0$, centered at $0 \in \H^n$ is invariant under horizontal rotations, i.e. the rotations preserving the center of the group $H_2$ (see for instance \cite[Section 2.3]{LRV12}).\\
As a consequence, let $(x_1, \dots, x_{2n}, x_{2n+1}) \in \B_c(0,R)$. Then, for every point $(y_1, \dots, y_{2n}, x_{2n+1}) \in \H^n$ such that the following equality
$$| (x_1, \dots, x_{2n})|=|(y_1, \dots, y_{2n})|$$
holds, then $(y_1, \ldots, y_{2n}, x_{2n+1}) \in \B_c(0,R)$.
Let us now introduce for every $j \in \{1, \dots, 2n \}$ the following projection
\begin{equation}\label{pij}
\pi_j: H_1 \to \R^{2n-1}, \pi_j\left( \sum_{i=1}^{2n}x_ie_i \right)=(x_1, \dots, x_{j-1}, x_{j+1}, \dots, x_{2n}, x_{2n}).
\end{equation}
Notice that $\pi_{H_2}$ and $\pi_j$, for $j= \{1, \dots, 2n \}$, are Lipschitz map in the Euclidean sense and their (Euclidean) Jacobian is unitary. Hence, for every $r \in \pi_{H_2}(\B_c(0,1)) \subset \R$ and for every $s=(s_1, \dots, s_{2n-1}) \in \pi_j (\pi_{H_2}^{-1}(r) \cap \B_c(0,1)) \subset \R^{2n-1}$, the invariance of $\B_c(0,R)$ with respect to horizontal rotations implies that
the set \begin{equation}
\label{simmetriai}
 \pi_j^{-1}(s) \cap \pi_{H_2}^{-1}(r) \cap \B_c(0,R) 
\end{equation}
is symmetric in the $x_j$-direction with respect to the point $(s_1, \dots, s_{j-1}, 0, s_j, \dots, s_{2n-1},r)$. \\
More precisely, for every $r \in \R$, every $s=(s_1, \dots, s_{2n-1}) \in \R^{2n-1}$ and for every $x_j \in \R$:
\begin{equation*}
(s_1, \dots, s_{j-1}, x_j, s_j, \dots, s_{2n-1},r) \in \B_c(0,R) \mathrm{ \ if \ and \ only \ if \ }(s_1, \dots, s_{j-1}, -x_j, s_j, \dots, s_{2n-1},r) \in \B_c(0,R).
\end{equation*}
In fact, the previous equivalence follows by $$|(s_1, \dots, s_{j-1}, x_j, s_j, \dots, s_{2n-1})|=|(s_1, \dots, s_{j-1}, -x_j, s_j, \dots, s_{2n-1})|,$$ combined with the fact that $\B_c(0,R)$ is invariant under horizontal rotations.\\
In addition, also $\partial \B_c(0,1)$ is invariant under horizontal rotations.
\subsection{Auxiliary tools}
In this subsection we study some specific families of integrals. In particular, we collect some technical results which will be used later on in the paper.
First, we fix $m \in \N$, $\alpha<Q$ and we collect few useful representations of the following integral 
\begin{equation}
\label{diff-taylor}
\int_{\B_c(0,1)}\big(\mathcal{T}_{2m+1}(\phi,x)(xy)-\mathcal{T}_{2m-1}(\phi,x)(xy) \big) \|y\|_{\alpha}^{\alpha-Q} dy.
\end{equation}
We recall that the object $\mathcal{T}_n(\phi,x)$ denotes the $Z$-Taylor Polynomial of $\G$-degree $n$ related to $\phi$ centered at $x$, for every $n \in \N$. 
\subsubsection{Representation, I}
\label{sect:441}
First of all, we exploit the explicit representations of $\mathcal{T}_{2m+1}(\phi,x)(xy)$ and $\mathcal{T}_{2m-1}(\phi,x)(xy)$ given by Theorem \ref{explicit} which allows to write the integral in \eqref{diff-taylor} as
\begin{equation}\label{prima}
 \sum_{h=2m}^{2m+1} \sum_{\substack{k=1, \ldots, h\\ i_1, \ldots,i_k \leq 2n+1 \\ \mathrm{deg}(Z_{i_1})+ \ldots \mathrm{deg}(Z_{i_k})=h}} \frac{ Z_{i_1} \ldots Z_{i_k} \phi(x)}{k!}
 \int_{\B_c(0,1)} y_{i_1} \ldots y_{i_k} \|y\|_{\alpha}^{\alpha-Q} dy.
\end{equation}
Now, for $h \in \{ 2m, 2m+1\}$, $k \in \{1, \ldots, h\}$ and  $i_1, \ldots,i_k \leq 2n+1$ such that $  \mathrm{deg}(Z_{i_1})+ \ldots + \mathrm{deg}(Z_{i_k})=h$, we focus on the integral 
\begin{equation}\label{integral}
\int_{\B_c(0,1)} y_{i_1} \ldots y_{i_k} \|y\|_{\alpha}^{\alpha-Q} dy
\end{equation}
appearing in the sum \eqref{prima}.
We observe one more time that $\B_c(0,1)$ and the map $\|y\|_{\alpha}^{\alpha-Q}$ are symmetric and invariant with respect to the group inversion. In fact, 
\begin{itemize}
\item by the homogeneity of $d_c$, if a point $y \in \B_c(0,1)$, then $y^{-1} \in \B_c(0,1)$. \item by the property of the heat kernel $h$ stated in Theorem \ref{T1}(iii), it holds that $$\|y\|_{\alpha}^{\alpha-Q}=\|y^{-1}\|_{\alpha}^{\alpha-Q},$$
for every $y \in \H^n$, $y \neq 0$.
\end{itemize}
Moreover, for every $y \in \H^n$, $y^{-1}=-y$. Therefore, we know that, if the number $k \in \{1, \ldots, h\}$ of the involved coordinates $y_{i_1} \ldots y_{i_k}$ is odd, then the integral in \eqref{integral} vanishes
\begin{equation}
\int_{\B_c(0,1)} y_{i_1} \ldots y_{i_k} \|y\|_{\alpha}^{\alpha-Q} dy=0.
\end{equation}
As a consequence, we can rewrite \eqref{diff-taylor} as follows
\begin{equation}
 \sum_{h=2m}^{2m+1} \sum_{\substack{k=1, \ldots, h, k  \ \mathrm{even}\\ i_1, \ldots,i_k \leq 2n+1 \\ \mathrm{deg}(Z_{i_1})+ \ldots \mathrm{deg}(Z_{i_k})=h}} \frac{ Z_{i_1} \ldots Z_{i_k} \phi(x)}{k!}
 \int_{\B_c(0,1)} y_{i_1} \ldots y_{i_k} \|y\|_{\alpha}^{\alpha-Q} dy.
\end{equation}
Now, we consider again the integral in \eqref{integral}
and we make the following hypothesis
\begin{equation}\label{hyp}
\mathrm{there \ exists \ } j \in \{1, \dots, k\} \mathrm{ \ such \ that \ }\mathrm{deg}(Z_{i_j})=1 \mathrm{ \ and \ }\sharp \{ \ell \in \{1, \dots, k \} : i_j=i_{\ell} \} \mathrm{ \ is \ odd }.
\end{equation}
Now, if we apply the Euclidean coarea formula to the integral in \eqref{integral} we get
\begin{align*}
&\int_{\B_c(0,1)} y_{i_1} \ldots y_{i_k} \| y \|_{\alpha}^{\alpha-Q} dy =\int_{\pi_{H_2}(\B_c(0,1))} \int_{\pi_{H_2}^{-1}(y_{2n+1}) \cap \B_c(0,1)} y_{i_1} \ldots y_{i_k} \| y \|_{\alpha}^{\alpha-Q} dy_1, \dots dy_{2n} \ dy_{2n+1} \\
=&\int_{\pi_{H_2}(\B_c(0,1))} \int_{\pi_j(\pi_{H_2}^{-1}(y_{2n+1}) \cap \B_c(0,1))} \int_{\pi_j^{-1}(r) \cap \pi_{H_2}^{-1}(y_{2n+1}) \cap \B_c(0,1)} y_{i_1} \ldots y_{i_k} \| y \|_{\alpha}^{\alpha-Q} dy_j dr \ dy_{2n+1}.
\end{align*}
By combining the symmetry of $\B_c(0,1)$ discussed in Section \ref{parita-ball}, the symmetry of $\|y\|_{\alpha}^{\alpha-Q}$ discussed in Section \ref{parita-h} and our hypothesis \eqref{hyp}, we conclude that
\begin{align*}
\int_{\pi_j^{-1}(r) \cap \pi_{H_2}^{-1}(y_{2n+1}) \cap \B_c(0,1)} y_{i_1} \ldots y_{i_k} \| y \|_{\alpha}^{\alpha-Q} dy_j=0,
\end{align*}
for every $y_{2n+1} \in \R$ and $r \in \R^{2n-1}$.
Hence, under the hypothesis \eqref{hyp}, the integral in \eqref{integral} vanishes. Then, the integral in \eqref{integral} can be non-zero only if
\begin{equation}\label{coi-multi}
\int_{\B_c(0,1)} y_{i_1} \ldots y_{i_k} \|y\|_{\alpha}^{\alpha-Q} dy= \int_{\B_c(0,1)} (y_1, \dots, y_{2n})^{\gamma} y_{2n+1}^{\gamma_{2n+1}} \|y\|_{\alpha}^{\alpha-Q} dy,
\end{equation}
for some even $2n$-multi-index $\gamma \in (\N \cup \{0\})^{2n}$ and for some $\gamma_{2n+1} \in \N \cup \{0\}$.\\
Let us now consider again the integral in \eqref{integral} and assume that $h=2m+1$. Hence we have that 
\begin{equation}\label{hyp2}
\mathrm{deg}(Z_{i_1})+ \ldots \mathrm{deg}(Z_{i_k})=2m+1.
\end{equation}
Since we are working in $\H^n$, for every $j=1, \dots, k$, necessarily $\mathrm{deg}(Z_{i_j})$ is 1 or 2. Hence 
\begin{equation}\label{dispari} \sharp\{ j \in \{1, \dots, k\}: \ \mathrm{deg} (Z_{i_j})=1\} = 2m+1- 2 \cdot \sharp\{ j \in \{1, \dots, k\}: \ \mathrm{deg} (Z_{i_j})=2\}.
\end{equation}
Thus, $\sharp\{ j \in \{1, \dots, k\}: \ \mathrm{deg} (Z_{i_j})=1\} $ is odd, and therefore hypothesis \eqref{hyp} is verified, hence, also under the hypothesis \eqref{hyp2}, we have
$$\int_{\B_c(0,1)} y_{i_1} \ldots y_{i_k} \|y\|_{\alpha}^{\alpha-Q} dy=0.$$
Therefore, we can conclude that the integral in \eqref{integral} can be non-zero only if $$ \mathrm{deg}(Z_{i_1})+ \ldots \mathrm{deg}(Z_{i_k})=2m.$$ As a consequence, in \eqref{coi-multi} $\gamma_{2n+1}$ has to be even too. Hence, the integral in \eqref{integral} can be non-zero only if
\begin{equation}\label{dariportare}
\int_{\B_c(0,1)} y_{i_1} \ldots y_{i_k} \|y\|_{\alpha}^{\alpha-Q} dy= \int_{\B_c(0,1)} y^{\gamma} \|y\|_{\alpha}^{\alpha-Q} dy,
\end{equation}
for some even $(2n+1)$-multi-index $\gamma \in (\N \cup \{0\})^{2n+1}$.\\
We observe that our argument allows to state the equality 
\begin{equation}\label{comb1}
\begin{split}
&\int_{\B_c(0,1)}\big(\mathcal{T}_{2m+1}(\phi,x)(xy)-\mathcal{T}_{2m-1}(\phi,x)(xy) \big) \|y\|_{\alpha}^{\alpha-Q} dy\\
=&   \sum_{\substack{k=1, \ldots, 2m, k  \ \mathrm{even}\\ i_1, \ldots,i_k \leq 2n+1 \\ \mathrm{deg}(Z_{i_1})+ \ldots \mathrm{deg}(Z_{i_k})=2m}} \frac{ Z_{i_1} \ldots Z_{i_k} \phi(x)}{k!}
 \int_{\B_c(0,1)} y_{i_1} \ldots y_{i_k} \|y\|_{\alpha}^{\alpha-Q} dy.
\end{split}
\end{equation}
\subsubsection{Representation, II}
According to the previous subsection, we fix $m \in \N$, $k\in \{1, \ldots, 2m \}$ and $ i_1, \ldots,i_k \leq 2n+1$ such that 
$$\mathrm{deg}(Z_{i_1})+ \ldots \mathrm{deg}(Z_{i_k})=2m.$$ Applying the coarea formula of Corollary \ref{cor-coarea}, we obtain
$$\int_{\B_c(0,1)} y_{i_1} \ldots y_{i_k} \|y\|_{\alpha}^{\alpha-Q} dy= \int_0^1 \int_{\partial \B_c(0,r)}  y_{i_1} \ldots y_{i_k} \|y\|_{\alpha}^{\alpha-Q}d\mathcal{S}^{Q-1}_{\infty}(y) dr.$$ By the homogeneity of the norm $\| \cdot \|_{\alpha}^{\alpha-Q}$ and the homogeneity of $\mathcal{S}^{Q-1}_{\infty}$, we get for every $-2m<\alpha<Q$ 
\begin{align*}
&\int_0^1 \int_{\partial \B_c(0,r)}  y_{i_1} \ldots y_{i_k} \|y\|_{\alpha}^{\alpha-Q}d\mathcal{S}^{Q-1}_{\infty}(y) dr\\
= & \int_0^1 \int_{\partial \B_c(0,1)}  (\delta_rz)_{i_1} \ldots (\delta_rz)_{i_k} \|\delta_r z\|_{\alpha}^{\alpha-Q} r^{Q-1} d\mathcal{S}^{Q-1}_{\infty}(z) dr\\
= & \int_0^1 \int_{\partial \B_c(0,1)}  r^{\mathrm{deg}(Z_{i_1})} z_{i_1} \ldots r^{ \mathrm{deg}(Z_{i_k})} z_{i_k} r^{\alpha-Q} \| z\|_{\alpha}^{\alpha-Q} r^{Q-1} d \mathcal{S}^{Q-1}_{\infty}(z)dr\\
= & \int_0^1 r^{\alpha-Q+Q-1+\mathrm{deg}(Z_{i_1})+\ldots \mathrm{deg}(Z_{i_k})} \int_{\partial \B_c(0,1)}  z_{i_1} \ldots  z_{i_k}  \| z\|_{\alpha}^{\alpha-Q} d \mathcal{S}^{Q-1}_{\infty}(z) dr\\
= & \int_0^1 r^{\alpha-1+2m} dr \int_{\partial \B_c(0,1)}  z_{i_1} \ldots  z_{i_k}  \| z\|_{\alpha}^{\alpha-Q} d \mathcal{S}^{Q-1}_{\infty}(z)\\
= & \frac{1}{\alpha+2m} \int_{\partial \B_c(0,1)}  z_{i_1} \ldots  z_{i_k}  \| z\|_{\alpha}^{\alpha-Q} d \mathcal{S}^{Q-1}_{\infty}(z).
\end{align*}

Finally, for every $-2m < \alpha<Q$ we can rewrite \eqref{comb1} as 
\begin{align}\label{comb2}
&\int_{\B_c(0,1)}\big(\mathcal{T}_{2m+1}(\phi,x)(xy)-\mathcal{T}_{2m-1}(\phi,x)(xy) \big) \|y\|_{\alpha}^{\alpha-Q} dy  \notag  \\
=&   \sum_{\substack{k=1, \ldots, 2m, k  \ \mathrm{even}\\ i_1, \ldots,i_k \leq 2n+1 \\ \mathrm{deg}(Z_{i_1})+ \ldots \mathrm{deg}(Z_{i_k})=2m}}  \frac{1}{\alpha+2m} \frac{ Z_{i_1} \ldots Z_{i_k} \phi(x)}{k!}
\int_{\partial \B_c(0,1)}  z_{i_1} \ldots  z_{i_k}  \| z\|_{\alpha}^{\alpha-Q} d \mathcal{S}^{Q-1}_{\infty}(z). 
\end{align}

As a consequence, for every $-2m < \alpha<Q$ we also have that 
\begin{align*}
&\int_{\B_c(0,1)}\big(\mathcal{T}_{2m+1}(\phi,x)(xy)-\mathcal{T}_{2m-1}(\phi,x)(xy) \big) P_{\alpha}(y) dy\\
=&   \sum_{\substack{k=1, \ldots, 2m, k  \ \mathrm{even}\\ i_1, \ldots,i_k \leq 2n+1 \\ \mathrm{deg}(Z_{i_1})+ \ldots \mathrm{deg}(Z_{i_k})=2m}} \frac{1}{\alpha+2m}\frac{1}{\Gamma(\frac{\alpha}{2})}\frac{ Z_{i_1} \ldots Z_{i_k} \phi(x)}{k!}
\int_{\partial \B_c(0,1)}  z_{i_1} \ldots  z_{i_k}  \| z\|_{\alpha}^{\alpha-Q} d \mathcal{S}^{Q-1}_{\infty}(z).
\end{align*}

\subsubsection{Regularity}\label{anal-integrals}
Here, we prove that the following map is analytic
\begin{equation}\label{diff-analitica}
\begin{split}
(-\infty,Q) \ni \alpha \to &   \sum_{\substack{k=1, \ldots, 2m, k  \ \mathrm{even}\\ i_1, \ldots,i_k \leq 2n+1 \\ \mathrm{deg}(Z_{i_1})+ \ldots \mathrm{deg}(Z_{i_k})=2m}} \frac{1}{\alpha+2m}\frac{1}{\Gamma(\frac{\alpha}{2})}\frac{ Z_{i_1} \ldots Z_{i_k} \phi(x)}{k!}
\int_{\partial \B_c(0,1)}  z_{i_1} \ldots  z_{i_k}  \| z\|_{\alpha}^{\alpha-Q} d \mathcal{S}^{Q-1}_{\infty}(z).
\end{split}
\end{equation}
Let us first consider the map $\alpha \to \frac{1}{\alpha+2m}\frac{1}{\Gamma(\frac{\alpha}{2})}$. By the standard properties of the Euler gamma function we have
\begin{align*}
&\lim_{\alpha \to -2m} \frac{1}{\alpha+2m}\frac{1}{\Gamma(\frac{\alpha}{2})}\\
=& \lim_{\alpha \to -2m} \frac{1}{\alpha+2m}\frac{1}{\Gamma(\frac{\alpha+2m}{2})}\left(\frac{\alpha}{2}+(m-1)\right)\left(\frac{\alpha}{2}+(m-2)\right) \ldots \left(\frac{\alpha}{2}\right) = \frac{1}{2}(-1)^m m!,
\end{align*}
then $\alpha \to \frac{1}{\alpha+2m}\frac{1}{\Gamma(\frac{\alpha}{2})}$ is analytic on $\R$.
Indeed, it is enough to show that for every even $k \in \{1, \ldots, 2m\}$ and every $i_1, \ldots i_k \leq 2n+1$ such that $\mathrm{deg}(Z_{i_1})+ \ldots \mathrm{deg}(Z_{i_k}) =2m$, the map
\begin{equation}\label{mappaalfa}
\begin{split}
(-\infty,Q) \ni \alpha \to \int_{\partial \B_c(0,1)}  z_{i_1} \ldots  z_{i_k}  \| z\|_{\alpha}^{\alpha-Q} d \mathcal{S}^{Q-1}_{\infty}(z)
\end{split}
\end{equation}
is analytic. The proof of this claim can be obtained as a generalization of the proof of Item \ref{itemcalfa} in Section \ref{sectioncalfa}, where we showed the analyticity of the map $\sigma(\alpha)$, which in particular equals the map \eqref{mappaalfa} when $m=0$. It is enough to repeat the cited proof combined with the fact that $z \to z_{i_1} \ldots  z_{i_k}$  in \eqref{diff-analitica} is a non-negative continuous map on the compact $\partial{\B_c(0,1)}$. In particular, the representation in \eqref{dariportare} implies the non-negativity of this map. 

\subsubsection{Representation, III}
Here we prove the following lemma.
\begin{lem}\label{lemma-general}
For every even $(2n+1)$-multi-index $\gamma=(\gamma_1, \ldots, \gamma_{2n+1})\in (\N \cup \{ 0 \})^{2n+1}$ and for every $\alpha<Q$ we have
\beq
 \int_{\partial \B_c(0,1)} x^{\gamma}\|x\|_{\alpha}^{\alpha-Q} d \mathcal{S}_{\infty}^{Q-1}(x)= 2\int_{\H^n} x^{\gamma} h(1,x) \|x\|_c^{-\alpha-(\gamma_1+\ldots \gamma_{2n}+2\gamma_{2n+1})} d x.
\eeq
\end{lem}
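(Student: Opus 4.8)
The plan is to reduce the left-hand side to an integral over all of $\H^n$ by unwinding the definition $\|x\|_\alpha^{\alpha-Q}=\int_0^\infty t^{\alpha/2-1}h(t,x)\,dt$, exploiting the dilation homogeneity of both the heat kernel (Theorem \ref{T1}(v)) and the spherical measure $\mathcal{S}^{Q-1}_\infty$, and then re-assembling the result through the coarea formula of Corollary \ref{cor-coarea}. A decisive simplification is that $\gamma$ is even: then $x\mapsto x^\gamma$ is non-negative, so every integrand appearing below is non-negative and all interchanges of integrals and changes of variables are legitimate by Tonelli's theorem, with no integrability side conditions beyond $\alpha<Q$. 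That hypothesis already guarantees the left-hand side is finite, e.g.\ because $\partial\B_c(0,1)$ is compact and disjoint from the origin, so $x^\gamma$ and $\|x\|_\alpha^{\alpha-Q}$ are bounded there.

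First I would write $\int_{\partial\B_c(0,1)}x^\gamma\|x\|_\alpha^{\alpha-Q}\,d\mathcal{S}^{Q-1}_\infty(x)=\int_0^\infty t^{\alpha/2-1}\int_{\partial\B_c(0,1)}x^\gamma h(t,x)\,d\mathcal{S}^{Q-1}_\infty(x)\,dt$ and substitute $h(t,x)=t^{-Q/2}h\big(1,\delta_{1/\sqrt t}(x)\big)$, which follows from Theorem \ref{T1}(v) with $s=1/\sqrt t$. The change of variables $r=t^{-1/2}$ then turns this into $2\int_0^\infty r^{Q-\alpha-1}\int_{\partial\B_c(0,1)}x^\gamma h\big(1,\delta_r(x)\big)\,d\mathcal{S}^{Q-1}_\infty(x)\,dr$, exactly as in the computation of $\sigma(\alpha)$ in Item \ref{fa2!} of Section \ref{sectioncalfa}.

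The next step handles the monomial. Setting $D:=\gamma_1+\cdots+\gamma_{2n}+2\gamma_{2n+1}$, which is precisely the $\delta$-homogeneous degree of $x\mapsto x^\gamma$ (the central coordinate counting twice), one has $(\delta_r x)^\gamma=r^{D}x^\gamma$, so $x^\gamma=r^{-D}(\delta_r x)^\gamma$; combining this with $\delta_r(\partial\B_c(0,1))=\partial\B_c(0,r)$ and the homogeneity $\mathcal{S}^{Q-1}_\infty(\delta_r A)=r^{Q-1}\mathcal{S}^{Q-1}_\infty(A)$ rewrites the inner integral as $r^{-D-(Q-1)}\int_{\partial\B_c(0,r)}y^\gamma h(1,y)\,d\mathcal{S}^{Q-1}_\infty(y)$. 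Plugging back and cancelling the powers of $r$ gives $2\int_0^\infty r^{-\alpha-D}\int_{\partial\B_c(0,r)}y^\gamma h(1,y)\,d\mathcal{S}^{Q-1}_\infty(y)\,dr$, and since $\|y\|_c=r$ on $\partial\B_c(0,r)$ this equals $2\int_0^\infty\int_{\partial\B_c(0,r)}y^\gamma h(1,y)\|y\|_c^{-\alpha-D}\,d\mathcal{S}^{Q-1}_\infty(y)\,dr$.

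Finally I would apply Corollary \ref{cor-coarea} in the reverse direction, with $A=\H^n$ and the non-negative function $y\mapsto y^\gamma h(1,y)\|y\|_c^{-\alpha-D}$ (the contribution of $r\le 0$ being on a null set), to collapse the iterated integral to $2\int_{\H^n}y^\gamma h(1,y)\|y\|_c^{-\alpha-D}\,dy$, which is the claimed right-hand side. The only points requiring genuine care are the bookkeeping of the exponents of $r$ and the identification of $D$ as the correct dilation weight of $x^\gamma$; the evenness hypothesis on $\gamma$ is what makes all the non-negativity (hence Tonelli) arguments go through, and is also implicitly the reason — via Section \ref{sect:441} — that only such monomials occur in the application of this lemma.
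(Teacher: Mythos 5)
Your proposal is correct and follows essentially the same route as the paper's proof: unwind $\|x\|_\alpha^{\alpha-Q}$ via the heat kernel, use the homogeneity of $h$ and the substitution $r=t^{-1/2}$, absorb the monomial into $(\delta_r x)^\gamma$ with the weight $D=\gamma_1+\cdots+\gamma_{2n}+2\gamma_{2n+1}$, push the sphere to radius $r$ by homogeneity of $\mathcal{S}^{Q-1}_\infty$, and reassemble with the coarea formula. The only addition is your explicit Tonelli justification via the non-negativity coming from the evenness of $\gamma$, which the paper leaves implicit.
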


\begin{proof}
We exploit the explicit form of $\| \cdot \|_{\alpha}$ and the homogeneity property of the heat kernel $h$ getting
\begin{align*}
\int_{\partial \B_c(0,1)} y^{\gamma}\|y\|_{\alpha}^{\alpha-Q} d \mathcal{S}_{\infty}^{Q-1}(y)&=\int_{\partial \B_c(0,1)} y^{\gamma} \int_0^{\infty} t^{\frac{\alpha}{2}-1} h(t,y) d t \  d \mathcal{S}^{Q-1}_{\infty}(y)\\
&= \int_0^{\infty} t^{\frac{\alpha}{2}-1-\frac{Q}{2}} \int_{\partial \B_c(0,1)} y^{\gamma} h(1,\delta_{\frac{1}{\sqrt{t}}}(y)) d \mathcal{S}^{Q-1}_{\infty}(y)d t. 
\end{align*}
Let us now perform the change of variable $r=\frac{1}{\sqrt{t}}$ so that $dt=-2r^{-3}dr$ and we get
\begin{align*}
&\int_0^{\infty} t^{\frac{\alpha}{2}-1-\frac{Q}{2}} \int_{\partial \B_c(0,1)} y^{\gamma}h(1,\delta_{\frac{1}{\sqrt{t}}}(y)) d \mathcal{S}^{Q-1}_{\infty}(y)d t\\
&=-2 \int_{\infty}^0 (r^{-2})^{\frac{\alpha}{2}-1-\frac{Q}{2}} r^{-3} \int_{\partial \B_c(0,1)} y^{\gamma} h(1,\delta_r(y)) d \mathcal{S}^{Q-1}_{\infty}(y)d r.\\
&=2 \int_0^{\infty} r^{-\alpha+Q-1} \int_{\partial \B_c(0,1)} y^{\gamma}h(1,\delta_r(y)) d \mathcal{S}^{Q-1}_{\infty}(y)d r\\
&= 2 \int_0^{\infty} r^{-\alpha+Q-1-(\gamma_1+\ldots + \gamma_{2n}+2\gamma_{2n+1})} \int_{\partial \B_c(0,1)}r^{\gamma_1+\ldots + \gamma_{2n}+2\gamma_{2n+1}} y^{\gamma} h(1,\delta_r(y)) d \mathcal{S}^{Q-1}_{\infty}(y)d r\\
&= 2 \int_0^{\infty} r^{-\alpha+Q-1-(\gamma_1+\ldots + \gamma_{2n}+2\gamma_{2n+1})} \int_{\partial \B_c(0,1)}(\delta_ry)^{\gamma} h(1,\delta_r(y)) d \mathcal{S}^{Q-1}_{\infty}(y)d r.
\end{align*}
Now we apply the change of variables $z=\delta_r(y)$ and taking in account the homogeneity of the metric $d_c$ and of the measure $\mathcal{S}^{Q-1}_{\infty}$, we get
\begin{align*}
&2\int_0^{\infty} r^{-\alpha+Q-1-(\gamma_1+\ldots + \gamma_{2n}+2\gamma_{2n+1})} \int_{\partial \B_c(0,1)} (\delta_r(y))^{\gamma} h(1,\delta_r(y)) d \mathcal{S}^{Q-1}_{\infty}(y)d r\\
=& \ 2\int_0^{\infty} r^{-\alpha-(\gamma_1+\ldots + \gamma_{2n}+2\gamma_{2n+1})} \int_{\partial \B_c(0,r)} z^{\gamma} h(1,z) d \mathcal{S}^{Q-1}_{\infty}(z)d r.
\end{align*}
Therefore, for every $\alpha<Q$ we have
\begin{align*}
&\int_{\partial \B_c(0,1)} y^{\gamma}  \|y\|_{\alpha}^{\alpha-Q} d\mathcal{S}^{Q-1}_{\infty}(y)\\ &=2\int_0^{\infty} r^{-\alpha-(\gamma_1+\ldots + \gamma_{2n}+2\gamma_{2n+1})} \int_{\partial \B_c(0,r)} z^{\gamma} h(1,z) d \mathcal{S}^{Q-1}_{\infty}(z)d r \\
&=2\int_0^{\infty}  \int_{\partial \B_c(0,r)} z^{\gamma} h(1,z) \|z\|_c^{-\alpha-(\gamma_1+\ldots + \gamma_{2n}+2\gamma_{2n+1})} d \mathcal{S}^{Q-1}_{\infty}(z)d r\\
&=2\int_{\H^n} z^{\gamma}  h(1,z) \|z\|_c^{-\alpha-(\gamma_1+\ldots + \gamma_{2n}+2\gamma_{2n+1})} dz,
\end{align*}
where in the last step we exploited the coarea formula of Corollary \ref{cor-coarea}. This concludes the proof.
\end{proof}
We recall that, if we fix $m \in \N$, $k \in \{1, \ldots, 2m\}$ and $ i_1, \ldots,i_k \leq 2n+1$ such that 
$$\mathrm{deg}(Z_{i_1})+ \ldots \mathrm{deg}(Z_{i_k})=2m,$$ as in \eqref{dariportare}, then the existence of some even $(2n+1)$-multi-index $\gamma \in (\N \cup \{0\})^{2n+1}$ such that the following equality holds
\begin{equation}
\int_{\B_c(0,1)} y_{i_1} \ldots y_{i_k} \|y\|_{\alpha}^{\alpha-Q} dy= \int_{\B_c(0,1)} y^{\gamma} \|y\|_{\alpha}^{\alpha-Q} dy
\end{equation}
follows from Section \ref{sect:441}.
Thus, necessarily
$$\mathrm{deg}(Z_{i_1})+ \ldots +\mathrm{deg}(Z_{i_k})=2m = \gamma_1 + \ldots \gamma_{2n}+2 \gamma_{2n+1}.$$ Therefore, exploiting Lemma \ref{lemma-general} we can rewrite \eqref{comb2}  for every $-2m<\alpha<Q$ as follows:
\begin{align*}
&\int_{\B_c(0,1)}\big(\mathcal{T}_{2m+1}(\phi,x)(xy)-\mathcal{T}_{2m-1}(\phi,x)(xy) \big) \|y\|_{\alpha}^{\alpha-Q} dy\\
=&   \sum_{\substack{k=1, \ldots, 2m, k  \ \mathrm{even}\\ i_1, \ldots,i_k \leq 2n+1\\ \mathrm{deg}(Z_{i_1})+ \ldots \mathrm{deg}(Z_{i_k})=2m}} \frac{2}{\alpha+2m}\frac{ Z_{i_1} \ldots Z_{i_k} \phi(x)}{k!}
 \int_{\H^n} y_{i_1} \ldots y_{i_k} h(1,y) \|y\|_c^{-\alpha-2m} d y.
\end{align*}
As a consequence, for every $-2m<\alpha<Q$ we have that
\begin{align}\label{comb3}
&\int_{\B_c(0,1)}\big(\mathcal{T}_{2m+1}(\phi,x)(xy)-\mathcal{T}_{2m-1}(\phi,x)(xy) \big) P_{\alpha}(y) dy  \\
=&   \sum_{\substack{k=1, \ldots, 2m, k  \ \mathrm{even}\\ i_1, \ldots,i_k \leq 2n+1 \\ \mathrm{deg}(Z_{i_1})+ \ldots \mathrm{deg}(Z_{i_k})=2m}} \frac{2}{\alpha+2m}\frac{1}{\Gamma(\frac{\alpha}{2})}\frac{ Z_{i_1} \ldots Z_{i_k} \phi(x)}{k!}
 \int_{\H^n} y_{i_1} \ldots y_{i_k} h(1,y) \|y\|_c^{-\alpha-2m} d y.  \notag 
\end{align}

\subsubsection{The behaviour as $\alpha \to -2\ell, \ \ell \leq m$}\label{sezione-limiti}
Let $m \in \N$. We wish to study the behaviour of the map 
\begin{align}\label{serve5}
\alpha \to \sum_{\substack{k=1, \ldots, 2m, \ k  \ \mathrm{even}\\ i_1, \ldots,i_k \leq 2n+1 \\ \mathrm{deg}(Z_{i_1})+ \ldots \mathrm{deg}(Z_{i_k})=2m}} \frac{2}{\alpha+2m}\frac{1}{\Gamma(\frac{\alpha}{2})}\frac{ Z_{i_1} \ldots Z_{i_k} \phi(x)}{k!}
 \int_{\H^n} y_{i_1} \ldots y_{i_k} h(1,y) \|y\|_c^{-\alpha-2m} d y,
\end{align}
when $\alpha$ is close to an even negative integer $-2k$, $k \geq m$.
First, we compute the limit of \eqref{serve5} as $\alpha \to -2m^+$. We use the properties of the Euler gamma function getting
\begin{align*}
&   \sum_{\substack{k=1, \ldots, 2m, \ k  \ \mathrm{even}\\ i_1, \ldots,i_k \leq 2n+1 \\ \mathrm{deg}(Z_{i_1})+ \ldots \mathrm{deg}(Z_{i_k})=2m}} \frac{2}{\alpha+2m}\frac{1}{\Gamma(\frac{\alpha}{2})}\frac{ Z_{i_1} \ldots Z_{i_k} \phi(x)}{k!}
 \int_{\H^n} y_{i_1} \ldots y_{i_k} h(1,y) \|y\|_c^{-\alpha-2m} d y \\
=&   \sum_{\substack{k=1, \ldots, 2m, \ k  \ \mathrm{even}\\ i_1, \ldots,i_k \leq 2n+1 \\ \mathrm{deg}(Z_{i_1})+ \ldots \mathrm{deg}(Z_{i_k})=2m}} \frac{2}{\alpha+2m}\frac{\left(\frac{\alpha}{2}\right)\left(\frac{\alpha}{2}+1\right)\ldots \left(\frac{\alpha}{2}+(m-1)\right)}{\Gamma(\frac{\alpha+2m}{2})} \frac{ Z_{i_1} \ldots Z_{i_k} \phi(x)}{k!}
 \int_{\H^n} y_{i_1} \ldots y_{i_k} h(1,y)  d y  \\
 \to & \sum_{\substack{k=1, \ldots, 2m, \  k  \ \mathrm{even}\\ i_1, \ldots,i_k \leq 2n+1 \\ \mathrm{deg}(Z_{i_1})+ \ldots \mathrm{deg}(Z_{i_k})=2m}} (-1)^m m! \frac{ Z_{i_1} \ldots Z_{i_k} \phi(x)}{k!}
 \int_{\H^n} y_{i_1} \ldots y_{i_k} h(1,y)  d y
\end{align*}
as $\alpha \to -2m^+$.
Let us now consider $\ell \in \N \cup \{0\}$, $\ell<m$, then we have
\begin{align*}
&  \lim_{\alpha \to -2\ell} \left( \sum_{\substack{k=1, \ldots, 2m, \ k  \ \mathrm{even}\\ i_1, \ldots,i_k \leq 2n+1 \\ \mathrm{deg}(Z_{i_1})+ \ldots \mathrm{deg}(Z_{i_k})=2m}} \frac{2}{\alpha+2m}\frac{1}{\Gamma(\frac{\alpha}{2})}\frac{ Z_{i_1} \ldots Z_{i_k} \phi(x)}{k!}
 \int_{\H^n} y_{i_1} \ldots y_{i_k} h(1,y) \|y\|_c^{-\alpha-2m} d y \right)\\
 =&  \lim_{\alpha \to -2\ell} \left( \sum_{\substack{k=1, \ldots, 2m, \ k  \ \mathrm{even}\\ i_1, \ldots,i_k \leq 2n+1 \\ \mathrm{deg}(Z_{i_1})+ \ldots \mathrm{deg}(Z_{i_k})=2m}} \frac{2}{2m-2\ell}\frac{1}{\Gamma(\frac{\alpha}{2})}\frac{ Z_{i_1} \ldots Z_{i_k} \phi(x)}{k!}
 \int_{\H^n} y_{i_1} \ldots y_{i_k} h(1,y) \|y\|_c^{2\ell-2m} d y \right)=0.
\end{align*}
\subsubsection{A second family of integrals}
We study the regularity of the map 
$\alpha \to \int_{\B_c(0,1)} \Big(\phi(xy)-\mathcal{T}_{2m+1}(\phi,x)(xy) \Big)P_{\alpha}(y) dy.$ The following result holds. 
\begin{lem} \label{analiticity}
Let $\phi \in \mathcal{S}(\H^n)$ and $x \in \G$. For every $m \in \N$, the map
\begin{equation}\label{forma}
\alpha \to \int_{\B_c(0,1)} \Big(\phi(xy)-\mathcal{T}_{2m+1}(\phi,x)(xy) \Big)P_{\alpha}(y) dy
\end{equation}
is analytic on the interval $(-2m-2,Q)$.
\end{lem}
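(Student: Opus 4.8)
The plan is to follow the scheme of Lemma \ref{claim-partedentro}, with the first-order remainder estimate used there replaced by the stratified Taylor estimate of $\G$-degree $2m+1$. Since $\alpha\mapsto 1/\Gamma(\tfrac{\alpha}{2})$ is entire and analytic functions are closed under products, it suffices to prove that
\[
\alpha\longmapsto \int_{\B_c(0,1)}\big(\phi(xy)-\mathcal{T}_{2m+1}(\phi,x)(xy)\big)\int_0^{\infty}t^{\frac{\alpha}{2}-1}h(t,y)\,dt\,dy
\]
is analytic on $(-2m-2,Q)$. As in the earlier proofs, I would show that, for every compact subinterval $[a,b]\subset(-2m-2,Q)$, there are constants $C,M>0$ such that for every $k\in\N$
\[
\left|\int_{\B_c(0,1)}\big(\phi(xy)-\mathcal{T}_{2m+1}(\phi,x)(xy)\big)\int_0^{\infty}t^{\frac{\alpha}{2}-1}\Big(\tfrac12\Big)^k\ln^k(t)\,h(t,y)\,dt\,dy\right|\le C\,M^{k+1}k!,
\]
for all $\alpha\in[a,b]$; the same bounds justify exchanging $\tfrac{\partial^k}{\partial\alpha^k}$ with both integrals, and the estimate gives real-analyticity by the standard characterization of analyticity via bounds on derivatives.

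To obtain the bound I would split $\int_0^{\infty}=\int_0^1+\int_1^{\infty}$. On $\int_1^{\infty}$ the factor $\phi(xy)-\mathcal{T}_{2m+1}(\phi,x)(xy)$ is bounded on $\B_c(0,1)$ by a constant depending only on $x,m,\phi$ (all $Z$-derivatives of $\phi$ being bounded), so using $h(t,y)=t^{-Q/2}h(1,\delta_{1/\sqrt t}(y))$ and $\|h(1,\cdot)\|_{L^{\infty}(\H^n)}<\infty$ one reduces to the integral $a_k$ of \eqref{ak}, and the recursion \eqref{ric1} gives a contribution $\le C\,k!\,\big(\tfrac{2}{Q-\alpha}\big)^{k+1}$, which is harmless on $[a,b]$. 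For $\int_0^1$ I would invoke Theorem \ref{Taylor} in the form \eqref{Taylorn} with $n=2m+1$ (the case $m=1$ being exactly \eqref{utile}): since $\phi\in\mathcal{S}(\H^n)$, there is $C\ge0$ with $|\phi(xy)-\mathcal{T}_{2m+1}(\phi,x)(xy)|\le C\|y\|_c^{2m+2}$ for $y\in\B_c(0,1)$. After the change of variables $z=\delta_{1/\sqrt t}(y)$ and the $Q$-homogeneity of the Lebesgue measure, the factor $\|y\|_c^{2m+2}=t^{m+1}\|z\|_c^{2m+2}$ combines with $t^{-Q/2}$ from $h$ and $t^{Q/2}$ from the Jacobian, leaving
\[
C\int_0^1 t^{\frac{\alpha}{2}+m}\Big(\tfrac12\Big)^k|\ln t|^k\,dt\cdot\int_{\H^n}\|z\|_c^{2m+2}h(1,z)\,dz,
\]
where the last integral is finite because $h(1,\cdot)\in\mathcal{S}(\H^n)$, and the $t$-integral is the quantity $b_k$ of \eqref{bk} with $N=m+1$. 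Crucially $N=m+1>-\tfrac{\alpha}{2}$ precisely when $\alpha>-2m-2$, so the recursion \eqref{ric2} applies and yields a contribution $\le C\,k!\,\big(\tfrac{2}{\alpha+2m+2}\big)^{k+1}$. Collecting the two pieces gives the claimed bound with $M=\max\{\tfrac{1}{Q-b},\tfrac{1}{a+2m+2}\}$.

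The only genuine point of care is the uniformity of the Taylor remainder estimate: Theorem \ref{Taylor} gives $|\phi(xy)-\mathcal{T}_{2m+1}(\phi,x)(xy)|\le c\|y\|_c^{2m+2}\sup_{\{\|z\|_c\le b^{2m+2}\|y\|_c\}}\sup_{\{|\beta|=2m+2\}}|(Z_1\cdots Z_{2n})^{\beta}\phi(xz)|$, and one must note that for $y\in\B_c(0,1)$ the inner supremum is taken over the fixed bounded set $\B_c(x,b^{2m+2})$, on which the continuous derivatives of $\phi$ are bounded; this is where the Schwartz (in fact only $C^{2m+2}$ with bounded derivatives) hypothesis on $\phi$ enters and fixes the constant $C$. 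Everything else is a verbatim repetition of the estimates \eqref{ak}--\eqref{ric2} already carried out in Lemma \ref{claim-partedentro} and Proposition \ref{prop-schw}, so I expect no further obstacle.
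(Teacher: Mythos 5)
Your proposal is correct and follows essentially the same route as the paper: the same split of the $t$-integral at $t=1$, the same reduction of the tail to the quantity $a_k$ of \eqref{ak} via the homogeneity of $h$, and the same use of the stratified Taylor estimate of degree $2m+1$ together with the change of variables $z=\delta_{1/\sqrt t}(y)$ to reduce the piece near $t=0$ to $b_k$ of \eqref{bk} with $N=m+1$, which is exactly where the threshold $\alpha>-2m-2$ enters. Your remark on the uniformity of the Taylor remainder constant over $\B_c(x,b^{2m+2})$ is a sound (and slightly more explicit) justification of the constant $C$ that the paper takes for granted.
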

Before proving Lemma \ref{analiticity}, we focus on the convergence of \eqref{forma}.
\begin{Remark}
We point out that
the integral 
$$\int_{\B_c(0,1)} \Big(\phi(xy)-\mathcal{T}_{2m+1}(\phi,x)(xy) \Big)P_{\alpha}(y) dy $$ converges when $-2m-2<\alpha<Q$. In fact, exploiting the coarea formula of Corollary \ref{cor-coarea} and the homogeneity properties of $P_{\alpha}$ and $\mathcal{S}^{Q-1}_{\infty}$ we get
\begin{align*}
&\int_{\B_c(0,1)}\Big(\phi(xy)-\mathcal{T}_{2m+1}(\phi,x)(xy) \Big)P_{\alpha}(y) dy \\
&= \int_0^1 \int_{\partial \B_c(0,r)} \Big(\phi(xy)-\mathcal{T}_{2m+1}(\phi,x)(xy)\Big)P_{\alpha}(y) d \mathcal{S}^{Q-1}_{\infty}(y) dr\\
&= \int_0^1 r^{\alpha-1} \int_{\partial \B_c(0,1)} \Big(\phi(x\delta_r(z))-\mathcal{T}_{2m+1}(\phi,x)(x\delta_r(z))\Big)P_{\alpha}(z) d \mathcal{S}^{Q-1}_{\infty}(z) dr.
\end{align*}
Now, recalling the regularity of $\phi$ and the estimate \eqref{Taylorn} of Theorem \ref{Taylor}, we know that for every $z \in \partial \B_c(0,1)$ $$|(\phi(x\delta_r(z))-\mathcal{T}_{2m+1}(\phi,x)(x\delta_r(z)))| =O(r^{2m+2}) \quad \mathrm{as} \ r \to 0$$holds. Therefore, for a suitable positive constant $c$ we have
\begin{align*}
\int_{\B_c(0,1)}|\phi(xy)-\mathcal{T}_{2m+1}(\phi,x)(xy)||P_{\alpha}(y)| dy & \leq c\int_0^1 r^{\alpha+2m+1} dr \int_{\partial \B_c(0,1)} |P_{\alpha}(z)| d \mathcal{S}^{Q-1}_{\infty}(z) \\
 & = c \ \frac{1}{| \Gamma(\frac{\alpha}{2})|} \frac{1}{\alpha+2m+2}\sigma(\alpha),
\end{align*}
which is finite. Here, we have exploited the fact that $\alpha >-2m-2$.
\end{Remark}

\begin{proof}[Proof of Lemma \ref{analiticity}]
The proof is analogous to the one of Lemma \ref{claim-partedentro}.
Let us consider the $k$-th derivative of the function in \eqref{forma}, for $k \in \N$. In particular, we will show that derivative and integral can be exchanged by estimating the term
\begin{equation}\label{modena}
\left| \int_{\B_c(0,1)} \Big(\phi(xy)-\mathcal{T}_{2m+1}(\phi,x)(xy) \Big)\int_0^{\infty} t^{\frac{\alpha}{2}-1} (\ln(t))^k \left( \frac{1}{2} \right)^k h(t,y)dt dy \right|.
\end{equation}
We split the integral of \eqref{modena} as
\begin{align}
&\left| \int_{\B_c(0,1)} (\phi(xy)-\mathcal{T}_{2m+1}(\phi,x)(xy))\int_0^{\infty} t^{\frac{\alpha}{2}-1} (\ln(t))^k \left( \frac{1}{2} \right)^k h(t,y)dt dy \right| \leq \notag\\
& \left| \int_{\B_c(0,1)} (\phi(xy)-\mathcal{T}_{2m+1}(\phi,x)(xy))\int_0^{1} t^{\frac{\alpha}{2}-1} (\ln(t))^k \left( \frac{1}{2} \right)^k h(t,y)dt dy \right|\\
&+\left| \int_{\B_c(0,1)} (\phi(xy)-\mathcal{T}_{2m+1}(\phi,x)(xy))\int_1^{\infty} t^{\frac{\alpha}{2}-1} (\ln(t))^k \left( \frac{1}{2} \right)^k h(t,y)dt dy \right|. \notag
\end{align}
The second term can be dealt with repeating the steps in \eqref{secondopezzo}
and we get that
\begin{equation}\label{zero1}
\begin{split}
&\left| \int_{\B_c(0,1)} (\phi(xy)-\mathcal{T}_{2m+1}(\phi,x)(xy)) \int_{1}^{\infty} t^{\frac{\alpha}{2}-1} \left( \frac{1}{2} \right)^k \ln^k(t) h(t,y)dt dy \right|\\
&\leq \sup_{\B_c(0,1)}| \phi(x \cdot)-\mathcal{T}_{2m+1}(\phi,x)(x \cdot) | \sup_{\B_c(0,1)} h(1, \cdot) \mu(\B_c(0,1)) \frac{2}{(Q-\alpha)^{k+1}}   k!.
\end{split}
\end{equation}
Then we need to deal with 
$$\left| \int_{\B_c(0,1)} (\phi(xy)-\mathcal{T}_{2m+1}(\phi,x)(xy))\int_0^{1} t^{\frac{\alpha}{2}-1} (\ln(t))^k \left( \frac{1}{2} \right)^k h(t,y)dt dy \right|.$$
In fact, we have
\begin{align}
\label{un2}
&\left| \int_{\B_c(0,1)} \Big(\phi(xy)-\mathcal{T}_{2m+1}(\phi,x)(xy) \Big)\int_0^{1} t^{\frac{\alpha}{2}-1} (\ln(t))^k \left( \frac{1}{2} \right)^k h(t,y)dt dy \right|  \notag \\
 \leq & \int_{\B_c(0,1)} |\phi(xy)-\mathcal{T}_{2m+1}(\phi,x)(xy)|\int_0^{1} t^{\frac{\alpha}{2}-1} |\ln(t)|^k \left( \frac{1}{2} \right)^k h(t,y)dt dy \\
 \leq & \int_0^{1} t^{\frac{\alpha}{2}-1} |\ln(t)|^k \left( \frac{1}{2} \right)^k  \int_{\B_c(0,1)} |\phi(xy)-\mathcal{T}_{2m+1}(\phi,x)(xy)| h(t,y)dy dt.  \notag 
\end{align}
Now, we observe that by the estimate \eqref{Taylorn} of Theorem \ref{Taylor}, we have $|\phi(x y)-\mathcal{T}_{2m+1}(\phi,x)(xy)| =O(\|y \|_c^{2m+2})$ as $\|y \|_c \to 0$, thus for some suitable positive constant $C$ we get
\begin{align*}
& \int_0^{1} t^{\frac{\alpha}{2}-1} |\ln(t)|^k \left( \frac{1}{2} \right)^k  \int_{\B_c(0,1)} |\phi(xy)-\mathcal{T}_{2m+1}(\phi,x)(xy)| h(t,y)dy dt \\
\leq & \ C \int_0^{1} t^{\frac{\alpha}{2}-1} |\ln(t)|^k \left( \frac{1}{2} \right)^k  \int_{\B_c(0,1)} \|y \|_c^{2m+2} h(t,y)dy dt \\
=& \ C \int_0^{1} t^{\frac{\alpha}{2}-1} |\ln(t)|^k \left( \frac{1}{2} \right)^k t^{-\frac{Q}{2}} \int_{\B_c(0,1)} \|y \|_c^{2m+2} h(1,\delta_{\frac{1}{\sqrt{t}}}(y))dy dt ,
\end{align*}
where we exploited the homogeneity property of $h$. Now, we change variables as $z= \delta_{\frac{1}{\sqrt{t}}}(y)$, so that by the homogeneity of the Lebesgue measure and the properties of homogeneous norms we con continue as follows
\begin{align}
\label{dos2}
& C\int_0^{1} t^{\frac{\alpha}{2}-1} |\ln(t)|^k \left( \frac{1}{2} \right)^k t^{-\frac{Q}{2}} \int_{\B_c(0,1)} \|y \|_c^{2m+2} h(1,\delta_{\frac{1}{\sqrt{t}}}(y))dy dt  \notag \\
=& \ C\int_0^{1} t^{\frac{\alpha}{2}-1} |\ln(t)|^k \left( \frac{1}{2} \right)^k  \int_{\B_c(0,\frac{1}{\sqrt{t}})} \|\delta_{\sqrt{t}}(z) \|_c^{2m+2} h(1,z)dz dt \notag  \\
=& \ C\int_0^{1} t^{\frac{\alpha}{2}-1} |\ln(t)|^k \left( \frac{1}{2} \right)^k  \int_{\B_c(0,\frac{1}{\sqrt{t}})} t^{m+1}\| z\|_c^{2m+2} h(1,z)dz dt \\
\leq &\  C\int_0^{1} t^{\frac{\alpha}{2}+m} |\ln(t)|^k \left( \frac{1}{2} \right)^k dt \int_{\H^n}  \|z\|_c^{2m+2} h(1,z)dz .  \notag 
\end{align}
Notice that $\int_{\H^n}  \|z\|_c^{2m+2} h(1,z)dz < \infty$ is a finite constant, since $h(1,z) \in \mathcal{S}(\H^n)$. Now, we observe that, according to \eqref{bk}, $$b_k=\int_0^{1} t^{\frac{\alpha}{2}+m} |\ln(t)|^k \left( \frac{1}{2} \right)^k   dt$$ when $N=m+1$, and $N=m+1 >- \frac{\alpha}{2}$ if and only if $\alpha>-(2m+2)$. Hence, we can exploit the estimate in \eqref{ric2} getting that for every fixed $\alpha >-2m-2$, for every $k \in \N$
\begin{equation}
\label{tres2}
\begin{split}
 \int_0^{1} t^{\frac{\alpha}{2}+m} |\ln(t)|^k \left( \frac{1}{2} \right)^k   dt
\leq C \frac{2}{(\alpha+2m+2)^{k+1}} k!.
\end{split}
\end{equation} 
By combining \eqref{zero1}, \eqref{un2}, \eqref{dos2} and \eqref{tres2} our claim is achieved.
\end{proof}
\begin{Remark}
The results of this subsection are still true if $\phi \in C^{\infty}(\H^n)$ and $\phi(z) \leq K \|z\|_c^{-L}$ as $\|z_c\|>S$ for some $L, S, K>0$.  It can be verified that in this case the map in \eqref{forma} is analytic on $(-2m-2,\min\{Q,L\})$.
\end{Remark}

\section{Analytic continuation in the Heisenberg group}\label{sect-indu}

Here, we prove the following result.
\begin{teo}\label{teo:emilia}
For every $m \in \N$, for every $\phi \in \mathcal{S}(\G)$ and for every $x \in \G$, the map $$\alpha \to \psi(x,\alpha)$$ can be analytically continued to the interval $(-2m-2, -2m]$.  Moreover, the representation 
\begin{align}\label{expression}
\psi(x,\alpha)=&\int_{\H^n \setminus \B_c(0,1)} \phi(xy)P_{\alpha}(y) dy + \int_{\B_c(0,1)} \Big(\phi(xy)-\mathcal{T}_{2m+1}(\phi,x)(xy)\Big)P_{\alpha}(y) dy  \notag  \\
& + \sum_{p=1}^m   \sum_{\substack{k=1, \ldots, 2p, k  \ \mathrm{even}\\ i_1, \ldots,i_k \leq 2n+1 \\ \mathrm{deg}(Z_{i_1})+ \ldots \mathrm{deg}(Z_{i_k})=2p}} \frac{1}{\alpha+2p}\frac{1}{\Gamma(\frac{\alpha}{2})}\frac{ Z_{i_1} \ldots Z_{i_k} \phi(x)}{k!} \int_{\partial \B_c(0,1)}  z_{i_1} \ldots  z_{i_k}  \| z\|_{\alpha}^{\alpha-Q} d \mathcal{S}^{Q-1}_{\infty}(z) \notag \\
&+ \phi(x) \frac{1}{\Gamma(\frac{\alpha}{2})} \frac{1}{\alpha} \sigma(\alpha)
\end{align}
holds for every $\alpha$ such that $-2m-2<\alpha \leq Q$.
\end{teo}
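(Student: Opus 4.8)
The plan is to argue by induction on $m$, the case $m=0$ being Theorem \ref{teo:samoggia} rewritten. Indeed, reading the empty sum $\sum_{p=1}^{0}$ as $0$ and using $\mathcal{T}_1(\phi,x)(xy)=\phi(x)+D_h\phi(x)(y)$ together with the vanishing $\int_{\B_c(0,1)}D_h\phi(x)(y)P_{\alpha}(y)\,dy=0$ from \eqref{rel-palla-diffe}, the representation \eqref{peralfamag-2} on $(-2,Q)$ is precisely \eqref{expression} with index $m=0$. Assume now, for some $m\geq 0$, that \eqref{expression} with index $m$ holds on $(-2m-2,Q)$ and furnishes there the analytic continuation of $\alpha\mapsto\psi(x,\alpha)$. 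I will produce the analytic continuation on $(-2m-4,Q)$ together with the validity of \eqref{expression} for $m+1$; this covers the new strip $(-2m-4,-2m-2]$ and hence yields the statement for every $m\in\N$.

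The inductive step rests on a single telescoping identity. Inserting $\pm\,\mathcal{T}_{2m+3}(\phi,x)(xy)$ into the ``inner'' term of \eqref{expression} gives
\begin{align*}
&\int_{\B_c(0,1)}\big(\phi(xy)-\mathcal{T}_{2m+1}(\phi,x)(xy)\big)P_{\alpha}(y)\,dy\\
&\qquad=\int_{\B_c(0,1)}\big(\phi(xy)-\mathcal{T}_{2m+3}(\phi,x)(xy)\big)P_{\alpha}(y)\,dy\\
&\qquad\qquad+\int_{\B_c(0,1)}\big(\mathcal{T}_{2m+3}(\phi,x)(xy)-\mathcal{T}_{2m+1}(\phi,x)(xy)\big)P_{\alpha}(y)\,dy,
\end{align*}
all three integrals being absolutely convergent on the overlap $(-2m-2,Q)$. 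By \eqref{comb3} — equivalently \eqref{comb2} multiplied by $1/\Gamma(\tfrac{\alpha}{2})$ and combined with Lemma \ref{lemma-general} — applied with $m$ replaced by $m+1$, which is licit exactly for $-2(m+1)<\alpha<Q$, the last integral equals the $p=m+1$ summand of the sum appearing in \eqref{expression}. Substituting back, one checks that on $(-2m-2,Q)$ the right-hand side of \eqref{expression} with index $m$ coincides term by term with the right-hand side of \eqref{expression} with index $m+1$: the old inner term is replaced by the new one, and the new summand $p=m+1$ is exactly the piece that was split off. Therefore $\psi(x,\alpha)$ agrees, on $(-2m-2,Q)$, with the right-hand side of \eqref{expression} written with $m+1$.

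It remains to verify that this latter expression is analytic on all of $(-2m-4,Q)$, which one does term by term: the integral over $\H^n\setminus\B_c(0,1)$ is analytic on $(-\infty,Q)$ by Remark \ref{partefuori}; the new inner integral $\int_{\B_c(0,1)}(\phi(xy)-\mathcal{T}_{2m+3}(\phi,x)(xy))P_{\alpha}(y)\,dy$ is analytic on $(-2(m+1)-2,Q)=(-2m-4,Q)$ by Lemma \ref{analiticity} applied with $m+1$; each summand of $\sum_{p=1}^{m+1}$ is analytic on $(-\infty,Q)$ by the discussion of Section \ref{anal-integrals}, where the would-be pole of $1/(\alpha+2p)$ at $\alpha=-2p$ is cancelled by the zero of $1/\Gamma(\tfrac{\alpha}{2})$; and the last term is analytic on $(-\infty,Q)$ since $\sigma$ is (Item \ref{itemcalfa} of Section \ref{sectioncalfa}) and $\alpha\,\Gamma(\tfrac{\alpha}{2})\to 2$ as $\alpha\to 0$. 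Since $\psi(x,\cdot)$ is already analytic on $(-2m-2,Q)$ by the inductive hypothesis and coincides there with this expression, which is analytic on the strictly larger interval $(-2m-4,Q)$, uniqueness of analytic continuation forces the expression to be the analytic continuation of $\psi(x,\cdot)$ to $(-2m-4,Q)$, proving \eqref{expression} for $m+1$ and closing the induction.

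The genuinely substantial input — Lemma \ref{analiticity} together with the collapse, via \eqref{comb1}, \eqref{comb2}, \eqref{comb3} and Lemma \ref{lemma-general}, of the $P_{\alpha}$-integral of a difference of consecutive $Z$-Taylor polynomials to a single ``critical'' term carrying a \emph{removable} singularity at $\alpha=-2(m+1)$ — has already been prepared in Section \ref{sect:three}, and ultimately relies on the invariance of $h$ and of $\B_c(0,1)$ under horizontal rotations, special to $\H^n$ (Sections \ref{parita-h} and \ref{parita-ball}). Consequently the only point requiring care in the proof of Theorem \ref{teo:emilia} itself is the bookkeeping: the rewriting via \eqref{comb2} and \eqref{comb3} must be invoked only on the overlap $(-2m-2,Q)$, where it is valid, so that the two expressions for $\psi(x,\cdot)$ are matched on a genuine nonempty open interval before uniqueness of analytic continuation is applied.
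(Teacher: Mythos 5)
Your proof is correct and follows essentially the same route as the paper: an induction whose step telescopes in $\pm\,\mathcal{T}_{2m+3}(\phi,x)(xy)$, converts the difference of consecutive $Z$-Taylor polynomials into the new summand via \eqref{comb2}--\eqref{comb3} on the overlap interval, and then verifies analyticity term by term (Remark \ref{partefuori}, Section \ref{anal-integrals}, Item \ref{itemcalfa}, and Lemma \ref{analiticity} with the shifted index) before invoking uniqueness of analytic continuation. The only difference is cosmetic: you anchor the induction at $m=0$ by identifying \eqref{peralfamag-2} with \eqref{expression} for an empty sum, whereas the paper runs the same computation explicitly as a separate base case $m=1$.
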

The proof of the previous theorem will be realized by induction in the following two subsections.
\subsection{Base case: continuation of $\alpha \to \psi(x,\alpha)$ on the strip $(-4,-2]$ }
We consider the map $ \alpha \to \psi(x,\alpha)$ which has been analytically continued to $\alpha \in (-2,Q)$ in Theorem \ref{teo:samoggia}. We obtained the expression of $\psi(x,\alpha)$ on $(-2,Q)$ 
\begin{align}\label{serve6}
\psi(x,\alpha)&= \int_{\H^n \setminus \B_c(0,1)} \phi(xy)P_{\alpha}(y) dy + \int_{\B_c(0,1)} \Big(\phi(xy)-\phi(x)\Big)P_{\alpha}(y) dy + \phi(x) \frac{1}{\Gamma(\frac{\alpha}{2})} \frac{1}{\alpha} \sigma(\alpha)\notag\\
&= \int_{\H^n \setminus \B_c(0,1)} \phi(xy)P_{\alpha}(y) dy + \int_{\B_c(0,1)} \Big(\phi(xy)-\mathcal{T}_1(\phi,x)(xy)\Big)P_{\alpha}(y) dy + \phi(x) \frac{1}{\Gamma(\frac{\alpha}{2})} \frac{1}{\alpha} \sigma(\alpha) 
\end{align}
on an arbitrary stratified group $\G$, and then, in particular, it holds on $\H^n$. In this subsection, we wish to continue analytically the map on the strip $-4 < \alpha \leq 2 $, so as to show the base step of the inductive procedure. In corresponds to the value $m=1$.
We consider $\alpha \in (-2,Q)$ and we consider \eqref{serve6} as follows
\begin{align*}
&\psi(x,\alpha)=\int_{\H^n \setminus \B_c(0,1)} \phi(xy)P_{\alpha}(y) dy + \int_{\B_c(0,1)} \Big(\phi(xy)-\mathcal{T}_{1}(\phi,x)(xy)\Big)P_{\alpha}(y) dy\\
&-\int_{\B_c(0,1)}(\mathcal{T}_3(\phi,x)(xy)-\mathcal{T}_1(\phi,x)(xy)) P_{\alpha}(y)dy + \int_{\B_c(0,1)} (\mathcal{T}_{3}(\phi,x)(xy)-\mathcal{T}_1(\phi,x)(xy) )P_{\alpha}(y) dy\\
&+ \phi(x) \frac{1}{\Gamma(\frac{\alpha}{2})} \frac{1}{\alpha} \sigma(\alpha)\\
=&\int_{\H^n \setminus \B_c(0,1)} \phi(xy)P_{\alpha}(y) dy + \int_{\B_c(0,1)} \Big(\phi(xy)-\mathcal{T}_{3}(\phi,x)(xy)\Big)P_{\alpha}(y) dy\\
&+ \int_{\B_c(0,1)} (\mathcal{T}_{3}(\phi,x)(xy)-\mathcal{T}_1(\phi,x)(xy) )P_{\alpha}(y) dy + \phi(x) \frac{1}{\Gamma(\frac{\alpha}{2})} \frac{1}{\alpha} \sigma(\alpha).
\end{align*}
We can exploit the expression obtained in \eqref{comb2} and for every $\alpha \in (-2, Q)$ we get 
\begin{align}\label{eq-basis}
\psi(x,\alpha)=&\int_{\H^n \setminus \B_c(0,1)} \phi(xy)P_{\alpha}(y) dy + \int_{\B_c(0,1)} \Big(\phi(xy)-\mathcal{T}_{3}(\phi,x)(xy)\Big)P_{\alpha}(y) dy \notag \\
&+ \sum_{\substack{i_1, i_2  \leq 2n+1 \\ \mathrm{deg}(Z_{i_1})+\mathrm{deg}(Z_{i_2})=2}} \frac{1}{\alpha+2}\frac{1}{\Gamma(\frac{\alpha}{2})}\frac{ Z_{i_1}Z_{i_2} \phi(x)}{k!}
 \int_{\partial \B_c(0,1)}  z_{i_1} z_{i_2}  \| z\|_{\alpha}^{\alpha-Q} d \mathcal{S}^{Q-1}_{\infty}(z)  \\
 & + \phi(x) \frac{1}{\Gamma(\frac{\alpha}{2})} \frac{1}{\alpha} \sigma(\alpha). \notag 
\end{align}
Let us focus on the right-hand side of \eqref{eq-basis}. We notice that, by Remark \ref{partefuori}, the first term of the sum
$$ \alpha \to \int_{\H^n \setminus \B_c(0,1)} \phi(xy)P_{\alpha}(y) dy$$ is analytic on $(-\infty, Q)$. The last term of the sum
$$ \alpha \to \phi(x) \frac{1}{\Gamma(\frac{\alpha}{2})} \frac{1}{\alpha} \sigma(\alpha)$$
is analytic on $(-\infty, Q)$ too, since in Item \ref{itemcalfa} of Section \ref{sectioncalfa} we proved that $\sigma(\alpha)$ is analytic on $(-\infty, \alpha)$. By the results in Section \ref{anal-integrals}, the third term of the sum 
$$\alpha \to \sum_{\substack{i_1, i_2  \leq 2n+1 \\ \mathrm{deg}(Z_{i_1})+\mathrm{deg}(Z_{i_2})=2}} \frac{1}{\alpha+2}\frac{1}{\Gamma(\frac{\alpha}{2})}\frac{ Z_{i_1}Z_{i_2} \phi(x)}{k!}
 \int_{\partial \B_c(0,1)}  z_{i_1} z_{i_2}  \| z\|_{\alpha}^{\alpha-Q} d \mathcal{S}^{Q-1}_{\infty}(z)$$
is analytic on $(-\infty,Q)$. Finally, by Lemma \ref{analiticity} applied with $m=1$, the second term of the sum 
$$\alpha \to \int_{\B_c(0,1)} \Big(\phi(xy)-\mathcal{T}_{3}(\phi,x)(xy)\Big)P_{\alpha}(y) dy$$
is analytic on $(-4,Q)$. In conclusion, the map $\alpha \to \psi(x,\alpha)$ is analytic on $(-4,Q)$. Then, we have obtained an analytic continuation of $\psi(x,\alpha)$ on $-4<\alpha \leq -2$. On the other hand, we have proved the first step of the inductive procedure, corresponding to $m=1$, aimed in the current section.

\subsection{Induction step: continuation of $\psi(x,\alpha)$ on the strip $(-2m-2, -2m]$} 
Now, we fix $m \in \N$ and we assume that the map $\alpha \to \psi(x,\alpha)$ has been analytically continued on $(-2m,Q)$ and that the representation
\begin{align}\label{map2}
\psi(x,\alpha)&=\int_{\H^n \setminus \B_c(0,1)} \phi(xy)P_{\alpha}(y) dy + \int_{\B_c(0,1)} \Big(\phi(xy)-\mathcal{T}_{2(m-1)+1}(\phi,x)(xy)\Big)P_{\alpha}(y) dy  \notag  \\
& + \sum_{p=1}^{m-1}  \sum_{\substack{k=1, \ldots, 2p, k  \ \mathrm{even}\\ i_1, \ldots,i_k \leq 2n+1 \\ \mathrm{deg}(Z_{i_1})+ \ldots \mathrm{deg}(Z_{i_k})=2p}} \frac{1}{\alpha+2p}\frac{1}{\Gamma(\frac{\alpha}{2})}\frac{ Z_{i_1} \ldots Z_{i_k} \phi(x)}{k!} \int_{\partial \B_c(0,1)}  z_{i_1} \ldots  z_{i_k}  \| z\|_{\alpha}^{\alpha-Q} d \mathcal{S}^{Q-1}_{\infty}(z)  \\
&+ \phi(x) \frac{1}{\Gamma(\frac{\alpha}{2})} \frac{1}{\alpha} \sigma(\alpha)  \notag
\end{align}
holds.
We wish to construct an analytic continuation of the map $\psi(x,\alpha)$ in \eqref{map2} on the strip $(-2m-2,-2m]$. Let us consider
\begin{align}\label{map21}
\psi(x,\alpha)&=\int_{\H^n \setminus \B_c(0,1)} \phi(xy)P_{\alpha}(y) dy + \int_{\B_c(0,1)} \Big(\phi(xy)-\mathcal{T}_{2m+1}(\phi,x)(xy)\Big)P_{\alpha}(y) dy  \notag \\
+& \int_{\B_c(0,1)} \Big( \mathcal{T}_{2m+1}(\phi,x)(xy)-\mathcal{T}_{2(m-1)+1}(\phi,x)(xy) \Big) P_{\alpha}(y) dy\\
+& \sum_{p=1}^{m-1}  \sum_{\substack{k=1, \ldots, 2p, k  \ \mathrm{even}\\ i_1, \ldots,i_k \leq 2n+1 \\ \mathrm{deg}(Z_{i_1})+ \ldots \mathrm{deg}(Z_{i_k})=2p}} \frac{1}{\alpha+2p}\frac{1}{\Gamma(\frac{\alpha}{2})}\frac{ Z_{i_1} \ldots Z_{i_k} \phi(x)}{k!} \int_{\partial \B_c(0,1)}  z_{i_1} \ldots  z_{i_k}  \| z\|_{\alpha}^{\alpha-Q} d \mathcal{S}^{Q-1}_{\infty}(z)  \notag \\
+& \phi(x) \frac{1}{\Gamma(\frac{\alpha}{2})} \frac{1}{\alpha} \sigma(\alpha). \notag 
\end{align}
Now, we apply \eqref{comb2} and we get that for every $\alpha \in (-2m,Q)$, the right-hand side of \eqref{map21} is equal to
\begin{align}\label{eq-indu}
&\int_{\H^n \setminus \B_c(0,1)} \phi(xy)P_{\alpha}(y) dy + \int_{\B_c(0,1)} \Big(\phi(xy)-\mathcal{T}_{2m+1}(\phi,x)(xy)\Big)P_{\alpha}(y) dy  \notag \\
& + \sum_{p=1}^{m}  \sum_{\substack{k=1, \ldots, 2p, k  \ \mathrm{even}\\ i_1, \ldots,i_k \leq 2n+1 \\ \mathrm{deg}(Z_{i_1})+ \ldots \mathrm{deg}(Z_{i_k})=2p}} \frac{1}{\alpha+2p}\frac{1}{\Gamma(\frac{\alpha}{2})}\frac{ Z_{i_1} \ldots Z_{i_k} \phi(x)}{k!} \int_{\partial \B_c(0,1)}  z_{i_1} \ldots  z_{i_k}  \| z\|_{\alpha}^{\alpha-Q} d \mathcal{S}^{Q-1}_{\infty}(z)\\
&+ \phi(x) \frac{1}{\Gamma(\frac{\alpha}{2})} \frac{1}{\alpha} \sigma(\alpha). \notag 
\end{align}
Let us focus on \eqref{eq-indu}. We observe that the first term of the sum is analytic on $(-\infty, Q)$ by Remark \ref{partefuori}; the last term of the sum is analytic on $(-\infty, Q)$ since in Item \ref{itemcalfa} of Section \ref{sectioncalfa} we proved that $\sigma(\alpha)$ is analytic on $(-\infty, \alpha)$; the third term of the sum is analytic on $(-\infty,Q)$ by the results in Section \ref{anal-integrals}; the second term of the sum is analytic on $(-2m-2,Q)$ by Lemma \ref{analiticity}. Then, the map $\alpha \to \psi(x,\alpha)$ in \eqref{eq-indu} is analytic on $(-2m-2,Q)$. Hence, we provided an analytic continuation of $\psi(x,\alpha)$ on $-2m-2<\alpha \leq -2m$ and the representation in \eqref{eq-indu} holds for $-2m-2<\alpha<Q$.
Hence the induction is concluded and Theorem \ref{teo:emilia} is proved.

\begin{Remark}
Notice that, for every $m\in \N$, when $\alpha<-2m$, by explicit calculations based on the coarea formula, we obtain
\begin{equation}\label{int-fuori}
\begin{split}
&\int_{\H^n \setminus \B_c(0,1)}  \Big(\mathcal{T}_{2m+1}(\phi,x)(xy)-\mathcal{T}_{2m-1}(\phi,x)(xy)\Big)P_{\alpha}(y) dy\\
&= -\int_{\B_c(0,1)}  \Big(\mathcal{T}_{2m+1}(\phi,x)(xy)-\mathcal{T}_{2m-1}(\phi,x)(xy)\Big)P_{\alpha}(y) dy,
\end{split}
\end{equation} so that,  when $-2m-2<\alpha <-2m$ we have the following representation of the analytic continuation
\begin{equation}\label{formaglobale}
\begin{split}
\psi(x,\alpha)=  \int_{\H^n} \Big(\phi(xy)-\mathcal{T}_{2m+1}(\phi,x)(xy)\Big)P_{\alpha}(y) dy.
\end{split}
\end{equation}
\end{Remark}
\subsubsection{Analysis of $\psi(x,-2m)$, $m \in \N$}
Let us now compute the value $\psi(x,-2m)$, $m \in \N$. We exploit the representation of $\psi(x,\alpha)$ on $(-2m-2, Q)$ and the results in Section \ref{sezione-limiti} getting:
\begin{align*}
&\psi(x,\alpha)=\int_{\H^n \setminus \B_c(0,1)} \phi(xy)P_{\alpha}(y) dy + \int_{\B_c(0,1)} \Big(\phi(xy)-\mathcal{T}_{2m+1}(\phi,x)(xy)\Big)P_{\alpha}(y) dy\\
& + \sum_{p=1}^m   \sum_{\substack{k=1, \ldots, 2p, k  \ \mathrm{even}\\ i_1, \ldots,i_k \leq 2n+1 \\ \mathrm{deg}(Z_{i_1})+ \ldots \mathrm{deg}(Z_{i_k})=2p}} \frac{1}{\alpha+2p}\frac{1}{\Gamma(\frac{\alpha}{2})}\frac{ Z_{i_1} \ldots Z_{i_k} \phi(x)}{k!} \int_{\partial \B_c(0,1)}  z_{i_1} \ldots  z_{i_k}  \| z\|_{\alpha}^{\alpha-Q} d \mathcal{S}^{Q-1}_{\infty}(z)\\
&+ \phi(x) \frac{1}{\Gamma(\frac{\alpha}{2})} \frac{1}{\alpha} \sigma(\alpha) \\
=& \ \frac{1}{\Gamma(\frac{\alpha}{2})}\int_{\H^n \setminus \B_c(0,1)} \phi(xy)\|y\|_{\alpha}^{\alpha-Q}dy + \frac{1}{\Gamma(\frac{\alpha}{2})} \int_{\B_c(0,1)} \Big(\phi(xy)-\mathcal{T}_{2m+1}(\phi,x)(xy)\Big)\|y\|_{\alpha}^{\alpha-Q} dy\\
& +   \sum_{p=1}^m \sum_{\substack{k=1, \ldots, 2p, k  \ \mathrm{even}\\ i_1, \ldots,i_k \leq 2n+1 \\ \mathrm{deg}(Z_{i_1})+ \ldots \mathrm{deg}(Z_{i_k})=2p}}   \frac{1}{\alpha+2p}\frac{1}{\Gamma(\frac{\alpha}{2})}\frac{ Z_{i_1} \ldots Z_{i_k} \phi(x)}{k!} \int_{\partial \B_c(0,1)}  z_{i_1} \ldots  z_{i_k}  \| z\|_{\alpha}^{\alpha-Q} d \mathcal{S}^{Q-1}_{\infty}(z) \\
&+  \phi(x) \frac{1}{\Gamma(\frac{\alpha}{2})} \frac{1}{\alpha} \sigma(\alpha)\\
\to  & \lim_{\alpha \to -2m}  \sum_{p=1}^m   \sum_{\substack{k=1, \ldots, 2p, k  \ \mathrm{even}\\ i_1, \ldots,i_k \leq 2n+1 \\ \mathrm{deg}(Z_{i_1})+ \ldots \mathrm{deg}(Z_{i_k})=2p}} \frac{1}{\alpha+2p}\frac{1}{\Gamma(\frac{\alpha}{2})}\frac{ Z_{i_1} \ldots Z_{i_k} \phi(x)}{k!} \int_{\partial \B_c(0,1)}  z_{i_1} \ldots  z_{i_k}  \| z\|_{\alpha}^{\alpha-Q} d \mathcal{S}^{Q-1}_{\infty}(z)\\
&= \sum_{\substack{k=1, \ldots, 2m, k  \ \mathrm{even}\\ i_1, \ldots,i_k \leq 2n+1 \\ \mathrm{deg}(Z_{i_1})+ \ldots \mathrm{deg}(Z_{i_k})=2m}} (-1)^m m! \frac{ Z_{i_1} \ldots Z_{i_k} \phi(x)}{k!}
 \int_{\H^n} y_{i_1} \ldots y_{i_k} h(1,y)  d y
\end{align*}
as $\alpha$ goes to $-2m$. We used in the last two steps the results of Section \ref{sezione-limiti}. Hence, we showed that for every $m \in \N$
\begin{align*}
\psi(x, -2m)= \sum_{\substack{k=1, \ldots, 2m, k  \ \mathrm{even}\\ i_1, \ldots,i_k \leq 2n+1 \\ \mathrm{deg}(Z_{i_1})+ \ldots \mathrm{deg}(Z_{i_k})=2m}} (-1)^m m! \frac{ Z_{i_1} \ldots Z_{i_k} \phi(x)}{k!}
 \int_{\H^n} y_{i_1} \ldots y_{i_k} h(1,y)  d y. 
\end{align*}

\subsection{An estimate of $x \to \psi(x,\alpha)$}
We wish to use the analytic continuation of $\psi(x,\alpha)$ to extend the notion of fractional sub-Laplacian $\mathcal{L}^s$ in the spirit of Sections \ref{sec-introddistr} and \ref{section-laplesteso}. In order to do this, we need to generalize in $\H^n$ the estimate on $x \to \psi(x,\alpha)$ established in Section \ref{estimates-psi}. This is the goal of the following proposition.
\begin{prop}
For every $m \in \N$, for every $\alpha \in (-2m-2,2m)$, for every $\phi \in \mathcal{S}(\H^n)$ and for every $x \in \H^n$ the following estimate holds
$$\psi(x, \alpha)=O(\|x\|_c^{\alpha-Q})$$
as $\|x\|_c \to \infty$.
\end{prop}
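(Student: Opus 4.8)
The plan is to argue by induction on $m$, exactly mirroring the inductive structure of the analytic continuation itself (base case $m=1$ treated in Section \ref{estimates-psi} through Proposition \ref{est-2}, or rather its companion for the first negative strip), and to exploit the explicit representation \eqref{formaglobale} of the analytic continuation valid on $(-2m-2,-2m)$, namely $\psi(x,\alpha)=\int_{\H^n}\big(\phi(xy)-\mathcal{T}_{2m+1}(\phi,x)(xy)\big)P_{\alpha}(y)\,dy$, together with the representation \eqref{expression} at the endpoints. First I would fix $x$ with $\|x\|_c$ large, let $b=b(\G)$ be the Taylor constant of Theorem \ref{Taylor}, and split the integral over $\H^n$ into the region $\G\setminus\B_c\big(0,\tfrac{\|x\|_c}{2b^{2m+2}}\big)$, where I separate the $\phi(xy)$ part from the $\mathcal{T}_{2m+1}(\phi,x)(xy)$ part, and the ball $\B_c\big(0,\tfrac{\|x\|_c}{2b^{2m+2}}\big)$, where the remainder $\phi(xy)-\mathcal{T}_{2m+1}(\phi,x)(xy)$ is controlled via Theorem \ref{Taylor} by $c\|y\|_c^{2m+2}\sup_{p\in\B_c(x,b^{2m+2}\|y\|_c)}\mathfrak{d}^{2m+2}_h\phi(p)$, where on that ball the supremum is taken over points $p$ with $\|p\|_c\geq \|x\|_c-b^{2m+2}\|y\|_c\geq \tfrac{\|x\|_c}{2}$, so the Schwartz decay of $\phi$ gives a factor $(1+(\tfrac{\|x\|_c}{2})^k)^{-1}$ for arbitrarily large $k$.

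The outer-region estimate for the $\phi(xy)$ piece is identical to the computation \eqref{an} in the proof of Proposition \ref{est-2}: changing variables $z=xy$ and using $|\phi(z)|\leq c_k(1+\|z\|_c^k)^{-1}$ together with the equivalence \eqref{relcc-alfa} and the coarea formula of Corollary \ref{cor-coarea}, one gets a bound $c(k,\alpha)\|x\|_c^{\alpha-Q}$ for every $k>Q$ (this step does not see $m$ at all, since $\alpha<Q$ suffices). The $\mathcal{T}_{2m+1}(\phi,x)(xy)$ piece integrated over the exterior is the new ingredient: here I would use the identity \eqref{int-fuori}, which says that on $\G\setminus\B_c(0,1)$ the integral of each homogeneous Taylor difference $\mathcal{T}_{2p+1}-\mathcal{T}_{2p-1}$ against $P_{\alpha}$ equals minus the corresponding integral over $\B_c(0,1)$ — but here the radius is $R:=\tfrac{\|x\|_c}{2b^{2m+2}}$ rather than $1$, so by homogeneity and the coarea formula each such exterior integral is a finite linear combination (with coefficients built from $Z_{i_1}\cdots Z_{i_k}\phi(x)$, hence $O(\|x\|_c^{-N})$ for all $N$ by Schwartz decay of $\phi$ and its derivatives) of powers $R^{\alpha+2p}=R^{\alpha+2p}$ for $p=1,\dots,m$; since the derivative prefactors decay faster than any polynomial, each such term is $o(\|x\|_c^{\alpha-Q})$, in fact $O(\|x\|_c^{\alpha+2m-N})$ which beats $\|x\|_c^{\alpha-Q}$ once $N$ is chosen large. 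The ball-region remainder estimate, combining the Taylor bound with the coarea formula over $\B_c(0,R)$, yields $\tilde c(\alpha,k)\|x\|_c^{\alpha+2m+2-k}$, which is $\leq \|x\|_c^{\alpha-Q}$ as soon as $k>Q+2m+2$; this is exactly the $m$-analogue of \eqref{trua}.

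Putting the three contributions together gives $\psi(x,\alpha)=O(\|x\|_c^{\alpha-Q})$ for $\alpha\in(-2m-2,-2m)$, and the endpoint values $\alpha=-2m$ are handled either by the same splitting applied to the representation \eqref{expression} (where the finitely many extra terms $\tfrac{1}{\alpha+2p}\tfrac{1}{\Gamma(\alpha/2)}Z_{i_1}\cdots Z_{i_k}\phi(x)\int_{\partial\B_c(0,1)}z^{\gamma}\|z\|_{\alpha}^{\alpha-Q}d\mathcal{S}^{Q-1}_{\infty}$ are manifestly $O(\|x\|_c^{-N})$ for all $N$ by Schwartz decay of the horizontal derivatives of $\phi$) or, more cleanly, by invoking continuity of the estimate in $\alpha$ on the closed strip. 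The main obstacle I anticipate is bookkeeping rather than conceptual: one must verify that in the decomposition of $\mathcal{T}_{2m+1}(\phi,x)(xy)-\phi(x)$ into homogeneous pieces, each integral over the growing ball $\B_c(0,R(x))$ scales with a power of $\|x\|_c$ that is strictly dominated by $\|x\|_c^{\alpha-Q}$ once the Schwartz decay factor from the $Z$-derivatives of $\phi$ evaluated at $x$ is taken into account — i.e. making precise that ``polynomial growth of the ball volume beats any fixed power'' is defeated by ``superpolynomial decay of $\phi$ and all its derivatives'', uniformly in the finitely many multi-indices $(i_1,\dots,i_k)$ with $\mathrm{deg}(Z_{i_1})+\dots+\mathrm{deg}(Z_{i_k})\leq 2m$. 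Once that uniformity is nailed down, the proof is a routine, if lengthy, repetition of the estimates in the proofs of Propositions \ref{stima32} and \ref{est-2}.
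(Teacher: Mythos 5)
Your proposal is correct and follows essentially the same route as the paper: the authors likewise start from the representation $\psi(x,\alpha)=\int_{\H^n}\big(\phi(xy)-\mathcal{T}_{2m+1}(\phi,x)(xy)\big)P_{\alpha}(y)\,dy$ of \eqref{formaglobale} and repeat the proof of Proposition \ref{est-2} with $b^2$ replaced by $b^{2m+2}$ and $\mathfrak{d}^2_h\phi$ by $\mathfrak{d}^{2m+2}_h\phi$, using that all horizontal derivatives of $\phi$ remain Schwartz. Your explicit bookkeeping of the exterior integral of the Taylor polynomial (homogeneous pieces contributing $R^{\alpha+2p}$ beaten by the superpolynomial decay of $Z_{i_1}\cdots Z_{i_k}\phi(x)$) is exactly what the paper's ``repeated verbatim'' tacitly requires.
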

\begin{proof}
The proof is analogous to the one of Proposition \ref{est-2}. We exploit the expression of $\psi(x,\alpha)$
\begin{align*}
\psi(x,\alpha)= & \int_{\H^n} \Big(\phi(xy)-\mathcal{T}_{2m+1}(\phi,x)(xy) \Big) P_{\alpha}(y) dy
\end{align*}
obtained in \eqref{expression} for $-2m-2<\alpha<-2m$. Thus, the proof of Proposition \ref{est-2} can be essentially repeated verbatim, up to substituting $b^2$ with $b^{2m+2}$ and $\mathfrak{d}_h^2(\xi)$ with 
\begin{equation}\label{mathfrakd}
\mathfrak{d}^{2m+2}_h\phi(\xi)=\max_{p \in \B_c(x,b^{m+2}\|x^{-1}z\|_c)} \mathfrak{d}^{2m+2}_h\phi(p),
\end{equation} where $\mathfrak{d}^{2m+2}_h\phi$ is $\mathfrak{d}^{2m+2}_h\phi(y)=\max_{|\alpha|=2m+2} |(Z_1 \cdots Z_{2n})^{\alpha}\phi(y)|$, for every $y \in \G$. \\
More precisely, in this case the integral variable $z$ belongs to the closed ball $\B_c(x,\frac{\|x\|_c}{2b^{2m+2}})$ and for each of such a $z$, the maximum of $\mathfrak{d}_h^{2m+2}$ in \eqref{mathfrakd} is realized at $\xi=\xi(z) \in \B_c(x,b^{2m+2}\|x^{-1}z\| ) \subset \B_c(x, \frac{\|x\|_c}{2})$.
 Moreover, it is necessary to exploit the fact that every derivative along vector fields of $\phi \in \mathcal{S}(\H^n)$, independently of the number of involved vector fields and on their degree, belongs to $S(\H^n$), hence $\mathfrak{d}_h^{2m+2} \phi \in \mathcal{S}(\H^n)$.
\end{proof}

\subsection{The powers of the fractional sub-Laplacian $\mathcal{L}^s$, $-\frac{Q}{2}<s<m+1$, $m \in \N$}
Coherently with the construction of the analytic continuation of the map $\alpha \to \psi_{\phi}(x,\alpha)$ on the interval $(-2m-2,Q)$, for an arbitrary $m \in \N$, in this section we extend the definition of real powers of the sub-Laplacian $\mathcal{L}$ to $s \in (-\frac{Q}{2},m+1)$. In fact, for every $x \in \H^n$ and $\phi \in \mathcal{S}(\H^n)$ the analytic continuation of $\alpha \to \psi_{\phi}(x,\alpha)$ on the interval $\alpha \in (-2m-2,Q)$ allows to extend the definition of the family of tempered distributions $\{ \widetilde{P}^x_{\alpha} \}_{\alpha \in (-2,Q)}$ that we introduced in Section \ref{sec-introddistr}, setting for every $\alpha \in (-2m-2,Q)$ and $x \in \H^n$ the following
$$\widetilde{P}^{x}_{\alpha}( \phi) :=\psi_{\phi}(x, \alpha),$$
for every $\phi \in \mathcal{S}(\H^n)$.\\
Therefore, we can define for every $s \in (-\frac{Q}{2},m+1)$ the fractional sub-Laplacian
\begin{equation}\label{realpow}
\mathcal{L}^s\phi(x):= \psi_{\phi}(x,-2s)= \widetilde{P}^x_{-2s}(\phi),
\end{equation}
for every $\phi \in \mathcal{S}(\H^n)$ and $x \in \H^n$. \\
In a similar way as in Section \ref{section-laplesteso}, the uniqueness of analytic continuation yields that for every $x \in \H^n$ and $\phi \in \mathcal{S}(\H^n)$ the equality
\begin{equation}\label{compu}
\psi_{\phi}(x,\alpha+\beta)=\widetilde{P}_{\alpha+\beta}^x(\phi)=\psi_{\psi_{\phi}(\cdot,\beta)}(x,\alpha)
\end{equation}
holds for every $\alpha \in (-2m-2,Q), \beta \in (-2m-2,Q)$ such that $-2m-2<\alpha+\beta <Q$. 
Therefore, one more time through analogous computations as in Section \ref{section-laplesteso},  \eqref{compu} allows to prove that if we choose $\alpha, \beta \in (-2m-2,Q)$ and we set $\alpha=-2s$ and $\beta=-2p$ for suitable $s,p \in (-\frac{Q}{2},m+1)$, then the equality
\begin{equation}\label{sg1}
\mathcal{L}^s \circ \mathcal{L}^p=\mathcal{L}^{s+p}
\end{equation}
holds, when $-\frac{Q}{2}<s+p <m+1$.\\
Since for every $x \in \H^n$ and $\phi \in \mathcal{S}(\H^n)$, the analytic continuation of $\alpha \to \psi_{\phi}(x,\alpha)$ has been realized on the interval $(-2m-2,Q)$ for every $m \in \N$, then \eqref{sg1} holds for every $m \in \N$. Therefore, we can deduce that the operator $\mathcal{L}^s$ is defined as in \eqref{realpow} for every $s >-\frac{Q}{2}$ and it satisfies the semigroup property \eqref{semigroup} for every $s,p >-\frac{Q}{2}$ such that $s+q>-\frac{Q}{2}$. We resume this result in the following proposition.
\begin{prop} \label{prop:semigroup}
For every $s,p \in (-\frac{Q}{2}, \infty)$ such that $-\frac{Q}{2}<p+s $ the equality
\begin{equation}\label{semigroup}
\mathcal{L}^s \circ \mathcal{L}^p=\mathcal{L}^{s+p}
\end{equation}
holds.
\end{prop}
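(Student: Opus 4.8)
The plan is to imitate the proof of the semigroup property already carried out in Section~\ref{section-laplesteso} for $s,m \in (-\frac{Q}{2},1)$, upgrading it with the full analytic continuation of $\alpha \to \psi_\phi(x,\alpha)$ to $(-\infty,Q)$ furnished by Theorem~\ref{teo:emilia}. Fix $x \in \H^n$ and $\phi \in \mathcal{S}(\H^n)$. Starting from the convolution rule $P_{\alpha+\beta} = P_\alpha \star P_\beta$ of \eqref{convolution}, the chain of identities \eqref{assumption} shows that
\[
\psi_\phi(x,\alpha+\beta) = \widetilde{P}^x_{\alpha+\beta}(\phi) = \psi_{\psi_\phi(\cdot,\beta)}(x,\alpha)
\]
holds for all $\alpha,\beta$ in the open triangle $\{0<\alpha,\ 0<\beta,\ \alpha+\beta<Q\}$. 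I would then argue that both sides extend to real-analytic functions of the pair $(\alpha,\beta)$ on the connected open set $\mathcal{U} := \{\alpha,\beta < Q,\ \alpha+\beta<Q\}$, so that the identity propagates to all of $\mathcal{U}$ by uniqueness of analytic continuation; specialising to $\alpha=-2s$, $\beta=-2p$ and invoking the definition in \eqref{realpow} would then yield $\mathcal{L}^{s+p} = \mathcal{L}^s\circ\mathcal{L}^p$ for all admissible $s,p$.

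For the left-hand side there is nothing new: $\alpha+\beta \mapsto \widetilde{P}^x_{\alpha+\beta}(\phi) = \psi_\phi(x,\alpha+\beta)$ is analytic on $(-\infty,Q)$ by Theorem~\ref{teo:emilia}, hence jointly analytic in $(\alpha,\beta)$ on $\mathcal{U}$. For the right-hand side I would fix $\beta$ and use the decay estimate of the preceding subsection, $\psi_\phi(z,\beta) = O(\|z\|_c^{\beta-Q})$ as $\|z\|_c \to \infty$, together with the smoothness of $z \mapsto \psi_\phi(z,\beta)$; this places $\psi_\phi(\cdot,\beta)$ among the $C^\infty$ functions with polynomial decay of order $L = Q-\beta$ to which the $C^\infty$-version of the continuation applies (Proposition~\ref{prop:anzola} and the Remark following Lemma~\ref{analiticity}). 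That gives analyticity of $\alpha \mapsto \psi_{\psi_\phi(\cdot,\beta)}(x,\alpha)$ on $(-\infty,\min\{Q,Q-\beta\}) = (-\infty,Q-\beta)$, i.e.\ exactly for $\alpha+\beta<Q$. Exchanging the roles of $\alpha$ and $\beta$ (legitimate because $\widetilde{P}^x_{\alpha+\beta} = \widetilde{P}^x_{\beta+\alpha}$, so $\psi_{\psi_\phi(\cdot,\beta)}(x,\alpha)=\psi_{\psi_\phi(\cdot,\alpha)}(x,\beta)$) gives separate analyticity in $\beta$ as well, from which joint analyticity on $\mathcal{U}$ follows.

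The main obstacle I anticipate is precisely this analyticity of $\alpha \mapsto \psi_{\psi_\phi(\cdot,\beta)}(x,\alpha)$ up to the line $\alpha+\beta = Q$: it is not enough to know $\psi_\phi(\cdot,\beta) \in C^\infty(\H^n)$, one genuinely needs the quantitative decay rate $\beta - Q$ so that the threshold $L$ in the polynomial-decay continuation matches $Q-\beta$, and one must check that the Heisenberg-specific strengthening of the continuation (valid on the strips $(-2k-2,\min\{Q,L\})$, $k \in \N$) indeed applies with $\phi$ replaced by $\psi_\phi(\cdot,\beta)$, which in turn uses that every derivative of a Schwartz function along the vector fields stays Schwartz. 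Once this is secured, the remainder is formal: the identity principle on $\mathcal{U}$ together with the substitution $\alpha=-2s,\ \beta=-2p$ closes the argument, and the statement for all $s,p > -\frac{Q}{2}$ with $s+p > -\frac{Q}{2}$ follows since $m \in \N$ was arbitrary in Theorem~\ref{teo:emilia}.
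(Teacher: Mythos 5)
Your proposal follows essentially the same route as the paper: the convolution identity $P_{\alpha+\beta}=P_\alpha\star P_\beta$ on the positive triangle, continuation of both sides in each variable separately (feeding the decay estimate $\psi_\phi(\cdot,\beta)=O(\|\cdot\|_c^{\beta-Q})$ into the $C^\infty$-with-polynomial-decay version of the continuation), propagation of the identity by uniqueness of analytic continuation, and the substitution $\alpha=-2s$, $\beta=-2p$ with $m$ arbitrary. The only caveat is that separate real-analyticity does not imply joint analyticity, so you should not phrase the conclusion that way; but the two-step iterated identity-theorem argument (first in $\alpha$ for fixed $\beta\in(0,Q)$, then in $\beta$ for fixed $\alpha$), which you already set up, is exactly what the paper uses and is all that is needed.
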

Actually, recalling the homogeneous dimension of $\H^n$ is $Q=2n+2$, we have introduced a notion of $\mathcal{L}^s$ on $\mathcal{S}(\H^n)$, for $s >-n-1$, which satisfies the semigroup property \eqref{semigroup} for every $s,p >-n-1$ such that $s+p>-n-1$.

\section{A representation of $\psi(x,\alpha)$ on $(-4,Q)$} 
\label{striscia2}
In this section we obtain a special simplified representation of $\psi(x, \alpha)$ on the interval $(-4,Q)$. Then, we use this representation to deduce the value of the integral $\int_{\H^n}|\pi_{H_1}(x)|^2 h(1,x) dx$.\\
Let us observe that, according to the arguments in Section \ref{sect:441} leading to \eqref{coi-multi} and then to \eqref{dariportare}, for every $\alpha<Q$ the equality
\begin{align}\label{continua}
&\sum_{\substack{i_1, i_2  \leq 2n+1 \\ \mathrm{deg}(Z_{i_1})+\mathrm{deg}(Z_{i_2})=2}} \frac{1}{\alpha+2}\frac{1}{\Gamma(\frac{\alpha}{2})}\frac{ Z_{i_1}Z_{i_2} \phi(x)}{2!}
 \int_{\partial \B_c(0,1)}  z_{i_1} z_{i_2}  \| z\|_{\alpha}^{\alpha-Q} d \mathcal{S}^{Q-1}_{\infty}(z) \notag \\
=&\frac{1}{\alpha+2} \frac{1}{\Gamma(\frac{\alpha}{2})}  \sum_{i=1}^{2n} Z^2_i\phi(x) \int_{\partial \B_c(0,1)}  y_i^2\| y\|_{\alpha}^{\alpha-Q} d \mathcal{S}^{Q-1}_{\infty}(y)
\end{align}
holds.
Now, since the ball $\B_c(0,1)$ and $\| \cdot \|_{\alpha}^{\alpha-Q}$ are invariant under horizontal rotations (see Sections \ref{parita-ball} and \ref{parita-h}), then the relation
\begin{equation}\label{dalfa}
d(\alpha):= \int_{\partial \B_c(0,1)} y_i^2 \|y\|_{\alpha}^{\alpha-Q} d \mathcal{S}^{Q-1}_{\infty}(y)= \frac{1}{2n} \int_{\partial \B_c(0,1)} |\pi_{H_1}(y)|^2 \|y\|_{\alpha}^{\alpha-Q} d \mathcal{S}^{Q-1}_{\infty}(y)
\end{equation}
holds for every $i=1, \dots, 2n$. In fact, by exploiting horizontal rotation invariance for every $i=1, \ldots, 2n$, we get that 
\begin{equation}
 \int_{\partial \B_c(0,1)} y_i^2 \|y\|_{\alpha}^{\alpha-Q} d \mathcal{S}^{Q-1}_{\infty}(y)= \int_{\partial \B_c(0,1)} y_j^2 \|y\|_{\alpha}^{\alpha-Q} d \mathcal{S}^{Q-1}_{\infty}(y),
\end{equation}
for every $i,j \in \{1, \ldots, 2n \}$. Then, we have that for every $i \in \{1, \ldots, 2n \}$
\begin{align*}
2n \int_{\partial \B_c(0,1)} y_i^2 \|y\|_{\alpha}^{\alpha-Q} d \mathcal{S}^{Q-1}_{\infty}(y)
=&\sum_{i=1}^{2n}\int_{\partial \B_c(0,1)} y_i^2 \|y\|_{\alpha}^{\alpha-Q} d \mathcal{S}^{Q-1}_{\infty}(y)\\
=& \int_{\partial \B_c(0,1)} \sum_{i=1}^{2n}y_i^2 \|y\|_{\alpha}^{\alpha-Q} d \mathcal{S}^{Q-1}_{\infty}(y)\\
=& \int_{\partial \B_c(0,1)} |(y_1, \ldots, y_{2n})|^2 \|y\|_{\alpha}^{\alpha-Q} d \mathcal{S}^{Q-1}_{\infty}(y)\\
=& \int_{\partial \B_c(0,1)} |\pi_{H_1}(y)|^2 \|y\|_{\alpha}^{\alpha-Q} d \mathcal{S}^{Q-1}_{\infty}(y).
\end{align*}
As a consequence, the value of $d(\alpha)$ does not depend on the choice of $i$. 
We can continue from \eqref{continua} getting
\begin{align*}
&  \sum_{\substack{i_1, i_2  \leq 2n+1 \\ \mathrm{deg}(Z_{i_1})+\mathrm{deg}(Z_{i_2})=2}} \frac{1}{\alpha+2}\frac{1}{\Gamma(\frac{\alpha}{2})}\frac{ Z_{i_1}Z_{i_2} \phi(x)}{2}
 \int_{\partial \B_c(0,1)}  z_{i_1} z_{i_2}  \| z\|_{\alpha}^{\alpha-Q} d \mathcal{S}^{Q-1}_{\infty}(z)\\
=&\frac{1}{\alpha+2} \frac{1}{\Gamma(\frac{\alpha}{2})}  \sum_{i=1}^{2n} Z^2_i\phi(x) d(\alpha)=-\frac{1}{\alpha+2} \frac{1}{\Gamma(\frac{\alpha}{2})}\frac{1}{2} \mathcal{L}\phi(x) d(\alpha).
\end{align*}
Thus, the analytically continued map $(-4,Q) \ni \alpha \to \psi(x,\alpha)$ can be represented as follows
\begin{equation}
\label{eq-secondastriscia}
\begin{split}
\psi(x,\alpha)= & \int_{\H^n \setminus \B_c(0,1)} \phi(xy)P_{\alpha}(y) dy + \int_{\B_c(0,1)} \Big(\phi(xy)-\phi(x)-\frac{1}{2}\sum_{i=1}^{2n} Z^2_i\phi(x)y^2_i \Big)P_{\alpha}(y) dy \\
&+ \phi(x) \frac{1}{\Gamma(\frac{\alpha}{2})} \frac{1}{\alpha} \sigma(\alpha)- \frac{1}{2}\frac{1}{\alpha+2}\frac{1}{\Gamma(\frac{\alpha}{2})} \mathcal{L}\phi(x)d(\alpha).
\end{split}
\end{equation}

The value of $\psi(x,-2)$ can be deduced by combining Proposition \ref{prop-limauno} and the equality in \eqref{fiuguale} getting that \begin{equation}\label{link-geomcons}
\lim_{\alpha \to -2} \psi(x,\alpha)=\lim_{\alpha \to -2^+} \psi(x,\alpha)=\lim_{\alpha \to -2^+}\mathcal{L}^{-\frac{\alpha}{2}}\phi(x)=\mathcal{L}\phi(x).
\end{equation}

\subsection{A geometric application, I}
We use the representation established in \eqref{eq-secondastriscia} to prove the following proposition.
\begin{prop}\label{Rem-consequence}
Let $n \in \N$ and let $h$ be the heat kernel associated with $\mathcal{L}$ on $\H^n$, then the following equality holds
\begin{equation}
\int_{\H^n} | \pi_{H_1}(x)|^2 h(1, x) dx=4n.
\end{equation}
\end{prop}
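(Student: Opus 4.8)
The plan is to evaluate both sides of the representation \eqref{eq-secondastriscia} at a single, cleverly chosen $\phi$ and at $\alpha = -2$, using \eqref{link-geomcons} to identify the left-hand side. The natural choice is a Gaussian-type bump or, more precisely, any $\phi \in \mathcal{S}(\H^n)$ for which $\mathcal{L}\phi(0)$ and the sum $\sum_{i=1}^{2n} Z_i^2\phi(0)$ are easy to read off and for which $\phi(0)$, the Taylor-type corrections, and the far-field integral conspire to give a clean identity. A cleaner route, however, is to pick $\phi$ to be a function that is quadratic near the origin in the horizontal variables, e.g. $\phi(y) = \eta(y)\,|\pi_{H_1}(y)|^2$ where $\eta \in C_0^\infty(\H^n)$ with $\eta \equiv 1$ on $\B_c(0,1)$, so that on $\B_c(0,1)$ we have $\phi(xy)|_{x=0} = |\pi_{H_1}(y)|^2$ exactly. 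With $x = 0$ one has $\phi(0) = 0$, $Z_i^2\phi(0) = 2$ for $i = 1,\dots,2n$ (hence $\sum_{i=1}^{2n} Z_i^2\phi(0) = 4n$ and $\mathcal{L}\phi(0) = -4n$), and the middle integrand $\phi(y) - \phi(0) - \frac{1}{2}\sum_{i=1}^{2n} Z_i^2\phi(0)\,y_i^2 = |\pi_{H_1}(y)|^2 - 2n\sum_{i=1}^{2n} y_i^2/(2n)\cdot\!$… — here one must be careful: $\frac12\sum Z_i^2\phi(0)y_i^2 = \sum_{i=1}^{2n} y_i^2 = |\pi_{H_1}(y)|^2$, so the middle integrand vanishes identically on $\B_c(0,1)$.

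\textbf{Carrying it out.} With this $\phi$ and $x=0$, equation \eqref{eq-secondastriscia} collapses to
\begin{equation}
\label{eq-collapsed}
\psi(0,\alpha) = \int_{\H^n \setminus \B_c(0,1)} \phi(y) P_\alpha(y)\,dy - \frac12 \frac{1}{\alpha+2}\frac{1}{\Gamma(\frac{\alpha}{2})}\mathcal{L}\phi(0)\, d(\alpha),
\end{equation}
since $\phi(0) = 0$ kills the $\sigma(\alpha)$ term and the middle integral is zero. Now I take the limit $\alpha \to -2$. By \eqref{link-geomcons}, $\lim_{\alpha\to -2}\psi(0,\alpha) = \mathcal{L}\phi(0) = -4n$. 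The far-field term $\int_{\H^n\setminus\B_c(0,1)}\phi(y)P_\alpha(y)\,dy$ is analytic up to $\alpha = Q$ (Remark \ref{partefuori}) and carries a prefactor $1/\Gamma(\alpha/2)$ which vanishes as $\alpha \to -2$ — indeed $1/\Gamma(-1) = 0$ — so this term contributes $0$ in the limit, provided the remaining integral stays bounded, which it does since $\phi \in \mathcal{S}(\H^n)$ and $\|\cdot\|_\alpha^{\alpha-Q}$ is integrable at infinity for $\alpha$ near $-2$. For the last term I use the elementary gamma-function fact (already invoked in the paper, cf. the discussion after \eqref{patrono}) that $\lim_{\alpha\to -2}(\alpha+2)\Gamma(\alpha/2) = \lim_{\alpha\to-2}(\alpha+2)\,\frac{2}{\alpha}\Gamma(1+\alpha/2)$; since $\Gamma(z)$ has a simple pole at $z = -1$ with residue $-1$, one gets $(\alpha/2+1)\Gamma(\alpha/2) \to -1$, hence $(\alpha+2)\Gamma(\alpha/2) \to -2$, so $\frac{1}{(\alpha+2)\Gamma(\alpha/2)} \to -\frac12$. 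Also $d(\alpha)$ is continuous (it is analytic on $(-\infty,Q)$ by Section \ref{anal-integrals} via \eqref{dalfa}), so $d(\alpha) \to d(-2)$. Plugging everything into \eqref{eq-collapsed}:
\begin{equation}
-4n = 0 - \frac12\cdot\Big(-\frac12\Big)\cdot(-4n)\cdot d(-2) = -n\, d(-2),
\end{equation}
whence $d(-2) = 4$.

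\textbf{From $d(-2)$ to the stated integral.} By definition \eqref{dalfa}, $d(\alpha) = \frac{1}{2n}\int_{\partial \B_c(0,1)}|\pi_{H_1}(y)|^2\|y\|_\alpha^{\alpha-Q}\,d\mathcal{S}^{Q-1}_\infty(y)$. Now I apply Lemma \ref{lemma-general} with the even multi-index $\gamma$ corresponding to $|\pi_{H_1}(y)|^2 = \sum_{i=1}^{2n} y_i^2$, i.e. summing over $\gamma = 2\mathbf{e}_i$ for $i=1,\dots,2n$, each of which has $\gamma_1+\cdots+\gamma_{2n}+2\gamma_{2n+1} = 2$. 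Thus at $\alpha = -2$ the weight $\|y\|_c^{-\alpha-2} = \|y\|_c^0 = 1$ and Lemma \ref{lemma-general} gives $\int_{\partial\B_c(0,1)}|\pi_{H_1}(y)|^2\|y\|_{-2}^{-2-Q}\,d\mathcal{S}^{Q-1}_\infty(y) = 2\int_{\H^n}|\pi_{H_1}(y)|^2 h(1,y)\,dy$. Therefore $2n\, d(-2) = 2\int_{\H^n}|\pi_{H_1}(x)|^2 h(1,x)\,dx$, and since $d(-2) = 4$ we conclude $\int_{\H^n}|\pi_{H_1}(x)|^2 h(1,x)\,dx = n\,d(-2) = 4n$, as claimed.

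\textbf{Expected main obstacle.} The routine parts are the gamma-function limit and Lemma \ref{lemma-general}; the one place demanding care is verifying that the far-field term in \eqref{eq-collapsed} really vanishes in the limit and that the chosen $\phi = \eta\cdot|\pi_{H_1}(\cdot)|^2$ genuinely makes the middle integrand vanish on all of $\B_c(0,1)$ — this needs $\eta\equiv 1$ on $\B_c(0,1)$ and the fact that the $Z$-Taylor polynomial $\mathcal{T}_3$ of such a $\phi$ at $x=0$ is exactly $\phi$ itself near $0$ because $\phi$ is polynomial there and its higher $\G$-degree pieces are absent. A secondary subtlety: $\phi = \eta\,|\pi_{H_1}|^2$ is in $C_0^\infty \subset \mathcal{S}(\H^n)$, so all hypotheses of \eqref{eq-secondastriscia} and of \eqref{link-geomcons} apply without change. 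I would write these verifications out in a couple of lines, then present the computation above.
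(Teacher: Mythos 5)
Your proof is correct and follows essentially the same route as the paper: both arguments extract $d(-2)=4$ by matching the $\alpha\to-2$ limit of the representation \eqref{eq-secondastriscia} against $\mathcal{L}\phi(x)$ via Proposition \ref{prop-limauno} and \eqref{fiuguale}, and then convert $d(-2)$ into the heat-kernel moment. Your two small variations --- specializing to $\phi=\eta\,|\pi_{H_1}(\cdot)|^2$ at $x=0$ so that the middle and $\sigma(\alpha)$ terms vanish identically rather than only in the limit, and invoking Lemma \ref{lemma-general} instead of redoing the change of variables by hand --- are sound but do not change the substance of the argument.
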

\begin{proof}

Let us consider the representation \eqref{eq-secondastriscia} of the map $\psi(x,\alpha)$ for $-4<\alpha<Q$
\begin{align*}
\psi(x,\alpha)= & \int_{\H^n \setminus \B_c(0,1)} \phi(xy)P_{\alpha}(y) dy + \int_{\B_c(0,1)} \Big(\phi(xy)-\phi(x)-\frac{1}{2}\sum_{i=1}^{2n} Z^2_i\phi(x)y^2_i \Big)P_{\alpha}(y) dy \\
&+ \phi(x) \frac{1}{\Gamma(\frac{\alpha}{2})} \frac{1}{\alpha} \sigma(\alpha)- \frac{1}{2}\frac{1}{\alpha+2}\frac{1}{\Gamma(\frac{\alpha}{2})} \mathcal{L}\phi(x)d(\alpha).
\end{align*}
By the arguments carried out in Section \ref{striscia2} we know that
\begin{align*}
\psi(x,-2)&=\lim_{\alpha \to -2} \psi(x,\alpha)= \lim_{\alpha \to -2}\Big( - \frac{1}{2}\frac{1}{\alpha+2}\frac{1}{\Gamma(\frac{\alpha+2}{2})}\frac{\alpha}{2} d(\alpha)\mathcal{L}\phi(x) \Big)\\
&= \lim_{\alpha \to -2} \frac{1}{4} d(\alpha)\mathcal{L}\phi(x)=  \frac{1}{4} \mathcal{L}\phi(x) \lim_{\alpha \to -2}d(\alpha),
\end{align*}
and we know as well that the limit exists since $d(\alpha)$ is continuous with respect to $\alpha$ as $\alpha<Q$.
Now, as we already observed, by Proposition \ref{prop-limauno} and \eqref{fiuguale}, we get\begin{equation}
\psi(x,-2)=\lim_{\alpha \to -2^+} \psi(x,\alpha)=\lim_{\alpha \to -2^+}\mathcal{L}^{-\frac{\alpha}{2}}\phi(x)=\mathcal{L}\phi(x),
\end{equation}
so that we can deduce that 
\begin{equation}\label{limdalpha}
\lim_{\alpha \to -2} d(\alpha)=d(-2)=4.
\end{equation}
Let us now consider for $\alpha<Q$, the map $d(\alpha)$ defined in \eqref{dalfa}. We have
\begin{align*}
d(\alpha)&=\frac{1}{2n} \int_{\partial \B_c(0,1)} |\pi_{H_1}(x)|^2 \|x\|_{\alpha}^{\alpha-Q} d \mathcal{S}^{Q-1}_{\infty}(x)\\
&=\frac{1}{2n}\int_{\partial \B_c(0,1)} |\pi_{H_1}(x)|^2 \int_0^{\infty} t^{\frac{\alpha}{2}-1} h(t,x) d t \  d \mathcal{S}^{Q-1}_{\infty}(x)\\
&=\frac{1}{2n} \int_0^{\infty} t^{\frac{\alpha}{2}-1} \int_{\partial \B_c(0,1)} |\pi_{H_1}(x)|^2h(t,x) d \mathcal{S}^{Q-1}_{\infty}(x)d t  \\
&=\frac{1}{2n} \int_0^{\infty} t^{\frac{\alpha}{2}-1-\frac{Q}{2}} \int_{\partial \B_c(0,1)} |\pi_{H_1}(x)|^2h(1,\delta_{\frac{1}{\sqrt{t}}}(x)) d \mathcal{S}^{Q-1}_{\infty}(x)d t. 
\end{align*}
Performing the change of variable $r=\frac{1}{\sqrt{t}}$, we get
\begin{align*}
&\frac{1}{2n} \int_0^{\infty} t^{\frac{\alpha}{2}-1-\frac{Q}{2}} \int_{\partial \B_c(0,1)} |\pi_{H_1}(x)|^2h(1,\delta_{\frac{1}{\sqrt{t}}}(x)) d \mathcal{S}^{Q-1}_{\infty}(x)d t\\
&=-\frac{1}{2n}2 \int_{\infty}^0 (r^{-2})^{\frac{\alpha}{2}-1-\frac{Q}{2}} r^{-3} \int_{\partial \B_c(0,1)} |\pi_{H_1}(x)|^2h(1,\delta_r(x)) d \mathcal{S}^{Q-1}_{\infty}(x)d r\\
&=\frac{1}{n} \int_0^{\infty} r^{-\alpha+Q-1} \int_{\partial \B_c(0,1)} |\pi_{H_1}(x)|^2h(1,\delta_r(x)) d \mathcal{S}^{Q-1}_{\infty}(x)d r\\
&=\frac{1}{n} \int_0^{\infty} r^{-\alpha+Q-3} \int_{\partial \B_c(0,1)} |r\pi_{H_1}(x)|^2h(1,\delta_r(x)) d \mathcal{S}^{Q-1}_{\infty}(x)d r.
\end{align*}
Let us remark that for every $r>0$ and $x \in \H^n$ we have $|r\pi_{H_1}(x)|^2=|\pi_{H_1}(\delta_r(x))|^2$, thus
\begin{align*}
&\frac{1}{n} \int_0^{\infty} r^{-\alpha+Q-3} \int_{\partial \B_c(0,1)} |r \pi_{H_1}(x)|^2h(1,\delta_r(x)) d \mathcal{S}^{Q-1}_{\infty}(x)d r\\
&=\frac{1}{n} \int_0^{\infty} r^{-\alpha+Q-3} \int_{\partial \B_c(0,1)} |\pi_{H_1}(\delta_r(x))|^2 h(1,\delta_r(x)) d \mathcal{S}^{Q-1}_{\infty}(x)d r.
\end{align*}
By changing one more time the variables $x'=\delta_r(x)$, the homogeneity of the metric $d_c$ and of the measure $\mathcal{S}^{Q-1}_{\infty}$ give
\begin{align*}
&\frac{1}{n} \int_0^{\infty} r^{-\alpha+Q-3} \int_{\partial \B_c(0,1)} |\pi_{H_1}(\delta_r(x))|^2 h(1,\delta_r(x)) d \mathcal{S}^{Q-1}_{\infty}(x)d r\\
&=\frac{1}{n} \int_0^{\infty} r^{-\alpha-2} \int_{\partial \B_c(0,r)} |\pi_{H_1}(x')|^2 h(1,x') d \mathcal{S}^{Q-1}_{\infty}(x')d r.
\end{align*}
Our thesis follows as $\alpha \to -2$. In fact, by \eqref{limdalpha} we know that 
\begin{equation}\label{combining1}
\lim_{\alpha \to -2}d(\alpha)=d(-2)=4
\end{equation} but, at the same time, we know as well that
\begin{align}\label{combining2}
\lim_{\alpha\to -2} d(\alpha)&=\lim_{\alpha \to -2}\frac{1}{n} \int_0^{\infty} r^{-\alpha-2} \int_{\partial \B_c(0,r)} |\pi_{H_1}(x')|^2 h(1,x') d \mathcal{S}^{Q-1}_{\infty}(x')d r \notag \\
&=\frac{1}{n} \int_0^{\infty} \int_{\partial \B_c(0,r)} |\pi_{H_1}(x')|^2 h(1,x') d \mathcal{S}^{Q-1}_{\infty}(x')d r \notag \\
&=\frac{1}{n} \int_{\H^n} |\pi_{H_1}(x')|^2 h(1,x') d (x')\\
&=\frac{1}{n} \int_{\H^n} |\pi_{H_1}(x)|^2 h(1,x) dx,  \notag 
\end{align}
where we used the coarea formula of Corollary \ref{cor-coarea}.
By combining \eqref{combining1} and \eqref{combining2} we conclude the proof.
\end{proof}

\section{A representation of $\psi(x,\alpha)$ on $(-6,Q)$}
\label{striscia3}
Here we discuss a representation of $\psi(x, \alpha)$ for $-6<\alpha \leq -4$ involving the most natural definition of the natural power $\mathcal{L}^2$ of the sub-Laplacian $\mathcal{L}$. 
Given $m \in \N$, we define
\begin{align}
\mathcal{L}^m \phi:&=\mathcal{L} \phi \quad \mathrm{if \ }m=1,\\
\mathcal{L}^m\phi:&=\mathcal{L}(\mathcal{L}^{m-1}\phi)\quad \mathrm{if \ }m>1,
\end{align}
for every $\phi \in C^{\infty}(\G)$.
An explicit computation on the Heisenberg group $\H^n$ shows that
$$\mathcal{L}^m\phi(x)=(-1)^m \sum_{i_1, \ldots, i_m \in \{1, \ldots, 2n\}} Z_{i_1}^2 \ldots  Z_{i_m}^2 \phi(x),$$
for every $m \in \N$ and $\phi \in C^{\infty}(\G)$.\\
According to the continuation described in Sections \ref{sect-indu} and \ref{striscia2} and to the explicit form of the $Z$-Taylor polynomials given by Theorem \ref{explicit}, we recall that $\psi(x,\alpha)$ can be represented for $\alpha \in (-6,Q)$ as follows:
\begin{align}\label{espressione3striscia}
\psi(x,\alpha)= & \int_{\H^n \setminus \B_c(0,1)} \phi(xy)P_{\alpha}(y) dy + \int_{\B_c(0,1)} \Big(\phi(xy)-\mathcal{T}_5(\phi,x)(xy)\Big)P_{\alpha}(y) dy  \notag  \\
&+ \phi(x) \frac{1}{\Gamma(\frac{\alpha}{2})} \frac{1}{\alpha} \sigma(\alpha)- \frac{1}{2}\frac{1}{\alpha+2}\frac{1}{\Gamma(\frac{\alpha}{2})} \mathcal{L}\phi(x)d(\alpha)  \notag  \\
&+ \frac{1}{4!}\frac{1}{\alpha+4} \sum_{\substack{i,j \leq 2n\\i < j}}\Big( Z^2_{i}Z^2_{j} \phi(x)+Z^2_{j}Z^2_{i}\phi(x) +Z_iZ_jZ_iZ_j \phi(x)+ Z_jZ_iZ_jZ_i\phi(x)  \notag \\
&+ Z_iZ_jZ_jZ_i \phi(x)+Z_jZ_iZ_iZ_j\phi(x)\Big)\int_{\partial \B_c(0,1)}  y_{i}^2y_{j}^2P_{\alpha}(y) d\mathcal{S}^{Q-1}_{\infty}(y)\\
&+ \frac{1}{4!} \frac{1}{\alpha+4} \sum_{i \leq 2n} Z^4_{i}\phi(x)\int_{\partial \B_c(0,1)}  y_{i}^4P_{\alpha}(y)  d\mathcal{S}^{Q-1}_{\infty}(y) \notag \\
&+ \frac{1}{2} \frac{1}{\alpha+4}  T^2\phi(x) \int_{\partial \B_c(0,1)}y_{2n+1}^2P_{\alpha}(y)  d\mathcal{S}^{Q-1}_{\infty}(y).  \notag 
\end{align}
For every $i,j \in \{1, \dots, 2n \}$, with $i < j$, there are only two possibilities
\begin{itemize}
\item[(a)] $[Z_i, Z_j]=0$, hence $Z_iZ_j=Z_jZ_i$. In this case we can state the following equality
\begin{align}\label{campicomm}
&Z^2_{i}Z^2_{j} \phi(x)+Z^2_{j}Z^2_{i}\phi(x)+Z_iZ_jZ_iZ_j \phi(x)+ Z_jZ_iZ_jZ_i\phi(x)+ Z_iZ_jZ_jZ_i \phi(x)+Z_jZ_iZ_iZ_j\phi(x) \notag\\
 =& 3Z_i^2Z_j^2\phi(x)+3Z_j^2Z_i^2\phi(x).
\end{align}
\item[(b)] $[Z_i, Z_j]=T$, hence $Z_iZ_j=Z_jZ_i+T$ and $Z_jZ_i=Z_iZ_j-T$.
\end{itemize}
We recall also that $[Z_i,T]=0$ for every $i \in \{1, \dots, 2n \}$. \\
Let us assume that we are in case (b). Thus, we have
\begin{align*}
Z_iZ_jZ_iZ_j\phi(x)=Z_i(Z_iZ_j-T)Z_j\phi(x)&=Z_i^2Z_j^2\phi(x)-Z_iZ_jT\phi(x)\\
Z_jZ_iZ_jZ_i\phi(x)=Z_j(Z_jZ_i+T)Z_i\phi(x)&=Z_j^2Z_i^2\phi(x)+Z_jZ_iT\phi(x)\\
Z_iZ_jZ_jZ_i\phi(x)=(Z_jZ_i+T)Z_jZ_i\phi(x)&=Z_jZ_iZ_jZ_i\phi(x)+Z_jZ_iT\phi(x)=Z_j^2Z_i^2\phi(x)+2Z_jZ_iT\phi(x)\\
Z_jZ_iZ_iZ_j\phi(x)=(Z_iZ_j-T)Z_iZ_j\phi(x)&=Z_iZ_jZ_iZ_j\phi(x)-Z_iZ_jT\phi(x)=Z_i^2Z_j^2\phi(x)-2Z_iZ_jT\phi(x).
\end{align*}
Therefore, 
\begin{align}\label{campiconcomm}
&Z^2_{i}Z^2_{j} \phi(x)+Z^2_{i}Z^2_{j}\phi(x) +Z_iZ_jZ_iZ_j \phi(x)+ Z_jZ_iZ_jZ_i\phi(x) + Z_iZ_jZ_jZ_i \phi(x)+Z_jZ_iZ_iZ_j\phi(x) \notag \\
=&3 Z_i^2Z_j^2\phi(x)+3Z_j^2Z_i^2\phi(x)+3 Z_jZ_iT\phi(x)-3Z_iZ_jT\phi(x) \notag \\
=& 3 Z_i^2Z_j^2\phi(x)+3Z_j^2Z_i^2\phi(x)-3 (Z_iZ_j-Z_jZ_i)T\phi(x)\\
=& 3 Z_i^2Z_j^2\phi(x)+3Z_j^2Z_i^2\phi(x)-3 T^2\phi(x).  \notag 
\end{align}
We can then use \eqref{campicomm} and \eqref{campiconcomm} to rewrite \eqref{espressione3striscia} as
\begin{align}\label{integrale4}
\psi(x,\alpha)= & \int_{\H^n \setminus \B_c(0,1)} \phi(xy)P_{\alpha}(y) dy + \int_{\B_c(0,1)} \Big(\phi(xy)-\mathcal{T}_5(\phi,x)(xy)\Big)P_{\alpha}(y) dy \notag  \\
&+ \phi(x) \frac{1}{\Gamma(\frac{\alpha}{2})} \frac{1}{\alpha} \sigma(\alpha)- \frac{1}{2}\frac{1}{\alpha+2}\frac{1}{\Gamma(\frac{\alpha}{2})} \mathcal{L}\phi(x)d(\alpha)  \notag \\
&+ \frac{1}{4!}\frac{1}{\alpha+4} \sum_{\substack{i,j \leq 2n\\i < j}}\big( Z_i^2Z_j^2\phi(x)+Z_j^2Z_i^2\phi(x)\big)3\int_{\partial \B_c(0,1)}  y_{1}^2y_{2}^2P_{\alpha}(y)  d\mathcal{S}^{Q-1}_{\infty}(y)\\
&+ \frac{1}{4!}\frac{1}{\alpha+4} \sum_{i \leq 2n} Z^4_{i}\phi(x)\int_{\B_c(0,1)}  y_{1}^4P_{\alpha}(y) dy \notag \\
&+   T^2\phi(x) \frac{1}{\alpha+4} \left(\frac{1}{2} \int_{\partial \B_c(0,1)}y_{2n+1}^2P_{\alpha}(y)  d\mathcal{S}^{Q-1}_{\infty}(y) -\frac{3n}{4!} \int_{\B_c(0,1)}y_1^2y_2^2P_{\alpha}(y)  d\mathcal{S}^{Q-1}_{\infty}(y) \right). \notag 
\end{align}
In order to write \eqref{integrale4}, we used the fact that there are $n$ couples of indexes $i,j \in \{1, \ldots, 2n\}$ satisfying case (b). Moreover, we have exploited the invariance by horizontal rotation of $\B_c(0,1)$ and of the $P_{\alpha}$, which ensure that the equalities
\begin{align}\label{serve7} \int_{\partial \B_c(0,1)} y_i^4 P_{\alpha}(y)d\mathcal{S}^{Q-1}_{\infty}(y)= \int_{\partial \B_c(0,1)} y_j^4 P_{\alpha}(y)d\mathcal{S}^{Q-1}_{\infty}(y) \qquad \mathrm{for  \ every \ }i,j \in \{1, &\dots, 2n \},\\
\int_{\partial \B_c(0,1)} y_i^2y_j^2 P_{\alpha}(y)d\mathcal{S}^{Q-1}_{\infty}(y) = \int_{\partial \B_c(0,1)} y_{\ell}^2y_m^2 P_{\alpha}(y) d\mathcal{S}^{Q-1}_{\infty}(y) \qquad\mathrm{for  \ every \ }i \neq j, \ell \neq m \in &\{1, \dots, 2n \} \notag.
\end{align}
hold.

Now, by applying Lemma \ref{lemma-general} for an arbitrary $\alpha < Q$, we have that
\begin{align*}
\int_{\partial \B_c(0,1)}  y_{1}^4 \|y\|_{\alpha}^{\alpha-Q} d\mathcal{S}^{Q-1}_{\infty}(y)&= 2  \int_{\H^n} z_1^4 h(1,z) \|z\|_c^{-\alpha-4} dz , \\
\int_{\partial \B_c(0,1)}y_{2n+1}^2\|y\|_{\alpha}^{\alpha-Q} d\mathcal{S}^{Q-1}_{\infty}(y)&=2\int_{\H^n} |\pi_{H_2}(z)|^2 h(1,z) \|z\|_c^{-\alpha-4} dz;
\end{align*}
therefore, for $\alpha=-4$ we obtain
\begin{align}\label{alfa4-2}
\int_{\partial \B_c(0,1)}  y_{1}^4 \|y\|_{\alpha}^{-4-Q} d\mathcal{S}^{Q-1}_{\infty}(y)&= 2\int_{\H^n} z_1^4 h(1,z) dz\\
\int_{\partial \B_c(0,1)}y_{2n+1}^2\|y\|_{\alpha}^{-4-Q} d\mathcal{S}^{Q-1}_{\infty}(y)&=2\int_{\H^n} |\pi_{H_2}(z)|^2 h(1,z) dz.
\end{align} 
As a consequence, we deduce that for every $-6<\alpha<Q$ the following representation holds
\begin{align}\label{eq:repre}
\psi(x, \alpha)&= \int_{\H^n \setminus \B_c(0,1)} \phi(xy)P_{\alpha}(y) dy + \int_{\B_c(0,1)} \Big(\phi(xy)-\mathcal{T}_5(\phi,x)(xy)\Big)P_{\alpha}(y) dy \notag \\
&+ \phi(x) \frac{1}{\Gamma(\frac{\alpha}{2})} \frac{1}{\alpha} \sigma(\alpha)- \frac{1}{2}\frac{1}{\alpha+2}\frac{1}{\Gamma(\frac{\alpha}{2})} \mathcal{L}\phi(x)d(\alpha) \notag \\
& + \frac{1}{\alpha+4} \frac{1}{\Gamma(\frac{\alpha}{2})} \Bigg( \frac{1}{4!}\sum_{\substack{i,j \leq 2n\\i < j}}\Big( Z_i^2Z_j^2\phi(x)+Z_j^2Z_i^2\phi(x)\Big)6  \int_{\H^n}  y_{1}^2y_{2}^2 h(1,y) \|y\|_c^{-\alpha-4} dy \\
&+ \frac{2}{4!} \sum_{i \leq 2n} Z^4_{i}\phi(x) \int_{\H^n}  y_{1}^4 h(1,y) \|y\|_c^{-\alpha-4} dy \notag\\
&+  T^2\phi(x) \bigg( \int_{\H^n}y_{2n+1}^2 h(1,y) \|y\|_c^{-\alpha-4} dy  -\frac{6n}{4!}\int_{\H^n}  y_{1}^2y_2^2 h(1,y) \|y\|_c^{-\alpha-4} dy  \bigg) \Bigg).\notag
\end{align}
In order to obtain a simpler representation, we need the following result.
\begin{prop}\label{serve8}
For every $\alpha<Q$ the following equality holds
\begin{equation} \label{tesi-bil}
 \int_{\H^n} y_1^4 h(1,y) \|y\|_c^{-\alpha-4}dy= 3 \int_{\H^n} y_1^2y_2^2 h(1,y) \|y\|_c^{-\alpha-4} dy.
\end{equation}
\end{prop}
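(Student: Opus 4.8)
The plan is to exploit the strong rotational symmetry of the weight $h(1,\cdot)\|\cdot\|_c^{-\alpha-4}$ recorded in Sections~\ref{parita-h} and~\ref{parita-ball}. Writing a point of $\H^n$ in coordinates as $y=(u,s)$ with $u=\pi_{H_1}(y)\in\R^{2n}$ and $s=y_{2n+1}\in\R$, both $h(1,y)$ and $\|y\|_c$ depend on $y$ only through the pair $(|u|,s)$; hence there is a function $G$ with $h(1,y)\|y\|_c^{-\alpha-4}=G(|u|,s)$. Moreover, arguing exactly as in Lemma~\ref{lemma-general} (using $|y_j|\le c\|y\|_c$ for $j\le 2n$ together with $\alpha<Q$ near the origin, cf. Remark~\ref{rem4}, and the Gaussian decay of $h(1,\cdot)$ at infinity), both integrals in \eqref{tesi-bil} converge absolutely, so that Fubini's theorem reduces the claim to the purely Euclidean identity
\begin{equation*}
\int_{\R^{2n}} u_1^4\,G(|u|,s)\,du = 3\int_{\R^{2n}} u_1^2u_2^2\,G(|u|,s)\,du \qquad\text{for a.e. }s\in\R.
\end{equation*}

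For the Euclidean identity I would use that any linear map of the form $R\oplus\mathrm{Id}$ with $R\in O(2n)$, acting on $\H^n=\R^{2n}\times\R$, preserves $|u|$, hence leaves $G(|u|,s)$ invariant, and preserves the Lebesgue measure. Set $A(s)=\int_{\R^{2n}}u_1^4 G(|u|,s)\,du$ and $B(s)=\int_{\R^{2n}}u_1^2u_2^2 G(|u|,s)\,du$. Applying the rotation by $\pi/4$ in the $(u_1,u_2)$-plane and expanding $(u_1+u_2)^4$ gives
\begin{equation*}
A(s)=\frac14\int_{\R^{2n}}(u_1+u_2)^4 G(|u|,s)\,du=\frac14\int_{\R^{2n}}\big(u_1^4+4u_1^3u_2+6u_1^2u_2^2+4u_1u_2^3+u_2^4\big)G(|u|,s)\,du.
\end{equation*}
The terms $u_1^3u_2$ and $u_1u_2^3$ integrate to zero by the reflection $u_2\mapsto-u_2$, and $\int u_2^4 G=\int u_1^4 G=A(s)$ by the coordinate swap $u_1\leftrightarrow u_2$; hence $A(s)=\tfrac14\big(2A(s)+6B(s)\big)$, that is $A(s)=3B(s)$. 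Integrating in $s$ and undoing the splitting proves \eqref{tesi-bil}.

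I expect no serious obstacle here: the only points requiring care are the justification that the relevant rotations, reflections and permutations really are symmetries of the weight, which is precisely the content of Sections~\ref{parita-h} and~\ref{parita-ball} (both $h(1,\cdot)$ and $\|\cdot\|_c$ are functions of $|\pi_{H_1}(\cdot)|$ and the central coordinate, hence $O(2n)$-invariant in the horizontal variables), and the absolute convergence needed to apply Fubini and split the integrals, which is handled as in the discussion preceding Lemma~\ref{lemma-general}. An alternative to the $\pi/4$-rotation trick would be to pass to polar coordinates in $\R^{2n}$ and compute the spherical moments $\int_{S^{2n-1}}\omega_1^4\,d\sigma=3\int_{S^{2n-1}}\omega_1^2\omega_2^2\,d\sigma$ directly, e.g. via the Gaussian representation $\omega=Z/|Z|$ with $Z$ a standard $\R^{2n}$-Gaussian and $\E[Z_1^4]=3\,\E[Z_1^2Z_2^2]$; the rotation argument has the advantage of avoiding polar coordinates altogether.
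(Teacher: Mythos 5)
Your argument is correct, and it reaches \eqref{tesi-bil} by a genuinely different and more elementary route than the one in the paper. The paper's proof passes to the explicit polar coordinates of \cite{B09} on $\H^n\setminus H_2$, isolates the last angular variable $\theta_{2n-1}$, uses the horizontal-rotation invariance of $h(1,\cdot)$ and $\|\cdot\|_c$ to show that the weight is constant along the corresponding circular orbits, and then deduces $\int_0^{2\pi}\sin^4\theta_{2n-1}\,F\,d\theta_{2n-1}=3\int_0^{2\pi}\cos^2\theta_{2n-1}\sin^2\theta_{2n-1}\,F\,d\theta_{2n-1}$ by an integration by parts (after first using horizontal rotations to move the monomials $y_1^4$ and $y_1^2y_2^2$ into the last two horizontal coordinates). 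You instead stay entirely in Cartesian coordinates: having observed that the weight is a function $G(|\pi_{H_1}(y)|,y_{2n+1})$ --- which is precisely the content of Sections \ref{parita-h} and \ref{parita-ball} --- you apply the measure-preserving, weight-preserving map $R\oplus\mathrm{Id}$ with $R$ the rotation by $\pi/4$ in the $(u_1,u_2)$-plane, expand $(u_1+u_2)^4$, remove the odd terms by the reflection $u_2\mapsto -u_2$, and identify $\int u_2^4G$ with $\int u_1^4G$ by the coordinate swap, so that $A=\tfrac14(2A+6B)$ gives $A=3B$ at once. The symmetry input is identical in the two arguments (nothing beyond invariance of $h(1,\cdot)$ and $\|\cdot\|_c$ under all rotations of the horizontal layer fixing the center, as stated in Sections \ref{parita-h} and \ref{parita-ball}), but your version avoids the polar parametrization, its Jacobian, and the integration by parts, and it makes the constant $3$ transparent as the classical fourth-moment ratio $\E[Z_1^4]=3\,\E[Z_1^2Z_2^2]$. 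The convergence and Fubini points you flag are handled exactly as in Remark \ref{rem4} and Lemma \ref{lemma-general}, so there is no gap; in fact the slicing in $y_{2n+1}$ is not even needed, since $R\oplus\mathrm{Id}$ can be applied directly on $\H^n$ as a Lebesgue-measure-preserving change of variables under which the full integrand transforms as claimed.
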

\begin{proof}
Following \cite[Section 4]{B09} (see also \cite{Gre80}), we first introduce suitable polar coordinates in $\H^n \setminus H_2$. 
\begin{align*}
T: (0,\infty) \times \left(-\frac{\pi}{2},\frac{\pi}{2} \right) \times [0, \pi)^{2n-2} \times [0,2 \pi) &\to \H^n \setminus H_2\\
(\rho, \varphi, \theta_1 , \ldots, \theta_{2n-1} )  &\to T(\rho, \varphi, \theta_1 , \ldots, \theta_{2n-1} ),
\end{align*}
with
\begin{align*}
T(\rho, \varphi, \theta_1 , \ldots, \theta_{2n-1} ) =   \begin{bmatrix} \rho \sqrt{\cos\varphi} \cos \theta_1,\\
 \rho \sqrt{\cos\varphi} \sin \theta_1 \cos \theta_2,\\
  \rho \sqrt{\cos\varphi} \sin \theta_1 \sin \theta_2 \cos \theta_3,\\
  \ldots \\
    \rho \sqrt{\cos\varphi} \sin \theta_1 \sin \theta_2  \ldots \sin \theta_{2n-2} \cos \theta_{2n-1},\\
        \rho \sqrt{\cos\varphi} \sin \theta_1 \sin \theta_2  \ldots \sin \theta_{2n-2} \sin \theta_{2n-1}\\
        \rho^2 \sin \varphi
        \end{bmatrix}^T.
\end{align*}
According to our purposes, we do not need to an explicit computation of the Jacobian of the transformation $T$. We just notice that, by the computations in \cite[Section 4]{B09}, the Jacobian does not depend on $\theta_{2n-1}$ and we denote it by $$JT_{\overline{\theta_{2n-1}}}(\rho, \varphi, \theta_1, \dots, \theta_{2n-2}, \theta_{2n-1}),$$ in order to emphasize its independence of $\theta_{2n-1}$.
By the invariance of $h(1,\cdot)$ with respect to horizontal rotations (see \eqref{serve7}), to prove \eqref{tesi-bil} means to prove that
\begin{equation}\label{tesirota}
 \int_{\H^n} y_{2n}^4 h(1,y) \|y\|_c^{-\alpha-4}dy= 3 \int_{\H^n} y_{2n-1}^2y_{2n}^2 h(1,y) \|y\|_c^{-\alpha-4} dy.
\end{equation}
Let us first perform the change of variables $T$ on the integral appearing on the right-hand side of \eqref{tesirota} as follows
\begin{small}
\begin{align*}
&\int_{\H^n} y_{2n-1}^2y_{2n}^2 h(1,y) \|y\|_c^{-\alpha-4} dy=\int_{\H^n \setminus H_2} y_{2n-1}^2y_{2n}^2 h(1,y) \|y\|_c^{-\alpha-4} dy\\
=& \int_0^{\infty} \int_{-\frac{\pi}{2}}^{\frac{\pi}{2}} \int_0^{\pi} \ldots \int_0^{\pi} \int_0^{2\pi}  \rho^4 \cos^2\varphi \sin^4 \theta_1 \sin^4 \theta_2  \ldots \sin^4 \theta_{2n-2} \cos^2 \theta_{2n-1} \sin^2 \theta_{2n-1}\\
&  h(1, T(\rho, \varphi, \theta_1, \ldots, \theta_{2n-1}) ) \|T(\rho, \varphi, \theta_1, \ldots, \theta_{2n-1})\|_c^{-\alpha-4} JT_{\overline{\theta_{2n-1}}}(\rho, \varphi, \theta_1, \dots, \theta_{2n-2}, \theta_{2n-1}) d \theta_{2n-1} \ldots d \theta_{1} d \varphi d \rho\\
=& \int_0^{\infty} \rho^4 \int_{-\frac{\pi}{2}}^{\frac{\pi}{2}} \cos^2\varphi  \int_0^{\pi}  \sin^4 \theta_1  \ldots \int_0^{\pi} \sin^4 \theta_{2n-2}  JT_{\overline{\theta_{2n-1}}}(\rho, \varphi, \theta_1, \dots, \theta_{2n-2}, \theta_{2n-1})  \\
& \int_0^{2\pi}    \cos^2 \theta_{2n-1} \sin^2 \theta_{2n-1}  h(1, T(\rho, \varphi, \theta_1, \ldots, \theta_{2n-1}) ) \|T(\rho, \varphi, \theta_1, \ldots, \theta_{2n-1})\|_c^{-\alpha-4} d \theta_{2n-1} d \theta_{2n-2}  \ldots d \theta_{1} d \varphi d \rho.
\end{align*}
\end{small}
Now we apply the change of variables $T$ on the integral on the left-hand side of \eqref{tesirota} and we get
\begin{small}
\begin{align*}
&\int_{\H^n} y_{2n}^4 h(1,y) \|y\|_c^{-\alpha-4} dy=\int_{\H^n \setminus H_2} y_{2n}^4 h(1,y) \|y\|_c^{-\alpha-4} dy\\
=& \int_0^{\infty} \int_{-\frac{\pi}{2}}^{\frac{\pi}{2}} \int_0^{\pi}  \ldots \int_0^{\pi} \int_0^{2\pi}  \rho^4 \cos^2\varphi \sin^4 \theta_1 \sin^4 \theta_2  \ldots \sin^4 \theta_{2n-2}  \sin^4 \theta_{2n-1}\\
&   h(1, T(\rho, \varphi, \theta_1, \ldots, \theta_{2n-1}) ) \|T(\rho, \varphi, \theta_1, \ldots, \theta_{2n-1})\|_c^{-\alpha-4} JT_{\overline{\theta_{2n-1}}}(\rho, \varphi, \theta_1, \dots, \theta_{2n-2}, \theta_{2n-1}) d \theta_{2n-1} \ldots d \theta_{1} d \varphi d \rho\\
=& \int_0^{\infty} \rho^4 \int_{-\frac{\pi}{2}}^{\frac{\pi}{2}} \cos^2\varphi  \int_0^{\pi}  \sin^4 \theta_1  \ldots \int_0^{\pi} \sin^4 \theta_{2n-2}  JT_{\overline{\theta_{2n-1}}}(\rho, \varphi, \theta_1, \dots, \theta_{2n-2}, \theta_{2n-1})  \\
& \int_0^{2\pi}    \sin^4 \theta_{2n-1}  h(1, T(\rho, \varphi, \theta_1, \ldots, \theta_{2n-1}) ) \|T(\rho, \varphi, \theta_1, \ldots, \theta_{2n-1})\|_c^{-\alpha-4} d \theta_{2n-1}d \theta_{2n-2}  \ldots d \theta_{1} d \varphi d \rho .
\end{align*}
\end{small}
Therefore, in order to prove \eqref{tesirota}, it is enough to prove that, for fixed $\rho, \varphi, \theta_1, \ldots, \theta_{2n-2} \in (0 ,\infty) \times \left(-\frac{\pi}{2}, \frac{\pi}{2} \right) \times (0, \pi)^{2n-2}$ it holds that
\begin{equation}\label{nuovatesi}
\begin{split}
 &\int_0^{2\pi}    \sin^4 \theta_{2n-1} h(1, T(\rho, \varphi, \theta_1, \ldots, \theta_{2n-1}) ) \|T(\rho, \varphi, \theta_1, \ldots, \theta_{2n-1})\|_c^{-\alpha-4} d \theta_{2n-1} \\
 =& \ 3 \int_0^{2\pi}    \cos^2 \theta_{2n-1} \sin^2 \theta_{2n-1} h(1, T(\rho, \varphi, \theta_1, \ldots, \theta_{2n-1}) ) \|T(\rho, \varphi, \theta_1, \ldots, \theta_{2n-1})\|_c^{-\alpha-4}d \theta_{2n-1}.
 \end{split}
\end{equation}
Notice that $\| \cdot \|_c$ is a $C^1$ map on $\H^n \setminus H_2$ (see for example \cite[Lemma 3.11]{AR02}) and that $h(1,z)$ is $C^{\infty}$ on $\H^n$. Now, exploiting the integration by parts, we get
\begin{align*}
&\int_0^{2\pi}    \cos^2 \theta_{2n-1} \sin^2 \theta_{2n-1} h(1, T(\rho, \varphi, \theta_1, \ldots, \theta_{2n-1}) \|T(\rho, \varphi, \theta_1, \ldots, \theta_{2n-1})\|_c^{-\alpha-4}d \theta_{2n-1}\\
=&\left[ \frac{\sin^3 \theta_{2n-1}}{3} \cos \theta_{2n-1} h(1, T(\rho, \varphi, \theta_1, \ldots, \theta_{2n-1}) \|T(\rho, \varphi, \theta_1, \ldots, \theta_{2n-1})\|_c^{-\alpha-4} \right]_{\theta_{2n-1}=0}^{\theta_{2n-1}=2\pi} \\
&- \int_0^{2\pi}   \frac{\sin^3 \theta_{2n-1}}{3} \Bigg(- \sin \theta_{2n-1}  h(1, T(\rho, \varphi, \theta_1, \ldots, \theta_{2n-1}))\|T(\rho, \varphi, \theta_1, \ldots, \theta_{2n-1})\|_c^{-\alpha-4}\\
&\qquad \qquad + \cos \theta_{2n-1}  \frac{d}{d \theta_{2n-1}} \Big(h(1, T(\rho, \varphi, \theta_1, \ldots, \theta_{2n-1})) \|T(\rho, \varphi, \theta_1, \ldots, \theta_{2n-1})\|_c^{-\alpha-4} \Big) \Bigg)d \theta_{2n-1}\\
=& \int_0^{2\pi}   \frac{\sin^4 \theta_{2n-1}}{3}   h(1, T(\rho, \varphi, \theta_1, \ldots, \theta_{2n-1})) \|T(\rho, \varphi, \theta_1, \ldots, \theta_{2n-1})\|_c^{-\alpha-4} d \theta_{2n-1}\\
& - \int_0^{2\pi}  \frac{\sin^3 \theta_{2n-1}}{3}\cos \theta_{2n-1}  \frac{d}{d \theta_{2n-1}} \Big(h(1, T(\rho, \varphi, \theta_1, \ldots, \theta_{2n-1})) \|T(\rho, \varphi, \theta_1, \ldots, \theta_{2n-1})\|_c^{-\alpha-4} \Big) d \theta_{2n-1}.
\end{align*}
Thus, \eqref{nuovatesi} follows if we prove that
\begin{equation*}
\int_0^{2\pi}  \frac{\sin^3 \theta_{2n-1}}{3}\cos \theta_{2n-1}  \frac{d}{d \theta_{2n-1}} \Big(h(1, T(\rho, \varphi, \theta_1, \ldots, \theta_{2n-1})) \|T(\rho, \varphi, \theta_1, \ldots, \theta_{2n-1})\|_c^{-\alpha-4} \Big)d \theta_{2n-1}=0.
\end{equation*}
We will show the previous equality by showing that 
\begin{equation}\label{nuovatesi3}
 \frac{d}{d \theta_{2n-1}} \Big(h(1, T(\rho, \varphi, \theta_1, \ldots, \theta_{2n-1}))\|T(\rho, \varphi, \theta_1, \ldots, \theta_{2n-1})\|_c^{-\alpha-4}\Big)=0
\end{equation}
for every $
\rho, \varphi, \theta_1 , \ldots, \theta_{2n-1}  \in (0,\infty) \times \left( -\frac{\pi}{2}, \frac{\pi}{2} \right) \times (0,\pi)^{2n-2} \times (0, 2\pi)$. 
In particular, it is enough to notice that, once fixed $\bar{\rho}, \bar{\varphi}, \bar{\theta}_1 , \ldots, \bar{\theta}_{2n-2}  \in (0,\infty) \times \left( -\frac{\pi}{2}, \frac{\pi}{2} \right) \times [0,\pi)^{2n-2}$, the map
$$[0, 2 \pi) \ni \theta_{2n+1} \to T(\bar{\rho}, \bar{\varphi}, \bar{\theta}_1, \ldots, \bar{\theta}_{2n-2}, \theta_{2n-1})$$
is a horizontal rotation. On the other hand, we argue as follows
\begin{itemize}
\item the heat kernel $h(t,y)$ is invariant with respect to horizontal rotations, as stated in Section \ref{parita-h}, thus $h(1, (z_1,\tau_1))=h(1, (z_2, \tau_2))$ if $|z_1|=|z_2|$, for every $(z_1, \tau_1), (z_2, \tau_2) \in \H^n$, with $z_1, z_2 \in H_1 \simeq \R^{2n}$ and $ \tau_1, \tau_2 \in H_2 \simeq \R$. 
\item the norm $\| \cdot \|_c$ is invariant with respect to horizontal rotations, as stated in Section \ref{parita-ball}, thus if we have $(z_1, \tau), (z_2, \tau) \in \H^n$, with $|z_1|=|z_2|$, then $\|(z_1, \tau)\|_c=\|(z_2, \tau)\|_c$.
\end{itemize}
Hence for every $(\bar{\rho}, \bar{\varphi}, \bar{\theta}_1 , \ldots, \bar{\theta}_{2n-2} ) \in (0,\infty) \times \left( -\frac{\pi}{2}, \frac{\pi}{2} \right) \times [0,\pi)^{2n-2}$, the map
$$ (0, 2 \pi) \ni \theta_{2n+1} \to h(1, T(\bar{\rho}, \bar{\varphi}, \bar{\theta}_1, \ldots, \theta_{2n-1})) \|T(\bar{\rho}, \bar{\varphi}, \bar{\theta}_1, \ldots, \theta_{2n-1})\|_c^{-\alpha-4} $$ is a constant map, then \eqref{nuovatesi3} holds. This concludes the proof.
\end{proof}
By combining Proposition \ref{serve8} with Lemma \ref{lemma-general}, the following corollary holds.

\begin{cor}\label{cor-bil}
For every $\alpha<Q$ the following equality holds
\begin{equation} 
 \int_{\partial \B_c(0,1)} y_1^4 \|y\|_{\alpha}^{\alpha-Q}  d\mathcal{S}^{Q-1}_{\infty}(y)= 3  \int_{\partial \B_c(0,1)} y_1^2 y_2^2 \|y\|_{\alpha}^{\alpha-Q} d\mathcal{S}^{Q-1}_{\infty}(y).
\end{equation}
\end{cor}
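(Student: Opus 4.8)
\textbf{Proof of Corollary \ref{cor-bil}.}

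The plan is to transfer the identity of Proposition \ref{serve8}, which is formulated for the weighted heat-kernel integrals over $\H^n$, to the spherical integrals over $\partial \B_c(0,1)$ by means of Lemma \ref{lemma-general}. First I would apply Lemma \ref{lemma-general} to the even $(2n+1)$-multi-index $\gamma=(4,0,\dots,0)$. Here $\gamma_1+\dots+\gamma_{2n}+2\gamma_{2n+1}=4$, so the lemma gives, for every $\alpha<Q$,
\begin{equation*}
\int_{\partial \B_c(0,1)} y_1^4 \|y\|_{\alpha}^{\alpha-Q}\, d\mathcal{S}^{Q-1}_{\infty}(y)= 2\int_{\H^n} y_1^4\, h(1,y)\, \|y\|_c^{-\alpha-4}\, dy.
\end{equation*}
Next I would apply Lemma \ref{lemma-general} to the even $(2n+1)$-multi-index $\gamma=(2,2,0,\dots,0)$ (recall $2n\geq 2$, so the coordinate $y_2$ is a genuine horizontal coordinate). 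Again $\gamma_1+\dots+\gamma_{2n}+2\gamma_{2n+1}=4$, hence
\begin{equation*}
\int_{\partial \B_c(0,1)} y_1^2 y_2^2 \|y\|_{\alpha}^{\alpha-Q}\, d\mathcal{S}^{Q-1}_{\infty}(y)= 2\int_{\H^n} y_1^2 y_2^2\, h(1,y)\, \|y\|_c^{-\alpha-4}\, dy.
\end{equation*}

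Now I would invoke Proposition \ref{serve8}, which states that for every $\alpha<Q$
\begin{equation*}
\int_{\H^n} y_1^4\, h(1,y)\, \|y\|_c^{-\alpha-4}\, dy = 3\int_{\H^n} y_1^2 y_2^2\, h(1,y)\, \|y\|_c^{-\alpha-4}\, dy.
\end{equation*}
Substituting this into the first displayed identity and then recognizing the right-hand side via the second displayed identity, the factors of $2$ cancel and one obtains
\begin{equation*}
\int_{\partial \B_c(0,1)} y_1^4 \|y\|_{\alpha}^{\alpha-Q}\, d\mathcal{S}^{Q-1}_{\infty}(y)= 3\int_{\partial \B_c(0,1)} y_1^2 y_2^2 \|y\|_{\alpha}^{\alpha-Q}\, d\mathcal{S}^{Q-1}_{\infty}(y),
\end{equation*}
which is the claim.

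There is no real obstacle here: the substantive analytic content has already been established in Proposition \ref{serve8}, where the change to polar coordinates on $\H^n\setminus H_2$ and the integration by parts in the angular variable $\theta_{2n-1}$ are carried out. The only points to verify in the present corollary are bookkeeping ones, namely that the multi-indices $(4,0,\dots,0)$ and $(2,2,0,\dots,0)$ are even so that Lemma \ref{lemma-general} applies, and that the total homogeneity weights $\gamma_1+\dots+\gamma_{2n}+2\gamma_{2n+1}$ coincide (both equal $4$) so that the resulting integrals over $\H^n$ carry the same weight $\|y\|_c^{-\alpha-4}$ and can be matched directly with Proposition \ref{serve8}. $\hfill\square$
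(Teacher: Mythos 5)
Your proof is correct and follows exactly the route the paper intends: the paper itself states that Corollary \ref{cor-bil} follows "by combining Proposition \ref{serve8} with Lemma \ref{lemma-general}," and your argument simply fills in the bookkeeping (the multi-indices $(4,0,\dots,0)$ and $(2,2,0,\dots,0)$ are even, both have total weight $4$, and the factors of $2$ cancel). Nothing is missing.
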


By combining Corollary \ref{cor-bil} and \eqref{eq:repre} we have that for $\alpha \in (-6,Q)$
\begin{align}\label{exp3}
&\psi(x,\alpha)=    \int_{\H^n \setminus \B_c(0,1)} \phi(xy)P_{\alpha}(y) dy + \int_{\B_c(0,1)} \Big(\phi(xy)-\mathcal{T}_5(\phi,x)(xy)\Big)P_{\alpha}(y) dy \notag  \\
&+ \phi(x) \frac{1}{\Gamma(\frac{\alpha}{2})} \frac{1}{\alpha} \sigma(\alpha)- \frac{1}{2}\frac{1}{\alpha+2}\frac{1}{\Gamma(\frac{\alpha}{2})} \mathcal{L}\phi(x)d(\alpha) \notag   \\
& + \frac{1}{\alpha+4} \frac{1}{\Gamma(\frac{\alpha}{2})} \Bigg( \frac{1}{4!}\sum_{\substack{i,j \leq 2n\\i < j}}( Z_i^2Z_j^2\phi(x)+Z_j^2Z_i^2\phi(x))3  \int_{\partial \B_c(0,1)}  y_{1}^2y_{2}^2\|y\|_{\alpha}^{\alpha-Q} d\mathcal{S}^{Q-1}_{\infty}(y)  \notag  \\
&+ \frac{1}{4!} \sum_{i \leq 2n} Z^4_{i}\phi(x)\int_{\partial \B_c(0,1)}  y_{1}^4 \|y\|_{\alpha}^{\alpha-Q} d\mathcal{S}^{Q-1}_{\infty}(y)  \notag   \\
&+  T^2\phi(x) \bigg(\frac{1}{2} \int_{\partial \B_c(0,1)}y_{2n+1}^2\|y\|_{\alpha}^{\alpha-Q} d\mathcal{S}^{Q-1}_{\infty}(y)  -\frac{3n}{4!} \int_{\partial \B_c(0,1)}y_1^2y_2^2\|y\|_{\alpha}^{\alpha-Q} d\mathcal{S}^{Q-1}_{\infty}(y)  \bigg) \Bigg) \notag  \\
=& \int_{\H^n \setminus \B_c(0,1)} \phi(xy)P_{\alpha}(y) dy + \int_{\B_c(0,1)} \Big(\phi(xy)-\mathcal{T}_5(\phi,x)(xy)\Big)P_{\alpha}(y) dy \notag   \\
&+ \phi(x) \frac{1}{\Gamma(\frac{\alpha}{2})} \frac{1}{\alpha} \sigma(\alpha)- \frac{1}{2}\frac{1}{\alpha+2}\frac{1}{\Gamma(\frac{\alpha}{2})} \mathcal{L}\phi(x)d(\alpha) \notag   \\
&+  \frac{1}{\alpha+4} \frac{1}{\Gamma(\frac{\alpha}{2})} \Bigg( \frac{1}{4!}\sum_{\substack{i,j \leq 2n\\i < j}}\Big( Z_i^2Z_j^2\phi(x)+Z_j^2Z_i^2\phi(x)+ Z^4_{i}\phi(x) + Z_j^4\phi(x)\Big) \int_{\partial \B_c(0,1)}  y_{1}^4 \|y\|_{\alpha}^{\alpha-Q} d\mathcal{S}^{Q-1}_{\infty}(y)  \notag  \\
&+  T^2\phi(x) \bigg(\frac{1}{2}  \int_{\partial \B_c(0,1)}y_{2n+1}^2\|y\|_{\alpha}^{\alpha-Q} d\mathcal{S}^{Q-1}_{\infty}(y) -\frac{n}{4!} \int_{\partial \B_c(0,1)}y_1^4\|y\|_{\alpha}^{\alpha-Q} d\mathcal{S}^{Q-1}_{\infty}(y)  \bigg) \Bigg) \notag  \\
=& \int_{\H^n \setminus \B_c(0,1)} \phi(xy)P_{\alpha}(y) dy + \int_{\B_c(0,1)} \Big(\phi(xy)-\mathcal{T}_5(\phi,x)(xy)\Big)P_{\alpha}(y) dy   \\
&+ \phi(x) \frac{1}{\Gamma(\frac{\alpha}{2})} \frac{1}{\alpha} \sigma(\alpha)- \frac{1}{2}\frac{1}{\alpha+2}\frac{1}{\Gamma(\frac{\alpha}{2})} \mathcal{L}\phi(x)d(\alpha)  \notag  \\
&+  \frac{1}{\alpha+4} \frac{1}{\Gamma(\frac{\alpha}{2})} \Bigg( \frac{1}{4!}\sum_{i,j \leq 2n}\Big( Z_i^2Z_j^2\phi(x)\Big) \int_{\partial \B_c(0,1)}  y_{1}^4 \|y\|_{\alpha}^{\alpha-Q} d\mathcal{S}^{Q-1}_{\infty}(y) \notag   \\
&+  T^2\phi(x) \bigg(\frac{1}{2}  \int_{\partial \B_c(0,1)}y_{2n+1}^2\|y\|_{\alpha}^{\alpha-Q} d\mathcal{S}^{Q-1}_{\infty}(y) -\frac{n}{4!} \int_{\partial \B_c(0,1)}y_1^4\|y\|_{\alpha}^{\alpha-Q} d\mathcal{S}^{Q-1}_{\infty}(y) \bigg) \Bigg). \notag  \\
=& \int_{\H^n \setminus \B_c(0,1)} \phi(xy)P_{\alpha}(y) dy + \int_{\B_c(0,1)} (\phi(xy)-\mathcal{T}_5(\phi,x)(xy))P_{\alpha}(y) dy  \notag   \\
&+ \phi(x) \frac{1}{\Gamma(\frac{\alpha}{2})} \frac{1}{\alpha} \sigma(\alpha)- \frac{1}{2}\frac{1}{\alpha+2}\frac{1}{\Gamma(\frac{\alpha}{2})} \mathcal{L}\phi(x)d(\alpha)  \notag   \\
&+  \frac{1}{\alpha+4} \frac{1}{\Gamma(\frac{\alpha}{2})} \Bigg( \frac{1}{4!} \mathcal{L}^2 \phi(x) \int_{\partial \B_c(0,1)}  y_{1}^4 \|y\|_{\alpha}^{\alpha-Q} d\mathcal{S}^{Q-1}_{\infty}(y) \notag   \\
&+   T^2\phi(x) \bigg(\frac{1}{2}  \int_{\partial \B_c(0,1)}y_{2n+1}^2\|y\|_{\alpha}^{\alpha-Q} d\mathcal{S}^{Q-1}_{\infty}(y)  -\frac{n}{4!} \int_{\partial \B_c(0,1)}y_1^4\|y\|_{\alpha}^{\alpha-Q} d\mathcal{S}^{Q-1}_{\infty}(y)  \bigg) \Bigg). \notag  
\end{align}

\subsection{A geometric application, II}
We use the representation obtained in the previous subsection in \eqref{exp3} to deduce the value of some integrals.

\begin{prop}\label{prop:valint2}
Let $n \in \N$ and let $h$ be the heat kernel associated with $\mathcal{L}$ on $\H^n$, then for every $i \in \{1, \ldots, 2n \}$ the following equalities hold
\begin{equation}\label{val-int2}
\int_{\H^n} x_i^4 h(1, x) dx=12 \qquad \mathrm{and} \qquad \int_{\H^n} x_{2n+1}^2 h(1, x) dx=n.
\end{equation}
\end{prop}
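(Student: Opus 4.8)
The plan is to exploit the representation \eqref{exp3} of $\psi(x,\alpha)$ valid on the strip $(-6,Q)$, evaluating it in the limit $\alpha \to -4$, in the same spirit as the proof of Proposition \ref{Rem-consequence}. First I would choose a convenient test function: pick $\phi \in \mathcal{S}(\H^n)$ such that $\mathcal{L}^2\phi(x_0) \neq 0$ but $T^2\phi(x_0)=0$ at some point $x_0$, and then a second one with $T^2\phi(x_0)\neq 0$ but $\mathcal{L}^2\phi(x_0)=0$; alternatively one keeps $\phi$ general and isolates the coefficients of $\mathcal{L}^2\phi(x)$ and $T^2\phi(x)$ appearing in \eqref{exp3}, since these two differential operators are linearly independent. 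The key identity is that by Proposition \ref{prop:semigroup} (or directly by \eqref{fiuguale} together with the semigroup property and the computation of $\psi(x,-2m)$ in Section \ref{sect-indu}), the analytically continued value $\psi(x,-4)$ equals $\mathcal{L}^2\phi(x)$. On the other hand, letting $\alpha \to -4$ in \eqref{exp3}, all the terms that remain bounded (the integral over $\H^n \setminus \B_c(0,1)$, the integral of $\phi(xy)-\mathcal{T}_5(\phi,x)(xy)$ over $\B_c(0,1)$, the $\sigma(\alpha)$ term, and the $\mathcal{L}\phi(x) d(\alpha)$ term) are killed by the factor $1/\Gamma(\alpha/2)$, which vanishes as $\alpha \to -4$; only the terms carrying the singular factor $\frac{1}{\alpha+4}\frac{1}{\Gamma(\alpha/2)}$ survive, and by the standard Gamma-function expansion $\lim_{\alpha \to -4}\frac{1}{(\alpha+4)\Gamma(\alpha/2)} = \frac12 (-1)^2 2! = 1$.

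Carrying this out, the limit of \eqref{exp3} as $\alpha \to -4$ yields
\[
\mathcal{L}^2\phi(x) = \frac{1}{4!}\, \mathcal{L}^2\phi(x)\, I_4 + T^2\phi(x)\Big(\tfrac12 I_2 - \tfrac{n}{4!} I_4\Big),
\]
where I abbreviate $I_4 := \int_{\partial \B_c(0,1)} y_1^4 \|y\|_{-4}^{-4-Q}\, d\mathcal{S}^{Q-1}_\infty(y)$ and $I_2 := \int_{\partial \B_c(0,1)} y_{2n+1}^2 \|y\|_{-4}^{-4-Q}\, d\mathcal{S}^{Q-1}_\infty(y)$. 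Matching the coefficient of $\mathcal{L}^2\phi(x)$ forces $I_4 = 4! = 24$, and matching the coefficient of $T^2\phi(x)$ forces $\tfrac12 I_2 - \tfrac{n}{4!} I_4 = 0$, i.e.\ $I_2 = \tfrac{2n}{24}\cdot 24 = 2n$. Finally I invoke Lemma \ref{lemma-general}: with $\gamma$ the multi-index having a single nonzero entry $4$ in the first position, it gives $I_4 = 2\int_{\H^n} x_1^4 h(1,x)\, dx$ (since $\|x\|_c^{-\alpha-4}\big|_{\alpha=-4}=1$), whence $\int_{\H^n} x_1^4 h(1,x)\,dx = 12$; and with $\gamma$ the multi-index having a single entry $2$ in the last position, $I_2 = 2\int_{\H^n} x_{2n+1}^2 h(1,x)\, dx$, whence $\int_{\H^n} x_{2n+1}^2 h(1,x)\,dx = n$. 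By the horizontal-rotation invariance of $h$ recorded in Section \ref{parita-h} (equation \eqref{parita-kernel} and the surrounding discussion), $\int_{\H^n} x_i^4 h(1,x)\,dx$ is independent of $i \in \{1,\dots,2n\}$, so the value $12$ holds for every such $i$, which is the claim \eqref{val-int2}.

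The main obstacle, as in the previous geometric application, is the justification of exchanging the limit $\alpha \to -4$ with the various integrals in \eqref{exp3}: one must confirm that the bounded terms really do vanish, which requires knowing that $\alpha \to \int_{\B_c(0,1)}(\phi(xy)-\mathcal{T}_5(\phi,x)(xy))P_\alpha(y)\,dy$ is continuous (indeed analytic) up to $\alpha = -4$ — this is exactly Lemma \ref{analiticity} with $m=2$ — and that $\sigma(\alpha)$, $d(\alpha)$ and the spherical integrals $I_4(\alpha), I_2(\alpha)$ are continuous there, which follows from the analyticity results of Section \ref{sectioncalfa} and Section \ref{anal-integrals}. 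A secondary subtlety is the clean separation of the $\mathcal{L}^2\phi$ and $T^2\phi$ contributions: one has to be careful that no other differential operator in $\phi$ of $\G$-degree $4$ survives, but the reduction already performed in \eqref{exp3} via \eqref{campicomm}–\eqref{campiconcomm} and Corollary \ref{cor-bil} guarantees that only $\mathcal{L}^2\phi(x)$ and $T^2\phi(x)$ appear with the singular prefactor, so the matching is unambiguous. Everything else is bookkeeping with the Gamma function and Lemma \ref{lemma-general}.
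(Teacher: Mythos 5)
Your overall route is the same as the paper's: pass to the limit $\alpha\to -4$ in \eqref{exp3}, observe that the factor $\frac{1}{\Gamma(\alpha/2)}$ kills every non-singular term while $\lim_{\alpha\to-4}\frac{1}{(\alpha+4)\Gamma(\alpha/2)}=1$, obtain the identity $\mathcal{L}^2\phi(x)=\frac{1}{4!}I_4\,\mathcal{L}^2\phi(x)+\big(\frac12 I_2-\frac{n}{4!}I_4\big)T^2\phi(x)$, and then convert $I_4,I_2$ into moments of $h(1,\cdot)$ via Lemma \ref{lemma-general}. The one point where you genuinely diverge is the justification of the coefficient matching. The paper does \emph{not} treat the linear independence of $\mathcal{L}^2$ and $T^2$ as obvious: it devotes most of the proof to showing that no constant $c$ satisfies $\mathcal{L}^2\phi=cT^2\phi$ on all of $\mathcal{S}(\H^n)$, via a scaling argument comparing the isotropic dilations $d_\lambda$ with the group dilations $\delta_{\sqrt\lambda}$ and deriving the false conclusion $\Delta^2\phi(0)=0$. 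Your proposal to separate the two operators by explicit test functions is simpler and perfectly legitimate, but as written it has a gap: you never exhibit the test functions, and the second one you ask for ($T^2\phi(x_0)\neq 0$ with $\mathcal{L}^2\phi(x_0)=0$) is not given by the obvious candidate, since a direct computation with the fields \eqref{eqbase} gives $\mathcal{L}^2(x_{2n+1}^2)=2n\neq 0$ at every point.

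The gap is easily filled, and in a way that makes your argument shorter than the paper's. Write the identity as $A\,\mathcal{L}^2\phi(x)=B\,T^2\phi(x)$ with $A=1-\frac{I_4}{4!}$ and $B=\frac12 I_2-\frac{n}{4!}I_4$. Taking $\phi=x_1^4\chi$ with $\chi$ a cutoff equal to $1$ near the origin, one has $T^2\phi(0)=0$ and $\mathcal{L}^2\phi(0)=Z_1^2Z_1^2(x_1^4)(0)=4!\neq 0$, which forces $A=0$, i.e.\ $I_4=4!$ and hence $\int_{\H^n}x_1^4h(1,x)\,dx=12$. Once $A=0$, the identity reads $B\,T^2\phi\equiv 0$, and $\phi=x_{2n+1}^2\chi$ gives $T^2\phi(0)=2\neq 0$, so $B=0$ and $I_2=\frac{2n}{4!}I_4=2n$, whence $\int_{\H^n}x_{2n+1}^2h(1,x)\,dx=n$. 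Note that with this ordering you never need a function annihilated by $\mathcal{L}^2$ at a point, only one annihilated by $T^2$ there, which is what saves the argument. The remaining ingredients of your write-up (the identification $\psi(x,-4)=\mathcal{L}^2\phi(x)$ via \eqref{fiuguale} and Proposition \ref{prop:semigroup}, the analyticity of the various terms up to $\alpha=-4$ from Lemma \ref{analiticity} and Sections \ref{sectioncalfa} and \ref{anal-integrals}, and the rotation invariance giving independence of $i$) are all correct and match the paper.
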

\begin{proof}
According to the representation of $\psi(x,\alpha)$ on $(-6,Q)$ obtained in \eqref{exp3}, exploiting the definition of real power of the fractional sub-Laplacian in \eqref{realpow}, we get that
\begin{equation}\label{L2}
\begin{split}
\mathcal{L}^2\phi(x)&=
\lim_{\alpha \to -4} \psi(x, \alpha)\\
&= \frac{2}{4!} \mathcal{L}^2 \phi(x) \int_{\H^n} y_1^4 h(1,y) dy+  T^2\phi(x) \bigg(  \int_{\H^n}y_{2n+1}^2 h(1,y)dy  -\frac{2n}{4!} \int_{\H^n}y_1^4 h(1,y) dy\bigg) .
\end{split}
\end{equation}
The equalities in \eqref{val-int2} will follow from \eqref{L2} if we show that it does not exist any constant $c \in \R$ such that the equality
\begin{equation}\label{assumption2}
\mathcal{L}^2\phi(x)=c T^2\phi(x) 
\end{equation}
holds for every $\phi \in \mathcal{S}(\H^n)$ and $x \in \H^n$.
Let us assume by contradiction that \eqref{assumption2} holds and let us denote by $d_{\lambda}$, for every $\lambda>0$, the isotropic dilation by $\lambda$, i.e. $$d_{\lambda}(x_1, \ldots, x_{2n+1})=(\lambda x_1, \ldots, \lambda x_{2n+1}),$$
for every $x=(x_1, \ldots, x_{2n+1})\in \H^n$.
Now, we observe that for every $\phi \in C^{\infty}(\H^n)$ and $x \in \H^n$, for every $\lambda>0$ the following homogeneity property holds
$$T^2 (\phi \circ d_{\lambda})(x))=\lambda^2 T^2\phi(d_{\lambda}(x)).$$
Taking in consideration that, if $\phi \in \mathcal{S}(\H^n)$, for every $\lambda>0$ the map $\phi \circ d_{\lambda} \in \mathcal{S}(\H^n)$, then we can apply \eqref{assumption2} to get that
\begin{align}\label{serve9}
\mathcal{L}^2(\phi \circ d_{\lambda})(x)&=cT^2(\phi\circ d_{\lambda})(x)=c \lambda^2 T^2\phi(d_{\lambda}(x))=\lambda^2 \mathcal{L}^2\phi(d_{\lambda}(x)),
\end{align}
for every $x \in \H^n$.
On the other hand, by the homogeneity property of $\mathcal{L}^2$ with respect to the group dilations $\delta_{\lambda}$ we can continue from \eqref{serve9} getting
\begin{align*}
\mathcal{L}^2(\phi \circ d_{\lambda})(x)&=\lambda^2 \mathcal{L}^2\phi(d_{\lambda}(x))\\
&= (\sqrt{\lambda})^4\mathcal{L}^2\phi(\delta_{\sqrt{\lambda}}(\delta_{\frac{1}{\sqrt{\lambda}}}d_{\lambda}(x))\\
&=\mathcal{L}^2(\phi \circ \delta_{\sqrt{\lambda}})(\delta_{\frac{1}{\sqrt{\lambda}}}(d_{\lambda}(x)))\\
&=\mathcal{L}^2(\phi \circ \delta_{\sqrt{\lambda}})(\sqrt{\lambda}\pi_{H_1}(x), \pi_{H_2}(x)).
\end{align*}
Thus, we established the relation 
\begin{equation}\label{castelvetro}
\mathcal{L}^2(\phi \circ d_{\lambda})(x)=\mathcal{L}^2(\phi \circ \delta_{\sqrt{\lambda}})(\sqrt{\lambda}\pi_{H_1}(x), \pi_{H_2}(x)),
\end{equation}
for every $\phi \in \mathcal{S}(\H^n)$, $x \in \H^n$ and $\lambda>0$.
Let us show that the homogeneity property obtained in \eqref{castelvetro} cannot be true. By straightforward computations it is possible to verify that for every $\lambda>0$, $x \in \H^n$ and $\phi \in C^{\infty}(\H^n)$ we have the following representation
\begin{align*}
&\mathcal{L}^2(\phi \circ d_{\lambda})(x) = \mathcal{L}(\mathcal{L}(\phi \circ d_{\lambda}))(x)\\
=& \ \mathcal{L}\Big(\lambda^2(\Delta \phi \circ d_{\lambda})+\lambda^2 \sum_{i=1}^n \big( x_i (\partial_{y_i}\partial_t \phi \circ d_{\lambda})-y_i (\partial_{x_i}\partial_t \phi \circ d_{\lambda})+\frac{1}{4}  (x_i^2+y_i^2)(\partial_t^2 \phi \circ d_{\lambda}) \big) \Big)(x)\\
=& \ \lambda^2\mathcal{L}(\Delta \phi \circ d_{\lambda})(x)+\lambda^2 \sum_{i=1}^n \mathcal{L}\big( x_i (\partial_{y_i}\partial_t \phi \circ d_{\lambda})-y_i (\partial_{x_i}\partial_t \phi \circ d_{\lambda})\big)(x)\\
&+\frac{1}{4} \lambda^2   \sum_{i=1}^n \mathcal{L}((x_i^2+y_i^2)(\partial_t^2 \phi \circ d_{\lambda}))(x)\\
=& \ \lambda^4 \Delta^2 \phi(d_{\lambda}(x))+ n \lambda^2 \partial_t^2  \phi(d_{\lambda}(x))+P_1(\lambda, x, \phi),
\end{align*}
where the second addend in the last equality comes from the computation of the term $\frac{1}{4} \lambda^2   \sum_{i=1}^n \mathcal{L}((x_i^2+y_i^2)(\partial_t^2 \phi \circ d_{\lambda}))(x)$ and $P_1(\lambda,x,\phi)=o(\lambda^2)$ as $\lambda \to 0^+.$ Here, we have denoted by $\Delta$ the operator
$$\Delta= \sum_{i=1}^n (\partial_{x_i}^2+\partial_{y_i}^2).$$
On the other hand, we have as well for every $z \in \H^n$
\begin{align*}
&\mathcal{L}^2(\phi \circ \delta_{\sqrt{\lambda}})(z) = \mathcal{L}(\mathcal{L}(\phi \circ \delta_{\sqrt{\lambda}}))(z)\\
= & \ \mathcal{L}\Big(\lambda(\Delta \phi \circ \delta_{\sqrt{\lambda}})+\lambda \sqrt{\lambda} \sum_{i=1}^n \big( x_i (\partial_{y_i}\partial_t \phi \circ \delta_{\sqrt{\lambda}})-y_i (\partial_{x_i}\partial_t \phi \circ \delta_{\sqrt{\lambda}}))+\frac{1}{4} \lambda^2 \sum_{i=1}^n \big((x_i^2+y_i^2)(\partial_t^2 \phi \circ \delta_{\sqrt{\lambda}})\big) \Big)(z)\\
=& \ \lambda \mathcal{L}(\Delta \phi \circ \delta_{\sqrt{\lambda}})(z)+\lambda \sqrt{\lambda} \sum_{i=1}^n \mathcal{L}\big( x_i (\partial_{y_i}\partial_t \phi \circ \delta_{\sqrt{\lambda}})-y_i (\partial_{x_i}\partial_t \phi \circ d_{\lambda})\big)(z)\\
&+\frac{1}{4} \lambda^2   \sum_{i=1}^n \mathcal{L}((x_i^2+y_i^2)(\partial_t^2 \phi \circ \delta_{\sqrt{\lambda}}))(z)\\
= & \ \lambda^2 \Delta^2 \phi(\delta_{\sqrt{\lambda}}(z))+ n \lambda^2 \partial_t^2  \phi(\delta_{\sqrt{\lambda}}(z))+P_2(\lambda, z, \phi),
\end{align*}
where the second addend in the last equality comes from the computation of the term $\frac{1}{4} \lambda^2   \sum_{i=1}^n \mathcal{L}((x_i^2+y_i^2)(\partial_t^2 \phi \circ \delta_{\sqrt{\lambda}}))(z)$ and $P_2(\lambda,z,\phi)=o(\lambda^2)$ as $\lambda \to 0^+.$ 
When we evaluate the previous operator at a point of the form $z=(\sqrt{\lambda}\pi_{H_1}(x), \pi_{H_2}(x))$, for an arbitrary $x \in \H^n$, we obtain
\begin{equation*}
\mathcal{L}^2(\phi \circ \delta_{\sqrt{\lambda}})(\sqrt{\lambda}\pi_{H_1}(x), \pi_{H_2}(x))=   \lambda^2 \Delta^2 \phi(d_{\lambda}(x))+ n \lambda^2 \partial_t^2  \phi(d_{\lambda}(x))+P_2(\lambda, x, \phi).
\end{equation*}

Therefore, if \eqref{castelvetro} holds, for every $\lambda>0$, $x \in \H^n$ and $\phi \in \mathcal{S}(\H^n)$ we also have
\begin{equation}\label{eq:dadiv}
 \lambda^4 \Delta^2 \phi(d_{\lambda}(x))+ n \lambda^2 \partial_t^2  \phi(\delta_{\lambda}(x))+P_1(\lambda, x, \phi)=\lambda^2 \Delta^2 \phi(d_{\lambda}(x))+ n \lambda^2 \partial_t^2  \phi(d_{\lambda}(x))+P_2(\lambda, x, \phi).
\end{equation}
Now, dividing both sides of \eqref{eq:dadiv} by $\lambda^2$ and letting $\lambda$ go to zero, we deduce that for every $\phi \in \mathcal{S}(\H^n)$
\begin{equation}\label{spilamberto}
\Delta^2 \phi(0)=0,
\end{equation}
but \eqref{spilamberto} does not hold because if $\phi(x)=e^{-\|x\|_K^4} \in \mathcal{S}(\H^n)$, where for $x \in \H^n$ we denoted by $$\| x \|_K=\sqrt[4]{|\pi_{H_1}(x)|^4+|\pi_{H_2}(x)|^2}$$ the norm arising from the Cygan-Kor\'anyi distance \cite{Cygan81}, then by straightforward computations $\phi$ does not satisfy \eqref{spilamberto}. Hence, we reached a contradiction with \eqref{assumption2} and the thesis follows.
\end{proof}

\section{A representation of $\psi(x,\alpha)$ on $(-8, Q)$} 
\label{striscia4}
We give a representation of $\psi(x,\alpha)$ in the strip $(-8,Q)$ involving $\mathcal{L}^3$.

\begin{prop}\label{prop3.6}
If $\B_E(0,1)$ is the Euclidean unit closed ball in $\R^n$ and $\gamma=(\gamma_1, \ldots, \gamma_{n})\in (\N \cup \{ 0 \})^{n}$ is an even $n$-multi-index, then there is a constant $c(n,|\gamma|)>0$ such that
\begin{equation} \label{serve10}
\int_{\B_E(0,1)} x^{\gamma} d x= c(n,|\gamma|)\frac{\gamma!}{\left( \frac{\gamma}{2}\right)!},
\end{equation} 
where by $dx$ we denote the integration with respect to the $n$-dimensional Lebesgue measure on $\R^n$.
\end{prop}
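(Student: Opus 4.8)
The plan is to reduce the statement to the computation of a surface integral over the Euclidean unit sphere, and then to evaluate that surface integral by comparing two expressions for a Gaussian integral. First I would pass to polar coordinates on $\B_E(0,1)$: writing $x=r\omega$ with $r\in(0,1)$ and $\omega\in\partial\B_E(0,1)$, and using that $x\mapsto x^{\gamma}$ is homogeneous of degree $|\gamma|$, one obtains
\[
\int_{\B_E(0,1)} x^{\gamma}\,dx = \Big(\int_0^1 r^{|\gamma|+n-1}\,dr\Big)\int_{\partial\B_E(0,1)} \omega^{\gamma}\,d\mathcal{H}^{n-1}_E(\omega) = \frac{1}{n+|\gamma|}\int_{\partial\B_E(0,1)} \omega^{\gamma}\,d\mathcal{H}^{n-1}_E(\omega),
\]
so it suffices to evaluate $\int_{\partial\B_E(0,1)} \omega^{\gamma}\,d\mathcal{H}^{n-1}_E(\omega)$, a quantity depending only on $n$ and $\gamma$.

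To compute this surface integral I would insert a Gaussian weight and evaluate $\int_{\R^n} x^{\gamma} e^{-|x|^2}\,dx$ in two ways. On the one hand, the same polar-coordinate splitting, together with the substitution $u=r^2$, gives
\[
\int_{\R^n} x^{\gamma} e^{-|x|^2}\,dx = \Big(\int_0^{\infty} r^{|\gamma|+n-1} e^{-r^2}\,dr\Big)\int_{\partial\B_E(0,1)} \omega^{\gamma}\,d\mathcal{H}^{n-1}_E(\omega) = \frac{1}{2}\,\Gamma\Big(\frac{n+|\gamma|}{2}\Big)\int_{\partial\B_E(0,1)} \omega^{\gamma}\,d\mathcal{H}^{n-1}_E(\omega).
\]
On the other hand, Fubini's theorem factorizes the integral over the coordinates, and since every $\gamma_i$ is even,
\[
\int_{\R^n} x^{\gamma} e^{-|x|^2}\,dx = \prod_{i=1}^n \int_{\R} t^{\gamma_i} e^{-t^2}\,dt = \prod_{i=1}^n \Gamma\Big(\frac{\gamma_i+1}{2}\Big).
\]
Combining the two evaluations determines $\int_{\partial\B_E(0,1)} \omega^{\gamma}\,d\mathcal{H}^{n-1}_E$, and hence $\int_{\B_E(0,1)} x^{\gamma}\,dx$, in closed form in terms of Gamma functions.

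Finally I would convert the Gamma values into factorials. Writing $\gamma_i=2k_i$, the standard identity $\Gamma\big(k_i+\tfrac12\big)=\frac{(2k_i)!}{4^{k_i}k_i!}\sqrt{\pi}=\frac{\gamma_i!\,\sqrt{\pi}}{2^{\gamma_i}(\gamma_i/2)!}$ (immediate by induction from $\Gamma(1/2)=\sqrt{\pi}$ and $\Gamma(z+1)=z\Gamma(z)$, or from Legendre's duplication formula) yields, after multiplying over $i$,
\[
\prod_{i=1}^n \Gamma\Big(\frac{\gamma_i+1}{2}\Big) = \frac{\pi^{n/2}}{2^{|\gamma|}}\cdot\frac{\gamma!}{(\gamma/2)!},
\]
so that
\[
\int_{\B_E(0,1)} x^{\gamma}\,dx = \frac{2\,\pi^{n/2}}{2^{|\gamma|}\,(n+|\gamma|)\,\Gamma\big(\frac{n+|\gamma|}{2}\big)}\cdot\frac{\gamma!}{(\gamma/2)!},
\]
and the claim holds with $c(n,|\gamma|)=\dfrac{2\,\pi^{n/2}}{2^{|\gamma|}\,(n+|\gamma|)\,\Gamma\big(\frac{n+|\gamma|}{2}\big)}>0$, which visibly depends only on $n$ and $|\gamma|$. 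The only mildly delicate point is the factorial identity for $\Gamma\big(\tfrac{\gamma_i+1}{2}\big)$ together with checking that the powers of $2$ and of $\pi$ recombine so that the surviving constant genuinely involves only $n$ and $|\gamma|$ and not the individual components of $\gamma$; everything else is routine.
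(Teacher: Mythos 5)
Your proof is correct, and it is more informative than what the paper provides: the paper does not actually prove Proposition \ref{prop3.6} but simply defers to a citation (\cite[Proposition 3.6]{B09}). Your argument --- polar decomposition of the ball integral, double evaluation of the Gaussian moment $\int_{\R^n}x^{\gamma}e^{-|x|^2}\,dx$ by polar coordinates and by Fubini, and the identity $\Gamma\!\left(\tfrac{\gamma_i+1}{2}\right)=\frac{\gamma_i!\,\sqrt{\pi}}{2^{\gamma_i}(\gamma_i/2)!}$ --- is the standard self-contained route, and every step checks out: the radial integrals give $\tfrac{1}{n+|\gamma|}$ and $\tfrac12\Gamma\!\left(\tfrac{n+|\gamma|}{2}\right)$ respectively, evenness of each $\gamma_i$ is exactly what makes the one-dimensional factors equal to $\Gamma\!\left(\tfrac{\gamma_i+1}{2}\right)$ rather than zero, and the powers of $2$ and $\pi$ recombine into $\frac{2\,\pi^{n/2}}{2^{|\gamma|}(n+|\gamma|)\,\Gamma\left(\frac{n+|\gamma|}{2}\right)}$, which manifestly depends only on $n$ and $|\gamma|$. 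As a bonus you obtain the explicit value of $c(n,|\gamma|)$, which the statement only asserts to exist.
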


The proof follows from \cite[Proposition 3.6]{B09}.
Let us recall the map $\pi_{H_1}:\H^n \to \R^{2n}, \pi_{H_1}(x_1, \dots, x_{2n+1}) \to (x_1, \dots, x_{2n}), $ and moreover the following proposition holds.
\begin{prop}\label{prop-rel-integrali}
For every even $2n$-multi-index $\gamma \in (\N \cup \{0\})^{2n}$ and for every $\alpha<Q$ the equality
\begin{equation}
\int_{\partial \B_c(0,1)}x_1^{|\gamma|}P_{\alpha}(x)d \mathcal{S}^{Q-1}_{\infty}= \frac{\frac{\gamma}{2}!}{\frac{|\gamma|}{2}!} \frac{|\gamma|!}{\gamma!} \int_{\partial \B_c(0,1)} \pi_{H_1}(x)^{\gamma}P_{\alpha}(x) d \mathcal{S}_{\infty}^{Q-1}(x)
\end{equation}
holds.
\end{prop}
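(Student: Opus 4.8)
The plan is to turn the identity on $\partial\B_c(0,1)$ into a Euclidean monomial computation by means of Lemma \ref{lemma-general}, using the horizontal‑rotation invariance of $h(1,\cdot)$ and of $\|\cdot\|_c$ to decouple the angular variables. Since $P_\alpha=\frac{1}{\Gamma(\alpha/2)}\|\cdot\|_\alpha^{\alpha-Q}$, it is enough to prove the equality with $\|\cdot\|_\alpha^{\alpha-Q}$ in place of $P_\alpha$. First I would apply Lemma \ref{lemma-general} on the left-hand side to the even $(2n+1)$-multi-index $(|\gamma|,0,\dots,0)$ and on the right-hand side to the even $(2n+1)$-multi-index $(\gamma_1,\dots,\gamma_{2n},0)$; in both cases the exponent of $\|\cdot\|_c$ produced by the lemma is $-\alpha-|\gamma|$, so I obtain
\[
\int_{\partial\B_c(0,1)} x_1^{|\gamma|}\|x\|_\alpha^{\alpha-Q}\,d\mathcal{S}^{Q-1}_\infty(x)=2\int_{\H^n} x_1^{|\gamma|}\,h(1,x)\,\|x\|_c^{-\alpha-|\gamma|}\,dx,
\]
\[
\int_{\partial\B_c(0,1)} \pi_{H_1}(x)^{\gamma}\|x\|_\alpha^{\alpha-Q}\,d\mathcal{S}^{Q-1}_\infty(x)=2\int_{\H^n} \pi_{H_1}(x)^{\gamma}\,h(1,x)\,\|x\|_c^{-\alpha-|\gamma|}\,dx,
\]
so that it only remains to compare the two integrals over $\H^n$.

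For the second step I would write $x=(z,t)$ with $z=\pi_{H_1}(x)\in\R^{2n}$ and $t=\pi_{H_2}(x)\in\R$, and recall from Sections \ref{parita-h} and \ref{parita-ball} that both $h(1,x)$ and $\|x\|_c$ depend on $x$ only through $|z|$ and $t$; hence, for each fixed $t$, the map $z\mapsto F(z,t):=h(1,(z,t))\,\|(z,t)\|_c^{-\alpha-|\gamma|}$ is radial on $\R^{2n}$. Passing to polar coordinates in $z$, the integrals $\int_{\R^{2n}}z_1^{|\gamma|}F(z,t)\,dz$ and $\int_{\R^{2n}}z^{\gamma}F(z,t)\,dz$ share exactly the same radial factor — precisely because $(|\gamma|,0,\dots,0)$ and $\gamma$ have the same total degree $|\gamma|$ — and differ only through the angular averages of $\omega_1^{|\gamma|}$ and of $\omega^{\gamma}$ over the unit sphere. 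Since the very same decomposition also computes $\int_{\B_E(0,1)}x_1^{|\gamma|}\,dx$ and $\int_{\B_E(0,1)}x^{\gamma}\,dx$ (with radial weight the indicator of $[0,1]$), and all integrands are nonnegative with nonvanishing integrals (as $\gamma$ and $(|\gamma|,0,\dots,0)$ are even), after integrating in $t$ as well the ratio of the two $\H^n$-integrals equals the ratio $\int_{\B_E(0,1)}x_1^{|\gamma|}\,dx\big/\int_{\B_E(0,1)}x^{\gamma}\,dx$ of the corresponding monomial integrals over the Euclidean unit ball of $\R^{2n}$.

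Finally I would invoke Proposition \ref{prop3.6} in dimension $2n$: both $(|\gamma|,0,\dots,0)$ and $\gamma$ are even multi-indices of total degree $|\gamma|$, so the constant $c(2n,|\gamma|)$ is the same for both and cancels, leaving
\[
\frac{\int_{\B_E(0,1)}x_1^{|\gamma|}\,dx}{\int_{\B_E(0,1)}x^{\gamma}\,dx}=\frac{|\gamma|!/(|\gamma|/2)!}{\gamma!/(\gamma/2)!}=\frac{(\gamma/2)!}{(|\gamma|/2)!}\,\frac{|\gamma|!}{\gamma!},
\]
which is exactly the asserted constant; restoring the factor $2$ and the factor $\Gamma(\alpha/2)^{-1}$ common to the two applications of Lemma \ref{lemma-general} yields the proposition. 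The convergence of every integral for $\alpha<Q$ is already guaranteed by Lemma \ref{lemma-general} and its proof, so the only genuinely delicate point — and the one I expect to be the main obstacle — is verifying that passing through Lemma \ref{lemma-general} has not introduced any extra $z$-dependent factor beyond $\|x\|_c^{-\alpha-|\gamma|}$, i.e.\ that the radial parts of the two $\H^n$-integrals really do coincide; this is ensured precisely by the matching total degree $|\gamma|$, with the rest of the argument being routine bookkeeping of multinomial constants resting on the horizontal-rotation invariance of $h(1,\cdot)$ and $\|\cdot\|_c$.
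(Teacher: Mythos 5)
Your proposal is correct and follows essentially the same route as the paper: reduce both sides to integrals over $\H^n$ against $h(1,\cdot)\,\|\cdot\|_c^{-\alpha-|\gamma|}$ via Lemma \ref{lemma-general} (the exponents match because both multi-indices have total degree $|\gamma|$ with vanishing $(2n+1)$-st component), then use Fubini in $x_{2n+1}$, polar coordinates on $H_1$, and the horizontal-rotation invariance of $h(1,\cdot)$ and $\|\cdot\|_c$ to factor out a common radial integral, leaving the ratio of spherical monomial averages, which Proposition \ref{prop3.6} evaluates as $\frac{(\gamma/2)!}{(|\gamma|/2)!}\frac{|\gamma|!}{\gamma!}$. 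The only cosmetic difference is that the paper passes from the ball identity of Proposition \ref{prop3.6} to the sphere identity via the Euclidean coarea formula and homogeneity, whereas you phrase the same cancellation directly in terms of angular averages with the indicator of $[0,1]$ as radial weight.
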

\begin{proof}
Let us recall Lemma \ref{lemma-general} so that for every even $2n$-multi-index $\gamma=(\gamma_1, \ldots, \gamma_{2n})\in (\N \cup \{ 0 \})^{2n}$ and every $\alpha<Q$ we have
\beq
\label{lemma-alfa}
 \int_{\partial \B_c(0,1)} \pi_{H_1}(x)^{\gamma}\|x\|_{\alpha}^{\alpha-Q} d \mathcal{S}_{\infty}^{Q-1}(x)= 2\int_{\H^n} \pi_{H_1}(x)^{\gamma} h(1,x) \|x\|_c^{-\alpha-|\gamma|} d x
 \eeq
 and
 \beq\label{lemma-alfa2}
 \int_{\partial \B_c(0,1)}x_1^{|\gamma|}\|x\|_{\alpha}^{\alpha-Q} d \mathcal{S}^{Q-1}_{\infty}=2 \int_{\H^n}x_1^{|\gamma|}h(1,x) \|x\|_c^{\alpha-|\gamma|} dx.
\eeq
If for every even $2n$-multi-index $\gamma$ and every $\alpha<Q$ the relation
\begin{equation}\label{tesidadim}
\int_{\H^n}x_1^{|\gamma|}h(1,x) \|x\|_c^{\alpha-|\gamma|} dx = \frac{\frac{\gamma}{2}!}{\frac{|\gamma|}{2}!} \frac{|\gamma|!}{\gamma!} \int_{\H^n} \pi_{H_1}(x)^{\gamma}h(1,x) \|x\|_c^{-\alpha-|\gamma|} dx
\end{equation}
holds, then the thesis follows recalling \eqref{lemma-alfa} and \eqref{lemma-alfa2}.
In order to show \eqref{tesidadim}, we recall a consequence of Proposition \ref{prop3.6} applied to the Euclidean space $\R^{2n}$. In fact, denoting by $\B_E(0,1)$ the Euclidean closed unit ball on $\R^{2n}$, and the Euclidean coarea formula, from \eqref{serve10} of Proposition \ref{prop3.6} 
\begin{equation}\label{conseg-lemma}
\begin{split}
&\int_0^1 \int_{\partial \B_E(0,r)} x_1^{|\gamma|} d \mathcal{H}_E^{2n-1}(x)dr= \int_{\B_E(0,1)} x_1^{|\gamma|} d\mathcal{L}^{2n}(x)\\
&= \frac{\frac{\gamma}{2}!}{\frac{|\gamma|}{2}!} \frac{|\gamma|!}{\gamma!}\int_{\B_E(0,1)} x^{\gamma} d\mathcal{L}^{2n}(x)= \frac{\frac{\gamma}{2}!}{\frac{|\gamma|}{2}!} \frac{|\gamma|!}{\gamma!} \int_0^1 \int_{\partial \B_E(0,r)} x^{\gamma} d \mathcal{H}_E^{2n-1}(x)dr
\end{split}
\end{equation}
hold, where $\mathcal{L}^{2n}$ denotes the $2n$-dimensional Lebesgue measure.
Moreover, exploiting the equality \eqref{conseg-lemma} and the homogeneity of the surface measure $\mathcal{H}^{2n-1}_E$ with respect to the Euclidean isotropic dilations, we get
\begin{align*}
\int_0^1 r^{|\gamma|} dr \int_{\partial \B_E(0,1)} x_1^{|\gamma|} d \mathcal{H}_E^{2n-1}(x) = \frac{\frac{\gamma}{2}!}{\frac{|\gamma|}{2}!} \frac{|\gamma|!}{\gamma!} \int_0^1 r^{|\gamma|} dr \int_{\partial \B_E(0,1)} x^{\gamma} d \mathcal{H}_E^{2n-1}(x).
\end{align*}
As a consequence, we have that
\begin{align}\label{serve11}
 \int_{\partial \B_E(0,1)} x_1^{|\gamma|} d \mathcal{H}_E^{2n-1}(x) = \frac{\frac{\gamma}{2}!}{\frac{|\gamma|}{2}!} \frac{|\gamma|!}{\gamma!} \int_{\partial \B_E(0,1)} x^{\gamma} d \mathcal{H}_E^{2n-1}(x).
\end{align}
Hence, by Fubini's theorem we can rewrite the left-hand side of \eqref{tesidadim} as
\beq\label{integrale-fubini}
\int_{\H^n} x_1^{|\gamma|} h(1,x) \|x\|_c^{-\alpha-|\gamma|} dx = \int_{-\infty}^{\infty} \left( \int_{H_1} x_1^{|\gamma|} h(1,x) \|x\|_c^{-\alpha-|\gamma|} dx_1 \dots dx_{2n} \right) dx_{2n+1}.
\eeq
Performing a change of variables on $H_1 \simeq \R^{2n}$, the same introduced in \cite[Section 4]{B09},
\begin{align*}
\Omega_{2n}: (0,\infty)  \times [0, \pi)^{2n-2} \times [0,2 \pi) &\to H_1\\
(\rho, \theta_1 , \ldots, \theta_{2n-1} )  &\to \Omega_{2n}(\rho, \theta_1 , \ldots, \theta_{2n-1} ),
\end{align*}
with 
\begin{align*}
\Omega_{2n}(\rho, \theta_1 , \ldots, \theta_{2n-1} ) =   \begin{bmatrix} \rho  \cos \theta_1,\\
 \rho  \sin \theta_1 \cos \theta_2,\\
  \rho  \sin \theta_1 \sin \theta_2 \cos \theta_3,\\
  \ldots \\
    \rho  \sin \theta_1 \sin \theta_2  \ldots \sin \theta_{2n-2} \cos \theta_{2n-1},\\
        \rho  \sin \theta_1 \sin \theta_2  \ldots \sin \theta_{2n-2} \sin \theta_{2n-1}\\
        \end{bmatrix}^T,
\end{align*}
where the Jacobian of $\Omega_{2n}$ is
$$ J\Omega_{2n}(\rho, \theta_1 , \ldots, \theta_{2n-1} ) =\rho^{2n-1}( \sin \theta_1)^{2n-2}( \sin \theta_2)^{2n-3} \ldots ( \sin \theta_{2n-2}),$$
from \eqref{integrale-fubini} it follows that
\begin{align}
&\int_{-\infty}^{\infty} \left( \int_{H_1} x_1^{|\gamma|} h(1,x) \|x\|_c^{-\alpha-|\gamma|} dx_1 \dots dx_{2n} \right)  dx_{2n+1} \notag \\
=& \int_{-\infty}^{\infty} \int_0^{\infty} \int_0^{\pi} \ldots \int_0^{\pi} \int_0^{2\pi} (\rho \cos(\theta_1))^{|\gamma|} h(1,(\Omega_{2n}(\rho, \theta),x_{2n+1})) \\
&\| (\Omega_{2n}(\rho, \theta),x_{2n+1})\|_c^{-\alpha-|\gamma|}\rho^{2n-1}( \sin \theta_1)^{2n-2}( \sin \theta_2)^{2n-3} \ldots ( \sin \theta_{2n-2})  d\theta_1 \dots d\theta_{2n-1} d \rho  dx_{2n+1} \notag\\
=& \int_{-\infty}^{\infty} \int_0^{\infty} \rho^{|\gamma|+2n-1} \int_0^{\pi} \ldots \int_0^{\pi} \int_0^{2\pi} (\cos(\theta_1))^{|\gamma|} h(1,(\Omega_{2n}(\rho, \theta),x_{2n+1})) \notag  \\
&\| (\Omega_{2n}(\rho, \theta),x_{2n+1})\|_c^{-\alpha-|\gamma|}( \sin \theta_1)^{2n-2}( \sin \theta_2)^{2n-3} \ldots ( \sin \theta_{2n-2})  d\theta_1 \dots d\theta_{2n-1} d \rho  dx_{2n+1} \notag.
\end{align}
We remark now that for every fixed $x_{2n+1} \in \R$ and $\rho \in (0, \infty)$ the function $$ [0,\pi)^{2n-2} \times [0,2\pi) \ni \theta \to h(1,(\Omega_{2n}(\rho, \theta),x_{2n+1})) 
\| (\Omega_{2n}(\rho, \theta),x_{2n+1})\|_c^{-\alpha-|\gamma|}$$ is constant. This is a consequence of the horizontal rotations invariance of the norm $\| \cdot \|_{\alpha}$ and of the heat kernel $h(1, \cdot)$, since for every fixed $x_{2n+1} \in \R$ and $\rho \in (0, \infty)$ the map
$$  [0,\pi)^{2n-2} \times [0,2\pi) \ni \theta  \to (\Omega_{2n}(\rho, \theta), x_{2n+1})$$
is a horizontal rotation.
As a consequence, we obtain
\begin{align*}
&\int_{-\infty}^{\infty} \int_0^{\infty} \rho^{|\gamma|+2n-1} \| (\Omega_{2n}(\rho, \theta), x_{2n+1})\|_c^{-\alpha-|\gamma|} h(1,(\Omega_{2n}(\rho, \theta),x_{2n+1}))  d \rho  dx_{2n+1} \cdot \\
&\int_0^{\pi} \ldots \int_0^{\pi} \int_0^{2\pi}  (\cos(\theta_1))^{|\gamma|}  ( \sin \theta_1)^{2n-2}( \sin \theta_2)^{2n-3} \ldots ( \sin \theta_{2n-2})  d\theta_1 \dots d\theta_{2n-1}\\
=&\int_{-\infty}^{\infty} \int_0^{\infty} \rho^{|\gamma|+2n-1} \| (\Omega_{2n}(\rho, \theta),x_{2n+1})\|_c^{-\alpha-|\gamma|} h(1,(\Omega_{2n}(\rho, \theta), x_{2n+1}))  d \rho  dx_{2n+1} \int_{\partial \B_E(0,1)} x_1^{|\gamma|} d \mathcal{H}^{2n-1}_E(x).
\end{align*}
Repeating the same steps on the right-hand side $ \int_{\H^n} \pi_{H_1}(x)^{\gamma} h(1,x) \|x\|_c^{-\alpha-|\gamma|} dx$ of \eqref{tesidadim} and exploiting \eqref{serve11}, we conclude the proof.
\end{proof}

Now, we wish to exploit Proposition \ref{prop-rel-integrali} in order to deduce a simplified representation of the map $\psi(x,\alpha)$ in \eqref{expression}.
We recall the analytic continuation of $\psi(x,\alpha)$ on the strip $-8<\alpha<Q$ given in \eqref{expression} and introduce the some notation. For every $i,j \in \{1, \ldots, 2n\}, \ i \neq j $, we denote by $\mathcal{P}_{i,j}$ the family of permutations with repetitions of the set of elements $\{i,i,i,i,j,j\}$.
Moreover, for every $\sigma=(i_1, \dots, i_6) \in \mathcal{P}_{i,j}$ we set $Z_{\sigma}=Z_{i_1} \dots Z_{i_6}$. 

From \eqref{exp3}, the explicit expression of $Z$-Taylor polynomials in Theorem \ref{explicit}, the relations characterizing the Lie algebra of $\H^n$ and the result of Proposition \ref{prop-rel-integrali}, we can write $\psi(x,\alpha)$ in the following form for every $-8<\alpha<Q$
\begin{align*}
\psi(x,\alpha)= & \int_{\H^n \setminus \B_c(0,1)} \phi(xy)P_{\alpha}(y) dy + \int_{\B_c(0,1)} \Big(\phi(xy)-\mathcal{T}_7(\phi,x)(xy)\Big)P_{\alpha}(y) dy \\
&+ \phi(x) \frac{1}{\Gamma(\frac{\alpha}{2})} \frac{1}{\alpha} \sigma(\alpha)- \frac{1}{2}\frac{1}{\alpha+2}\frac{1}{\Gamma(\frac{\alpha}{2})} \mathcal{L}\phi(x)d(\alpha)\\
& + \frac{1}{\alpha+4} \frac{1}{\Gamma(\frac{\alpha}{2})} \frac{1}{4!} \mathcal{L}^2 \phi(x) \int_{\partial \B_c(0,1)}y_1^4\|y\|_{\alpha}^{\alpha-Q} d\mathcal{S}^{Q-1}_{\infty}(y)\\
& + \frac{1}{\alpha+4} \frac{1}{\Gamma(\frac{\alpha}{2})} T^2\phi(x) \bigg(\frac{1}{2}  \int_{\partial \B_c(0,1)}y_{2n+1}^2  \|y\|_{\alpha}^{\alpha-Q} dy  -\frac{n}{4!} \int_{\partial \B_c(0,1)}y_1^4\|y\|_{\alpha}^{\alpha-Q} d\mathcal{S}^{Q-1}_{\infty}(y) \bigg)\\
&+\frac{1}{4!}\frac{1}{\alpha+6} \frac{4!}{2!2!}\sum_{i=1}^{2n} T^2 Z_i^2 \phi(x) \int_{\partial \B_c(0,1)}y_i^2 y_{2n+1}^2P_{\alpha}(y) d\mathcal{S}^{Q-1}_{\infty}(y)\\
&+ \frac{1}{6!}\frac{1}{\alpha+6} \sum_{i=1}^{2n} Z_i^6 \phi(x) \int_{\partial \B_c(0,1)}y_i^6P_{\alpha}(y) d\mathcal{S}^{Q-1}_{\infty}(y)\\
&+ \frac{1}{6!}\frac{1}{\alpha+6} \frac{6!}{2! 2! 2!} \frac{1}{3!} \sum_{\substack{ i,j,k=1, \\i \neq j \neq k \neq i,\\ [Z_i,Z_k]=[Z_i,Z_k]=[Z_j,Z_k]=0 }}^{2n} (Z_i^2Z_j^2Z_k^2\phi(x)+Z_i^2Z_k^2Z_j^2\phi(x)+Z_j^2Z_i^2Z_k^2\phi(x)\\
&+Z_j^2Z_k^2Z_i^2\phi(x)+Z_k^2Z_j^2Z_i^2\phi(x) +Z_k^2Z_i^2Z_j^2\phi(x)) \int_{ \partial \B_c(0,1)}y_i^2 y_j^2y_k^2P_{\alpha}(y) d\mathcal{S}^{Q-1}_{\infty}(y)\\
&+ \frac{1}{6!}\frac{1}{\alpha+6} \frac{6!}{4! 2!}   \sum_{\substack{ i,j=1, \\i < j ,\\ [Z_i,Z_j]=T  }}^{2n} \sum_{\substack{k=1 \\,  i,j\neq k ,\\ [Z_i,Z_k]=[Z_j,Z_k]=0}}^{2n}  (Z_i^2Z_j^2+Z_iZ_jZ_iZ_j+Z_iZ_jZ_jZ_i\\
&+ Z_jZ_iZ_iZ_j+Z_jZ_iZ_jZ_i+Z_j^2Z_i^2)Z_k^2\phi(x) \int_{ \partial \B_c(0,1)}y_i^2 y_j^2y_k^2P_{\alpha}(y) d\mathcal{S}^{Q-1}_{\infty}(y)\\
&+ \frac{1}{6!} \frac{1}{\alpha+6} \sum_{\substack{i,j=1, i \neq j }}^{2n} \sum_{\sigma \in \mathcal{P}_{i,j}}Z_{\sigma}\phi(x) \phi(x) \int_{\partial \B_c(0,1)}y_i^4 y_j^2P_{\alpha}(y) d\mathcal{S}^{Q-1}_{\infty}(y)\\
=& \int_{\H^n \setminus \B_c(0,1)} \phi(xy)P_{\alpha}(y) dy + \int_{\B_c(0,1)} \Big(\phi(xy)-\mathcal{T}_7(\phi,x)(xy)\Big)P_{\alpha}(y) dy \\
&+ \phi(x) \frac{1}{\Gamma(\frac{\alpha}{2})} \frac{1}{\alpha} \sigma(\alpha)- \frac{1}{2}\frac{1}{\alpha+2}\frac{1}{\Gamma(\frac{\alpha}{2})} \mathcal{L}\phi(x)d(\alpha)\\
& + \frac{1}{\alpha+4} \frac{1}{\Gamma(\frac{\alpha}{2})} \frac{1}{4!} \mathcal{L}^2 \phi(x) \int_{\partial \B_c(0,1)}y_1^4\|y\|_{\alpha}^{\alpha-Q} d\mathcal{S}^{Q-1}_{\infty}(y)\\
& + \frac{1}{\alpha+4} \frac{1}{\Gamma(\frac{\alpha}{2})} T^2\phi(x) \bigg(\frac{1}{2}  \int_{\partial \B_c(0,1)}y_{2n+1}^2  \|y\|_{\alpha}^{\alpha-Q} dy  -\frac{n}{4!} \int_{\partial \B_c(0,1)}y_1^4\|y\|_{\alpha}^{\alpha-Q} d\mathcal{S}^{Q-1}_{\infty}(y) \bigg)\\
&-\frac{1}{4}\frac{1}{\alpha+6} T^2 \mathcal{L}\phi(x) \int_{\partial \B_c(0,1)}y_1^2 y_{2n+1}^2P_{\alpha}(y) d\mathcal{S}^{Q-1}_{\infty}(y)\\
& + \frac{1}{6!} \frac{1}{\alpha+6} \sum_{i=1}^{2n} Z_i^6 \phi(x) \int_{\partial \B_c(0,1)}y_1^6P_{\alpha}(y) d\mathcal{S}^{Q-1}_{\infty}(y)\\
&+ \frac{1}{6!}\frac{1}{\alpha+6} \frac{6!}{2! 2! 2!} \frac{1}{3!} \sum_{\substack{ i,j,k=1, \\i \neq j \neq k \neq i,\\ [Z_i,Z_k]=[Z_i,Z_k]=[Z_j,Z_k]=0 }}^{2n} (Z_i^2Z_j^2Z_k^2\phi(x)+Z_i^2Z_k^2Z_j^2\phi(x)+Z_j^2Z_i^2Z_k^2\phi(x)\\
&+Z_j^2Z_k^2Z_i^2\phi(x)+Z_k^2Z_j^2Z_i^2\phi(x) +Z_k^2Z_i^2Z_j^2\phi(x)) \frac{2!2!2!}{6!}\frac{3!}{1!1!1!} \int_{ \partial \B_c(0,1)}y_1^6P_{\alpha}(y) d\mathcal{S}^{Q-1}_{\infty}(y)\\
&+ \frac{1}{6!}\frac{1}{\alpha+6} \frac{6!}{4! 2!}   \sum_{\substack{ i,j=1, \\i < j ,\\ [Z_i,Z_j]=T  }}^{2n} \sum_{\substack{k=1 \\,  i,j\neq k ,\\ [Z_i,Z_k]=[Z_j,Z_k]=0}}^{2n}  (3Z_i^2Z_j^2+3Z_j^2Z_i^2-3T^2)Z_k^2\phi(x)\\
& \frac{2!2!2!}{6!}\frac{3!}{1!1!1!}\int_{ \partial \B_c(0,1)}y_1^6P_{\alpha}(y) d\mathcal{S}^{Q-1}_{\infty}(y)\\
&+ \frac{1}{6!} \frac{1}{\alpha+6}  \sum_{\substack{i,j=1, i \neq j }}^{2n} \sum_{\sigma \in \mathcal{P}_{i,j}}Z_{\sigma}\phi(x)  \int_{\partial \B_c(0,1)}y_i^4 y_j^2P_{\alpha}(y) d\mathcal{S}^{Q-1}_{\infty}(y)\\
=& \int_{\H^n \setminus \B_c(0,1)} \phi(xy)P_{\alpha}(y) dy + \int_{\B_c(0,1)} \Big(\phi(xy)-\mathcal{T}_7(\phi,x)(xy)\Big)P_{\alpha}(y) dy \\
&+ \phi(x) \frac{1}{\Gamma(\frac{\alpha}{2})} \frac{1}{\alpha} \sigma(\alpha)- \frac{1}{2}\frac{1}{\alpha+2}\frac{1}{\Gamma(\frac{\alpha}{2})} \mathcal{L}\phi(x)d(\alpha)\\
& + \frac{1}{\alpha+4} \frac{1}{\Gamma(\frac{\alpha}{2})} \frac{1}{4!} \mathcal{L}^2 \phi(x) \int_{\partial \B_c(0,1)}y_1^4\|y\|_{\alpha}^{\alpha-Q} d\mathcal{S}^{Q-1}_{\infty}(y)\\
& + \frac{1}{\alpha+4} \frac{1}{\Gamma(\frac{\alpha}{2})} T^2\phi(x) \bigg(\frac{1}{2}  \int_{\partial \B_c(0,1)}y_{2n+1}^2  \|y\|_{\alpha}^{\alpha-Q} dy  -\frac{n}{4!} \int_{\partial \B_c(0,1)}y_1^4\|y\|_{\alpha}^{\alpha-Q} d\mathcal{S}^{Q-1}_{\infty}(y) \bigg)\\
&-\frac{1}{4}\frac{1}{\alpha+6} T^2 \mathcal{L}\phi(x) \int_{\partial \B_c(0,1)}y_1^2 y_{2n+1}^2P_{\alpha}(y) d\mathcal{S}^{Q-1}_{\infty}(y)\\
& + \frac{1}{6!} \frac{1}{\alpha+6} \sum_{i=1}^{2n} Z_i^6 \phi(x) \int_{\partial \B_c(0,1)}y_1^6P_{\alpha}(y) d\mathcal{S}^{Q-1}_{\infty}(y)\\
&+ \frac{1}{6!}\frac{1}{\alpha+6}  \sum_{\substack{ i,j,k=1, \\i \neq j \neq k \neq i,\\ [Z_i,Z_k]=[Z_i,Z_k]=[Z_j,Z_k]=0 }}^{2n} (Z_i^2Z_j^2Z_k^2\phi(x)+Z_i^2Z_k^2Z_j^2\phi(x)+Z_j^2Z_i^2Z_k^2\phi(x)\\
&+Z_j^2Z_k^2Z_i^2\phi(x)+Z_k^2Z_j^2Z_i^2\phi(x) +Z_k^2Z_i^2Z_j^2\phi(x))  \int_{ \partial \B_c(0,1)}y_1^6P_{\alpha}(y) d\mathcal{S}^{Q-1}_{\infty}(y)\\
&+ \frac{1}{6!}\frac{1}{\alpha+6} 3 \sum_{\substack{ i,j=1, \\i < j ,\\ [Z_i,Z_j]=T  }}^{2n} \sum_{\substack{k=1 \\,  i,j\neq k ,\\ [Z_i,Z_k]=[Z_j,Z_k]=0}}^{2n}  (Z_i^2Z_j^2+Z_j^2Z_i^2-T^2)Z_k^2\phi(x)\\
& \int_{ \partial \B_c(0,1)}y_1^6P_{\alpha}(y) d\mathcal{S}^{Q-1}_{\infty}(y)\\
&+ \frac{1}{6!} \frac{1}{\alpha+6}  \sum_{\substack{i,j=1, i \neq j }}^{2n} \sum_{\sigma \in \mathcal{P}_{i,j}}Z_{\sigma}\phi(x)  \int_{\partial \B_c(0,1)}y_i^4 y_j^2P_{\alpha}(y) d\mathcal{S}^{Q-1}_{\infty}(y)\\
=& \int_{\H^n \setminus \B_c(0,1)} \phi(xy)P_{\alpha}(y) dy + \int_{\B_c(0,1)} \Big(\phi(xy)-\mathcal{T}_7(\phi,x)(xy)\Big)P_{\alpha}(y) dy \\
&+ \phi(x) \frac{1}{\Gamma(\frac{\alpha}{2})} \frac{1}{\alpha} \sigma(\alpha)- \frac{1}{2}\frac{1}{\alpha+2}\frac{1}{\Gamma(\frac{\alpha}{2})} \mathcal{L}\phi(x)d(\alpha)\\
& + \frac{1}{\alpha+4} \frac{1}{\Gamma(\frac{\alpha}{2})} \frac{1}{4!} \mathcal{L}^2 \phi(x) \int_{\partial \B_c(0,1)}y_1^4\|y\|_{\alpha}^{\alpha-Q} d\mathcal{S}^{Q-1}_{\infty}(y)\\
& + \frac{1}{\alpha+4} \frac{1}{\Gamma(\frac{\alpha}{2})} T^2\phi(x) \bigg(\frac{1}{2}  \int_{\partial \B_c(0,1)}y_{2n+1}^2  \|y\|_{\alpha}^{\alpha-Q} dy  -\frac{n}{4!} \int_{\partial \B_c(0,1)}y_1^4\|y\|_{\alpha}^{\alpha-Q} d\mathcal{S}^{Q-1}_{\infty}(y) \bigg)\\
&-\frac{1}{4}\frac{1}{\alpha+6} T^2 \mathcal{L}\phi(x) \int_{\partial \B_c(0,1)}y_1^2 y_{2n+1}^2P_{\alpha}(y) d\mathcal{S}^{Q-1}_{\infty}(y)\\
& + \frac{1}{6!} \frac{1}{\alpha+6} \sum_{i=1}^{2n} Z_i^6 \phi(x) \int_{\partial \B_c(0,1)}y_1^6P_{\alpha}(y) d\mathcal{S}^{Q-1}_{\infty}(y)\\
&+ \frac{1}{6!}\frac{1}{\alpha+6}  \sum_{\substack{ i,j,k=1, \\i \neq j \neq k \neq i,\\ [Z_i,Z_k]=[Z_i,Z_k]=[Z_j,Z_k]=0 }}^{2n} (Z_i^2Z_j^2Z_k^2\phi(x)+Z_i^2Z_k^2Z_j^2\phi(x)+Z_j^2Z_i^2Z_k^2\phi(x)\\
&+Z_j^2Z_k^2Z_i^2\phi(x)+Z_k^2Z_j^2Z_i^2\phi(x) +Z_k^2Z_i^2Z_j^2\phi(x))  \int_{ \partial \B_c(0,1)}y_1^6P_{\alpha}(y) d\mathcal{S}^{Q-1}_{\infty}(y)\\
&+ \frac{1}{6!}\frac{1}{\alpha+6} 3 \Big(\sum_{\substack{ i,j=1, \\i < j ,\\ [Z_i,Z_j]=T  }}^{2n} \sum_{\substack{k=1 \\,  i,j\neq k ,\\ [Z_i,Z_k]=[Z_j,Z_k]=0}}^{2n}  (Z_i^2Z_j^2+Z_j^2Z_i^2)Z_k^2 \phi(x) \\
&- \sum_{\substack{ i,j=1, \\i < j ,\\ [Z_i,Z_j]=T  }}^{2n} \sum_{\substack{k=1 \\,  i,j\neq k ,\\ [Z_i,Z_k]=[Z_j,Z_k]=0}}^{2n} T^2Z_k^2\phi(x) \Big) \int_{ \partial \B_c(0,1)}y_1^6P_{\alpha}(y) d\mathcal{S}^{Q-1}_{\infty}(y)\\
&+ \frac{1}{6!} \frac{1}{\alpha+6}  \sum_{\substack{i,j=1, i \neq j }}^{2n} \sum_{\sigma \in \mathcal{P}_{i,j}}Z_{\sigma}\phi(x) \int_{\partial \B_c(0,1)}y_i^4 y_j^2P_{\alpha}(y) d\mathcal{S}^{Q-1}_{\infty}(y).
\end{align*}
Hence, we have obtained the representation
\begin{align} \label{espressione4striscia}
\psi(x,\alpha)=& \int_{\H^n \setminus \B_c(0,1)} \phi(xy)P_{\alpha}(y) dy + \int_{\B_c(0,1)} \Big(\phi(xy)-\mathcal{T}_7(\phi,x)(xy)\Big)P_{\alpha}(y) dy \notag \\ 
&+ \phi(x) \frac{1}{\Gamma(\frac{\alpha}{2})} \frac{1}{\alpha} \sigma(\alpha)- \frac{1}{2}\frac{1}{\alpha+2}\frac{1}{\Gamma(\frac{\alpha}{2})} \mathcal{L}\phi(x)d(\alpha) \notag\\
& + \frac{1}{\alpha+4} \frac{1}{\Gamma(\frac{\alpha}{2})} \frac{1}{4!} \mathcal{L}^2 \phi(x) \int_{\partial \B_c(0,1)}y_1^4\|y\|_{\alpha}^{\alpha-Q} d\mathcal{S}^{Q-1}_{\infty}(y) \notag \\
& + \frac{1}{\alpha+4} \frac{1}{\Gamma(\frac{\alpha}{2})} T^2\phi(x) \bigg(\frac{1}{2}  \int_{\partial \B_c(0,1)}y_{2n+1}^2  \|y\|_{\alpha}^{\alpha-Q} dy  -\frac{n}{4!} \int_{\partial \B_c(0,1)}y_1^4\|y\|_{\alpha}^{\alpha-Q} d\mathcal{S}^{Q-1}_{\infty}(y) \bigg) \notag \\
&-\frac{1}{4}\frac{1}{\alpha+6} T^2 \mathcal{L}\phi(x) \int_{\partial \B_c(0,1)}y_1^2 y_{2n+1}^2P_{\alpha}(y) d\mathcal{S}^{Q-1}_{\infty}(y) \notag \\
& + \frac{1}{6!} \frac{1}{\alpha+6} \sum_{i=1}^{2n} Z_i^6 \phi(x) \int_{\partial \B_c(0,1)}y_1^6P_{\alpha}(y) d\mathcal{S}^{Q-1}_{\infty}(y)\\
&+ \frac{1}{6!}\frac{1}{\alpha+6}  \sum_{\substack{ i,j,k=1, \\i \neq j \neq k \neq i,\\ [Z_i,Z_k]=[Z_i,Z_k]=[Z_j,Z_k]=0 }}^{2n} (Z_i^2Z_j^2Z_k^2\phi(x)+Z_i^2Z_k^2Z_j^2\phi(x)+Z_j^2Z_i^2Z_k^2\phi(x) \notag \\
&+Z_j^2Z_k^2Z_i^2\phi(x)+Z_k^2Z_j^2Z_i^2\phi(x) +Z_k^2Z_i^2Z_j^2\phi(x))  \int_{ \partial \B_c(0,1)}y_1^6P_{\alpha}(y) d\mathcal{S}^{Q-1}_{\infty}(y) \notag \\
&+ \frac{1}{6!}\frac{1}{\alpha+6}  \Big(\frac{3}{3}\sum_{\substack{ i,j=1, \\i < j ,\\ [Z_i,Z_j]=T  }}^{2n} \sum_{\substack{k=1 \\,  i,j\neq k ,\\ [Z_i,Z_k]=[Z_j,Z_k]=0}}^{2n}  (Z_i^2Z_j^2Z_k^2\phi(x)+Z_i^2Z_k^2Z_j^2\phi(x)+Z_j^2Z_i^2Z_k^2\phi(x) \notag \\
&+Z_j^2Z_k^2Z_i^2\phi(x)+Z_k^2Z_j^2Z_i^2\phi(x) +Z_k^2Z_i^2Z_j^2\phi(x)) \notag \\
&+ 3(n-1)T^2\mathcal{L}\phi(x)) \int_{ \partial \B_c(0,1)}y_1^6P_{\alpha}(y) d\mathcal{S}^{Q-1}_{\infty}(y) \notag \\
&+ \frac{1}{6!} \frac{1}{\alpha+6}  \sum_{\substack{i,j=1, i \neq j }}^{2n} \sum_{\sigma \in \mathcal{P}_{i,j}}Z_{\sigma}\phi(x) \int_{\partial \B_c(0,1)}y_i^4 y_j^2P_{\alpha}(y) d\mathcal{S}^{Q-1}_{\infty}(y). \notag
\end{align}
For every $i,j \in \{1, \dots, 2n\}, i \neq j $ we have two possibilities:
\begin{itemize}
\item[(a)] $[Z_i,Z_j]=0$ and then clearly we can organize as follows
\begin{equation}\label{organizzati}
\sum_{\sigma \in \mathcal{P}_{i,j}}Z_{\sigma}\phi(x)=  5 Z_i^4Z_j^2\phi(x)+5Z_i^2Z_j^2Z_i^2\phi(x)+5Z_j^2Z_i^4 \phi(x),
\end{equation}
\item[(b)] $[Z_i,Z_j] \neq 0$, then we claim that
\begin{equation}\label{claimdaprovare}
\sum_{\sigma \in \mathcal{P}_{i,j}}Z_{\sigma}\phi(x)=  5 Z_i^4Z_j^2\phi(x)+5Z_i^2Z_j^2Z_i^2\phi(x)+5Z_j^2Z_i^4 \phi(x)-25T^2Z_i^2 \phi(x).
\end{equation}
\end{itemize}
Let us explicitly notice that $$\begin{Large}5= \frac{6!}{4!2!}\frac{2!1!}{3!}.\end{Large}$$
If we assume that \eqref{claimdaprovare} is true, then by Proposition \ref{prop-rel-integrali} we have the equality
\begin{align*}
&\frac{1}{6!} \frac{1}{\alpha+6}  \sum_{\substack{i,j=1, i \neq j }}^{2n} \sum_{\sigma \in \mathcal{P}_{i,j}}Z_{\sigma}\phi(x) \int_{\partial \B_c(0,1)}y_i^4 y_j^2P_{\alpha}(y) d\mathcal{S}^{Q-1}_{\infty}(y)\\
=& \  \frac{1}{6!} \frac{1}{\alpha+6} \sum_{\substack{i=1,\\ i \neq j }}^{2n} (Z_i^4Z_j^2\phi(x)+Z_i^2Z_j^2Z_i^2\phi(x)+Z_j^2Z_i^4 \phi(x)) 5  \frac{4!2!}{6!}\frac{3!}{2!1!}\int_{\partial \B_c(0,1)}y_i^6P_{\alpha}(y)  d\mathcal{S}^{Q-1}_{\infty}(y)\\
&-5 \frac{1}{6!} \frac{1}{\alpha+6}\sum_{i=1}^{2n} T^2Z_i^2 \phi(x)  5 \frac{4!2!}{6!}\frac{3!}{2!1!}\int_{\partial \B_c(0,1)}y_i^6P_{\alpha}(y)  d\mathcal{S}^{Q-1}_{\infty}(y),
\end{align*}
hence we can state that 
\begin{align*}
&-\frac{1}{4}\frac{1}{\alpha+6} T^2 \mathcal{L}\phi(x) \int_{\partial \B_c(0,1)}y_1^2 y_{2n+1}^2P_{\alpha}(y) d\mathcal{S}^{Q-1}_{\infty}(y)\\
& + \frac{1}{6!} \frac{1}{\alpha+6} \sum_{i=1}^{2n} Z_i^6 \phi(x) \int_{\partial \B_c(0,1)}y_1^6P_{\alpha}(y) d\mathcal{S}^{Q-1}_{\infty}(y)\\
&+ \frac{1}{6!}\frac{1}{\alpha+6}  \sum_{\substack{ i,j,k=1, \\i \neq j \neq k \neq i,\\ [Z_i,Z_k]=[Z_i,Z_k]=[Z_j,Z_k]=0 }}^{2n} (Z_i^2Z_j^2Z_k^2\phi(x)+Z_i^2Z_k^2Z_j^2\phi(x)+Z_j^2Z_i^2Z_k^2\phi(x)\\
&+Z_j^2Z_k^2Z_i^2\phi(x)+Z_k^2Z_j^2Z_i^2\phi(x) +Z_k^2Z_i^2Z_j^2\phi(x))  \int_{ \partial \B_c(0,1)}y_1^6P_{\alpha}(y) d\mathcal{S}^{Q-1}_{\infty}(y)\\
&+ \frac{1}{6!}\frac{1}{\alpha+6}  \sum_{\substack{ i,j=1, \\i < j ,\\ [Z_i,Z_j]=T  }}^{2n} \sum_{\substack{k=1 \\,  i,j\neq k ,\\ [Z_i,Z_k]=[Z_j,Z_k]=0}}^{2n}  (Z_i^2Z_j^2Z_k^2\phi(x)+Z_i^2Z_k^2Z_j^2\phi(x)+Z_j^2Z_i^2Z_k^2\phi(x)\\
&+Z_j^2Z_k^2Z_i^2\phi(x)+Z_k^2Z_j^2Z_i^2\phi(x) +Z_k^2Z_i^2Z_j^2\phi(x))) \int_{ \partial \B_c(0,1)}y_1^6P_{\alpha}(y) d\mathcal{S}^{Q-1}_{\infty}(y)\\
 \\
&+ 3 \frac{1}{6!} \frac{1}{\alpha+6} (n-1)T^2\mathcal{L}\phi(x)) \int_{ \partial \B_c(0,1)}y_1^6P_{\alpha}(y) d\mathcal{S}^{Q-1}_{\infty}(y)\\
&+ \frac{1}{6!} \frac{1}{\alpha+6}  \sum_{\substack{i,j=1, i \neq j }}^{2n} \sum_{\sigma \in \mathcal{P}_{i,j}}Z_{\sigma}\phi(x)  \int_{\partial \B_c(0,1)}y_i^4 y_j^2P_{\alpha}(y) d\mathcal{S}^{Q-1}_{\infty}(y)\\
=& - \frac{1}{\alpha+6} T^2 \mathcal{L}\phi(x) \left( \frac{1}{4} \int_{\partial \B_c(0,1)}y_1^2 y_{2n+1}^2P_{\alpha}(y)  d\mathcal{S}^{Q-1}_{\infty}(y) - \Big(\frac{5}{6!} +\frac{3}{6!}(n-1) \Big)\int_{\partial \B_c(0,1)}y_1^6P_{\alpha}(y)  d\mathcal{S}^{Q-1}_{\infty}(y) \right)\\
& -  \frac{1}{\alpha+6} \mathcal{L}^3\phi(x) \frac{1}{6!} \int_{\partial \B_c(0,1)}y_1^6P_{\alpha}(y)  d\mathcal{S}^{Q-1}_{\infty}(y).
\end{align*}
Therefore, for $\alpha \in (-8,Q)$ we can rewrite \eqref{espressione4striscia} as 
\begin{align}\label{ultimarap}
\psi(x,\alpha)&= \int_{\H^n \setminus \B_c(0,1)} \phi(xy)P_{\alpha}(y) dy + \int_{\B_c(0,1)} \Big(\phi(xy)-\mathcal{T}_7(\phi,x)(xy)\Big)P_{\alpha}(y) dy \notag \\
&+ \phi(x) \frac{1}{\Gamma(\frac{\alpha}{2})} \frac{1}{\alpha} \sigma(\alpha)- \frac{1}{2}\frac{1}{\alpha+2}\frac{1}{\Gamma(\frac{\alpha}{2})} \mathcal{L}\phi(x)d(\alpha) \notag \\
& + \frac{1}{\alpha+4} \frac{1}{\Gamma(\frac{\alpha}{2})} \frac{1}{4!} \mathcal{L}^2 \phi(x) \int_{\partial \B_c(0,1)}y_1^4\|y\|_{\alpha}^{\alpha-Q} d\mathcal{S}^{Q-1}_{\infty}(y) \notag \\
& + \frac{1}{\alpha+4} \frac{1}{\Gamma(\frac{\alpha}{2})} T^2\phi(x) \bigg(\frac{1}{2}  \int_{\partial \B_c(0,1)}y_{2n+1}^2  \|y\|_{\alpha}^{\alpha-Q} dy  -\frac{n}{4!} \int_{\partial \B_c(0,1)}y_1^4\|y\|_{\alpha}^{\alpha-Q} d\mathcal{S}^{Q-1}_{\infty}(y) \bigg) \\
&-\frac{1}{\alpha+6} \frac{1}{\Gamma(\frac{\alpha}{2})} T^2 \mathcal{L}\phi(x) \Bigg( \frac{1}{4} \int_{\partial \B_c(0,1)}y_1^2 y_{2n+1}^2\|y\|_{\alpha}^{\alpha-Q}  d\mathcal{S}^{Q-1}_{\infty}(y) \notag \\
& -\Big(\frac{5}{6!} +\frac{3}{6!}(n-1) \Big)\int_{\partial \B_c(0,1)}y_1^6\|y\|_{\alpha}^{\alpha-Q}  d\mathcal{S}^{Q-1}_{\infty}(y)  \Bigg) \notag \\
&-  \frac{1}{\alpha+6}  \frac{1}{\Gamma(\frac{\alpha}{2})}\mathcal{L}^3\phi(x) \frac{1}{6!} \int_{\partial \B_c(0,1)}y_1^6\|y\|_{\alpha}^{\alpha-Q}  d\mathcal{S}^{Q-1}_{\infty}(y) . \notag
\end{align}

Finally, in order to establish the representation in \eqref{ultimarap}, we are left to prove \eqref{claimdaprovare}. Let us consider then $i,j \in \{1, \dots, 2n\}, i \neq j$, with $[Z_i,Z_j] \neq 0$. Hence $[Z_i,Z_j]=T$ or $[Z_i,Z_j]=-T$. We assume that $[Z_i,Z_j]=T$ and we denote by $X=Z_i$ and $Y=Z_j$.
The sum on the left-hand side of \eqref{claimdaprovare} involves the following terms
\beq
\begin{matrix}\label{derivate}
XXXXYY\phi(x) & YXXXXY\phi(x)&  YXXXYX\phi(x) \\
XXXYXY\phi(x) & XXYXYX \phi(x)&  YXXYXX \phi(x) \\
XXYXXY\phi(x) & XYXXYX \phi(x)&  XYYXXX \phi(x) \\
XYXXXY\phi(x) & XYXYXX \phi(x) &  YXYXXX \phi(x)\\
XXXYYX\phi(x) & XXYYXX\phi(x)&   YYXXXX \phi(x) \\
\end{matrix}.
\eeq
By exploiting the relations among the vector fields $X, Y$ and $T$, it can be verified that the sum of the terms of the first column of \eqref{derivate} gives 
\begin{equation}\label{serve13}
5 X^4Y^2 \phi(x)-8X^3YT \phi(x),
\end{equation}
the sum of the terms of the third column of \eqref{derivate} is
\begin{equation}\label{serve14}
5 Y^2X^4 \phi(x) +8 YX^3T \phi(x),
\end{equation}
and the sum of the terms of the second column of \eqref{derivate} equals
\begin{equation}\label{serve15}
5 X^2 Y^2X^2 \phi(x) -X^3YT\phi(x)+X^2YXT\phi(x)=5 X^2 Y^2X^2 \phi(x) -X^2T^2\phi(x).
\end{equation}

We observe also that the following equalities hold
\begin{align}\label{serve16} &X^2T^2=XTXT=X(XY-YX)XT=X^2YXT-XYX^2T \notag \\
& X^2T^2=X^2(XY-YX)T=X^3YT-X^2YXT\\
& X^2T^2=(XY-YX)X^2T=XYX^2T-YX^3T ,\notag
\end{align}
hence
$$3X^2T^2=X^3YT-YX^3T.$$

If we sum up the fifteen terms of \eqref{derivate} and we exploit the established relations \eqref{serve13}, \eqref{serve14}, \eqref{serve15} and \eqref{serve16} we get the following result
\begin{align*}
& \ 5(X^4Y^2 \phi(x)+X^2Y^2X^2\phi(x)+Y^2X^4\phi(x)) -8X^3YT \phi(x)+8 YX^3T \phi(x)-X^2T^2\phi(x)\\
=& \ 5(X^4Y^2 \phi(x)+X^2Y^2X^2\phi(x)+Y^2X^4\phi(x)) -8(X^3YT - YX^3T )\phi(x)-X^2T^2\phi(x)\\
=& \ 5(X^4Y^2 \phi(x)+X^2Y^2X^2\phi(x)+Y^2X^4\phi(x)) -8(3X^2T^2 )\phi(x)-X^2T^2\phi(x)\\
=& \ 5(X^4Y^2 \phi(x)+X^2Y^2X^2\phi(x)+Y^2X^4\phi(x)) -24X^2T^2\phi(x)-X^2T^2\phi(x)\\
=& \ 5(X^4Y^2 \phi(x)+X^2Y^2X^2\phi(x)+Y^2X^4\phi(x)) -25X^2T^2\phi(x).
\end{align*}
One can verify that analogous passages can be carried out if $[Z_i,Z_j]=-T$. Thus, \eqref{claimdaprovare} is proved.

\subsection{A geometric application, III}
Here, we apply the representation obtained in \eqref{ultimarap} in order to prove the following proposition.
\begin{prop}
Let $n \in \N$ and let $h$ be the heat kernel associated with $\mathcal{L}$ on $\H^n$, then for every $i \in \{1, \ldots, 2n \}$ the following equalities hold
\begin{equation}\label{val-int3}
\int_{\H^n} x_i^6 h(1, x) dx=5! \qquad \mathrm{and} \qquad \int_{\H^n} x_i^2 x_{2n+1}^2 h(1, x) dx=\frac{2}{3}(5+3(n-1)).
\end{equation}
\end{prop}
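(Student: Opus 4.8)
The plan is to mimic exactly the strategy used in the previous two geometric applications (Propositions~\ref{Rem-consequence} and~\ref{prop:valint2}), now exploiting the representation~\eqref{ultimarap} of $\psi(x,\alpha)$ on the strip $(-8,Q)$. First I would take the limit $\alpha\to-6$ in~\eqref{ultimarap}. By the same argument as in Section~\ref{striscia2}, combining Proposition~\ref{prop-limauno} with the identity~\eqref{fiuguale} and the uniqueness of analytic continuation, the left-hand side tends to $\mathcal{L}^3\phi(x)$; on the right-hand side, every term carrying a factor $\tfrac{1}{\Gamma(\alpha/2)}$ without a compensating pole at $\alpha=-6$ vanishes (since $\tfrac{1}{\Gamma(-3)}=0$), while the terms with the prefactor $\tfrac{1}{\alpha+6}\tfrac{1}{\Gamma(\alpha/2)}$ survive with limit $\tfrac12(-1)^3 3! = -3$ as computed in Section~\ref{anal-integrals}. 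Using Lemma~\ref{lemma-general} to replace the surface integrals $\int_{\partial\B_c(0,1)}y_1^6\|y\|_\alpha^{\alpha-Q}\,d\mathcal{S}^{Q-1}_\infty$ and $\int_{\partial\B_c(0,1)}y_1^2y_{2n+1}^2\|y\|_\alpha^{\alpha-Q}\,d\mathcal{S}^{Q-1}_\infty$ at $\alpha=-6$ by $2\int_{\H^n}y_1^6 h(1,y)\,dy$ and $2\int_{\H^n}y_1^2y_{2n+1}^2 h(1,y)\,dy$ respectively, I would obtain an identity of the schematic form
\begin{equation*}
\mathcal{L}^3\phi(x)= a\,\mathcal{L}^3\phi(x) + b\,T^2\mathcal{L}\phi(x),
\end{equation*}
where $a$ is a universal constant times $\int_{\H^n}x_1^6 h(1,x)\,dx$ and $b$ is a combination of $\int_{\H^n}x_1^2 x_{2n+1}^2 h(1,x)\,dx$ and $\int_{\H^n}x_1^6 h(1,x)\,dx$ (with the explicit numerical coefficients $\tfrac{1}{6!}$, $\tfrac14$, $\tfrac{5}{6!}+\tfrac{3}{6!}(n-1)$ read off from~\eqref{ultimarap}).

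Next, to conclude that $a=1$ and $b=0$ — which yields the two claimed values — I need to show the operators $\mathcal{L}^3$ and $T^2\mathcal{L}$ are \emph{linearly independent} as differential operators on $\mathcal{S}(\H^n)$, i.e.\ there is no constant $c$ with $\mathcal{L}^3\phi = c\,T^2\mathcal{L}\phi$ for all $\phi\in\mathcal{S}(\H^n)$ and all $x$. This is the exact analogue of the contradiction argument in Proposition~\ref{prop:valint2}, and I would run it the same way: assuming such a $c$ existed, I would apply the relation to $\phi\circ d_\lambda$ for the isotropic dilation $d_\lambda$, use that $T^2$ is isotropically $2$-homogeneous while $\mathcal{L}^3$ is $\delta_\lambda$-homogeneous of degree $6$, and compare the leading-order expansions in $\lambda\to 0^+$ of $\mathcal{L}^3(\phi\circ d_\lambda)(x)$ and $\mathcal{L}^3(\phi\circ\delta_{\sqrt\lambda})(\sqrt\lambda\,\pi_{H_1}(x),\pi_{H_2}(x))$. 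One writes $\mathcal{L}=\Delta + (\text{first-order in }t)+(\text{quadratic})\partial_t^2$ and iterates three times; the mismatch in the powers of $\lambda$ forces a nontrivial constraint on $\phi$ at the origin, and testing it against $\phi(x)=e^{-\|x\|_K^4}$ (the Cygan--Kor\'anyi gauge, as in the cited proof) produces a contradiction. Hence $\mathcal{L}^3$ and $T^2\mathcal{L}$ cannot be proportional, so matching coefficients in the displayed identity forces $a=1$ and $b=0$.

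From $a=1$ I get $\int_{\H^n}x_1^6 h(1,x)\,dx=5!$ after clearing the numerical factor ($a$ equals $\tfrac{2}{6!}\int_{\H^n}y_1^6 h(1,y)\,dy$ once the $\tfrac{1}{\alpha+6}\tfrac{1}{\Gamma(\alpha/2)}\to -3$ and the factor $2$ from Lemma~\ref{lemma-general} are absorbed). From $b=0$, substituting the now-known value of $\int_{\H^n}x_1^6 h(1,x)\,dx$ into the combination defining $b$ and solving gives $\int_{\H^n}x_1^2 x_{2n+1}^2 h(1,x)\,dx=\tfrac{2}{3}(5+3(n-1))$; invariance under horizontal rotations (Section~\ref{parita-h}) lets me replace the index $1$ by an arbitrary $i\in\{1,\dots,2n\}$. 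The main obstacle, as in the $(-6,Q)$ case, is not the bookkeeping of the coefficients in~\eqref{ultimarap} but the linear-independence step: one must carry the three-fold iteration of $\mathcal{L}$ through the two dilation substitutions carefully enough to isolate a genuinely obstructing term (here a polynomial in the derivatives of $\phi$ at $0$ of order exactly $6$ that does not vanish for the test function), and one must double-check that all lower-order remainders are genuinely $o(\lambda^{k})$ for the relevant power $k$ so that dividing and letting $\lambda\to 0^+$ is legitimate. I would also note in passing that the same computation evaluated at $\alpha\to-6$ automatically re-confirms, via the $\mathcal{L}^2$- and $T^2$-terms at level $\tfrac{1}{\alpha+4}$, consistency with the already-established values in~\eqref{val-int2}, which serves as a useful sanity check on the coefficients.
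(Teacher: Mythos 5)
Your overall strategy coincides with the paper's: take $\alpha\to-6$ in \eqref{ultimarap}, observe that only the terms with the prefactor $\tfrac{1}{\alpha+6}\tfrac{1}{\Gamma(\alpha/2)}$ survive (with limit $\tfrac12(-1)^3 3!=-3$), convert the surface integrals via Lemma \ref{lemma-general}, and match coefficients in $\mathcal{L}^3\phi=a\,\mathcal{L}^3\phi+b\,T^2\mathcal{L}\phi$ using the non-proportionality of $\mathcal{L}^3$ and $T^2\mathcal{L}$. Two points deserve attention. First, a minor slip: with the sign in \eqref{ultimarap} and the factor $2$ from Lemma \ref{lemma-general}, the coefficient is $a=\tfrac{6}{6!}\int_{\H^n}y_1^6h(1,y)\,dy=\tfrac{1}{5!}\int_{\H^n}y_1^6h(1,y)\,dy$, not $\tfrac{2}{6!}\int_{\H^n}y_1^6h(1,y)\,dy$ as in your parenthetical (which would give $360$ rather than $5!$); your final answer is nonetheless the right one.

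The substantive issue is the linear-independence step. The paper does \emph{not} rerun the dilation argument on $\mathcal{L}^3$ versus $T^2\mathcal{L}$: it rewrites the hypothetical identity as $\mathcal{L}^2(\mathcal{L}\phi)=cT^2(\mathcal{L}\phi)$, i.e.\ the already-refuted relation $\mathcal{L}^2\psi=cT^2\psi$ restricted to $\psi$ in the range of $\mathcal{L}$ on $\mathcal{S}(\H^n)$. This lets it reuse the entire homogeneity computation of Proposition \ref{prop:valint2} verbatim; the only new input is a single function $\psi$ with $\Delta^2(\mathcal{L}\psi)(0)\neq 0$, and the paper takes $\psi(x)=x_1^6e^{-\|x\|_K^4}$. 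Your plan to redo the three-fold iteration of $\mathcal{L}$ through the two dilation substitutions is workable in principle but substantially heavier, and — more importantly — your proposed test function $\phi(x)=e^{-\|x\|_K^4}$ is very likely inadequate at this level. The natural order-six obstruction produced by your direct argument is (a multiple of) $\Delta^3\phi(0)$, which picks out the degree-six purely horizontal part of the Taylor expansion of $\phi$ at the origin; since $\|x\|_K^4=|\pi_{H_1}(x)|^4+x_{2n+1}^2$, the expansion of $e^{-\|x\|_K^4}$ contains horizontal monomials only of degrees $0,4,8,\dots$, so $\Delta^3\phi(0)=0$ and no contradiction is reached. This is precisely why the paper switches from the plain gauge (which suffices for the degree-four obstruction in Proposition \ref{prop:valint2}) to $x_1^6e^{-\|x\|_K^4}$ here. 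To repair your version you would need either to adopt the paper's reduction, or to replace your test function by one with a genuine degree-six horizontal jet at $0$.
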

\begin{proof}
As a consequence of the representation of $\psi(x,\alpha)$ in \eqref{ultimarap} valid for $\alpha \in (-8,Q)$, exploiting Lemma \ref{lemma-general} and taking into account the definition \eqref{realpow}, we know that
\begin{align}\label{trilap}
&\mathcal{L}^3\phi(x)=\lim_{\alpha \to -6} \psi(x,\alpha) \notag \\
=& \lim_{\alpha \to -6} \Bigg(-\frac{1}{\alpha+6}\frac{1}{\Gamma(\frac{\alpha}{2})} T^2 \mathcal{L}\phi(x) \Bigg( \frac{1}{4} \int_{\partial \B_c(0,1)}y_1^2 y_{2n+1}^2 \|y\|_{\alpha}^{\alpha-Q} d\mathcal{S}^{Q-1}_{\infty}(y) \\
&-\Big(\frac{5}{6!} +\frac{3}{6!}(n-1) \Big) \int_{\partial \B_c(0,1)}y_1^6 \|y\|_{\alpha}^{\alpha-Q} d\mathcal{S}^{Q-1}_{\infty}(y)  \Bigg) - \frac{1}{\alpha+6} \frac{1}{\Gamma(\frac{\alpha}{2})}\mathcal{L}^3\phi(x) \frac{1}{6!} \int_{\partial \B_c(0,1)}y_1^6 \|y\|_{\alpha}^{\alpha-Q} d\mathcal{S}^{Q-1}_{\infty}(y) \Bigg) \notag \\
=&  \ 6 T^2 \mathcal{L}\phi(x) \left( \frac{1}{4} \int_{\H^n } y_1^2 y_{2n+1}^2 h(1,y)dy -\Big(\frac{5}{6!} +\frac{3}{6!}(n-1) \Big) \int_{\H^n} y_1^6 h(1,y) dy \right) \notag \\
& +6 \mathcal{L}^3\phi(x) \frac{1}{6!} \int_{\H^n}y_1^6 h(1,y) dy.  \notag
\end{align}
If we show that there is not any constant $c \in \R$ such that the equality
\begin{equation}\label{assumption3}
\mathcal{L}^3\phi(x)=cT^2 \mathcal{L}\phi(x)
\end{equation}
holds for every $\phi \in \mathcal{S}(\H^n)$ and $x \in \H^n$, then the proof follows from \eqref{trilap}. Let us assume by contradiction that \eqref{assumption3} is true, then the following equality holds
\begin{equation}\label{absurd2}
\mathcal{L}^2(\mathcal{L}\phi)(x)=cT^2 \mathcal{L}\phi(x),
\end{equation}
for every $\phi \in \mathcal{S}(\H^n)$ and $x \in \H^n$. 
According to our reasoning, it would exist a constant $c \in \R$ such that
\begin{equation}\label{vignola}
\mathcal{L}^2\phi(x)=c T^2\phi(x),
\end{equation}
for every $\phi=\mathcal{L}(\psi)$ for some $\psi \in \mathcal{S}(\H^n)$ and for every $x \in \H^n$. Then, by following the same reasoning adopted in the proof of Proposition \ref{prop:valint2}, in order to show a contradiction with \eqref{vignola} it is sufficient to individuate a function $\psi \in \mathcal{S}(\H^n)$ such that
\begin{equation}\label{ceretolo}
\Delta^2(\mathcal{L}\psi)(0) \neq 0.
\end{equation}

For example $\psi(x)=x_1^6 e^{-\|x\|_K^4}$ satisfies \eqref{ceretolo}.
Thus, \eqref{vignola} is not true, hence we reach a contradiction with \eqref{assumption3} and the thesis follows.
\end{proof}

\bibliography{CF-References}
\bibliographystyle{plain}

\end{document}